\newtheorem{theorem}{Theorem}[section]
\newtheorem{lemma}[theorem]{Lemma}
\newtheorem{proposition}[theorem]{Proposition}
\newtheorem{corollary}[theorem]{Corollary}
\newtheorem{question}[theorem]{Question}
\newtheorem{problem}[theorem]{Problem}
\newtheorem{conjecture}[theorem]{Conjecture}
\theoremstyle{definition}
\newtheorem{definition}[theorem]{Definition}
\newtheorem{remark}[theorem]{Remark}
\newtheorem{example}[theorem]{Example}
\newcommand{\Si}{\mathfrak{S}}
\newcommand{\Ext}{{\mathrm{Ext}}}
\renewcommand{\hom}{{\mathrm{Hom}}}
\newcommand{\End}{{\mathrm{End}}}
\newcommand{\Id}{{\mathrm{Id}}}
\renewcommand{\k}{\Bbbk}
\newcommand{\op}{\mathrm{op}}
\renewcommand{\P}{\mathcal{P}}
\newcommand{\V}{\mathcal{V}}
\newcommand{\Z}{\mathbb{Z}}
\font\tencyr=wncyr10
\def\cyr{\tencyr\cyracc}
\newcommand{\sha}{{\mbox{\cyr Sh}}}
\title{Troesch complexes and extensions of strict polynomial functors}
\author{Antoine Touz\'e }
\thanks{The author was partially supported by the ANR HGRT (Projet BLAN08-2 338236): \emph{Nouveaux liens entre la th\'eorie de l'homotopie et la th\'eorie des groupes et des repr\'esentations}} 
\date{\today}
\begin{document}

\sloppy

\begin{abstract}
We develop a new approach of extension calculus in the category of strict polynomial functors, based on Troesch complexes. We obtain new short elementary proofs of numerous classical $\Ext$-computations as well as new results. 

In particular, we get a cohomological version of the `fundamental theorems' from classical invariant invariant theory for $GL_n$ for $n$ big enough (and we give a conjecture for smaller values of $n$).
 
We also study the `twisting spectral sequence' $E^{s,t}(F,G,r)$
converging to the extension groups $\Ext^*_{\P_\k}(F^{(r)}, G^{(r)})$ between the twisted functors $F^{(r)}$ and $G^{(r)}$. 
Many classical $\Ext$ computations simply amount to the collapsing of this spectral sequence at the second page (for lacunary reasons), and it is also a convenient tool to study the effect of the Frobenius twist on $\Ext$ groups. 
We prove many cases of collapsing, and we conjecture collapsing is a general fact. 

\end{abstract}

\maketitle

%\tableofcontents

\section{Introduction}

Let $\k$ be a field of prime characteristic $p$. In \cite{FS}, Friedlander and Suslin introduced the category $\P_\k$ of strict polynomial functors (of finite type)  over $\k$. 
Let $\mathcal{V}_\k$ the category of finite dimensional $\k$-vector spaces. Roughly speaking, objects of $\P_\k$ are functors $F:\mathcal{V}_\k\to \mathcal{V}_\k$ with some additional polynomial structure, so that the following property holds. If $G$ is an algebraic group (or group scheme) over $\k$ acting rationally on $V$, then functoriality defines a \emph{rational} action of $G$ on $F(V)$. Such functors occur very frequently in representation theory. Typical examples are symmetric powers $S^n$, tensor powers $\otimes^d$, exterior powers $\Lambda^n$ or divided powers $\Gamma^n$.

The category $\P_\k$ is particularly suited to study the representation theory of $GL_n$. Indeed, 
evaluation on the standard representation $\k^n$ of $GL_n$ induces a map:
$$\Ext^*_{\P_\k}(F,G)\to \Ext^*_{GL_n}(F(\k^n),G(\k^n))\;,$$
which is an isomorphism as soon as $n\ge \max\{\deg F,\deg G\}$ \cite[Cor 3.13]{FS}. 
Thus, one can use the powerful computational tools available in $\P_\k$ to compute the `stable' (that is, when $n$ is big enough) extension groups between rational $GL_n$-modules.

A successful application of strict polynomial functors is the computation of extension groups between representations involving `Frobenius twists'.
If $V$ is a rational representation of $GL_n$, we denote by $V^{(r)}$ the rational representation of $GL_n$ obtained by twisting along the $r$-th power of the Frobenius morphism. The functor $I^{(r)}:V\mapsto V^{(r)}$ is actually a strict polynomial functor. It has a crucial role in many problems, for example in cohomological finite generation problems \cite{FS,VdKGross,TVdK}. Twisted representations are also related \cite{CPSVdK} to the cohomology of the finite groups $GL_n(\mathbb{F}_q)$. 
For these reasons, extension groups between twisted functors (that is, extension groups of the form $\Ext^*_{\P_\k}(F\circ I^{(r)},G\circ I^{(r)})$)  have received much attention, and many successful computations have been performed \cite{FS,FFSS,Chalupnik1,Chalupnik2}. In particular, Cha{\l}upnik has proved in \cite[Thm 4.3]{Chalupnik1} that $\Ext$-groups of the form $\Ext^*_{\P_\k}(\Gamma^\mu\circ I^{(r)},F\circ I^{(r)})$ (where $\Gamma^\mu$ denotes a tensor product of divided powers) can be easily computed via an isomorphism:
$$\Ext^*_{\P_\k}(\Gamma^\mu\circ I^{(r)},F\circ I^{(r)})\simeq \hom_{\P_\k}(\Gamma^\mu\circ (E_r\otimes I),F)\quad (*)$$
where $E_r$ is the graded vector space $\Ext^*_{\P_\k}(I^{(r)},I^{(r)})$, and $E_r\otimes I$ denotes the graded functor $V\mapsto E_r\otimes V$.

\bigskip

In this article, we give a new approach of $\Ext$-computations between twisted functors. This approach does not depend on the earlier computations of \cite{FS,FFSS,Chalupnik1,Chalupnik2}.
As main tool, we use the explicit injective coresolutions of twisted symmetric powers built by Troesch in \cite{Troesch}. These coresolutions generalize to all prime characteristic what was previously known in characteristic $p=2$ only \cite{FLS,FS}. At first sight these coresolutions are quite big and complicated (especially in odd characteristic), and one could fear that they are useless for concrete computations. But this is not the case: we show that only a very little part of the information contained in these coresolutions is needed for computations. In particular, we don't need the information borne by the differential (see lemma \ref{lm-CFSr})! We exploit the latter fact to get the first main result of the article, namely:
\begin{itemize}
\item we get a new and simpler proof of Cha{\l}upnik's isomorphism $(*)$, and we derive from this isomorphism new short proofs of many $\Ext$-computations.
\end{itemize}
Then we try to go further in the study of extension groups between twisted functors. With the help of Troesch complexes, we obtain new results in two independent directions.
\begin{itemize}
\item First, we apply isomorphism $(*)$ to compute rational cohomology algebras of $GL_n$. To do this, we need to improve Friedlander and Suslin's bound on $n$ so that $\Ext^*_{\P_\k}(\Gamma^\mu\circ I^{(r)},F\circ I^{(r)})$ computes $GL_n$ extensions. As an application, we give explicit generators and relations for the cohomology algebra $H^*(GL_n,A)$, where $A$ is an algebra of polynomials over a direct sum of copies of the twisted standard representation $(\k^{n})^{(r)}$ and its dual (for $n$ big enough) in the spirit of classical invariant theory. We give a conjecture for smaller $n$.
\item We introduce in section \ref{sec-untwist} the `twisting spectral sequence' which generalizes isomorphism $(*)$. We show that this spectral sequence contains interesting information about extensions between twisted functors. As main new result, we prove in section \ref{sec-collapse} that in many cases this spectral sequence collapses at the second page, and we conjecture that this is a general fact.  A positive answer to this conjecture would improve significantly our understanding of the homological effects of Frobenius twists. 
\end{itemize}

\bigskip

Let us review more specifically the content of the paper. Sections \ref{sec-background} and \ref{sec-Troesch} are mainly expository. They collect well-know facts 
about $\P_\k$ and describe the properties of Troesch coresolutions which we need for our computations (in propositions \ref{prop-res1} and \ref{prop-res-2}). For our computations, we do not need an explicit description of the differentials of Troesch coresolutions. However, for the reader's convenience, we have recalled their construction in an appendix.

In section \ref{sec-classical}, we present our elementary proof of Cha{\l}upnik's isomorphism $(*)$. As corollaries, we retrieve many computations from \cite{FS,FFSS,Chalupnik1}. Beside brevity, our proofs have the advantage to avoid the use of many technical tools (e.g. functors with several variables, generalized Koszul complexes, trigraded Hopf algebra structures on hypercohomology spectral sequences, symmetrizations of functors) which seemed essential in \cite{FS,FFSS}, and also in \cite{Chalupnik1} since this latter article elaborates on the results of \cite{FFSS}.

In section \ref{subsec-mult}, we enrich the computation of $\Ext^*_{\P_\k}(\Gamma^\mu\circ I^{(r)},F\circ I^{(r)})$ by describing cup products as well as the twisting map:
$$\mathrm{Fr}_1:\Ext^*_{\P_\k}(\Gamma^\mu\circ I^{(r)},F\circ I^{(r)})\to \Ext^*_{\P_\k}(\Gamma^\mu\circ I^{(r+1)},F\circ I^{(r+1)})$$ induced by precomposition by $I^{(1)}$. Although the result might not surprise experts, it is neither stated, nor proved in the literature. We use it to generalize some Hopf algebra computations of \cite{FFSS}.

In section \ref{sec-semistable}, we try to apply our $\Ext$-computations in $\P_\k$ to compute some rational cohomology algebras for $GL_n$. When doing so, we encounter the problem that Friedlander and Suslin's bound on $n$ such that $\Ext^*_{\P_\k}(F\circ I^{(r)},G\circ I^{(r)})$ computes $GL_n$ extensions is not sufficient. Fortunately, with the help of Troesch complexes, we prove that this bound can be substantially improved for extensions of the form $\Ext^*_{\P_\k}(\Gamma^\mu\circ I^{(r)},F\circ I^{(r)})$.  Combining this with the previous computations of section \ref{subsec-mult} we prove a cohomological analogue of the  `Fundamental Theorems' from classical invariant theory. Namely, we describe the cohomology algebra $H^*(GL_n, S^*((\k^{n\,(r)})^{\oplus k}\oplus (\k^{n\,(r)})^{\vee\,\oplus \ell})$. Actually our result is valid for $n\ge p^r\min\{k,\ell\}$. For $n$ smaller, we state a conjecture. 

We introduce the `twisting spectral sequence' $E^{s,t}(F,G,r)$ in section \ref{sec-untwist}. The second page of this spectral sequence is given by extension groups between $F$ and $G$ precomposed by the functor $V\mapsto E_r\otimes V$, and it converges to the extension groups $\Ext^*_{\P_\k}(F\circ I^{(r)}, G\circ I^{(r)})$. Although the construction of the twisting spectral sequence is a formal consequence of Cha{\l}upnik's isomorphism, it is an interesting tool to study extensions between twisted functors. Indeed, the effect of cup products and the effect of the twisting map $\mathrm{Fr}_1$ may be easily read on the second page, so the twisting spectral sequence is a convenient way to study them. For example, we can read the `twist stability' phenomenon \cite[Cor 4.10]{FFSS} on the second page, and the injectivity of the twisting map $\mathrm{Fr}_1$ is implied by the collapsing at the second page. Also, many classical computations amount to the collapsing (for lacunary reasons) of the twisting spectral sequence at the second page. 

In fact, we observe that the twisting spectral sequence collapses at the second page in all the computations known so far, even when there is no lacunary reason for it, and we conjecture this is a general fact. As a main result, we make a step towards this conjecture, by proving in section \ref{sec-collapse} that $E(F,G,r)$ collapses at the second page for all $r\ge 0$ and many pairs $(F,G)$, including all pairs $(F,G)$ studied in \cite{FS,FFSS,Chalupnik1,Chalupnik2}. We also propose to the reader a combinatorial problem whose positive solution would prove the collapsing for any $F,G$.

\section{Background and notations}\label{sec-background}
In the article, we assume from the reader only a basic knowledge of the category $\P_\k$, corresponding to section 2 of the seminal article \cite{FS}. We do not assume that any $\Ext$-computation is known (we redo all the computations from scratch). In this background section,  we  introduce notations (most of them are standard), and we write down a few useful facts which are either implicit in, or easy consequences of \cite[Section 2]{FS}. 

\subsection{Notations}
Throughout the article, $\k$ is a field of prime characteristic $p$. If $V$ is a $\k$-vector space, we let $V^\vee:=\hom_\k(V,\k)$.   Many notations are as in \cite{FS}. In particular $F^\sharp(V):= F(V^\vee)^\vee$ denotes the dual of a functor $F$ (as in \cite[Prop 2.6]{FS}) and $F^{(r)}$ denotes the composite $F\circ I^{(r)}$. For the sake of simplicity, we drop the index `$\P_\k$' on $\hom$ and $\Ext$-groups when no confusion is possible (i.e. $\Ext^*(F,G)$ means $\Ext^*_{\P_\k}(F,G)$).

We denote tuples of nonnegative integers by Greek letters $\lambda,\mu,\nu$. Let $\mu=(\mu_1,\dots,\mu_n)$ be a tuple. The weight of $\mu$ is the integer $\sum \mu_i$. If $m$ is an integer, we denote by $m\mu$ the tuple $(m\mu_1,\dots,m\mu_n)$. We say that a positive integer $d$ divides $\mu$ if for all $1\le j\le n$, $d$ divides $\mu_j$. 
If $X$ is one of the symbols $S,\Lambda,\Gamma$, we denote by $X^\mu$ the tensor product $X^{\mu_1}\otimes\dots \otimes X^{\mu_n}$ (since $X^0=\k$, this tensor product has a meaning even if some $\mu_i$ are zero). 
 
We denote by $I$ the identity functor ($I=\Lambda^1=\Gamma^1=S^1$), and if $W$ is a finite dimensional vector space we denote by $F(W\otimes I)$ the precomposition of $F$ by the functor $V\mapsto W\otimes V$.

Finally, we denote by $\sha_r$ the finite dimensional graded vector space which equals $\k$ in degrees $i$ for $0\le i<p^r$ and which is zero in the other degrees (we use the Cyrillic letter `sha' by analogy with a dirac comb). We also denote by $E_r$ the even degree version of $\sha_r$, that is, $E_r$ is the graded vector space which equals $\k$ in degrees $2i$, for $0\le i< p^r$ and which is zero in the other degrees. We shall prove below in corollary \ref{cor-Ext-I-I} that $E_r$ is isomorphic to $\Ext^*(I^{(r)},I^{(r)})$, a result originally proved by Friedlander and Suslin and which inspires our notation.

\subsection{The exponential formula}\label{subsec-expformula}

Let $X$ denote one of the symbols $S,\Lambda,\Gamma$. For all $V,W$, there is an isomorphism (natural in $V,W$, and associative in the obvious sense)
$$ X^*(V)\otimes X^*(W)\simeq X^*(V\oplus W)\;.$$
According to \cite{FFSS}, we call this isomorphism the `exponential formula'. 
%Observe that the exponential formula is equivalent to the algebra structure of $X^*$. Indeed, the multiplication of $X^*$
%equals the composite $$X^*(V)\otimes X^*(V) \simeq X^*(V\oplus V)\xrightarrow[]{X^*(\Sigma)}X^*(V)\;,$$ 
%where $\Sigma(x,y)=x+y$, and conversely the exponential isomorphism equals the composite
%$$X^*(V)\otimes X^*(W)\xrightarrow[]{X^*(\iota_V)\otimes X^*(\iota_W)}X^*(V\oplus W)\otimes X^*(V\oplus W)\to X^*(V\oplus W)\;. $$
As a consequence of the exponential formula, $X^d(\bigoplus_{i=1}^n V_i)$ is isomorphic to $\bigoplus_\mu \bigotimes_{i=1}^n X^\mu_i(V_i)$ where the sum is taken over the $n$-tuples $\mu=(\mu_1,\dots,\mu_n)$ of nonnegative integers of weight $d$. %This is used in \cite{FS} and we also use it many times.

\subsection{Maps between tensor products}

Let $d$ be a positive integer and let $\k\Si_d$ be the group algebra of the symmetric group. We define a $\k$-linear map
$$\k\Si_d\to \hom(\otimes^d, \otimes^d)\quad (*)$$
by sending a permutation $\tau\in\Si_n$ onto the natural transformation (still denoted $\tau$) which maps $v_1\otimes\dots\otimes v_d$ to $v_{\tau^{-1}(1)}\otimes\dots\otimes v_{\tau^{-1}(d)}$.  

\begin{lemma}\label{lm-descrHomTT}
The map $(*)$ is an isomorphism.
\end{lemma}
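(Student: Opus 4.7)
My plan is to prove injectivity by direct evaluation and surjectivity by observing that any $\phi\in\hom(\otimes^d,\otimes^d)$ is determined by a single ``universal'' value which torus-equivariance pins down to the span of the permutation vectors.

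For injectivity, I would take $V = \k^d$ with basis $e_1, \dots, e_d$ and set $u := e_1 \otimes \cdots \otimes e_d$. The image of $\tau \in \Si_d$ under $(*)$, evaluated on $u$, equals $e_{\tau^{-1}(1)} \otimes \cdots \otimes e_{\tau^{-1}(d)}$, and these $d!$ vectors are linearly independent in $V^{\otimes d}$. Hence the composite $\k\Si_d \to \hom(\otimes^d, \otimes^d) \to V^{\otimes d}$, $\psi \mapsto \psi_V(u)$, is already injective, so \emph{a fortiori} so is $(*)$.

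For surjectivity, let $\phi \in \hom(\otimes^d, \otimes^d)$. The first observation is that $\phi$ is determined by the single element $\phi_V(u) \in V^{\otimes d}$: for any vector space $W$ and simple tensor $w_1 \otimes \cdots \otimes w_d \in W^{\otimes d}$, naturality applied to the linear map $f : V \to W$ with $f(e_i) = w_i$ gives $\phi_W(w_1 \otimes \cdots \otimes w_d) = f^{\otimes d}(\phi_V(u))$, and simple tensors span $W^{\otimes d}$. The second observation is that $\phi_V(u)$ must be a weight-$(1, \dots, 1)$ vector under the diagonal torus $T := \mathbb{G}_m^d$ acting on $V$ via $(t_1, \dots, t_d) \cdot e_i = t_i e_i$; this follows from naturality applied to the diagonal endomorphisms of $V$. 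The weight-$(1, \dots, 1)$ subspace of $V^{\otimes d}$ is spanned precisely by $\{e_{\sigma(1)} \otimes \cdots \otimes e_{\sigma(d)} : \sigma \in \Si_d\}$, so writing $\phi_V(u) = \sum_\sigma c_\sigma\, e_{\sigma(1)} \otimes \cdots \otimes e_{\sigma(d)}$ and reindexing via $\tau = \sigma^{-1}$, the element $\psi := \sum_\tau c_{\tau^{-1}} \tau \in \k\Si_d$ will satisfy $\psi_V(u) = \phi_V(u)$, and therefore map to $\phi$ under $(*)$.

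The only delicate point is the torus-equivariance step, but it uses only naturality with respect to the (linear) diagonal endomorphisms of $V$, which is automatic for natural transformations in $\P_\k$; no extra use of the strict polynomial structure is required. I do not foresee any genuine obstacle, since everything reduces to a dimension count on a single weight space of $V^{\otimes d}$.
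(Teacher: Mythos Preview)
Your proof is correct and follows essentially the same idea as the paper: both identify $\hom(\otimes^d,\otimes^d)$ with the weight-$(1,\dots,1)$ subspace of $(\k^d)^{\otimes d}$. The paper does this by citing \cite[Cor~2.12]{FS} for the dimension and concluding by a count, whereas you unpack the argument and show directly that $\phi\mapsto\phi_V(u)$ is injective into that weight space; your version is therefore more self-contained.

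One remark on your closing sentence. You write that the weight constraint ``uses only naturality with respect to the (linear) diagonal endomorphisms of $V$'' and that ``no extra use of the strict polynomial structure is required.'' As phrased (with $T:=\mathbb{G}_m^d$), this is not quite right over small fields: over $\k=\mathbb{F}_2$, the only invertible diagonal matrix is the identity, so naturality with respect to $T(\k)$ gives no information. What does work with ordinary naturality alone is to use the \emph{non-invertible} diagonal projections $P_j:e_i\mapsto e_i$ for $i\neq j$, $e_j\mapsto 0$: since $P_j^{\otimes d}(u)=0$, naturality forces $P_j^{\otimes d}(\phi_V(u))=0$, and running over all $j$ shows that only permutation monomials survive in $\phi_V(u)$. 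Alternatively, you may simply invoke that a morphism in $\P_\k$ evaluates to a map of rational $GL_d$-representations, hence is equivariant for the algebraic torus (not just its $\k$-points); this is exactly the strict polynomial structure. Either fix is routine, but your current justification should be sharpened to one of them.
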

\begin{proof} Let $(b_1,\dots,b_d)$ be a basis of $\k^d$. For all $\tau\in\Si_d$, the natural transformation $\tau\in \hom(\otimes^d, \otimes^d)$ sends $b_1\otimes\cdots\otimes b_d\in (\k^d)^{\otimes d}$ onto $b_\tau=b_{\tau^{-1}(1)}\otimes\cdots\otimes b_{\tau^{-1}(d)}\in (\k^d)^{\otimes d}$. Since the family $(b_\tau)_{\tau\in\Si_d}$ is free, one gets that $(*)$ is injective.
The subspace of weight $(1,\dots,1)$ of the $GL_d$-representation $(\k^d)^{\otimes d}$ has basis $(b_\tau)_{\tau\in\Si_d}$. By \cite[Cor 2.12]{FS}, this subspace is isomorphic to $\hom(\otimes^d, \otimes^d)$. So for dimension reason, $(*)$ is an isomorphism.
\end{proof}

\subsection{Precomposition by Frobenius Twists}
Let $F,G$ be two strict polynomial functors. Precomposing extensions by $I^{(r)}$ yields a 
graded $\k$-linear map (natural in $F$,$G$): 
$$\mathrm{Fr_r}:\Ext^*(F,G)\to \Ext^*(F^{(r)},G^{(r)})\;.$$
The following lemma asserts that this twisting map is an isomorphism in degree zero (further properties of this map are proved in sections \ref{subsec-mult} and \ref{sec-untwist}).

\begin{lemma}\label{lm-twist}
Let $r$ be a nonnegative integer. 
For all tuple $\mu$ of nonnegative integers and all $F\in\P_\k$, precomposition by $I^{(r)}$ and the inclusion $S^{\mu\,(r)}\hookrightarrow S^{p^r\mu}$ induce isomorphisms:
\begin{align*}&\hom(F,S^\mu)\xrightarrow[]{\simeq} \hom(F^{(r)},S^{\mu\,(r)})\,,\\& \hom(F^{(r)},S^{\mu\,(r)})\xrightarrow[]{\simeq} \hom(F^{(r)},S^{p^r\mu})\,.
\end{align*}
As a result, for all $F$, $G$, precomposition by $I^{(r)}$ yields an isomorphism:
$$\hom(F,G)\xrightarrow[]{\simeq}  \hom(F^{(r)},G^{(r)})\;.$$
\end{lemma}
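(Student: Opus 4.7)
My plan is to prove the first two isomorphisms simultaneously via the sandwich
$$\hom(F,S^\mu)\xrightarrow{\,-\circ I^{(r)}\,}\hom(F^{(r)},S^{\mu(r)})\hookrightarrow\hom(F^{(r)},S^{p^r\mu}):$$
I would show that both arrows are injective and that the source and target of the composite have the same finite dimension, so that each arrow must be an isomorphism. The general third assertion will then follow from the special case $G=S^\lambda$ by embedding $G$ in a two-term coresolution by direct sums of such $S^\lambda$. The delicate point will be the dimension count, which requires running the dual Yoneda lemma carefully at both degree $d$ and degree $p^rd$ and then matching weights through the Frobenius twist.

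For injectivity: the second arrow is injective because $S^{\mu(r)}\hookrightarrow S^{p^r\mu}$ is a monomorphism and $\hom(F^{(r)},-)$ is left exact. For the first, if $f^{(r)}=0$ then $f_{V^{(r)}}=0$ for every $V$; since $V^{(r)}$ has underlying vector space equal to $V$, this forces $f=0$. For the dimension count, I would use the Yoneda identification $\hom(F,S^{d,\k^n})\simeq F^\sharp(\k^n)$, where $S^{d,\k^n}(W):=S^d(\k^n\otimes W)$ is the injective dual to $\Gamma^{d,\k^n}$ (Section 2 of \cite{FS}). For $n$ at least the length of $\mu$, the exponential formula decomposes $S^{d,\k^n}$ as $\bigoplus_{|\lambda|=d}S^\lambda$ into $T_n$-weight subfunctors, yielding $\hom(F,S^\mu)\simeq F^\sharp(\k^n)_\mu$. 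The same reasoning at degree $p^rd$ gives $\hom(F^{(r)},S^{p^r\mu})\simeq F^{(r)\sharp}(\k^n)_{p^r\mu}$; since $\sharp$ commutes with $(r)$ and $T_n$ acts on $(\k^n)^{(r)}$ with weights scaled by $p^r$, this equals $F^\sharp(\k^n)_\mu$. Both extremes of the sandwich have the same dimension, so it collapses to isomorphisms.

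For the third assertion, embed $G$ in a left-exact sequence $0\to G\to I^0\to I^1$ with each $I^i$ a finite direct sum of functors of the form $S^\lambda$, which exists since the $S^\lambda$ cogenerate $\P_\k$. Precomposition by $I^{(r)}$ is exact---it does not alter underlying vector spaces---and commutes with finite direct sums, so the twisted sequence $0\to G^{(r)}\to I^{0(r)}\to I^{1(r)}$ is still exact and each $I^{i(r)}$ is a direct sum of $S^{\lambda(r)}$. Applying $\hom(F,-)$ and $\hom(F^{(r)},-)$ and comparing via precomposition yields a diagram whose two rightmost vertical arrows are isomorphisms by the first assertion (applied summand by summand), and identifying the kernels of the horizontal maps gives the desired $\hom(F,G)\simeq\hom(F^{(r)},G^{(r)})$.
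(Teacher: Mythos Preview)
Your proof is correct and follows essentially the same route as the paper: both argue by sandwiching the two maps, establishing injectivity of each arrow by the same reasons you give, and then matching dimensions of the endpoints via the Yoneda/weight-space identification $\hom(F,S^\mu)\simeq F^\sharp(\k^n)_\mu$ together with the fact that Frobenius twist scales torus weights by $p^r$ (the paper phrases this dually as $\hom(\Gamma^\mu,F^\sharp)\simeq\hom(\Gamma^{p^r\mu},F^{\sharp\,(r)})$, citing \cite[Cor~2.12]{FS}). The deduction of the general case from the case $G=S^\mu$ by a two-term injective copresentation is likewise identical to the paper's argument.
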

\begin{proof}
The $S^\mu$ form a (injective) cogenerator in $\P_\k$, so the last isomorphism of lemma \ref{lm-twist} follows from the first  one by taking coresolutions.

It remains to prove the first two isomorphisms. We prove them simultaneously. First, the map $\hom(F,S^\mu)\to \hom(F^{(r)},S^{\mu\,(r)})$, $f\mapsto f^{(r)}$ is injective. Indeed, if $f^{(r)}=0$ then using the isomorphism $V\simeq V^{(r)}$ (not natural in $V$, but which holds for dimension reasons) one gets that $f=0$.
The second map is induced by the injection $S^{\mu\,(r)}\hookrightarrow S^{p^r\mu}$, so it is also injective (by left exactness of $\hom(F,-)$). Now, to prove the isomorphisms, it suffices to prove that $\hom(F,S^\mu)$ and $\hom(F^{(r)},S^{p^r\mu})$ have the same dimension, or equivalently that $\hom(\Gamma^\mu,F^\sharp)$ and $\hom(\Gamma^{p^r\mu}, F^{\sharp\,(r)})$ have the same dimension. But we know their dimensions are equal by \cite[Cor 2.12]{FS}.
\end{proof}

As a consequence of lemma \ref{lm-twist}, one gets an elementary vanishing lemma:
\begin{lemma}\label{lm-vanish}
Let $r$ be a positive integer and  let $\mu$ be a tuple of nonnegative integers. If $p^r$ does not divide $\mu$, then $\hom(F^{(r)},S^{\mu})=0$.
\end{lemma}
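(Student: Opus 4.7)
The approach is to identify $\hom(F^{(r)},S^\mu)$ with a torus weight subspace and observe that Frobenius twist forces all such weights to be divisible by $p^r$.

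Set $n = \ell(\mu)$ and let $T \subset GL_n$ be the standard diagonal torus. Using the identification $S^\mu \simeq (\Gamma^\mu)^\sharp$, together with Friedlander--Suslin's Cor.~2.12 (already invoked in the proof of Lemma~\ref{lm-twist}), we obtain
\[
\hom(F^{(r)},S^\mu) \;\simeq\; \hom(\Gamma^\mu, F^{\sharp\,(r)}) \;\simeq\; F^{\sharp}(\k^{n\,(r)})_\mu,
\]
the weight-$\mu$ summand of $F^{\sharp}(\k^{n\,(r)})$ under the action of $T$.

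The key observation is that $T$ acts on $\k^{n\,(r)}$ through the composite $T \xrightarrow{\mathrm{Fr}_r} T \hookrightarrow GL_n$, where $\mathrm{Fr}_r$ is given by $(t_1,\dots,t_n)\mapsto (t_1^{p^r},\dots,t_n^{p^r})$. Consequently every $T$-weight appearing in $F^\sharp(\k^{n\,(r)})$ is of the form $p^r\cdot\lambda$ for some tuple $\lambda$. If $p^r$ does not divide $\mu$, the weight-$\mu$ subspace is therefore zero.

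There is essentially no obstacle: the conclusion reduces to the transparent fact that Frobenius twist multiplies torus weights by $p^r$. Alternatively, staying closer to the statement ``as a consequence of Lemma~\ref{lm-twist},'' one can argue by a global dimension count: the identity $\sum_\mu \dim\hom(G,S^\mu) = \dim G(\k^n)$ (a consequence of Cor.~2.12 and duality) applied to both $G=F$ and $G=F^{(r)}$, combined with $\dim F^{(r)}(\k^n) = \dim F(\k^n)$ and Lemma~\ref{lm-twist}, yields
\[
\sum_{\mu:\;p^r\nmid\mu} \dim\hom(F^{(r)}, S^\mu) \;=\; 0,
\]
whence each summand vanishes by positivity.
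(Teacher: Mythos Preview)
Your proposal is correct, and in fact your \emph{alternative} argument (the global dimension count using $\sum_\mu \dim\hom(G,S^\mu)=\dim G(\k^n)$ together with Lemma~\ref{lm-twist}) is precisely the paper's own proof: the paper observes that $\hom(F,S^d(\k^n\otimes I))$ and $\hom(F^{(r)},S^{p^rd}(\k^n\otimes I))$ both have dimension $\dim F^\sharp(\k^n)$, decomposes via the exponential formula, and uses the second isomorphism of Lemma~\ref{lm-twist} to conclude that the terms with $p^r\nmid\mu$ must vanish.

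Your \emph{primary} argument, however, is genuinely different and more direct. Rather than counting dimensions and subtracting, you identify $\hom(F^{(r)},S^\mu)$ with the weight-$\mu$ subspace of $F^\sharp(\k^{n\,(r)})$ via \cite[Cor.~2.12]{FS}, and then observe that every torus weight on $F^\sharp(\k^{n\,(r)})$ is a multiple of $p^r$ because the torus acts through the $r$-th Frobenius. This avoids the detour through Lemma~\ref{lm-twist} and the dimension bookkeeping; it is the ``one-line'' conceptual proof. The paper's route has the mild advantage of staying strictly within the $\hom$-level language already set up in Lemma~\ref{lm-twist}, but your weight-space argument is cleaner and makes the reason for the vanishing transparent.
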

\begin{proof}
By duality  \cite[Prop 2.6]{FS} and the Yoneda lemma \cite[Th 2.10]{FS}, the two vector spaces $\hom(F,S^{d}(\k^n\otimes I))$ and $\hom(F^{(r)},S^{p^rd}(\k^n\otimes I))$ have the same dimension (which equals the dimension of $F^\sharp(\k^n)$). Decomposing these vector spaces by the exponential formula and using the second isomorphism of lemma \ref{lm-twist}, we get the cancellation for the $n$-tuples $\mu$ of weight $d$ which are not divisible by $p^r$.
\end{proof}

\subsection{Values of strict polynomial functors on graded vector spaces}\label{subsec-ValuesGraded}

Let $\V_\k$ (resp. $\mathcal{V}_\k^*$) denotes the category of (resp. graded) finite dimensional $\k$-vector spaces and $\k$-linear maps (resp. which preserve the grading). 
Strict polynomial functors are functors $F:\V_\k\to\V_\k$ with a `strict polynomial structure'. This strict polynomial structure can be used to extend $F$ into a functor $F:\V_\k^*\to\V_\k^*$, as follows.

Let $V^*\in \V_\k^*$, and let $V\in\V_\k$ be the ungraded vector space obtained by forgetting the grading. We define $F(V^*)$ by:
\begin{itemize}
\item[(i)] As a vector space, $F(V^*)$ equals $F(V)$.
\item[(ii)] Let the multiplicative group $\mathbb{G}_m$ act on each $V^i$ with weight $i$. The strict polynomial structure of $F$ endows $F(V)$ with a rational action of $\mathbb{G}_m$. Thus $F(V)$ has a weight space decomposition $F(V)=\bigoplus_{i\ge 0} F(V)^i$. The grading on $F(V)$ is defined by putting $F(V)^i$ in degree $i$.
\end{itemize}
The following lemma is a trivial verification.
\begin{lemma}\label{lm-valuesgraded}
Let $V^*\in\V_\k^*$. Then evaluation on $V^*$ yields an exact functor
$\mathrm{ev}_{V^*}: \P_\k\to \V_\k^*$ which commutes with tensor products: $\mathrm{ev}_{V^*}(F\otimes G)=\mathrm{ev}_{V^*}(F)\otimes \mathrm{ev}_{V^*}(G)$.
Moreover, a graded map $V^*\to W^*$ induces a natural transformation $\mathrm{ev}_{V^*}\to \mathrm{ev}_{W^*}$. 
\end{lemma}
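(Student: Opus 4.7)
The plan is to unpack each clause of the statement and reduce it to standard properties of the $\mathbb{G}_m$-action by which the grading on $F(V^*)$ is defined. The recurring theme is that whenever a map arises naturally (either from a morphism in $\P_\k$ or from a graded linear map), functoriality of the strict polynomial structure makes it $\mathbb{G}_m$-equivariant, and hence automatically respects the weight-space decomposition that defines the grading.

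First I would check exactness. Given a short exact sequence $0\to F\to G\to H\to 0$ in $\P_\k$, evaluation on the underlying ungraded vector space $V$ yields a short exact sequence of $\k$-vector spaces. By naturality, each arrow commutes with the $\mathbb{G}_m$-action coming from the grading on $V^*$, so the sequence is one of rational $\mathbb{G}_m$-modules. Taking weight-$i$ subspaces is exact on rational $\mathbb{G}_m$-modules (the weight spaces are direct summands), so exactness holds in each degree, hence in $\V_\k^*$.

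Next I would address the compatibility with tensor products. The tensor product of strict polynomial functors is defined pointwise by $(F\otimes G)(V)=F(V)\otimes G(V)$, and the $\mathbb{G}_m$-action on the right-hand side is the diagonal one (as follows from the definition of the strict polynomial structure on a tensor product of functors). Hence the weight-$i$ part of $(F\otimes G)(V)$ equals $\bigoplus_{a+b=i}F(V)^a\otimes G(V)^b$, which is precisely the degree-$i$ piece of the tensor product in $\V_\k^*$, and the identification is an equality on the nose.

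Finally I would check that a graded map $f:V^*\to W^*$ induces a natural transformation $\mathrm{ev}_{V^*}\to\mathrm{ev}_{W^*}$. Forgetting gradings, $f$ is $\mathbb{G}_m$-equivariant between the underlying spaces, so $F(f):F(V)\to F(W)$ is $\mathbb{G}_m$-equivariant by the strict polynomial structure of $F$; it therefore preserves weight-space decompositions and defines a morphism in $\V_\k^*$. Naturality in $F$ follows from naturality of $\alpha:F\to G$ evaluated on the ungraded vector space $V$. The main obstacle is purely notational: the verification is formal, and the only non-trivial input is that the strict polynomial structure endows $F(V)$ with a \emph{rational} $\mathbb{G}_m$-representation, which is what ensures that weight spaces are well-behaved direct summands.
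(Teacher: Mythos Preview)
Your proposal is correct; the paper itself does not spell out a proof and simply declares the lemma ``a trivial verification.'' Your argument is exactly the verification one would expect: each clause reduces to the fact that the relevant maps are $\mathbb{G}_m$-equivariant and that passing to weight spaces is exact and compatible with tensor products.
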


\begin{remark}\label{rk-2emedef}
Alternatively, the grading on $F(V)$ may be described in the following way.  Let $(v_i)_{1\le i\le n}$ be a homogeneous basis of $V^*$, and let $\mathbb{T}^{n}\simeq\mathbb{G}_m^{\times n}$ be the corresponding subtorus of $GL(V)$.
The $GL(V)$ representation $F(V)$ decomposes as a direct sum of $\mathbb{T}^n$-weight spaces: $F(V)= \bigoplus F(V)^{(d_1,\dots,d_n)}$. The grading on $F(V)$ is obtained by placing the summand $F(V)^{(d_1,\dots,d_n)}$ in degree $\sum d_i\deg(v_i)$ (to see it coincides with the former definition of $F(V^*)$, use the morphism $\mathbb{G}_m\to \mathbb{G}_m^{\times n}$, $x\mapsto (x^{\deg(v_i)})_{1\le i\le n}$).
\end{remark}

\begin{example}
For symmetric, exterior, divided or tensor powers, the notion of degree is nothing but the usual one. For example, let $V^*=\k e_1\oplus \k e_2$, with $e_i$ in degree $i$, and let $F=\Lambda^2\otimes S^3$. Then $e_1\wedge e_2\otimes e_1e_1e_2$ is an element of degree $1+2+1+1+2=7$ of $F(V^*)$.
\end{example}

\begin{example}\label{ex-twistgraded}
Let $V^*$ be a graded vector space and let $r$ be a nonnegative integer. Then $(V^*)^{(r)}$ is a graded vector space with $(V^i)^{(r)}$ in degree $ip^r$.
\end{example}

Let $F\in \P_\k$.  If $V^*$ is a graded vector space and $W$ is a vector space (concentrated in degree $0$), then the degree defined above splits the strict polynomial functor $F(V^*\otimes I):W\mapsto F(V^*\otimes W)$ into a direct sum where each summand takes values of homogeneous given degree. Thus we may consider $F(V^*\otimes I)$ as an element of the category $\P_\k^*$, whose objects are strict polynomial functors equipped with a grading, and whose maps are natural transformations which respect the grading.
From this viewpoint, lemma \ref{lm-valuesgraded} may be reformulated in the following way.
\begin{lemma}\label{lm-fctgraded}
Evaluation on $V^*\otimes I$ yields an exact functor:
$$\mathrm{ev}_{V^*\otimes I}: \P_\k\to \P_\k^*$$
which commutes with tensor products. Moreover, a graded map $V^*\to W^*$ induces a natural transformation $\mathrm{ev}_{V^*\otimes I}\to \mathrm{ev}_{W^*\otimes I}$. 
\end{lemma}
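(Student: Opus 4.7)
Since this lemma is explicitly presented as a reformulation of Lemma \ref{lm-valuesgraded}, my strategy is to apply that lemma pointwise in $W\in\V_\k$ to the graded vector space $V^*\otimes W$ (where $V^*\otimes W$ carries the grading with $V^i\otimes W$ in degree $i$), and then promote the resulting assignment $W\mapsto F(V^*\otimes W)$ to a graded strict polynomial functor.

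First I would verify that $F(V^*\otimes I)$ genuinely defines an object of $\P_\k^*$. The underlying ungraded strict polynomial functor is the precomposition of $F$ with the linear functor $W\mapsto V^*\otimes W$ (viewing $V^*$ as ungraded), which is an element of $\P_\k$ by the standard closure of $\P_\k$ under precomposition with linear functors. For the grading, I apply Lemma \ref{lm-valuesgraded} at each $W\in\V_\k$ to the graded vector space $V^*\otimes W$ (where $W$ sits in degree $0$): this gives a grading on $F(V^*\otimes W)$ coming from the weight decomposition under the $\mathbb{G}_m$-action that scales each $V^i\otimes W$ with weight $i$. The key compatibility to check is that the strict polynomial structure maps $F(V^*\otimes W)\to F(V^*\otimes W')$ induced by a linear map $W\to W'$ preserve this grading: this is because $V^*\otimes W\to V^*\otimes W'$ is $\mathbb{G}_m$-equivariant (trivial action on $W$ and $W'$), so by functoriality of the rational $GL$-action coming from the strict polynomial structure of $F$, the induced map respects weight spaces.

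Once $\mathrm{ev}_{V^*\otimes I}$ is defined as a functor $\P_\k\to\P_\k^*$, the three properties follow by checking pointwise in $W$ and invoking the corresponding properties of $\mathrm{ev}_{V^*\otimes W}$: exactness holds because for each $W$ the evaluation $F\mapsto F(V^*\otimes W)$ is exact (evaluations in $\P_\k$ are exact) and passing to weight spaces is a direct sum decomposition hence exact; compatibility with tensor products follows from the equality $(F\otimes G)(V^*\otimes W)=F(V^*\otimes W)\otimes G(V^*\otimes W)$ together with the fact that the $\mathbb{G}_m$-action on the tensor product is diagonal, so weight spaces of the tensor product are tensor products of weight spaces in the graded sense; finally, a graded map $V^*\to W^*$ is $\mathbb{G}_m$-equivariant, hence by Lemma \ref{lm-valuesgraded} induces a natural transformation $\mathrm{ev}_{V^*\otimes W}\to \mathrm{ev}_{W^*\otimes W}$ at each $W$, and these are evidently compatible with maps in $W$.

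I do not expect any real obstacle: the content of the lemma is essentially that the pointwise construction of \ref{subsec-ValuesGraded} is natural in the argument $W$, and this is built into the very definition of the grading through the rational $\mathbb{G}_m$-action provided by the strict polynomial structure of $F$. The only step requiring genuine attention is the compatibility of the grading with the functorial structure in $W$, which reduces to $\mathbb{G}_m$-equivariance of the tautological map $V^*\otimes W\to V^*\otimes W'$.
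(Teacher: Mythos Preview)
Your proposal is correct and follows the paper's approach: the paper gives no separate proof of this lemma, presenting it explicitly as a reformulation of Lemma~\ref{lm-valuesgraded}, and your argument is exactly the natural unpacking of that reformulation---applying Lemma~\ref{lm-valuesgraded} pointwise in $W$ and checking that the resulting grading is compatible with the strict polynomial structure via $\mathbb{G}_m$-equivariance. You have simply made explicit what the paper leaves to the reader.
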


The following example is one of the basic examples of graded functors that we shall use in the paper.

\begin{example}\label{ex-sha}
Let $r,d$ be nonnegative integers. Recall that $\sha_r$ denotes the graded $\k$-vector space which is one dimensional in degrees $i$ for $0\le i<p^r$ and zero dimensional in the other degrees. Then by the exponential formula we have
$S^d(\sha_r\otimes I)\simeq \bigoplus S^\mu$
where the sum is taken over all the $p^r$-tuples $\mu=(\mu_0,\dots,\mu_{p^r-1})$ of nonnegative integers of weight $d$. With this latter description, the summand $S^\mu$ has degree $\sum i\mu_i$.
\end{example}

Finally we give a graded version of the Yoneda lemma \cite[Th 2.10]{FS}.
\begin{lemma}[The Yoneda lemma]\label{lm-Yoneda-graded}
Let $V^*$ be a graded finite dimensional vector space, and let $d$ be a nonnegative integer. 
Then for all $F\in\P_{\k,d}$ one has graded isomorphisms (natural in $F,V^*$):
\begin{align*}&\hom(\Gamma^d((V^*)^\vee\otimes I),F)\simeq F(V^*)\;,\\&  \hom(F,S^d(V^*\otimes I))\simeq F^\sharp(V^*)\;.
\end{align*}
\end{lemma}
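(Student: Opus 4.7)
The plan is to deduce the graded Yoneda lemma from the ungraded one stated in \cite[Th 2.10]{FS}, by exploiting naturality together with the description of the grading on $F(V^*)$ via a $\mathbb{G}_m$-action.

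First, I would invoke the ungraded Yoneda isomorphisms $\hom(\Gamma^d(V^\vee \otimes I), F) \simeq F(V)$ and $\hom(F, S^d(V \otimes I)) \simeq F^\sharp(V)$, which are natural in the ordinary vector space $V \in \V_\k$ and in $F \in \P_{\k,d}$. This immediately supplies the underlying ungraded isomorphisms and their naturality in $F$; what remains is to check that they preserve the gradings, and to promote the naturality in $V$ to naturality in graded maps $V^* \to W^*$.

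For the grading, I would appeal to the alternative description of Remark \ref{rk-2emedef}: the grading on $F(V^*)$ agrees with the weight-space decomposition of $F(V)$ under the $\mathbb{G}_m$-action on the ungraded space $V$ obtained by letting $\mathbb{G}_m$ act on $V^i$ with weight $i$. The dual action on $V^\vee$ encodes the grading of $(V^*)^\vee$, and through Lemma \ref{lm-fctgraded} it equips the functor $\Gamma^d((V^*)^\vee \otimes I)$ (resp.\ $S^d(V^* \otimes I)$) with a grading. The grading on $\hom(\Gamma^d((V^*)^\vee \otimes I), F)$ is then the weight decomposition induced by the resulting $\mathbb{G}_m$-action on the $\hom$-space. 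Because the ungraded Yoneda isomorphism is $\End(V)$-natural, it is in particular $\mathbb{G}_m$-equivariant for this specific action, and so it preserves weight spaces, i.e.\ it is graded.

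For naturality in $V^*$: a linear map $V^* \to W^*$ is graded precisely when the corresponding map of underlying ungraded spaces intertwines the chosen $\mathbb{G}_m$-actions, so the desired compatibility follows at once from the naturality of the ungraded isomorphism with respect to arbitrary $\k$-linear maps $V \to W$. I do not anticipate any serious obstacle; the only point that requires care is fixing a consistent convention for the grading of the dual $(V^*)^\vee$ so that both sides of each asserted isomorphism are graded compatibly.
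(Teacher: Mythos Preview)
Your proposal is correct and follows essentially the same route as the paper: invoke the ungraded Yoneda isomorphism of \cite[Th 2.10]{FS} and then check the grading via the torus/$\mathbb{G}_m$-weight description of Remark \ref{rk-2emedef} (the paper packages this last step as an appeal to \cite[Cor 2.12]{FS}). The only cosmetic difference is that the paper derives the second isomorphism from the first by the duality $\hom(F,S^d(V^*\otimes I))\simeq \hom(\Gamma^d((V^*)^\vee\otimes I),F^\sharp)$ of \cite[Prop 2.6]{FS}, whereas you treat both cases in parallel.
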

\begin{proof}The second isomorphism is obtained from the first by duality, since $\hom(F,S^d(V^*\otimes I))$ is naturally isomorphic to $\hom(\Gamma^d((V^*)^\vee\otimes I),F^\sharp)$ by \cite[Prop 2.6]{FS}.
The first isomorphism is given by \cite[Th 2.10]{FS}. We have to prove that it respects the gradings. But this follows from \cite[Cor 2.12]{FS} and remark \ref{rk-2emedef}.
\end{proof}

\section{Troesch coresolutions}\label{sec-Troesch}

The main results of this section are propositions \ref{prop-res1} and \ref{prop-res-2}. These two propositions are blackboxes which we use as a key ingredient for almost all the theorems of the article. They give the properties of the  Troesch coresolutions $B_\mu(r)^*$. These coresolutions are injective coresolutions of twisted injectives of $\P_\k$ and they are derived from Troesch's main theorem in \cite{Troesch}. Since Troesch coresolutions are in fact $p$-coresolutions, we first recall some elementary facts about $p$-complexes.

\subsection{Recollections of p-complexes}
A $p$-complex in $\P_\k$ is a graded functor $C^*=\textstyle\bigoplus_{i\ge 0}C^i$ together with maps $d:C^i\to C^{i+1}$ such that $d^p=0$. 
For all $1\le s< p$, we can `contract' a $p$-complex $C^*$ into an ordinary complex $(C_{[s]})^*$ by taking alternatively $d^s$ and $d^{p-s}$ as differentials:
$$ (C_{[s]})^* = C^0 \xrightarrow[]{d^s} C^s \xrightarrow[]{d^{p-s}} C^p\xrightarrow[]{d^s} C^{p+s}\to \dots\;.$$

A $p$-complex $C^*$ is said to be a `$p$-coresolution of $F$' if the following holds. For all $1\le s <p$, the map $$d^s: C^0\to C^s$$ has kernel $F$, and for all $1\le s <p$ and all $i>0$ the complex
$$C^{i-p+s}\xrightarrow[]{d^{p-s}} C^i \xrightarrow[]{d^{s}} C^{i+s}$$
is exact in $C^i$ (in the complex above, take $C^{j}=0$ if $j<0$).
In particular, if $C^*$ is a $p$-coresolution of $F$, then for all $0\le s<p$ the contraction
$(C_{[s]})^*$ is a coresolution of $F$. Finally, a $p$-complex is acyclic if it is a $p$-coresolution of $0$.

\subsection{Troesch's result}
Recall from section \ref{sec-background} that $\sha_r$ is the graded vector space which equals $\k$ in degrees $i$, for $0\le i<p^r$, and that $V\mapsto S^d(\sha_r\otimes V)$ is considered as a graded strict polynomial functor, as described in example \ref{ex-sha}.
The following theorem is due to Troesch. It generalizes to all prime characteristic some coresolutions which were previously known in characteristic $p=2$ only \cite{FLS,FS}. In the statement of the theorem, we do not give an explicit expression for the differential $\delta$. This expression is actually quite complicated but our arguments do not use the explicit description of $\delta$. For our proofs, we only need to know that $\delta$ exists\footnote{The only exception to this is lemma \ref{lm-prep2} (for which we mention two other proofs which do not rely on Troesch complexes). Everywhere else in the paper, we only use the information on $\delta$ given by theorem \ref{thm-tr}. For the interested reader, we recall Troesch's construction of $B_d(r)^*$ in appendix \ref{sec-app}.}.
\begin{theorem}[{\cite[Thm 2, Thm 4.3.2]{Troesch}}]\label{thm-tr}
Let $r$ be a positive integer and let $n$ be a nonnegative integer. Let $B_d(r)^*$ denote the graded functor $S^d(\sha_r\otimes I)$.
Then $B_d(r)^*$ is equipped with a $p$-differential $\delta$ such that 
\begin{itemize}
\item[(i)] $\delta$ raises the cohomological degree by $p^{r-1}$.
\item[(ii)] If $d=p^rn$ then $(B_d(r)^*, \delta)$ is a $p$-coresolution of $S^{n\,(r)}$. If $p^r$ does not divide $d$, then $(B_d(r)^*, \delta)$ is $p$-acyclic.  
\end{itemize}
\end{theorem}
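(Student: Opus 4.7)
The plan is to produce $\delta$ explicitly and verify $p$-exactness, trying first a naive candidate and then refining it. Start with the degree-$p^{r-1}$ shift $\partial\colon\sha_r\to\sha_r$ defined by $\partial(e_i)=e_{i+p^{r-1}}$ (with $\partial(e_i)=0$ when $i+p^{r-1}\ge p^r$); this is a $p$-differential since $\partial^p$ shifts by $p^r$, out of range. Extending $\partial\otimes\mathrm{id}_V$ to $S^d(\sha_r\otimes V)$ by the Leibniz rule gives a candidate $\delta_0$, and $\delta_0^p=0$ follows from the multinomial theorem in characteristic $p$: the coefficients $\binom{p}{k_1,\dots,k_d}$ vanish mod $p$ unless some $k_i=p$, and the surviving terms vanish since $\partial^p=0$.

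For $r=1$ this naive $\delta_0$ already looks correct: small computations show that $\ker\delta_0\subset S^{pn}(V)$ consists precisely of $p$th-power monomials, matching the embedded copy of $S^{n\,(1)}(V)$. For $r\ge 2$, however, the kernel of $\delta_0$ is strictly too large. For example with $p=2$, $r=2$, $d=4$ one computes $\delta_0(v^2w^2)=0$, yet $v^2w^2$ is not in the subspace $V^{(2)}=S^{1\,(2)}(V)\subset S^4(V)$ (which is spanned only by fourth powers). Troesch's actual $\delta$ must therefore include components landing in summands of $B_d(r)^{p^{r-1}}$ outside the image of $\delta_0$ --- continuing the example, an additional component $\delta'\colon S^4(V)\to S^2(V)\otimes S^2(V)$ with $\delta'(v^2w^2)\neq 0$ and $\delta'(v^4)=0$ would kill the spurious cycle.

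I would build the correct $\delta$ by induction on $r$, viewing $\sha_r$ as assembled from shifted copies of $\sha_1$ and using the already-constructed $B_{d'}(r-1)^*$ as inductive input. To verify $p$-exactness, filter $B_d(r)^*$ by a suitable sub-grading (say the weight in one of the $\sha_1$-factors) and run a spectral sequence whose $E_1$-page is controlled by the inductively constructed complexes. The base case $r=1$ can be handled by the exponential decomposition $S^d(\sha_1\otimes V)\cong\bigoplus_\mu S^\mu(V)$ together with a Koszul/de Rham-style analysis of $GL(V)$-weight spaces, and the case $p^r\nmid d$ should fall out for lacunary reasons after the spectral sequence is computed.

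The main obstacle is producing the correct correction terms in $\delta$: one must simultaneously (i) kill all spurious cycles of $\delta_0$, (ii) preserve $\delta^p=0$, and (iii) arrange for the surviving cohomology to be exactly $S^{n\,(r)}$ in the divisible case and zero otherwise. The combinatorics are genuinely intricate --- precisely why, as the paper remarks, the odd-characteristic case required Troesch's construction --- and the spectral-sequence argument must be adapted to $p$-complexes, whose cohomological structure is richer than that of ordinary $2$-complexes.
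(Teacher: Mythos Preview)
Your outline matches Troesch's strategy in broad strokes: decompose $\sha_r$ in terms of copies of $\sha_1$, and compute the homology by induction on $r$ via a ($p$-complex analogue of a) bicomplex spectral sequence. But what you have is a plan, not a proof. You correctly observe that the naive Leibniz extension $\delta_0$ of a single shift fails for $r\ge 2$, yet you do not actually \emph{construct} the corrected $\delta$; you only list the constraints it must satisfy and declare the combinatorics ``genuinely intricate.''

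The missing construction, reviewed in the paper's appendix, is this. Under the identification $\sha_r\simeq\sha_1^{(0)}\otimes\cdots\otimes\sha_1^{(r-1)}$, let $\rho^{(s)}$ be the degree-$p^s$ shift on the $s$-th factor. For each $s$, extend $\rho^{(s)}$ to $S^d(\sha_r\otimes V)$ not by the Leibniz rule but via the Hopf-algebra \emph{convolution} $\Id\star S^*(\mathrm{id}\otimes\cdots\otimes\rho^{(s)}\otimes\cdots\otimes\mathrm{id})$, and then take its homogeneous component $d^{r-1-s}_{p^{r-1-s}}$ raising degree by exactly $p^{r-1}$. The differential is $\delta=\sum_{s=0}^{r-1} d^{r-1-s}_{p^{r-1-s}}$, defined directly rather than inductively. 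The convolution framework (the appendix's Lemma~\ref{lm-diff}) buys you precisely the two facts you flagged as obstacles: the components pairwise commute, and each is individually a $p$-differential---so their sum is too, with no further verification needed. The homology computation then writes $\delta=D'+D''$ by splitting off the last $\sha_1$-factor, uses the $r=1$ case for $D''$ and the induction hypothesis for $D'$; your spectral-sequence picture is right in spirit, but for $p>2$ no spectral sequence for $p$-bicomplexes exists off the shelf, and Troesch supplies an ad hoc lemma in its place.
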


\begin{remark}\label{rk-summand}
Since the $p$-differential $\delta$ raises the cohomological degree by $p^{r-1}$, $B_d(r)^*$ is the direct sum of the $p$-complexes $B_{d,i}(r)^*$ ($0\le i< p^{r-1}$): 
$$B_{d,i}(r)^*:=\quad  B_d(r)^i\xrightarrow[]{\delta}B_d(r)^{i+p^{r-1}}\xrightarrow[]{\delta}B_d(r)^{i+2p^{r-1}}\xrightarrow[]{\delta}\cdots\;. $$
The meaning of $B_{p^rn}(r)^*$ being a $p$-coresolution of $S^{n\,(r)}$ is that 
$B_{p^rn,0}(r)^*$ is a $p$-coresolution of $S^{n\,(r)}$, and the other $B_{p^rn,i}(r)^*$ are acyclic.
\end{remark}

\subsection{Ready for use injective coresolutions}
For our computations we need to elaborate a bit on theorem \ref{thm-tr}.
For all positive integer $n$ and all $\k$-vector spaces $(V_i)_{1\le i\le n}$, the exponential formula yields an isomorphism of graded objects:
$$B_d(r)^*(V_1\oplus\cdots\oplus V_n)\;\simeq \; \bigoplus_{\mu_i\ge 0\,,\; \sum \mu_i=d} B_{\mu_1}(r)^*(V_1)\otimes \cdots\otimes B_{\mu_n}(r)^*(V_n)\;\;\; (*)$$
By theorem \ref{thm-tr}, the graded object $B_d(r)^*(\bigoplus_{i=1}^n V_i)$ is equipped with a $p$-differential $\delta$. We want to transport $\delta$ on the right hand side of isomorphism $(*)$.
For all $n$-tuple $\mu$ of nonnegative integers of weight $d$, we may consider $\bigotimes_{i=1}^n B_{\mu_i}(r)^*(V_i)$ as a direct summand of $B_d(r)^*(\bigoplus_{i=1}^n V_i)$. 
\begin{lemma}\label{lm-restrict}
The $p$-differential $\delta$ restricts to $\bigotimes_{i=1}^n B_{\mu_i}(r)^*(V_i)$.
\end{lemma}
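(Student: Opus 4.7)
The plan is to identify the decomposition $(*)$ as the weight space decomposition for a torus action and then to invoke naturality of $\delta$.

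First, I would observe that the tuple-indexed direct sum on the right-hand side of $(*)$ is the decomposition of $B_d(r)^*(V_1\oplus\cdots\oplus V_n)$ into weight spaces for the torus $\mathbb{T}:=\mathbb{G}_m^{\times n}\subset GL(V_1\oplus\cdots\oplus V_n)$ acting by independent rescalings on the summands $V_i$. Indeed, the functor $B_{\mu_i}(r)^*=S^{\mu_i}(\sha_r\otimes I)$ is a strict polynomial functor of homogeneous degree $\mu_i$ in its argument, so the summand $\bigotimes_{i=1}^n B_{\mu_i}(r)^*(V_i)$ lies in the $\mathbb{T}$-weight space of character $(t_1,\ldots,t_n)\mapsto t_1^{\mu_1}\cdots t_n^{\mu_n}$. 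Since distinct tuples $\mu$ of weight $d$ give distinct characters, the exponential decomposition $(*)$ coincides with the full decomposition of $B_d(r)^*(V_1\oplus\cdots\oplus V_n)$ into $\mathbb{T}$-weight spaces.

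Next, I would exploit that the $p$-differential $\delta$ from theorem \ref{thm-tr} is by construction a natural transformation of strict polynomial functors from $B_d(r)^*$ to $B_d(r)^{*+p^{r-1}}$. In particular, its evaluation on $V_1\oplus\cdots\oplus V_n$ is equivariant for the action of $GL(V_1\oplus\cdots\oplus V_n)$ and \emph{a fortiori} for the subgroup $\mathbb{T}$. A $\mathbb{T}$-equivariant map necessarily preserves the decomposition into weight spaces, so $\delta$ sends $\bigotimes_{i=1}^n B_{\mu_i}(r)^*(V_i)$ into itself (one has to beware that $\delta$ shifts the $\sha_r$-grading but not the $\mathbb{T}$-weight, which is precisely the distinction of gradings we need). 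This yields the restriction claim.

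The argument has no real obstacle: the only thing to be careful about is separating the two gradings at play, namely the $\sha_r$-cohomological grading (which $\delta$ shifts by $p^{r-1}$) and the multi-weight grading coming from the decomposition of the source $V_1\oplus\cdots\oplus V_n$ (which $\delta$ preserves by naturality). Once this is made precise, the lemma is immediate.
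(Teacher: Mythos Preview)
Your proof is correct and follows essentially the same approach as the paper: identify the exponential decomposition as the weight space decomposition for the torus $\mathbb{G}_m^{\times n}$ acting on $V_1\oplus\cdots\oplus V_n$, and use the $\mathbb{G}_m^{\times n}$-equivariance of $\delta$ (coming from its naturality as a morphism of strict polynomial functors) to conclude that each weight space is $\delta$-stable. Your added remark distinguishing the $\sha_r$-grading from the torus weight grading is a helpful clarification, but the argument is otherwise the same.
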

\begin{proof}
Consider $\bigoplus_{i=1}^n V_i$ as a representation of the $n$-dimensional torus $\mathbb{G}_m^{\times n}$ of weight $(1,\dots,1)$. By functoriality, $B_d(r)^*(\bigoplus_{i=1}^n V_i)$ is a representation of $\mathbb{G}_m^{\times n}$ and $\bigotimes_{i=1}^n B_{\mu_i}^*(r)(V_i)$ is the vector space associated to the weight $(\mu_1,\dots,\mu_n)$. Now $\delta$ is $\mathbb{G}_m^{\times n}$ equivariant, hence $\bigotimes_{i=1}^n B_{\mu_i}(r)^*(V_i)$ is $\delta$-stable.  
\end{proof}

By lemma \ref{lm-restrict}, we can view the $p$-complex 
$(B_d(r)^*(V^{\oplus n}), \delta)$ as the direct sum of the $p$-complexes $(\bigotimes_{i=1}^n B_{\mu_i}(r)^*(V), \delta)$. These latter $p$-complexes serve as a basis for the computations in this article. The remainder of the section is devoted to the study of their properties.

\begin{proposition}[Blackbox 1]\label{prop-res1}Let $r$ be a positive integer and let $\mu=(\mu_1,\dots,\mu_n)$ be a $n$-tuple of nonnegative integers of weight $d$.
Denote by $B_\mu(r)^*$ the graded strict polynomial functor
$$B_\mu(r)^*:= B_{\mu_1}(r)^*\otimes\cdots\otimes B_{\mu_n}(r)^*=S^\mu(\sha_r\otimes I)\;.$$
Then $B_\mu(r)^*$ is equipped with a $p$-differential $\delta$ such that:
\begin{itemize}
\item[(i)] $\delta$ raises the cohomological degree by $p^{r-1}$.
\item[(ii)] If $\mu=p^r\nu$, then $(B_\mu(r)^*,\delta)$ is a $p$-coresolution of $S^{\nu\,(r)}$. If $p^r$ does not divide $\mu$ then, $(B_\mu(r)^*,\delta)$ is $p$-acyclic.
\end{itemize}
\end{proposition}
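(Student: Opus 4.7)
My plan is to deduce the proposition from Theorem \ref{thm-tr} applied to the vector space $V^{\oplus n}$, combined with the exponential formula and the weight analysis already carried out in lemma \ref{lm-restrict}. The key observation is that the natural $\mathbb{G}_m^{\times n}$-action on $V^{\oplus n}$ splits both the Troesch $p$-complex $B_d(r)^*(V^{\oplus n})$ and its total $p$-cohomology into weight-$\mu$ components, and these components are exactly what the proposition is about.

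First I would define $\delta$ on $B_\mu(r)^*$. Applying the exponential formula to $\sha_r\otimes V^{\oplus n}\simeq (\sha_r\otimes V)^{\oplus n}$ gives a natural isomorphism
$$B_d(r)^*(V^{\oplus n})\;\simeq\;\bigoplus_{|\mu|=d}B_\mu(r)^*(V),$$
and lemma \ref{lm-restrict} tells us that the Troesch $p$-differential on the left restricts to each summand on the right. This restriction gives a natural $p$-differential $\delta$ on $B_\mu(r)^*$, and part (i) is inherited for free from the corresponding property of Troesch's differential on $B_d(r)^*$.

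For (ii), I would compute the $p$-cohomology of each summand by comparing with the total $p$-cohomology of $B_d(r)^*(V^{\oplus n})$. Since all of $\ker(\delta^s)$, $\mathrm{im}(\delta^{p-s})$, and their subquotients are $\mathbb{G}_m^{\times n}$-stable, these "homologies" split along the weight decomposition. If $p^r\nmid d$ then the total $p$-complex is acyclic by Theorem \ref{thm-tr}, so every summand $B_\mu(r)^*$ is acyclic. If $d=p^rm$, the total $p$-complex is a $p$-coresolution of $S^{m\,(r)}(V^{\oplus n})=S^m(V^{(r)\oplus n})$, which by the exponential formula decomposes as $\bigoplus_{|\nu|=m}S^\nu(V^{(r)})$; since the $j$-th copy of $\mathbb{G}_m$ acts with weight $p^r\nu_j$ on $S^{\nu_j}(V^{(r)})$, the weight-$\mu$ component is $S^{\nu\,(r)}(V)$ when $\mu=p^r\nu$ and is zero otherwise. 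Combining this weight-by-weight with the coresolution property of the total complex yields exactly the two cases in (ii).

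The step requiring most care is the weight computation of $S^m(V^{(r)\oplus n})$: one must keep in mind that the Frobenius twist rescales torus weights by a factor of $p^r$, and it is precisely this rescaling that forces $\mu$ to be divisible by $p^r$ for the corresponding summand of the cohomology to survive. Once this bookkeeping is set up, everything else is formal and the proposition follows from a direct weight-by-weight inspection.
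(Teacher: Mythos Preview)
Your proposal is correct and follows essentially the same approach as the paper: define $\delta$ via the restriction provided by lemma \ref{lm-restrict}, then use the exponential formula and the $\mathbb{G}_m^{\times n}$-weight decomposition to read off the $p$-cohomology of each summand $B_\mu(r)^*$ from that of $B_d(r)^*(V^{\oplus n})$, with theorem \ref{thm-tr} supplying the latter. Your explicit remark that the Frobenius twist rescales torus weights by $p^r$ is exactly the mechanism the paper uses (more tersely) when it identifies the inclusion $S^{d/p^r\,(r)}(V^{\oplus n})\hookrightarrow S^d(V^{\oplus n})$ with the direct sum of the inclusions $S^{\mu/p^r\,(r)}(V)\hookrightarrow S^\mu(V)$.
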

\begin{proof}
Take for $\delta$ the restriction of the $p$-differential $\delta$ of lemma \ref{lm-restrict} to $B_\mu(r)^*$. Then (i) is satisfied. Moreover $B_\mu(r)^*$ equals $S^\mu$ in degree $0$, so if $p^r$ divides $\mu$ we have an injective map: $S^{\mu/p^r\,(r)}\hookrightarrow B_\mu(r)^0$. Now for all finite dimensional $\k$-vector space $V$,  Troesch complex $B_d(r)^*(V^{\oplus n})$ is isomorphic to the direct sum of complexes $\bigoplus_\mu B_\mu(r)^*(V)$. The inclusion $S^{d/p^r\,(r)}( V^{\oplus n})\hookrightarrow B_d(r)^0(V^{\oplus n})=S^{d}(V^{\oplus n})$ identifies through this isomorphism with the direct sum of the maps $S^{\mu/p^r\,(r)}(V)\hookrightarrow B_\mu(r)^0(V)=S^\mu(V)$ (with the convention that $S^{\mu/p^r\,(r)}(V)=0$ if $p^r$ does not divide $\mu$). Thus (ii) results from theorem \ref{thm-tr}.
\end{proof}

\begin{example}
To make things more concrete, we draw two small examples of $B_\mu(r)^*$. First, if $p=2$, $r=1$, and $\mu=(3)$ then $B_{(3)}(1)^*$ is the $2$-complex:
$$S^6\xrightarrow[]{\delta} S^5\otimes S^1\xrightarrow[]{\delta} S^4\otimes S^2\xrightarrow[]{\delta} S^3\otimes S^3\xrightarrow[]{\delta} S^2\otimes S^4\xrightarrow[]{\delta} S^1\otimes S^5\xrightarrow[]{\delta} S^6\;.$$
Now if $p=3$, $r=1$ and $\mu=(1)$, then $B_{(1)}(1)^*$ is the $3$-complex:
$$S^3\xrightarrow[]{\delta} S^2\otimes S^1 \xrightarrow[]{\delta} \begin{array}{c}S^2\otimes S^1\\\oplus\\ S^1\otimes S^2\end{array}
\xrightarrow[]{\delta} \begin{array}{c}S^3\\\oplus\\ \otimes^3\end{array}\xrightarrow[]{\delta}\begin{array}{c}S^2\otimes S^1\\\oplus\\ S^1\otimes S^2\end{array}
\xrightarrow[]{\delta} S^1\otimes S^2\xrightarrow[]{\delta} S^3$$
In these examples, we do not give an explicit formula for the differential $\delta$, but as we already said  at the beginning of the section, we don't need such a formula for our proofs.
\end{example}

\begin{remark} To define a $p$-differential on $B_\mu(r)^*$, we have chosen to transport the differential $\delta$ of $B_d(r)^*(V^{\oplus d})$ onto the graded object $B_\mu(r)^*(V)$ via isomorphism $(*)$. We could have made another choice, namely we could have taken $B_\mu(r)^*$ as the tensor product of the $p$-complexes $B_{\mu_i}(r)^*$. A priori, there is no reason why these two definitions should coincide. And indeed, one can check from the explicit expression of $\delta$ given in \cite[Section 4.2]{Troesch} that for $r\ge 2$, \emph{these two constructions are not equivalent}, that is, these two constructions induce distinct $p$-differentials on $B_\mu(r)^*$ (cf. remark \ref{rk-iso} for some problems linked with this fact). 
\end{remark}

Let $\mu,\nu$ be two tuples of weight $d$ and let $f\in\hom(S^\mu, S^\nu)$. By evaluating $f$ on $\sha_r\otimes I$, one obtains a graded natural transformation between the graded functors $B_\mu(r)^*=S^\mu(\sha_r\otimes I)$ and $B_\nu(r)^*=S^\nu(\sha_r\otimes I)$. The following statement is specific to our definition of $\delta$.

\begin{proposition}[Blackbox 2]\label{prop-res-2}
Let $r$ be a positive integer, let $\mu,\nu$ be two tuples of nonnegative integers of weight $d$ and let $g\in\hom(S^\mu, S^\nu)$. The morphism of graded functors 
$$g(\sha_r\otimes I) : B_\mu(r)^*\to B_\nu(r)^*$$ 
commutes with the $p$-differential $\delta$ of proposition \ref{prop-res1}. In particular, if $f\in\hom(S^\mu, S^\nu)$ fits into a commutative diagram:
$$\xymatrix{
S^{p^r\mu}\ar[rr]^-{\widetilde{f}}&&S^{p^r\nu}\\
S^{\mu\,(r)}\ar@{^{(}->}[u]\ar[rr]^-{f^{(r)}}&&S^{\nu\,(r)}\ar@{^{(}->}[u]
}$$
then $\widetilde{f}(\sha_r\otimes I)$ is a chain map between the $p$-coresolutions $(B_{p^r\mu}(r)^*,\delta)$ and $(B_{p^r\nu}(r)^*,\delta)$ which lifts $f$. 
\end{proposition}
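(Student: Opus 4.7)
The first assertion is the key; the second follows easily. My strategy is to realize $g(\sha_r \otimes I)$ as a polynomial coefficient of natural transformations that manifestly commute with $\delta$ by naturality of Troesch's construction. Pad $\mu$ and $\nu$ with trailing zeros to become $N$-tuples for some common $N$; this leaves $B_\mu(r)^*$ and $B_\nu(r)^*$ unchanged, and by Lemma \ref{lm-restrict} their differentials are both restrictions of the Troesch $\delta$ on the common ambient complex $B_d(r)^*(\k^N \otimes V)$, where they appear as the $\mathbb{T}^N$-weight summands of weights $\mu$ and $\nu$.

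For any $M = (m_{ij}) \in \End_\k(\k^N)$, the linear map $M \otimes \mathrm{id}_V : \k^N \otimes V \to \k^N \otimes V$ induces a morphism $(M \otimes \mathrm{id}_V)_*$ of $B_d(r)^*(\k^N \otimes V)$ commuting with $\delta$, by naturality of Troesch's construction in its vector space argument. Regarding the $m_{ij}$ as indeterminates over $\k[m_{ij}]$ (Troesch's construction commutes with base change), $(M \otimes \mathrm{id}_V)_*$ becomes a polynomial of total degree $d$ in the $m_{ij}$; matching monomial coefficients in the identity $\delta \circ (M \otimes \mathrm{id}_V)_* = (M \otimes \mathrm{id}_V)_* \circ \delta$ shows that each coefficient $\phi_A$, indexed by a matrix $A = (a_{ij}) \in \mathbb{Z}_{\ge 0}^{N \times N}$ with $\sum a_{ij} = d$, also commutes with $\delta$. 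The monomial $\prod m_{ij}^{a_{ij}}$ shifts the weight-$\mu$ summand (with $\mu_j = \sum_i a_{ij}$) to the weight-$\nu$ summand (with $\nu_i = \sum_j a_{ij}$), so we obtain natural transformations $\phi_A : B_\mu(r)^* \to B_\nu(r)^*$ commuting with $\delta$, indexed by matrices $A$ with column sums $\mu$ and row sums $\nu$. Since the $\phi_A$'s span $\hom_{\P_\k}(S^\mu, S^\nu)$---a classical basis fact which can be verified by a dimension count against graded Yoneda (Lemma \ref{lm-Yoneda-graded})---the map $g(\sha_r \otimes I)$ is a $\k$-linear combination of the $\phi_A$'s, hence commutes with $\delta$.

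The second assertion is then immediate. Applying the first part to $g = \widetilde{f}$, the morphism $\widetilde{f}(\sha_r \otimes I)$ is a chain map between the $p$-coresolutions $B_{p^r\mu}(r)^*$ and $B_{p^r\nu}(r)^*$ of $S^{\mu\,(r)}$ and $S^{\nu\,(r)}$ provided by Proposition \ref{prop-res1}. In cohomological degree zero, $\widetilde{f}(\sha_r \otimes I) = \widetilde{f}$, and its further restriction to the subfunctor $S^{\mu\,(r)} \hookrightarrow S^{p^r\mu}$ returns $f^{(r)}$ by hypothesis, so the chain map lifts $f$ as claimed. The main obstacle I anticipate is the spanning claim together with the clean extraction of polynomial coefficients in positive characteristic; both are standard, handled by passage to $\k[m_{ij}]$ and the indicated dimension count.
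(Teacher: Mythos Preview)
Your approach is correct and takes a genuinely different route from the paper's. Both arguments rest on the same underlying fact---naturality of $\delta$ with respect to the vector space variable of $B_d(r)^*$---but package it differently. The paper proceeds in three steps: it first treats $S^\mu=S^\nu=\otimes^d$ by realizing each permutation $\sigma\in\Si_d$ via the functorial map $B_d(r)^*(\sigma)$ on $B_d(r)^*(V^{\oplus d})$; then it handles the multiplication $\otimes^d\twoheadrightarrow S^\nu$ via the sum map $V^{\oplus d}\to V^{\oplus n}$; finally, a general $g:S^\mu\to S^\nu$ is lifted through $\otimes^d$ by projectivity. You instead use the full $\End_\k(\k^N)$-action at once, linearize through $\Gamma^d(\End_\k(\k^N))$ (this is what your ``coefficient extraction over $\k[m_{ij}]$'' amounts to), and observe that the images of the divided-power monomial basis give natural transformations $g_A:S^\mu\to S^\nu$ whose evaluations on $\sha_r\otimes I$ commute with $\delta$.

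What each approach buys: the paper's reduction via $\otimes^d$ is entirely elementary and needs no spanning statement beyond Lemma~\ref{lm-descrHomTT}. Your route is shorter and more structural, and it yields directly a basis of $\hom(S^\mu,S^\nu)$ whose members visibly commute with $\delta$ after evaluation. The one point that deserves a sentence more of justification is the spanning claim: the dimension count via Yoneda gives $\dim\hom(S^\mu,S^\nu)$ equal to the number of nonnegative integer matrices with column sums $\mu$ and row sums $\nu$, but you also need that the $g_A$ are linearly independent. This is standard (evaluate at $V=\k^d$ on a product of distinct basis vectors and observe that the supports in $S^\nu(\k^d)$ are disjoint, since each $g_A$ produces only terms with $|L_i\cap I_j|=a_{ij}$), and once stated your argument is complete. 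Your handling of the second assertion is fine.
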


\begin{proof}{\bf Step 1.}
We first treat the case $S^\mu=S^\nu=\otimes^d$. By lemma \ref{lm-descrHomTT}, the permutations $\sigma\in\Si_d$ form a basis of $\hom(\otimes^d,\otimes^d)$. So we only have to prove the statement for such maps. Let $\sigma\in\Si_d$, then $\sigma$ acts on $(\sha_r\otimes V)^{\otimes d}$ by permuting the factors (without sign) and on $V^{\oplus d}$ by permuting the terms, and for all $V$ we have a commutative diagram of graded vector spaces:
$$\xymatrix{
B_d(r)^*(V^{\oplus d})\ar[d]^-{B_d(r)^*(\sigma)}\ar@{->>}[rr]&& B_{(1,\cdots,1)}(r)^*(V)=(\sha_r\otimes V)^{\otimes d}\ar[d]^-{\sigma(\sha_r\otimes V)}\\ 
B_d(r)^*(V^{\oplus d})\ar@{->>}[rr]&& B_{(1,\cdots,1)}(r)^*(V)=(\sha_r\otimes V)^{\otimes d}.
}$$ 
In this diagram, all the graded objects bear a $p$-differential $\delta$. The horizontal maps are the projections onto a summand of the $p$-complex $B_d(r)^*(V^{\oplus d})$, hence commute with $\delta$. The map $B_d(r)^*(\sigma)$ also commutes with $\delta$, by functoriality of $B_d(r)^*$. Thus, $\sigma(\sha_r\otimes V)$ commutes with $\delta$.

{\bf Step 2.} Next, we treat the case where $S^\mu=\otimes^d$, $\nu$ is an arbitrary $n$-tuple of weight $d$, and where $g:\otimes^d\twoheadrightarrow S^\nu$ is a tensor product of multiplications. Observe that not all the homomorphisms between $\otimes^d$ and $S^\nu$ are of this form (the case of a general morphism $g$ will be treated in the last step of the proof). Let $\Sigma_g:V^{\oplus d}\to V^{\oplus n}$ be the sum map associated to the partition $\nu$, that is $\Sigma_g$ sends $(v_1,\dots,v_d)$ to $(\sum_{i=1}^{\nu_1} v_i,\dots, \sum_{i=d-\nu_n}^{d} v_i)$. Then for all $V$ we have a commutative diagram of graded objects:
$$\xymatrix{
B_d(r)^*(V^{\oplus d})\ar[d]^-{B_d(r)^*(\Sigma_g)}\ar@{->>}[rr]&& B_{(1,\cdots,1)}(r)^*(V)=(\sha_r\otimes V)^{\otimes d}\ar[d]^-{g(\sha_r\otimes V)}\\ 
B_d(r)^*(V^{\oplus n})\ar@{->>}[rr]&& B_{\nu}(r)^*(V)=S^\nu(\sha_r\otimes V).
}$$ 
As before all the the graded objects involved bear a $p$-differential $\delta$, and the horizontal maps and the vertical map on the left commute with $\delta$, whence the result for $g(\sha_r\otimes V)$.

{\bf Step 3.} Finally if $\mu,\nu$ are two tuples of weight $d$, and $g:S^\mu\to S^\nu$, then by projectivity of $\otimes^d$ in $\P_\k$, $g$ lifts to a map $\widetilde{g}:\otimes^d\to\otimes^d$. So for all $V$ we have a commutative diagram of graded objects:
$$\xymatrix{
(\sha_r\otimes V)^{\otimes d}\ar[d]^-{\widetilde{g}(\sha_r\otimes V)}\ar@{->>}[rr]&& S^\mu(\sha_r\otimes V)\ar[d]^-{g(\sha_r\otimes V)}\\ 
(\sha_r\otimes V)^{\otimes d}\ar@{->>}[rr]&& S^\nu(\sha_r\otimes V).
}$$
In this diagram, all the objects bear a $p$-differential. By step 1, $\widetilde{g}(\sha_r\otimes V)$ commutes with the $p$-differentials, and so do the horizontal arrows by step 2. Since the horizontal arrows are epimorphisms, we obtain that $g(\sha_r\otimes V)$ is a morphism of $p$-complexes.
\end{proof}

\section{Classical computations revisited}\label{sec-classical}

Let $r$ be a positive integer, let $\mu$ be a tuple of nonnegative integers and let $F\in\P_\k$. In this section, we focus on the computation of the extension groups of the form:
\begin{align*}(1)\;\Ext^*(F^{(r)}, S^{\mu\,(r)})&& (2)\;  \Ext^*(\Gamma^{\mu\,(r)}, F^{(r)})\;.
\end{align*}
The duality functor $-^\sharp:\P_\k^{\op}\to \P_\k$ of \cite[Prop 2.6]{FS} induces for all $F,G$ an isomorphism, natural in $F,G$, between $\Ext^*(F,G)$ and $\Ext^*(G^\sharp, F^\sharp)$. Therefore we may restrict our attention to $(1)$. 

These extension groups were computed by Cha{\l}upnik in \cite[Thm 4.3]{Chalupnik1}, where the proof relies heavily on the technical computations of \cite{FFSS}. In section \ref{subsec-compute}, we give a new simple direct proof of \cite[Thm 4.3]{Chalupnik1}, relying on Troesch coresolutions. Then we show in section \ref{subsec-more} how to use this result to recover classical computations to be found in the literature.

\subsection{Computation of  $\Ext^*(F^{(r)}, S^{\mu\,(r)})$}\label{subsec-compute}
Our proof relies on an elementary observation: if $T(S^\mu,r)^*$ denotes the injective coresolution of $S^{\mu\,(r)}$ obtained by contracting Troesch $p$-complex $B_{p^r\mu}(r)^*$, then the cochain complex $\hom(F^{(r)},T(S^\mu,r)^*)$ is zero in odd degrees, so its homology is very easy to compute! 
To prove the vanishing of $\hom(F^{(r)},T(S^\mu,r)^*)$ in odd degree, and to compute its even degree part, we first analyze the graded vector space $\hom(F^{(r)}, B_{p^r\mu}(r)^*)$.

\medskip

\noindent
{\bf Step 1: Analysis of the graded vector space $\hom(F^{(r)},B_{p^r\mu}(r)^*)$.} Recall that $B_{p^r\mu}(r)^*$ equals $S^{p^r\mu}(\sha_r\otimes I)$ as a graded functor. By lemma \ref{lm-twist}, evaluation on the Frobenius twist $I^{(r)}$ yields an isomorphism of graded vector spaces:
$$\hom(F,S^\mu(\sha_r^{(r)}\otimes I))\xrightarrow[]{\simeq} \hom(F^{(r)},S^{\mu\,(r)}(\sha_r\otimes I)) \quad(A)$$
Now by evaluating the injection $S^{\mu\,(r)}\hookrightarrow S^{p^r\mu}$ on $\sha_r\otimes I$ we get a graded injection $S^{\mu\,(r)}(\sha_r\otimes I)\hookrightarrow S^{p^r\mu}(\sha_r\otimes I)$, whence a morphism of graded vector spaces:
$$\hom(F^{(r)},S^{\mu\,(r)}(\sha_r\otimes I))\to \hom(F^{(r)},S^{p^r\mu}(\sha_r\otimes I)) \quad(B)$$
which is also an isomorphism by lemma \ref{lm-twist}. We denote by $\xi(F,S^\mu,r)$ the isomorphism of graded vector spaces obtained by composing $(A)$ and $(B)$:
$$\xi(F,S^\mu,r):\; \hom(F,S^\mu(\sha_r^{(r)}\otimes I))\xrightarrow[]{\simeq}\hom(F^{(r)},B_{p^r\mu}(r)^*)\;.$$
The following lemma gives some properties of $\xi(F,S^\mu,r)$.
\begin{lemma}\label{lm-prop-xi}
Let $r$ be a positive integer, let $\mu$ be a tuple of nonnegative integers and let $F\in\P_\k$.
\begin{itemize}
\item[(i)] The graded vector space $\hom(F^{(r)},B_{p^r\mu}(r)^*)$ is concentrated in degrees divisible by $p^r$.
\item[(ii)] Let $f\in\hom(F_1,F_2)$. The following diagram of graded vector spaces is commutative:
$$\xymatrix{
\hom(F_1,S^\mu(\sha_r^{(r)}\otimes I))\ar[rr]^-{\xi(F_1,S^\mu,r)} &&\hom(F_1^{(r)},B_{p^r\mu}(r)^*) \\
\hom(F_2,S^\mu(\sha_r^{(r)}\otimes I))
\ar[u]^-{f^*}\ar[rr]^-{\xi(F_2,S^\mu,r)} &&\hom(F_2^{(r)},B_{p^r\mu}(r)^*)\ar[u]^-{f^{(r)\,*}}
}$$
\item[(iii)] Let $f\in\hom(S^\mu,S^\nu)$, and let $\widetilde{f}:S^{p^r\mu}\to S^{p^r\nu}$ be a lifting of $f^{(r)}$. The following diagram of graded vector spaces is commutative: 
$$\xymatrix{
\hom(F,S^\mu(\sha_r^{(r)}\otimes I))\ar[rr]^-{\xi(F,S^\mu,r)}\ar[d]^{f(\sha_r^{(r)}\otimes I)_*}
 &&\hom(F^{(r)},B_{p^r\mu}(r)^*)\ar[d]^{\widetilde{f}(\sha_r\otimes I)_*}\\
\hom(F,S^\nu(\sha_r^{(r)}\otimes I))\ar[rr]^-{\xi(F,S^\nu,r)} &&\hom(F^{(r)},B_{p^r\nu}(r)^*)
}$$
\end{itemize}
\end{lemma}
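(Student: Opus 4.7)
The plan is to handle the three parts by unpacking the definition $\xi(F,S^\mu,r) = (B)\circ (A)$ and checking everything carefully; each part is essentially a naturality or grading check, with the hard part being just to keep the Frobenius‐twist/graded conventions straight.

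For part (i), I would argue directly using example \ref{ex-twistgraded}: the graded vector space $\sha_r^{(r)}$ is concentrated in degrees $0, p^r, 2p^r, \ldots, (p^r-1)p^r$, so the exponential formula decomposes $S^\mu(\sha_r^{(r)}\otimes I)$ into a direct sum of tensor products sitting only in degrees that are multiples of $p^r$. Hence $\hom(F, S^\mu(\sha_r^{(r)}\otimes I))$ is concentrated in degrees divisible by $p^r$. Both (A) (Frobenius twist) and (B) (post‐composition with the graded inclusion $\iota_\mu(\sha_r\otimes I)$) are homogeneous of degree $0$, so $\xi(F,S^\mu,r)$ is a graded isomorphism, and the conclusion transfers to the right‐hand side. (As a sanity check, one can verify the statement directly from the right: the degree-$n$ piece of $B_{p^r\mu}(r)^*$ is a direct sum of $S^\sigma$ with $\sum i\sigma_i = n$, and the vanishing lemma \ref{lm-vanish} forces $p^r\mid \sigma_i$ for each $i$, whence $p^r\mid n$.)

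For part (ii), the map (A) is $\phi\mapsto \phi^{(r)}$, which is functorial in $F$ by functoriality of Frobenius precomposition, and the map (B) is post‐composition by a morphism of the target that is independent of $F$. Hence $\xi$ is natural in $F$; given $f\colon F_1\to F_2$ and $\phi\colon F_2\to S^\mu(\sha_r^{(r)}\otimes I)$, one has $(\phi\circ f)^{(r)} = \phi^{(r)}\circ f^{(r)}$, and composing with $\iota_\mu(\sha_r\otimes I)$ on the left yields the commutativity of the displayed square.

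For part (iii), I would trace both paths of the diagram applied to $\phi\colon F\to S^\mu(\sha_r^{(r)}\otimes I)$. Going right then down gives $\widetilde{f}(\sha_r\otimes I)\circ\iota_\mu(\sha_r\otimes I)\circ \phi^{(r)}$. Going down then right, one first uses the identification $(f(\sha_r^{(r)}\otimes I))^{(r)} = f^{(r)}(\sha_r\otimes I)$, which follows from the identity $(\sha_r\otimes V)^{(r)} = \sha_r^{(r)}\otimes V^{(r)}$ on vector spaces, and then obtains $\iota_\nu(\sha_r\otimes I)\circ f^{(r)}(\sha_r\otimes I)\circ \phi^{(r)}$. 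Equality of the two paths is therefore reduced to the identity $\widetilde{f}\circ \iota_\mu = \iota_\nu\circ f^{(r)}$, which is exactly the commutativity of the square defining $\widetilde{f}$ as a lift of $f^{(r)}$. Evaluating that identity on the graded functor $\sha_r\otimes I$ (evaluation being a functor, hence preserving equalities) concludes the proof. The main subtlety is keeping track of where Frobenius twist is being applied and recognising that $(f(\sha_r^{(r)}\otimes I))^{(r)}$ and $f^{(r)}(\sha_r\otimes I)$ really do coincide.
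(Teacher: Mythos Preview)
Your proof is correct and follows essentially the same approach as the paper's own proof. The paper dispatches (i) by the same grading observation on $\sha_r^{(r)}$ and simply asserts that (ii) and (iii) ``follow directly from the description of $\xi(F,S^\mu,r)$ as the composite of the isomorphisms (A) and (B)''; you have unpacked this in detail, including the key identification $(f(\sha_r^{(r)}\otimes I))^{(r)} = f^{(r)}(\sha_r\otimes I)$ and the reduction of (iii) to the lifting square defining $\widetilde{f}$.
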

\begin{proof}
The graded vector space $\sha_r^{(r)}$, hence the functor $S^\mu(\sha_r^{(r)}\otimes I)$, is concentrated in degrees divisible by $p^r$ (recall that $-^{(r)}$ acts as a homothety of coefficient $p^r$ on the degrees of graded vector spaces). Since $\xi(F,S^\mu,r)$ is a graded isomorphism, we obtain (i). (ii) and (iii) follow directly from the description of $\xi(F,S^\mu,r)$ as the composite of the isomorphisms (A) and (B).
\end{proof}

\noindent
{\bf Step 2: Ext-computation.} 
We first define explicit injective coresolutions $T(S^\mu,r)^*$ of the $S^{\mu\,(r)}$'s by contracting Troesch $p$-complexes.

\begin{definition}\label{def-TSmur}
Recall  Troesch $p$-complex $(B_{p^r\mu}(r)^*,\delta)$ given in proposition \ref{prop-res1}.
We define $T(S^\mu,r)^*$ as the cochain complex such that for all $i\ge 0$:
\begin{align*}
&T(S^\mu,r)^{2i}= B_{p^r\mu}(r)^{p^ri}\;,\\
&T(S^\mu,r)^{2i+1} = B_{p^r\mu}(r)^{p^ri+p^{r-1}}\;,
\end{align*}
the differential $T(S^\mu,r)^{2i}\to T(S^\mu,r)^{2i+1}$ equals $\delta$ and the differential $T(S^\mu,r)^{2i+1}\to T(S^\mu,r)^{2i+2}$ equals $\delta^{p-1}$. By proposition \ref{prop-res1}, $T(S^\mu,r)^*$ is an injective coresolution of $S^{\mu\,(r)}$.
\end{definition}

\begin{example}
In characteristic $p=2$, $T(S^3,1)^*$ has the form:
$$S^6\xrightarrow[]{\delta} S^5\otimes S^1\xrightarrow[]{\delta} S^4\otimes S^2\xrightarrow[]{\delta} S^3\otimes S^3\xrightarrow[]{\delta} S^2\otimes S^4\xrightarrow[]{\delta} S^1\otimes S^5\xrightarrow[]{\delta} S^6\;.$$
In characteristic $p=3$, $T(S^1,1)^*$ has the form:
$$S^3\xrightarrow[]{\delta} S^2\otimes S^1 \xrightarrow[]{\delta^2} S^3\oplus \otimes^3\xrightarrow[]{\delta}
S^2\otimes S^1\oplus S^1\otimes S^2\xrightarrow[]{\delta^2} S^3\;.$$
\end{example}

By definition, $\Ext^*(F^{(r)}, S^{\mu\,(r)})$ is computed as the homology of the cochain complex
$\hom(F^{(r)}, T(S^\mu,r)^*)$. But we have seen in lemma \ref{lm-prop-xi}(i) that the graded vector space $\hom(F^{(r)},B_{p^r\mu}(r)^*)$ is concentrated in degrees divisible by $p^r$. So this means that the complex $\hom(F^{(r)}, T(S^\mu,r)^*)$ is zero in odd degrees! We record this fact in the following lemma.
\begin{lemma}\label{lm-CFSr}
The complex $\hom(F^{(r)},T(S^\mu,r)^*)$ is concentrated in even degrees. So for all $i\ge 0$ we have:
\begin{align*}
&\Ext^{2i+1}(F^{(r)}, S^{\mu\,(r)})=0\;,\\
&\Ext^{2i}(F^{(r)}, S^{\mu\,(r)})= \hom(F^{(r)},T(S^\mu,r)^{2i})= \hom(F^{(r)},B_{p^r\mu}(r)^{p^ri})\;.
\end{align*}
\end{lemma}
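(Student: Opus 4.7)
The strategy is essentially dictated by Lemma \ref{lm-prop-xi}(i): the graded vector space $\hom(F^{(r)},B_{p^r\mu}(r)^*)$ lives only in cohomological degrees divisible by $p^r$, so the plan is simply to check that the odd terms of the contraction $T(S^\mu,r)^*$ have cohomological degree (in $B_{p^r\mu}(r)^*$) not divisible by $p^r$, and hence die under $\hom(F^{(r)},-)$.

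Concretely, I would unwind the definition: $T(S^\mu,r)^{2i} = B_{p^r\mu}(r)^{p^r i}$ has degree $p^r i$ in the $p$-complex, which is divisible by $p^r$, while $T(S^\mu,r)^{2i+1} = B_{p^r\mu}(r)^{p^r i + p^{r-1}}$ has degree $p^{r-1}(pi+1)$, which is \emph{not} divisible by $p^r$ since $pi+1$ is coprime to $p$ (here I use that $r\geq 1$). Applying $\hom(F^{(r)},-)$ and invoking Lemma \ref{lm-prop-xi}(i) then gives $\hom(F^{(r)},T(S^\mu,r)^{2i+1})=0$ for all $i\geq 0$.

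Since every odd term of the complex $\hom(F^{(r)},T(S^\mu,r)^*)$ vanishes, every differential of this complex (going into or out of an odd position) is the zero map. The cohomology therefore coincides termwise with the complex itself, yielding $\Ext^{2i+1}(F^{(r)},S^{\mu\,(r)})=0$ and $\Ext^{2i}(F^{(r)},S^{\mu\,(r)})=\hom(F^{(r)},T(S^\mu,r)^{2i})=\hom(F^{(r)},B_{p^r\mu}(r)^{p^r i})$, as claimed. There is really no obstacle here: all the work has been done upstream, in setting up Troesch coresolutions and in establishing the degree-divisibility statement of Lemma \ref{lm-prop-xi}(i); the present lemma is essentially the observation that the contraction $T(S^\mu,r)^*$ was engineered so that, after applying $\hom(F^{(r)},-)$, every other term is forced to zero.
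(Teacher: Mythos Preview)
Your proof is correct and follows essentially the same approach as the paper: the paper also observes (just before stating the lemma) that Lemma~\ref{lm-prop-xi}(i) forces $\hom(F^{(r)},B_{p^r\mu}(r)^*)$ to be concentrated in degrees divisible by $p^r$, and hence the odd terms $T(S^\mu,r)^{2i+1}=B_{p^r\mu}(r)^{p^ri+p^{r-1}}$ die under $\hom(F^{(r)},-)$. Your explicit check that $p^{r-1}(pi+1)$ is not divisible by $p^r$ just spells out what the paper leaves implicit.
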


Now we can use the isomorphism $\xi(F,S^\mu,r)$ defined in the first step, to identify explicitly the even part of $\Ext^*(F^{(r)}, S^{\mu\,(r)})$. To be more specific, recall that $E_r$ denotes the graded $\k$-vector space concentrated in degrees $2i$ for $0\le i<p^r$, and one dimensional in these degrees. So $S^\mu(E_r\otimes I)$ and $S^\mu(\sha_r^{(r)}\otimes I)$ coincide as ungraded functors, and the degree $2i$ part of the graded functor $S^\mu(E_r\otimes I)$ equals the degree $p^ri$ part of $S^\mu(\sha_r^{(r)}\otimes I)$. Thus, we may rescale $\xi(F,S^\mu,r)$ (i.e. multiply all degrees by $2/p^r$) to get an isomorphism of graded vector spaces $$\hom(F,S^\mu(E_r\otimes I))\simeq \hom(F^{(r)},T(S^\mu,r)^{\mathrm{even}})= \Ext^*(F^{(r)},S^{\mu\,(r)})\;.$$ 
To finish our $\Ext$-computation, we need to check that the isomorphism we have just constructed is natural with respect to $F$ and $S^\mu$.

\begin{lemma}\label{lm-nat}
The isomorphism $\hom(F,S^\mu(E_r\otimes I))\simeq \Ext^*(F^{(r)},S^{\mu\,(r)})$ is natural with respect to $F,S^\mu$.
\end{lemma}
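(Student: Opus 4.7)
\textbf{Plan for Lemma \ref{lm-nat}.}
The plan is to split the statement into two independent naturality checks (in $F$ and in $S^\mu$) and to reduce each to the corresponding item of Lemma \ref{lm-prop-xi}. Recall that the isomorphism $\hom(F,S^\mu(E_r\otimes I))\simeq \Ext^*(F^{(r)},S^{\mu\,(r)})$ is assembled in three stages: first the graded isomorphism $\xi(F,S^\mu,r)$ identifying $\hom(F,S^\mu(\sha_r^{(r)}\otimes I))$ with $\hom(F^{(r)},B_{p^r\mu}(r)^*)$; second, Lemma \ref{lm-CFSr}, which identifies the part of the latter sitting in degrees divisible by $p^r$ with the even Ext groups (the odd Ext groups being zero); and third, a purely formal rescaling of the grading from $p^r$-multiples to even integers, replacing $\sha_r^{(r)}$ by $E_r$. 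The rescaling is tautologically compatible with any morphism, so the whole naturality statement reduces to naturality of $\xi$ in each variable separately.

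Naturality in $F$ is essentially immediate from Lemma \ref{lm-prop-xi}(ii). Given $\phi\colon F_1\to F_2$, the map $\phi^{(r)\,*}$ on Ext is computed at the chain level by post-composing with $\hom(\phi^{(r)},T(S^\mu,r)^*)$; on the even part this is exactly $\hom(\phi^{(r)}, B_{p^r\mu}(r)^{p^r\bullet})$, and the commutative square in \ref{lm-prop-xi}(ii) is precisely what is needed to transport it through $\xi$ to $\phi^*\colon \hom(F_2,S^\mu(E_r\otimes I))\to \hom(F_1,S^\mu(E_r\otimes I))$.

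Naturality in $S^\mu$ is the only nontrivial step. Given $f\colon S^\mu\to S^\nu$, I would first exhibit a chain map $T(S^\mu,r)^*\to T(S^\nu,r)^*$ lifting $f^{(r)}$. For this, pick any $\widetilde{f}\colon S^{p^r\mu}\to S^{p^r\nu}$ extending $f^{(r)}\colon S^{\mu\,(r)}\to S^{\nu\,(r)}\hookrightarrow S^{p^r\nu}$ along $S^{\mu\,(r)}\hookrightarrow S^{p^r\mu}$; such an $\widetilde{f}$ exists because $S^{p^r\nu}$ is injective in $\P_\k$, being a direct summand of $S^{p^r|\nu|}(\k^n\otimes I)$ via the exponential formula. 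Proposition \ref{prop-res-2} then ensures that $\widetilde{f}(\sha_r\otimes I)$ commutes with the $p$-differential $\delta$, and so, after contraction, defines a chain map of injective coresolutions $T(S^\mu,r)^*\to T(S^\nu,r)^*$ lifting $f^{(r)}$. Hence the map induced on Ext is post-composition by $\widetilde{f}(\sha_r\otimes I)$ on the even part, and the desired commutativity with $f(E_r\otimes I)_*$ through the two $\xi$'s is exactly the content of Lemma \ref{lm-prop-xi}(iii). The only mild obstacle is the existence of the lift $\widetilde{f}$, handled by injectivity of $S^{p^r\nu}$; the resulting map on Ext is independent of this choice, since any two lifts differ by a chain-null-homotopic map between injective coresolutions.
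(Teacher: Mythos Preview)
Your proposal is correct and follows essentially the same approach as the paper: split into naturality in $F$ and in $S^\mu$, use Lemma~\ref{lm-prop-xi}(ii) for the first, and for the second lift $f^{(r)}$ to $\widetilde{f}\colon S^{p^r\mu}\to S^{p^r\nu}$ by injectivity of $S^{p^r\nu}$, invoke Proposition~\ref{prop-res-2} to obtain a chain map between the Troesch coresolutions, and conclude via Lemma~\ref{lm-prop-xi}(iii). The paper organizes these steps into two explicit commutative diagrams, but the argument is the same.
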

\begin{proof}
Let $f:F\to F'$. Rescaling lemma \ref{lm-prop-xi}(ii) we get a commutative diagram of graded vector spaces (which are zero in odd degree):
$$\xymatrix{
\hom(F,S^\mu(E_r\otimes I))\ar[r]^-{\simeq}& \hom(F^{(r)},T(S^\mu,r)^*)\ar[r]^-{=}&\Ext^*(F^{(r)},S^{\mu\,(r)})\\
\hom(F',S^\mu(E_r\otimes I))\ar[u]^{f^*}\ar[r]^-{\simeq}& \hom({F'}^{(r)},T(S^\mu,r)^*)\ar[u]^{f^{(r)\,*}}\ar[r]^-{=}&\Ext^*({F'}^{(r)},S^{\mu\,(r)})\ar[u]^-{f^{(r)\,*}}
}.$$
This proves the naturality with respect to $F$. The naturality with respect to $S^\mu$ is slightly more delicate. Let $f:S^\mu\to S^\nu$ and let $\widetilde{f}:S^{p^r\mu}\to S^{p^r\nu}$ be a lifting of $f^{(r)}$ (such a lifting exists since $S^{p^r\nu}$ is injective). By proposition \ref{prop-res-2}, $\widetilde{f}(\sha_r\otimes I)$ is a lifting of $f$ to the $p$-coresolutions $B_\mu(r)^*\to B_\nu(r)^*$, so it induces a lifting $T(f,r)^*:=\widetilde{f}(\sha_r\otimes I)_{[1]}$ of $f$ to the injective coresolutions $T(S^\mu,r)^*\to T(S^\nu,r)^*$. Rescaling lemma \ref{lm-prop-xi}(iii) we get a commutative diagram of graded vector spaces (which are zero in odd degree):
$$\xymatrix{
\hom(F,S^\mu(E_r\otimes I))\ar[r]^-{\simeq}\ar[d]^{f(E_r\otimes I)_*}& \hom(F^{(r)},T(S^\mu,r)^*)\ar[d]^{(T(f,r)^*)_*}\ar[r]^-{=}&\Ext^*(F^{(r)},S^{\mu\,(r)})\ar[d]^-{(f^{(r)})_*}\\
\hom(F,S^\nu(E_r\otimes I))\ar[r]^-{\simeq}& \hom({F}^{(r)},T(S^\nu,r)^*)\ar[r]^-{=}&\Ext^*(F^{(r)},S^{\nu\,(r)})
}.$$
This proves the naturality with respect to $S^\mu$.
\end{proof}

To sum up, we have proved (compare \cite[Thm 4.3]{Chalupnik1}):

\begin{theorem}\label{thm-princ}
Let $r$ be a positive integer, let $\mu$ be a tuple of nonnegative integers and let $F\in\P_\k$.
There is a graded isomorphism, natural in $F$ and $S^{\mu}$:
$$\Ext^*(F^{(r)},S^{\mu\,(r)})\simeq \hom(F,S^\mu(E_r\otimes I))\;.$$
Similarly, there is a graded isomorphism, natural in $F$ and $\Gamma^{\mu}$:
$$\Ext^*(\Gamma^{\mu\,(r)},F^{(r)})\simeq \hom(\Gamma^\mu(E_r\otimes I),F)\;.$$
\end{theorem}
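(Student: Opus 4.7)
The plan is to exploit the fact that, after passing to Troesch's coresolution, the relevant Hom complex is concentrated in even cohomological degrees, so its cohomology is read off without any differential computation. Concretely, I would start from the explicit injective coresolution $T(S^\mu,r)^*$ of $S^{\mu\,(r)}$ built in Definition~\ref{def-TSmur} by contracting $B_{p^r\mu}(r)^*$, and compute $\Ext^*(F^{(r)},S^{\mu\,(r)})$ as the cohomology of $\hom(F^{(r)},T(S^\mu,r)^*)$.

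The key observation is Lemma~\ref{lm-prop-xi}(i): via the composition of the two isomorphisms of Lemma~\ref{lm-twist}, one obtains a graded isomorphism
\[\xi(F,S^\mu,r)\colon \hom(F,S^\mu(\sha_r^{(r)}\otimes I))\xrightarrow[]{\simeq}\hom(F^{(r)},B_{p^r\mu}(r)^*),\]
and since $\sha_r^{(r)}$ (hence $S^\mu(\sha_r^{(r)}\otimes I)$) lives only in degrees that are multiples of $p^r$, the graded vector space on the right is also concentrated in degrees divisible by $p^r$. But by construction of $T(S^\mu,r)^*$, the odd-degree terms $T(S^\mu,r)^{2i+1}$ sit in cohomological degree $p^r i+p^{r-1}$ of $B_{p^r\mu}(r)^*$, which is \emph{not} divisible by $p^r$. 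So all odd-degree terms of $\hom(F^{(r)},T(S^\mu,r)^*)$ vanish, the complex has zero differentials, and Lemma~\ref{lm-CFSr} follows immediately: $\Ext^{2i}(F^{(r)},S^{\mu\,(r)})=\hom(F^{(r)},B_{p^r\mu}(r)^{p^ri})$ and the odd Ext groups vanish.

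Rescaling degrees by a factor $2/p^r$ identifies the graded vector space $\hom(F,S^\mu(\sha_r^{(r)}\otimes I))$ with $\hom(F,S^\mu(E_r\otimes I))$, because $E_r$ is, by definition, the rescaled version of $\sha_r^{(r)}$ (i.e.\ $\k$ in even degrees $2i$ for $0\le i<p^r$). Composing with $\xi(F,S^\mu,r)$ gives the desired isomorphism
\[\Ext^*(F^{(r)},S^{\mu\,(r)})\simeq \hom(F,S^\mu(E_r\otimes I)).\]
Naturality in $F$ is immediate from Lemma~\ref{lm-prop-xi}(ii). For naturality in $S^\mu$, given $f\colon S^\mu\to S^\nu$, lift $f^{(r)}$ to $\widetilde f\colon S^{p^r\mu}\to S^{p^r\nu}$ (possible by injectivity of $S^{p^r\nu}$); Proposition~\ref{prop-res-2} then guarantees that $\widetilde f(\sha_r\otimes I)$ is a chain map of Troesch coresolutions lifting $f^{(r)}$, which is exactly what is needed to apply Lemma~\ref{lm-prop-xi}(iii).

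The second isomorphism, for $\Ext^*(\Gamma^{\mu\,(r)},F^{(r)})$, follows by applying the duality functor $-^\sharp\colon \P_\k^{\op}\to\P_\k$ of \cite[Prop~2.6]{FS}: since $(S^\mu)^\sharp=\Gamma^\mu$ and $-^\sharp$ commutes with Frobenius twists and sends $\hom$ to $\hom$ with reversed arguments, we obtain the stated formula from the first one. The only potential obstacle is the naturality in the second variable for this version, but this follows by the same Proposition~\ref{prop-res-2} argument applied after dualization; no new ingredient is required.
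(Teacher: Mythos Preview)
Your proposal is correct and follows essentially the same approach as the paper: you use the Troesch coresolution $T(S^\mu,r)^*$, invoke Lemma~\ref{lm-prop-xi}(i) to see that $\hom(F^{(r)},B_{p^r\mu}(r)^*)$ is concentrated in degrees divisible by $p^r$ (hence the odd terms of the contracted complex vanish, giving Lemma~\ref{lm-CFSr}), rescale to pass from $\sha_r^{(r)}$ to $E_r$, and establish naturality via Lemma~\ref{lm-prop-xi}(ii)--(iii) together with Proposition~\ref{prop-res-2}. The dual statement for $\Gamma^\mu$ is handled exactly as in the paper, via the duality $-^\sharp$ of \cite[Prop~2.6]{FS}.
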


\begin{corollary}\label{cor-exact} 
Let $\mu$ be a tuple of nonnegative integers and let $i$ be a nonnegative integer. 
The two following functors are exact: $$F\mapsto \Ext^i(\Gamma^{\mu (r)},F^{(r)})\,,\; \text{ and }\; F\mapsto \Ext^i(F^{(r)}, S^{\mu(r)})\;.$$
\end{corollary}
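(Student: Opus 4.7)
The plan is to reduce the exactness statements directly to the Hom descriptions supplied by Theorem~\ref{thm-princ}, and then to observe that, degree by degree, these Hom groups are taken into (respectively out of) a direct sum of injectives (respectively projectives). First I would replace the two Ext-functors by their right-hand sides: using the naturality in $F$ of the isomorphisms
\[
\Ext^{*}(F^{(r)},S^{\mu\,(r)})\simeq\hom(F,S^\mu(E_r\otimes I))\,,\qquad \Ext^{*}(\Gamma^{\mu\,(r)},F^{(r)})\simeq \hom(\Gamma^\mu(E_r\otimes I),F)\,,
\]
it suffices to prove that the functors $F\mapsto\hom(F,S^\mu(E_r\otimes I))^i$ and $F\mapsto\hom(\Gamma^\mu(E_r\otimes I),F)^i$ are exact for every $i\ge 0$, where the superscript $i$ denotes the homogeneous component of degree $i$ of these graded Hom groups.

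Next I would decompose the targets into sums of standard functors. The graded vector space $E_r$ is concentrated in a finite set of degrees, so by the exponential formula (applied once for each tensor factor of $S^\mu$ or $\Gamma^\mu$), the graded functor $S^\mu(E_r\otimes I)$ splits as a direct sum of functors of the form $S^\nu$, each placed in some homogeneous degree, and similarly $\Gamma^\mu(E_r\otimes I)$ splits as a direct sum of functors of the form $\Gamma^\nu$. Hence the degree $i$ part $S^\mu(E_r\otimes I)^i$ is a finite direct sum of symmetric tensor products $S^\nu$, and the degree $i$ part $\Gamma^\mu(E_r\otimes I)^i$ is a finite direct sum of divided tensor products $\Gamma^\nu$.

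Finally, I would invoke the standard injectivity/projectivity of these building blocks in $\P_\k$ (the $S^\nu$ are injective and the $\Gamma^\nu$ are projective, cf.\ \cite[Cor~2.12]{FS}): a finite direct sum of injectives is injective, and a finite direct sum of projectives is projective. Therefore the contravariant functor $\hom\bigl(-,S^\mu(E_r\otimes I)^i\bigr)$ is exact, and the covariant functor $\hom\bigl(\Gamma^\mu(E_r\otimes I)^i,-\bigr)$ is exact, which is exactly what we needed. There is no genuine obstacle here once Theorem~\ref{thm-princ} is in hand; the only thing to be slightly careful about is that the graded isomorphism of Theorem~\ref{thm-princ} is natural in $F$, so that exactness of the Hom-functors on the right really does transport to exactness of the Ext-functors on the left.
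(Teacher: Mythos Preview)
Your proof is correct and follows essentially the same approach as the paper's: the paper's one-line argument is simply ``Use that $\Gamma^\mu(E_r\otimes I)$ (resp.\ $S^\mu(E_r\otimes I)$) is projective (resp.\ injective)'', and you have spelled out the reason behind this assertion via the exponential formula decomposition into $\Gamma^\nu$'s and $S^\nu$'s. The only minor point is that the citation for the projectivity/injectivity of $\Gamma^\nu$ and $S^\nu$ is more accurately \cite[Thm~2.10]{FS} (and its dual) rather than \cite[Cor~2.12]{FS}.
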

\begin{proof}
Use that $\Gamma^\mu(E_r\otimes I)$ (resp. $S^\mu(E_r\otimes I)$) is projective (resp. injective).
\end{proof}

\subsection{Other classical $\Ext$-computations}\label{subsec-more}

Now we show that theorem \ref{thm-princ} encompasses concrete $\Ext$-computation which can be found in the literature. First, as a special case we obtain the following corollary, originally proved by Friedlander and Suslin \cite{FS} and which inspires the notation $E_r$. 

\begin{corollary}[{\cite{FS}}]\label{cor-Ext-I-I}
There is an isomorphism of graded vector spaces: 
$$ E_r\simeq \Ext^*(I^{(r)}, I^{(r)})\;.$$
\end{corollary}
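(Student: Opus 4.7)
The plan is to derive this as a direct specialization of Theorem~\ref{thm-princ}. I would take $F = I$ and $\mu = (1)$ in the first isomorphism of that theorem, so that $F^{(r)} = I^{(r)}$ and $S^{\mu\,(r)} = I^{(r)}$. The theorem then yields a graded isomorphism
$$\Ext^*(I^{(r)}, I^{(r)}) \simeq \hom(I, S^1(E_r \otimes I)) = \hom(I, E_r \otimes I),$$
where $E_r \otimes I$ denotes the graded functor $V \mapsto E_r \otimes V$.

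The remaining step is to identify $\hom(I, E_r \otimes I)$ with $E_r$ as graded vector spaces. I would invoke the graded Yoneda lemma (Lemma~\ref{lm-Yoneda-graded}) applied to $F = I = S^1 \in \P_{\k,1}$ and $V^* = E_r$, which gives the graded isomorphism
$$\hom(I, S^1(E_r \otimes I)) \simeq I^\sharp(E_r) = E_r,$$
since $S^1 = I$ is self-dual. Composing with the isomorphism above gives the claim.

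There is essentially no obstacle here, since all the work has been done in Theorem~\ref{thm-princ}; the corollary is just the simplest nontrivial instance obtained by plugging in $F = I$ and $S^\mu = S^1$, and then applying Yoneda to compute the resulting $\hom$-space. The only point worth being careful about is that the grading on $\hom(I, E_r \otimes I)$ produced by the machinery of Section~\ref{subsec-ValuesGraded} matches the intrinsic grading of $E_r$, but this is exactly the content of the graded refinement of Yoneda recorded in Lemma~\ref{lm-Yoneda-graded}.
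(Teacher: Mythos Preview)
Your proof is correct and matches the paper's approach: the corollary is stated as an immediate special case of Theorem~\ref{thm-princ} with $F=I$ and $\mu=(1)$, and the identification $\hom(I, E_r\otimes I)\simeq E_r$ via the graded Yoneda lemma is exactly the mechanism the paper uses in the surrounding corollaries (e.g., Corollary~\ref{cor-formulemagique}).
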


Recall that $\Gamma^{d,n}$ denotes the functor $V\mapsto \Gamma^d(\hom_\k(\k^n,V))$, and $S^{d,n}$ its dual. More generally, if $U$ is a finite dimensional vector space, we denote by $\Gamma^{d,U}$ the functor $V\mapsto \Gamma^d(\hom_\k(U,V))$ and by $S^{d,U}$ the functor $V\mapsto S^d(U\otimes V)$. The following result was proved by Cha{\l}upnik \cite[Cor 5.1]{Chalupnik1}.
\begin{corollary}\label{cor-formulemagique} Let $F$ be a homogeneous functor of degree $d$. There are graded isomorphisms, natural in $F,U$:
$$\Ext^*(\Gamma^{d,U\,(r)},F^{(r)})\simeq F(U\otimes E_r)\;,\text{ and }\,\Ext^*(F^{(r)},S^{d,U\,(r)})\simeq F^{\sharp}(U\otimes E_r)\;.$$
\end{corollary}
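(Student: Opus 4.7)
The plan is to deduce both isomorphisms from theorem~\ref{thm-princ} combined with the graded Yoneda lemma (lemma~\ref{lm-Yoneda-graded}), the second formula then following from the first by the duality functor $-^\sharp$.

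I would first extend theorem~\ref{thm-princ} from tensor products $\Gamma^\mu$ to $\Gamma^{d,U}=\Gamma^d(U^\vee\otimes I)$. Fixing a basis of $U$, the exponential formula gives a decomposition $\Gamma^{d,U}\simeq\bigoplus_{|\mu|=d}\Gamma^\mu$ with $\mu$ ranging over $n$-tuples of weight $d$ ($n=\dim U$). This decomposition is compatible with precomposition by $I^{(r)}$, so using the naturality of theorem~\ref{thm-princ} on each summand and reassembling by a second use of the exponential formula on the $\hom$ side, one obtains
\[\Ext^*(\Gamma^{d,U\,(r)},F^{(r)})\simeq\hom(\Gamma^{d,U}(E_r\otimes I),F),\]
an isomorphism which is natural in $U$ (in particular independent of the chosen basis). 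I would then apply the graded Yoneda lemma with $V^*=U\otimes E_r$: using $E_r\simeq E_r^\vee$ to identify $\Gamma^{d,U}(E_r\otimes I)=\Gamma^d(U^\vee\otimes E_r\otimes I)$ with $\Gamma^d((U\otimes E_r)^\vee\otimes I)$, lemma~\ref{lm-Yoneda-graded} yields the required $F(U\otimes E_r)$, naturally in $F$ and $U$.

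For the second isomorphism, the ingredients are the duality identity $(S^{d,U})^\sharp=\Gamma^{d,U}$ (immediate from $(S^d)^\sharp=\Gamma^d$ together with $(U\otimes V^\vee)^\vee=U^\vee\otimes V$) and the compatibility of $-^\sharp$ with Frobenius twists, $(F^{(r)})^\sharp=F^{\sharp\,(r)}$. Applying the general duality isomorphism $\Ext^*(A,B)\simeq\Ext^*(B^\sharp,A^\sharp)$ and then the first isomorphism with $F$ replaced by $F^\sharp$ gives
\[\Ext^*(F^{(r)},S^{d,U\,(r)})\simeq \Ext^*(\Gamma^{d,U\,(r)},F^{\sharp\,(r)})\simeq F^\sharp(U\otimes E_r),\]
as desired. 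The only bookkeeping point requiring care is to keep track of the gradings, specifically the self-duality $E_r\simeq E_r^\vee$; since $F$ is homogeneous of degree~$d$, both sides of the target formulas live naturally in matching positive degrees, so no essential issue arises.
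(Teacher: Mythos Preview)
Your proposal is correct and follows essentially the same route as the paper's own proof: decompose $\Gamma^{d,U}$ as a direct sum of $\Gamma^\mu$ via the exponential formula, apply theorem~\ref{thm-princ} summandwise, and then invoke the graded Yoneda lemma~\ref{lm-Yoneda-graded}. The paper's proof is stated in a single sentence, while you spell out the details (including the duality passage for the second isomorphism, which in the paper is already built into theorem~\ref{thm-princ}), but the substance is identical.
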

\begin{proof}
By the exponential formula, $\Gamma^{d,U}$ is a direct sum of functors of the form $\Gamma^\mu$.
So the result follows from theorem \ref{thm-princ} and the graded Yoneda lemma \ref{lm-Yoneda-graded}. 
\end{proof}

\begin{corollary}\label{cor-FFSS} For any $j$, $0\le j\le r$ we have the following computations of $\Ext$-groups:
\begin{itemize}
\item[(i)] {\cite[Thm 4.5]{FS}} The graded vector space
$\Ext^*(I^{(r)}, S^{p^{r-j}\,(j)})$ is concentrated in degrees $q$ such that $q =0 \mod 2p^{r-j}$, $q<2p^r$, and is one dimensional in these degrees.
\item[(ii)] {\cite[Thm 5.1]{FFSS}} Set $U_{r,j}=\Ext^*(\Gamma^{p^{r-j}\,(j)},I^{(r)})$. Then we have isomorphisms
\begin{align*}
&\Ext^*(\Gamma^{dp^{r-j}(j)}, S^{d\,(r)})\simeq S^d(U_{r,j})\\
& \Ext^*(\Gamma^{dp^{r-j}(j)}, \Lambda^{d\,(r)})\simeq \Lambda^d(U_{r,j})
\end{align*}
\end{itemize}
\end{corollary}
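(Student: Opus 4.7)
The plan is to reduce both parts to applications of Theorem~\ref{thm-princ} after rewriting $I^{(r)} = (I^{(r-j)})^{(j)}$, $S^{d\,(r)}=(S^{d\,(r-j)})^{(j)}$, and $\Lambda^{d\,(r)}=(\Lambda^{d\,(r-j)})^{(j)}$, and then decomposing by the exponential formula, killing most summands with the vanishing Lemma~\ref{lm-vanish}.

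For (i), applying Theorem~\ref{thm-princ} with twist $j$, $F=I^{(r-j)}$, and $\mu=(p^{r-j})$ yields
$$\Ext^*(I^{(r)},S^{p^{r-j}\,(j)})\simeq \hom(I^{(r-j)}, S^{p^{r-j}}(E_j\otimes I)).$$
The exponential formula splits the right-hand side as $\bigoplus_\mu \hom(I^{(r-j)},S^\mu)$ over $p^j$-tuples $\mu$ of weight $p^{r-j}$, with the $\mu$-summand placed in degree $\sum 2i\mu_i$. Lemma~\ref{lm-vanish} forces $p^{r-j}\mid\mu$, which combined with $|\mu|=p^{r-j}$ singles out the tuples with exactly one entry $\mu_k=p^{r-j}$; Lemma~\ref{lm-twist} then gives $\hom(I^{(r-j)},S^{p^{r-j}})\simeq \hom(I,S^1)=\k$, contributing one dimension in degree $2kp^{r-j}$ for each $0\le k\le p^j-1$. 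This is precisely the description in (i).

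For (ii), the second isomorphism of Theorem~\ref{thm-princ} gives
$$\Ext^*(\Gamma^{dp^{r-j}\,(j)},S^{d\,(r)})\simeq \hom(\Gamma^{dp^{r-j}}(E_j\otimes I),S^{d\,(r-j)}),$$
and analogously for $\Lambda^{d\,(r)}$. Decomposing $\Gamma^{dp^{r-j}}(E_j\otimes I)=\bigoplus_\mu \Gamma^\mu$ (with $\Gamma^\mu$ in degree $\sum 2i\mu_i$) and dualizing via $\Gamma^\sharp=S$, $\Lambda^\sharp=\Lambda$ (both commuting with the Frobenius twist), one is reduced to $\hom(\Gamma^{d\,(r-j)},S^\mu)$ and $\hom(\Lambda^{d\,(r-j)},S^\mu)$. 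Lemma~\ref{lm-vanish} forces $\mu=p^{r-j}\lambda$ with $|\lambda|=d$, and Lemma~\ref{lm-twist} reduces further to $\hom(\Gamma^d,S^\lambda)$ and $\hom(\Lambda^d,S^\lambda)$; by Yoneda the first equals $S^\lambda(\k)=\k$ for every $\lambda$, and a further application of duality identifies the second with the $\lambda$-weight space of $\Lambda^d(\k^{p^j})$, which is one-dimensional exactly when $\lambda\in\{0,1\}^{p^j}$.

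Finally, by $\Ext^*(F,G)\simeq\Ext^*(G^\sharp,F^\sharp)$ applied to (i), the graded vector space $U_{r,j}$ is one-dimensional in each degree $2kp^{r-j}$, $0\le k\le p^j-1$, so it admits a homogeneous basis $u_0,\dots,u_{p^j-1}$ with $\deg u_k=2kp^{r-j}$. The exponential formula in the graded category then decomposes $S^d(U_{r,j})$ (resp.\ $\Lambda^d(U_{r,j})$) into one-dimensional summands indexed by $p^j$-tuples $\lambda$ of weight $d$ with arbitrary (resp.\ $\{0,1\}$-valued) entries, each placed in degree $2p^{r-j}\sum i\lambda_i$, matching the count above. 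The main point requiring care is the consistent tracking of the various gradings on $E_j$, on $U_{r,j}$, and on their symmetric and exterior powers, but no genuine difficulty arises once (i) is in hand.
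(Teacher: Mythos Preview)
Your proof is correct and rests on the same key ingredient as the paper's, namely Theorem~\ref{thm-princ}. The paper's argument for (ii) is more compact because it routes through Corollary~\ref{cor-formulemagique}: once one knows $\Ext^*(\Gamma^{d,U\,(j)},F^{(j)})\simeq F(U\otimes E_j)$, taking $U=\k$ and $F=S^{d\,(r-j)}$ (resp.\ $F=\Lambda^{d\,(r-j)}$) gives the result in one line, since $S^{d\,(r-j)}(E_j)=S^d(E_j^{(r-j)})=S^d(U_{r,j})$. Your explicit exponential decomposition together with Lemmas~\ref{lm-twist} and~\ref{lm-vanish} is precisely what is packaged inside the proof of Corollary~\ref{cor-formulemagique} (via the graded Yoneda Lemma~\ref{lm-Yoneda-graded}), so the two approaches coincide once unpacked; yours has the virtue of making the grading bookkeeping fully visible, while the paper's is shorter.
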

\begin{proof}
(i) is a straightforward application of either theorem \ref{thm-princ} or corollary \ref{cor-formulemagique}. For (ii), use that $E_{j}^{(r-j)}=\Ext^*(\Gamma^{p^{r-j}\,(j)},I^{(r)})$ by theorem \ref{thm-princ}, and apply corollary \ref{cor-formulemagique}.
\end{proof}

The symmetric group $\Si_d$ acts on $\otimes^d$ by permuting the factors of the tensor product. This action induces an action of $\Si_d$ on the extension groups $\Ext^*(F^{(r)},\otimes^{d\,(r)})$. As another application of theorem \ref{thm-princ}, we compute the graded $\Si_d$-module $\Ext^*(F^{(r)},\otimes^{d\,(r)})$.
\begin{corollary}\label{cor-belle-formule}
Let $F$ be a strict polynomial functor, let $r,d$ be positive integers. Let $\Si_d$ act on $E_r^{\otimes d}$ by permuting the factors of the tensor product, and let $\Si_d$ act diagonally on $\hom(F,\otimes^d)\otimes E_r^{\otimes d}$. There is a graded isomorphism of $\Si_d$-modules, natural in $F$:
$$\Ext^*(F^{(r)},\otimes^{d\,(r)})\simeq \hom(F,\otimes^d)\otimes E_r^{\otimes d}\;.$$
\end{corollary}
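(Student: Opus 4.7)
The plan is to derive this from Theorem \ref{thm-princ} by specializing to the tuple $\mu=(1,\dots,1)$ (so that $S^\mu = \otimes^d$), and then to track the $\Si_d$-action through the naturality statement in $S^\mu$.

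First I would apply Theorem \ref{thm-princ} with $\mu=(1,\dots,1)$ to get a graded isomorphism natural in $F$:
$$\Ext^*(F^{(r)},\otimes^{d\,(r)})\;\simeq\;\hom\bigl(F,(E_r\otimes I)^{\otimes d}\bigr).$$
Next I would identify $(E_r\otimes I)^{\otimes d}$, as a graded functor, with $E_r^{\otimes d}\otimes \otimes^d$. For an ungraded vector space $V$, the tautological reshuffling of tensor factors
$(E_r\otimes V)^{\otimes d}\simeq E_r^{\otimes d}\otimes V^{\otimes d}$
respects the grading defined in Section \ref{subsec-ValuesGraded} and involves no Koszul signs because $E_r$ lives in even degrees only. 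Since $E_r^{\otimes d}$ is a constant graded vector space, it commutes with $\hom(F,-)$, yielding the graded identification
$$\hom\bigl(F,(E_r\otimes I)^{\otimes d}\bigr)\;\simeq\; \hom(F,\otimes^d)\otimes E_r^{\otimes d}.$$

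Third, I would verify $\Si_d$-equivariance by using naturality in $S^\mu$. Any $\sigma\in\Si_d$ is, by Lemma \ref{lm-descrHomTT}, an element of $\hom(\otimes^d,\otimes^d)$ acting on tensor factors without signs. Naturality of Theorem \ref{thm-princ} in $S^\mu$ applied to $\sigma\colon\otimes^d\to\otimes^d$ says that post-composition by $\sigma^{(r)}$ on $\Ext^*(F^{(r)},\otimes^{d\,(r)})$ corresponds under the isomorphism to post-composition by $\sigma(E_r\otimes I)$ on $\hom(F,(E_r\otimes I)^{\otimes d})$. But $\sigma(E_r\otimes I)$ permutes the $d$ tensor factors of $(E_r\otimes V)^{\otimes d}$, which under the reshuffling $(E_r\otimes V)^{\otimes d}\simeq E_r^{\otimes d}\otimes V^{\otimes d}$ becomes exactly the diagonal action $\sigma\otimes\sigma$ on $E_r^{\otimes d}\otimes V^{\otimes d}$ (again with no Koszul signs). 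This gives the desired diagonal $\Si_d$-action on $\hom(F,\otimes^d)\otimes E_r^{\otimes d}$.

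I do not expect any real obstacle: the substance is concentrated in Theorem \ref{thm-princ}, and the remainder is a combination of the exponential formula for $\otimes^d(E_r\otimes I)$ and the naturality in $S^\mu$. The only mildly delicate point is ruling out Koszul signs when reshuffling the tensor factors, which is automatic since $E_r$ is concentrated in even degrees.
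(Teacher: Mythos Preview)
Your proof is correct and follows essentially the same route as the paper's: apply Theorem~\ref{thm-princ} with $\mu=(1,\dots,1)$, reshuffle $(E_r\otimes I)^{\otimes d}\simeq E_r^{\otimes d}\otimes \otimes^d$, and pull out the constant graded vector space $E_r^{\otimes d}$. The paper's proof is terser about the $\Si_d$-equivariance (it simply asserts that the first isomorphism is $\Si_d$-equivariant and describes the action under the reshuffling), whereas you spell out more carefully that this follows from naturality in $S^\mu$ and the absence of Koszul signs.
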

\begin{proof}
By theorem \ref{thm-princ}, $\Ext^*(F^{(r)},\otimes^{d\,(r)})$ is isomorphic (as a $\Si_d$-module) to $\hom(F, (E_r\otimes I)^{\otimes d})$. But $(E_r\otimes I)^{\otimes d}\simeq I^{\otimes d}\otimes E_r^{\otimes d}$ (here $\Si_d$ acts on the left term by permuting the factors of the tensor products, and $\Si_d$ acts on the right term  by permuting simultaneously the factors of the tensor products $I^{\otimes d}$ and $E_r^{\otimes d}$. Now $\hom(F,I^{\otimes d}\otimes E_r^{\otimes d})$ is isomorphic to $\hom(F,I^{\otimes d})\otimes E_r^{\otimes d}$, whence the result.
\end{proof}

\begin{remark} Corollary \ref{cor-belle-formule} is a generalization of the computation of the $\Si_d^{\times 2}$-module $\Ext^*(I^{(r)\,\otimes d},I^{(r)\,\otimes d})$  stated in \cite[p. 781]{Chalupnik1} and also proved in \cite[Thm 1.8]{FF}. 
See also proposition \ref{prop-FFC}.
\end{remark}

\section{Additional structures}\label{subsec-mult}

Extension groups between twisted functors are equipped with the following additional structures.

\begin{enumerate}
\item {\bf Products.}
Let $F_1,F_2,G_1,G_2$ be strict polynomial functors. The extension groups are equipped with a product: 
$$\Ext^{k_1}(F_1,G_1)\otimes \Ext^{k_2}(F_2,G_2)\xrightarrow[]{\cup} \Ext^{k_1+k_2}(F_1\otimes F_2,G_1\otimes G_2)\;, $$
defined on the cochain level by taking tensor products of cocycles, as in e.g. \cite[Section 3.2]{Benson1}. %(By standard homological algebra, this is the unique product product compatible with connecting homomorphisms which equals the tensor product in degree $0$).

\item {\bf Twisting maps.} Let $F,G$ be two strict polynomial functors, and let $r$ be a nonnegative integer.
Evaluation on $I^{(1)}$ yields a twisting map (natural in $F,G$):
$$\mathrm{Fr_1}:\Ext^*(F^{(r)},G^{(r)})\to \Ext^*(F^{(r+1)},G^{(r+1)})\;.$$
\end{enumerate}

The main result of this section is theorem \ref{thm-mult}. It improves theorem \ref{thm-princ} by asserting that one can choose the isomorphisms $\Ext^*(F^{(r)},S^{\mu\,(r)})\simeq \hom(F,S^\mu(E_r\otimes I))$ 
(natural in $F,S^\mu$) so that they are compatible with products and twisting maps. 
To prove theorem \ref{thm-mult}, we first study the special case of $\Ext^*(\otimes^{d\,(r)},\otimes^{d\,(r)})$ in section \ref{subsec-TT}. With this computation in hand, theorem \ref{thm-mult} is a formal consequence the exactness result given in corollary \ref{cor-exact}. Finally, in section \ref{subsec-Hopf-alg-structure}, we apply theorem \ref{thm-mult} to Hopf algebra computations which generalize some results of \cite{FFSS}.

\subsection{Computation of $\Ext^*(\otimes^{d\,(r)},\otimes^{d\,(r)})$}\label{subsec-TT}

Corollary \ref{cor-belle-formule} already yields a description of $\Ext^*(\otimes^{d\,(r)},\otimes^{d\,(r)})$ as a $\Si_d^{\times 2}$-module, but we need a more accurate description (with cup products and precomposition by Frobenius twists).
We first give three preparatory lemmas. The first one is an elementary consequence of lemma \ref{lm-descrHomTT}.
\begin{lemma}\label{lm-prep1}
The following map is an isomorphism: 
$$
\begin{array}{ccc}\hom(I,E_r\otimes I)^{\otimes d}\otimes \k\Si_d&\to &\hom(\otimes^d,(E_r\otimes I)^{\otimes d})\\
f_1\otimes\cdots\otimes f_d\otimes \alpha&\mapsto & \alpha\circ (f_1\otimes\cdots\otimes f_n)\;.
\end{array}$$
\end{lemma}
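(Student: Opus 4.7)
The plan is to reduce this statement directly to Lemma \ref{lm-descrHomTT} by pulling out the fixed (graded) vector space $E_r$ from the $\hom$-groups.

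First I would observe that since $E_r$ is a fixed graded vector space, tensoring with it commutes with $\hom$: for any strict polynomial functors $F,G$ there is a canonical graded isomorphism $\hom(F, E_r\otimes G)\simeq E_r\otimes \hom(F,G)$, and likewise $(E_r\otimes I)^{\otimes d}$ canonically identifies with $E_r^{\otimes d}\otimes \otimes^d$. Applying this once to $F=G=I$ gives $\hom(I, E_r\otimes I)\simeq E_r$, so the source of the asserted map becomes $E_r^{\otimes d}\otimes \k\Si_d$. Applying it to $F=\otimes^d$, together with Lemma \ref{lm-descrHomTT}, identifies the target $\hom(\otimes^d, (E_r\otimes I)^{\otimes d})$ with $E_r^{\otimes d}\otimes \hom(\otimes^d,\otimes^d)\simeq E_r^{\otimes d}\otimes \k\Si_d$ as well. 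So both sides have the same dimension and live in the same graded degrees.

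Next I would chase the map through these identifications. If $f_i\in \hom(I,E_r\otimes I)$ corresponds to $e_i\in E_r$, then $f_1\otimes\cdots\otimes f_d$ sends $v_1\otimes\cdots\otimes v_d$ to $(e_1\otimes v_1)\otimes\cdots\otimes(e_d\otimes v_d)$, which under $(E_r\otimes I)^{\otimes d}\simeq E_r^{\otimes d}\otimes\otimes^d$ corresponds to $(e_1\otimes\cdots\otimes e_d)\otimes\mathrm{Id}$. Postcomposition with $\alpha\in\Si_d$, which permutes factors, acts as $\alpha\otimes\alpha$ on $E_r^{\otimes d}\otimes\otimes^d$. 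Hence the map of the lemma becomes
$$E_r^{\otimes d}\otimes \k\Si_d\longrightarrow E_r^{\otimes d}\otimes \k\Si_d,\qquad e\otimes\alpha\longmapsto (\alpha\cdot e)\otimes\alpha,$$
which is visibly a $\k$-linear isomorphism (with inverse $e\otimes\alpha\mapsto (\alpha^{-1}\cdot e)\otimes\alpha$).

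There is no real obstacle here; the only thing that requires care is the bookkeeping when transporting $\alpha$ through the swap $(E_r\otimes I)^{\otimes d}\simeq E_r^{\otimes d}\otimes \otimes^d$, to make sure one correctly sees the diagonal action of $\Si_d$ on $E_r^{\otimes d}\otimes \k\Si_d$.
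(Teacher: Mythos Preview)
Your proof is correct and is precisely the natural elaboration of what the paper indicates: the paper gives no explicit proof, merely stating that the lemma is ``an elementary consequence of lemma~\ref{lm-descrHomTT}'', and your argument spells out exactly that reduction by pulling $E_r$ out of the $\hom$'s and tracking the resulting map $e\otimes\alpha\mapsto(\alpha\cdot e)\otimes\alpha$.
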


We have already computed the graded vector space $\Ext^*(I^{(r)},I^{(r)})$ in corollary \ref{cor-Ext-I-I}. We add the description of the twisting map $\mathrm{Fr}_1$ in the following lemma. This result is well known (cf. \cite[Part II, prop 10.14]{Jantzen} or \cite[Cor 4.9]{FS}), although the proofs available are not so easy. We give here a new proof based on Troesch complexes.
\begin{lemma}\label{lm-prep2}
The twisting map $\Ext^*(I^{(r)},I^{(r)})\to \Ext^*(I^{(r+1)},I^{(r+1)})$ is injective. 
\end{lemma}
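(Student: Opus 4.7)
The plan is to make the twisting map $\mathrm{Fr}_1$ explicit on the cochain level via Troesch coresolutions and then verify non-vanishing degree by degree.

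First I would reduce to a degreewise statement. By Corollary \ref{cor-Ext-I-I} (an immediate consequence of Theorem \ref{thm-princ}), both $\Ext^*(I^{(r)},I^{(r)})\simeq E_r$ and $\Ext^*(I^{(r+1)},I^{(r+1)})\simeq E_{r+1}$ are concentrated in even degrees, and the degree-$2k$ component is one-dimensional when $0\le k<p^r$ (resp.\ $<p^{r+1}$) and zero otherwise. So it suffices to prove that for each $0\le k<p^r$, the degree-$2k$ part of $\mathrm{Fr}_1$ is non-zero.

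Next I would realize $\mathrm{Fr}_1$ concretely on cocycles. Precomposition by the exact functor $I^{(1)}$ is exact on $\P_\k$, so $(T(S^1,r)^*)^{(1)}$ is a (non-injective) coresolution of $I^{(r+1)}$. The comparison theorem for injective coresolutions produces a chain map $\phi^*:(T(S^1,r)^*)^{(1)}\to T(S^1,r+1)^*$ lifting $\mathrm{Id}_{I^{(r+1)}}$, and $\mathrm{Fr}_1$ acts on cochains by $\alpha\mapsto \phi^*\circ \alpha^{(1)}$. By Lemma \ref{lm-CFSr} every $\alpha\in \hom(I^{(r)},T(S^1,r)^{2k})$ is automatically a cocycle; identifying it via the exponential formula (Example \ref{ex-sha}) with the inclusion $I^{(r)}\hookrightarrow S^{p^r}$ sitting inside the summand of $B_{p^r}(r)^{p^rk}=S^{p^r}(\sha_r\otimes I)^{p^rk}$ where all weight of $\sha_r$ is concentrated in degree $k$ gives an explicit representative $\alpha_k$ for the generator of $\Ext^{2k}(I^{(r)},I^{(r)})$.

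The main step is to construct $\phi^*$ explicitly enough to verify that $\phi^{2k}\circ \alpha_k^{(1)}$ is still the non-zero degree-$2k$ generator of $\Ext^{2k}(I^{(r+1)},I^{(r+1)})$. Here I would exploit the graded-vector-space isomorphism $\sha_{r+1}\simeq \sha_r\otimes\sha_1^{(r)}$ (checked by counting dimensions in each degree, since $\sha_r\otimes\sha_1^{(r)}$ is one-dimensional in each degree $i+jp^r$ with $0\le i<p^r$ and $0\le j<p$, which runs precisely through $0,\dots,p^{r+1}-1$). Via the exponential formula this produces a natural direct-summand embedding of $(B_{p^r}(r)^*)^{(1)}$ into $B_{p^{r+1}}(r+1)^*$, corresponding to concentrating all weight on the $\sha_r\otimes 1$ factor. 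Proposition \ref{prop-res-2}, together with the $\mathbb{G}_m^{\times 2}$-equivariance of $\delta$ used in Lemma \ref{lm-restrict}, should show that the corresponding projection is compatible with the $p$-differentials and so yields a chain map $\phi^*$ of the desired form. Under this $\phi^*$, the cocycle $\alpha_k^{(1)}$ lands, by construction, on the degree-$2k$ generator.

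The hard part is precisely the verification that the candidate $\phi^*$ really does commute with the $p$-differentials $\delta$ on the two Troesch complexes. As the footnote to Theorem \ref{thm-tr} warns, this is the only place in the paper where the black-box Propositions \ref{prop-res1} and \ref{prop-res-2} are not quite enough, and one has to enter the explicit construction of $\delta$ from the appendix to see that the projection singled out above does respect $\delta$ (rather than being mixed with the other summands arising from $\sha_{r+1}\simeq \sha_r\otimes \sha_1^{(r)}$); equivalently one has to rule out correction terms coming from non-trivial components of $\delta$ that could cross from the distinguished summand into the others.
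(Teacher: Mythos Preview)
Your overall strategy matches the paper's: build an explicit chain map $f:(T(I,r)^*)^{(1)}\to T(I,r+1)^*$ from the Troesch model and then check that the generators $\alpha_k$ survive. The paper carries this out verbatim, and like you it isolates the compatibility of $f$ with the $p$-differential as the single step requiring the explicit definition of $\delta$ (this is Lemma~\ref{lm-fonct-B} in the appendix).

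The gap is in your construction of $\phi^*$. The exponential formula applied to $\sha_{r+1}\simeq \sha_r\otimes\sha_1^{(r)}$ splits $S^{p^{r+1}}(\sha_{r+1}\otimes I)$ into summands $\bigotimes_j S^{\mu_j}(\sha_r\otimes I)$ with $\sum_j \mu_j = p^{r+1}$; none of them is $(B_{p^r}(r)^*)^{(1)}=S^{p^r}(\sha_r\otimes I^{(1)})$, since the degree-$p^r$ symmetric power never appears. Moreover, the torus-equivariance in Lemma~\ref{lm-restrict} is for the torus acting through $V$, not through $\sha_r$; the differential $\delta$ is built from translation operators on the $\sha_1^{(k)}$ factors and is emphatically \emph{not} equivariant for the grading on $\sha_{r+1}$, so the ``projection'' you invoke is not a chain map (and in any case $\phi^*$ must be the embedding, not the projection). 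The paper's map is instead the composite
\[
S^{p^r}(\sha_r\otimes I^{(1)})\;\simeq\; S^{p^r\,(1)}(\sha_r\otimes I)\;\hookrightarrow\; S^{p^{r+1}}(\sha_r\otimes I)\;\hookrightarrow\; S^{p^{r+1}}(\sha_{r+1}\otimes I),
\]
where the middle arrow is the Frobenius inclusion $S^{d\,(1)}\hookrightarrow S^{pd}$. This is not a summand inclusion, so Proposition~\ref{prop-res-2} does not apply either; the appendix checks by hand that the $(r{+}1)$-differential decomposes as $D'+D''$ with $D''$ vanishing on the image and $D'$ restricting to the twisted $r$-differential. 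Once $f$ is in hand, the endgame is exactly as you sketch: on the copies $E_r\otimes S^{p^r}$ sitting inside $T(I,r)^*$ the map is visibly the inclusion $E_r\hookrightarrow E_{r+1}$, hence injective.
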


\begin{proof}
If $J^*$ is an injective coresolution of $I^{(r)}$ and $K^*$ is an injective coresolution of $I^{(r+1)}$, the twisting map is described on the cochain level as the composite
$$\hom(I^{(r)}, J^*)\xrightarrow[]{\mathrm{Fr}_1}\hom(I^{(r+1)}, J^{(1)\,*})\xrightarrow[]{(f)_*} \hom(I^{(r+1)}, K^{*})\;, $$
where $f:J^{(1)\,*}\to K^*$ is a chain map which induces the identity map $I^{(r+1)} =I^{(r+1)}$ after taking homology (such a map exists and is unique up to homotopy).

In this proof, we take the injective coresolutions $J^*=T(I,r)^*$ and $K^*=T(I,r+1)^*$ from definition \ref{def-TSmur}, that is $T(I,r)$ is a contraction of Troesch $p$-complex $B_{p^r}(r)$. Now we give an explicit formula for the chain map $f:T(I,r)^{(1)\,*}\to T(I,r+1)^*$. 
We consider the composite
$$S^{p^{r}}(\sha_{r}\otimes I^{(1)})\simeq S^{p^{r}\,(1)}(\sha_{r}\otimes I) \hookrightarrow S^{p^{r+1}}(\sha_{r}\otimes I) \hookrightarrow S^{p^{r+1}}(\sha_{r+1}\otimes I) \;(*)$$
To be more specific, the first map is induced by the isomorphism $\sha_{r}\simeq \sha_{r}^{(1)}$ which maps the summand $\k$ of degree $i$ of $\sha_{r}$ identically onto the summand $\k$ of degree $pi$ of $\sha_{r}^{(1)}$. The second map is induced by the canonical inclusion $S^{p^{r}\,(1)}\hookrightarrow S^{p^{r+1}}$, and the last map is induced by the canonical inclusion of $\sha_{r}$ into $\sha_{r+1}$. This composite is a morphism of functors, which sends an element of degree $i$ to an element of degree $pi$. Now the graded functors $S^{p^{r}}(\sha_{r}\otimes I^{(1)})=B_{p^{r}}(r)^{*\,(1)}$ and $S^{p^{r+1}}(\sha_{r+1}\otimes I)=B_{p^{r+1}}(r+1)^*$ bear a $p$-differential, and it follows from the construction of the differential of $B_{p^{r+1}}(r+1)^*$ in \cite[Section 4.2 and 4.3]{Troesch} that the composite $(*)$ commutes with the $p$-differentials\footnote{This argument is actually the only argument in the paper where one needs more information on $\delta$ than what is contained in theorem \ref{thm-tr}. For the reader's convenience, we have added a description of Troesch's construction in appendix \ref{sec-app}, with a proof that $f$ is a chain map in lemma \ref{lm-fonct-B}.}. We define $f:T(I,r)^{(1)\,*}\to T(I,r+1)^*$ as the contraction of the composite $(*)$.

Now, with our explicit injective coresolutions and our explicit $f$, we can compute the twisting map. Recall that for all $r\ge 0$, the complex $T(I,r)^*$ contains exactly one copy of $S^{p^r}$ as a direct summand in each even degree strictly less than $2p^r$, so we have an inclusion of graded functors $E_r\otimes S^{p^r}\hookrightarrow T(I,r)^*$. But by definition of $f$, the restriction of $f:T(I,r)^{(1)\,*}\to T(I,r+1)^*$ to $E_r\otimes S^{p^r\,(1)}$ equals the map $\phi$ defined as the composite:
$$E_r\otimes S^{p^r\,(1)}\hookrightarrow E_r\otimes S^{p^{r+1}}\hookrightarrow E_{r+1}\otimes S^{p^{r+1}} \;.$$
Hence we get a commutative diagram of graded vector spaces:{\small
$$\xymatrix{
\hom(I^{(r)},T(I,r)^*)\ar[r]^-{\mathrm{Fr}_1}&\hom(I^{(r+1)},T(I,r)^{(1)\,*})\ar[r]^{f_*}&\hom(I^{(r+1)},T(I,r+1)^{*})\\
\hom(I^{(r)},E_r\otimes S^{p^r})\ar@{^{(}->}[u]\ar[r]^-{\mathrm{Fr}_1}&
\hom(I^{(r+1)},E_r\otimes S^{p^r\,(1)})\ar@{^{(}->}[u]\ar[r]^{\phi_*}&
\hom(I^{(r+1)},E_{r+1}\otimes S^{p^{r+1}})\ar@{^{(}->}[u]
}$$}

Observe that the top row of the diagram actually describes the twisting map on $\Ext$-groups (we don't need to take the map induced in homology since the complexes involved are zero in odd degrees). 
Moreover, we know that $\hom(I^{(r)},T(I,r)^*)\simeq E_r$ for all $r\ge 0$, and by lemma \ref{lm-twist} $\hom(I^{(r)},E_r\otimes S^{p^r})$ is also isomorphic to $E_r$. Since the vertical arrows are injective, this shows that they are actually isomorphisms. Thus we can read the injectivity of the twisting map on the bottom row of the diagram. But $\mathrm{Fr}_1$ is an isomorphism by lemma \ref{lm-twist} and $\phi_*$ is an injection since $\phi$ is injective. This concludes the proof.
\end{proof}

The following lemma is an easy consequence of lemma \ref{lm-prep2}.
\begin{lemma}\label{lm-prep3} One can choose a family of isomorphisms $$\theta(I,I,r):\hom(I,I\otimes E_r)\simeq  \Ext^*(I^{(r)},I^{(r)})$$ for $r\ge 0$ such that for all $r$ we have a commutative diagram (where the left vertical arrow is induced by the canonical inclusion of $E_r$ into $E_{r+1}$):
$$\xymatrix{
\hom(I,I\otimes E_r)\ar[r]^-{\simeq}\ar@{^{(}->}[d]& \Ext^*(I^{(r)},I^{(r)})\ar[d]^{\mathrm{Fr}_1}\\
\hom(I,I\otimes E_{r+1})\ar[r]^-{\simeq}& \Ext^*(I^{(r+1)},I^{(r+1)})
}.$$
\end{lemma}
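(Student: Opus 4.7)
\textbf{Plan of proof for Lemma \ref{lm-prep3}.} The plan is to build the family $\theta(I,I,r)$ by induction on $r$, using the degree structure and the injectivity statement of Lemma \ref{lm-prep2} as the only nontrivial input. I would first note that both sides are, as graded vector spaces, isomorphic to $E_r$: the left hand side because $\hom(I,I)=\k$, so $\hom(I,I\otimes E_r)\simeq E_r$; the right hand side by Corollary \ref{cor-Ext-I-I}. In particular, in every even degree $2i$ with $0\le i<p^r$ both sides are one-dimensional, and they vanish in all other degrees. Thus picking a graded isomorphism $\theta(I,I,r)$ amounts to choosing a nonzero scalar in each such degree.

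Next I would unwind the commutativity of the square degree-wise. The canonical inclusion $E_r\hookrightarrow E_{r+1}$ is the identity on the degree $2i$ summand when $0\le i<p^r$ and sends no element to the summands of degree $2i$ with $p^r\le i<p^{r+1}$. Hence the left vertical arrow restricts to an identification of the degree-$2i$ part of $\hom(I,I\otimes E_r)$ with the degree-$2i$ part of $\hom(I,I\otimes E_{r+1})$ for $0\le i<p^r$, and is zero in the higher degrees. So the commutativity of the square in degree $2i$, $0\le i<p^r$, is a single scalar equation relating $\theta(I,I,r+1)$, $\mathrm{Fr}_1$ and $\theta(I,I,r)$, while in the degrees $p^r\le i<p^{r+1}$ there is no constraint at all.

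The construction is then immediate. Start with any graded isomorphism $\theta(I,I,0)$ (for instance $\mathrm{id}\mapsto\mathrm{id}$). Suppose $\theta(I,I,r)$ has been chosen. By Lemma \ref{lm-prep2}, the twisting map $\mathrm{Fr}_1$ is injective; since source and target are one-dimensional in each even degree $2i<2p^r$, its restriction to these degrees is an isomorphism onto its image. Define $\theta(I,I,r+1)$ on the degrees $2i$ with $0\le i<p^r$ to be the composite $\mathrm{Fr}_1\circ\theta(I,I,r)$ (transported along the left vertical identification), and extend it arbitrarily to a graded isomorphism on the remaining degrees $p^r\le i<p^{r+1}$; this is possible because on both sides those degrees are one-dimensional. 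The resulting $\theta(I,I,r+1)$ is a graded isomorphism and makes the square commute by construction.

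\textbf{Main obstacle.} There is no real obstacle: once one observes that the two graded spaces are abstractly isomorphic to $E_r$ and are one-dimensional in each nonzero degree, the only nontrivial ingredient is that $\mathrm{Fr}_1$ is nonzero in each relevant degree, which is exactly the content of Lemma \ref{lm-prep2}. The whole argument is then a degree-by-degree inductive rescaling.
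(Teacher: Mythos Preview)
Your proposal is correct and follows essentially the same route as the paper: induct on $r$, use Lemma \ref{lm-prep2} to see that $\mathrm{Fr}_1$ is an isomorphism in degrees $<2p^r$, force $\theta(I,I,r+1)$ in those degrees by the commutativity of the square, and choose it arbitrarily in the remaining degrees. The paper phrases the transport along the left vertical arrow via the projection $\pi_r:E_{r+1}\to E_r$, but this is merely notational.
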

\begin{proof}
We know that $\mathrm{Fr}_1$ is injective. But since the dimensions of $\Ext^i(I^{(r)},I^{(r)})$ and  $\Ext^i(I^{(r+1)},I^{(r+1)})$ coincide in degrees $i<2p^r$, this means that  $\mathrm{Fr}_1:\Ext^i(I^{(r)},I^{(r)})\to \Ext^i(I^{(r+1)},I^{(r+1)})$ is an isomorphism in degrees $i<2p^r$ and is zero in higher degrees.

So we may define $\theta(I,I,r)$ by induction on $r$. For $r=0$, we take $\theta(I,I,0)=\Id$. For $r\ge 1$, let $\pi_r$ denote the projection of $E_{r+1}$ onto its direct summand $E_r$. It is an isomorphism in degrees $i<2p^r$. We define $\theta(I,I,r+1)$ in degrees $i<2p^r$ as the composite $\mathrm{Fr}_1\circ \theta(I,I,r)\circ \pi_r$. In higher degrees, we can choose any isomorphism between $(E_{r+1})^i$ and $\Ext^i(I^{(r+1)},I^{(r+1)})$ to define $\theta(I,I,r+1)$.
\end{proof}

\begin{proposition}[{Compare \cite[p. 781]{Chalupnik1}, \cite[Thm 1.8]{FF}}]\label{prop-FFC}
Let $d,r$ be positive integers. The following map is an isomorphism:
$$
\begin{array}{ccc}\Ext^*(I^{(r)},I^{(r)})^{\otimes d}\otimes \k\Si_d&\to &\Ext^*(I^{(r)\,\otimes d},I^{(r)\,\otimes d})\\
c_1\otimes\cdots\otimes c_d\otimes \alpha&\mapsto & \alpha_*(c_1\cup\cdots\cup c_n)\;.
\end{array}
$$
Moreover, let $\Si_d^{\times 2}$ act on the source by the formula:
$$(\sigma,\tau)\cdot(c_1\otimes\dots \otimes c_d\otimes \alpha) = c_{\sigma^{-1}(1)}\otimes\dots\otimes c_{\sigma^{-1}(d)}\otimes \tau\alpha\sigma^{-1}\;,$$
and let $\Si_d^{\times 2}$ act on the target by the formula: $(\sigma,\tau)\cdot c = \tau_* (\sigma^{-1})^*c $. Then the above map  is  $\Si_d^{\times 2}$-equivariant.
\end{proposition}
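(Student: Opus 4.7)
The plan is to realize the claimed isomorphism as a composite of natural isomorphisms coming from Theorem \ref{thm-princ} and Lemma \ref{lm-prep1}, and then to verify that this composite matches the explicit formula of the statement. Applying Theorem \ref{thm-princ} with $F = \otimes^d$ and $S^\mu = \otimes^d = S^{(1,\dots,1)}$ yields a natural graded isomorphism
$$\Phi : \Ext^*(\otimes^{d\,(r)},\otimes^{d\,(r)}) \xrightarrow{\simeq} \hom(\otimes^d,(E_r\otimes I)^{\otimes d}).$$
Combining $\Phi^{-1}$ with Lemma \ref{lm-prep1} and the $d$-fold tensor product of $\theta(I,I,r)$ from Lemma \ref{lm-prep3} defines a composite isomorphism
$$\Psi : \Ext^*(I^{(r)},I^{(r)})^{\otimes d}\otimes \k\Si_d \xrightarrow{\simeq} \Ext^*(\otimes^{d\,(r)},\otimes^{d\,(r)}).$$

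Next I would verify that $\Psi$ coincides with the map of the proposition. Setting $f_i := \theta(I,I,r)^{-1}(c_i) \in \hom(I,E_r\otimes I)$, the construction gives
$$\Psi(c_1\otimes\cdots\otimes c_d\otimes\alpha) = \Phi^{-1}\bigl(\alpha\circ(f_1\otimes\cdots\otimes f_d)\bigr).$$
By naturality of $\Phi$ in the target variable applied to the endomorphism $\alpha$ of $\otimes^d$, one has $\Phi(\alpha_*\gamma) = \alpha\circ\Phi(\gamma)$, so the claim reduces to the multiplicativity
$$\Phi(c_1\cup\cdots\cup c_d) = f_1\otimes\cdots\otimes f_d.$$
To prove this, I would compute $\Ext^*(\otimes^{d\,(r)},\otimes^{d\,(r)})$ using the tensor-product coresolution $T(I,r)^{\otimes d}$, which is injective in each degree and resolves $\otimes^{d\,(r)}$ by the K\"unneth formula. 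Cup products are then represented at the cochain level by literal tensor products of cocycles. On the other hand, the isomorphism $\xi$ underlying Theorem \ref{thm-princ} is built from the Frobenius-twisting isomorphism of Lemma \ref{lm-twist} together with inclusions of direct summands of symmetric powers, both of which commute with tensor products; so the $d$-fold tensor product of the $d=1$ case of $\xi$ yields precisely the instance of $\xi$ relevant to $c_1 \cup \cdots \cup c_d$.

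The $\Si_d^{\times 2}$-equivariance is then a short computation. By naturality of $\Phi$, the action $c\mapsto \tau_*(\sigma^{-1})^*c$ on $\Ext^*$ corresponds to $g\mapsto \tau\circ g\circ\sigma^{-1}$ on $\hom(\otimes^d,(E_r\otimes I)^{\otimes d})$. Using the identity
$$(f_1\otimes\cdots\otimes f_d)\circ\sigma^{-1} = \sigma^{-1}\circ(f_{\sigma^{-1}(1)}\otimes\cdots\otimes f_{\sigma^{-1}(d)}),$$
this conjugation translates via Lemma \ref{lm-prep1} into the stated action $(\sigma,\tau)\cdot(c_1\otimes\cdots\otimes c_d\otimes\alpha) = c_{\sigma^{-1}(1)}\otimes\cdots\otimes c_{\sigma^{-1}(d)}\otimes\tau\alpha\sigma^{-1}$.

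The principal obstacle is the multiplicativity of $\Phi$. The default proof of Theorem \ref{thm-princ} uses the Troesch coresolution $T(\otimes^d,r)^*$, whose differential differs in general from that of $T(I,r)^{\otimes d}$ (cf.\ the remark following Proposition \ref{prop-res-2}), so one must either compare the two coresolutions explicitly or argue directly at the level of the map $\xi$ via the tensor-compatibility of its two constituents (A) and (B) of Section \ref{subsec-compute}.
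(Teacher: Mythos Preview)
Your primary route through the isomorphism $\Phi$ of Theorem~\ref{thm-princ} runs into a genuine gap: the multiplicativity $\Phi(c_1\cup\cdots\cup c_d)=f_1\otimes\cdots\otimes f_d$ that you need is precisely the open question raised later in Remark~\ref{rk-iso}. Theorem~\ref{thm-princ} is proved using the coresolution $T(\otimes^d,r)^*$, whereas cup products are computed via $T(I,r)^{*\,\otimes d}$; for $r\ge 2$ these two $p$-complexes carry different differentials, and no comparison map restricting to the identity on $(E_r\otimes S^{p^r})^{\otimes d}$ is known. So your first plan, as stated, cannot be completed without resolving that open problem.

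The paper sidesteps this entirely by never invoking $\Phi$. It works from the start with the tensor-product coresolution $T(I,r)^{*\,\otimes d}$: the cochain map realizing $c_1\otimes\cdots\otimes c_d\otimes\alpha\mapsto\alpha_*(c_1\cup\cdots\cup c_d)$ is literally $f_1\otimes\cdots\otimes f_d\otimes\sigma\mapsto\sigma\circ(f_1\otimes\cdots\otimes f_d)$ on $\hom(I^{(r)},T(I,r)^*)^{\otimes d}\otimes\k\Si_d$. One then observes that only the summand $(E_r\otimes S^{p^r})^{\otimes d}$ of $T(I,r)^{*\,\otimes d}$ survives after applying $\hom(\otimes^{d\,(r)},-)$ (by Lemma~\ref{lm-vanish}), so the cochain complexes are zero in odd degrees and the map in question identifies, via Lemma~\ref{lm-twist}, with the map of Lemma~\ref{lm-prep1}. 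This is essentially the fallback you sketch in your final paragraph, made into the main argument. Note also that your detour through $\theta(I,I,r)$ from Lemma~\ref{lm-prep3} is unnecessary here: the map of the proposition is defined intrinsically by cup products and $\alpha_*$, and its bijectivity does not require choosing any particular identification $\Ext^*(I^{(r)},I^{(r)})\simeq E_r$.
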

\begin{proof}
The map is clearly equivariant, since it is given by cup products (no sign is involved since $\Ext^*(I^{(r)},I^{(r)})$ is concentrated in even degree). We have to prove it is an isomorphism.
 
We use the injective coresolution  $T(I,r)^*$ of $I^{(r)}$ given in definition \ref{def-TSmur}. Then $T(I,r)^{*\,\otimes d}$ is an injective coresolution of $I^{(r)\,\otimes d}$ and the map of proposition \ref{prop-FFC} is given on the cochain level by the chain map:
$$\begin{array}{cccc}\hom(I^{(r)}, T(I,r)^*)^{\otimes d}\otimes\k\Si_d &\to  &\hom(I^{(r)\,\otimes d},T(I,r)^{\otimes d\,*})&\\
f_1\otimes\dots\otimes f_d\otimes\sigma&\mapsto &\epsilon\, \sigma\circ (f_{1}\otimes\dots\otimes f_{d})&,
\end{array}$$
where $\epsilon$ is a Koszul sign which equals one if all the $f_i$ are in even degree.

By proposition \ref{prop-res1}, each $T(S^1,r)^{k}$ is a direct sum of functors $S^\mu$ of polynomial degree $\sum \mu_i=p^r$. Moreover, for $0\le k<2p^r$, either $k$ is odd and the $S^\mu$  are such that $p^r \not| \mu$, or $k$ is even and $T(S^1,r)^k$ contains exactly one $S^{p^r}$ as a summand. Thus we have a commutative diagram of graded objects:
$$\xymatrix{
\hom(I^{(r)}, T(I,r)^*)^{\otimes d}\otimes\k\Si_d\ar[r]& \hom(I^{(r)\,\otimes d},T(I,r)^{\otimes d\,*})\\
\hom(I^{(r)}, E_r\otimes S^{p^r})^{\otimes d}\otimes\k\Si_d\ar[r]\ar@{^{(}->}[u] &  \hom(I^{(r)\,\otimes d}, (E_r\otimes S^{p^r})^{\otimes d})\ar@{^{(}->}[u]\;.
}$$

By the vanishing lemma \ref{lm-vanish}, the vertical arrows are isomorphisms of graded objects. In particular, the complexes at stake are zero in odd degrees, and the map of proposition \ref{prop-FFC} identifies with the bottom map of the diagram. Now by lemma \ref{lm-twist}, the bottom map of the diagram identifies with the map of lemma \ref{lm-prep1}, hence is an isomorphism.
\end{proof}

\begin{corollary}
There exist a family of isomorphisms $$\theta(\otimes^d,\otimes^d,r):\Ext^*(\otimes^{d\,(r)},\otimes^{d\,(r)})\simeq \hom(\otimes^d,(E_r\otimes I)^{\otimes d})$$
for $d\ge 1$, $r\ge 1$, which are natural in $\otimes^d,\otimes^d$, and compatible with products: $$\theta(\otimes^d,\otimes^d,r)(c)\otimes \theta(\otimes^{d'},\otimes^{d'},r)(c')= \theta(\otimes^{d+d'},\otimes^{d+d'},r)(c\cup c').$$
Moreover, the twisting maps fit into commutative diagrams (where the vertical arrow on the right is induced by the canonical inclusion $E_{r}\hookrightarrow E_{r+1}$)
$$\xymatrix{
\Ext^*(\otimes^{d\,(r)},\otimes^{d\,(r)})\ar[d]^-{\mathrm{Fr}_1}\ar[r]^-{\simeq}& \hom(\otimes^d,(E_r\otimes I)^{\otimes d})\ar@{^{(}->}[d]\\
\Ext^*(\otimes^{d\,(r+1)},\otimes^{d\,(r+1)})\ar[r]^-{\simeq}&\hom(\otimes^d,(E_{r+1}\otimes I)^{\otimes d})\;.
}
$$
\end{corollary}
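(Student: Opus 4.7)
The plan is to build $\theta(\otimes^d,\otimes^d,r)$ by a three–step factorisation using the isomorphisms already produced in this section. Put
\[
\theta(\otimes^d,\otimes^d,r) \;:=\; \beta \;\circ\; (\theta(I,I,r)^{-1})^{\otimes d}\otimes\mathrm{Id}_{\k\Si_d} \;\circ\; \alpha^{-1},
\]
where $\alpha:\Ext^*(I^{(r)},I^{(r)})^{\otimes d}\otimes\k\Si_d\xrightarrow{\simeq}\Ext^*(\otimes^{d\,(r)},\otimes^{d\,(r)})$ is the isomorphism of Proposition \ref{prop-FFC}, and $\beta:\hom(I,E_r\otimes I)^{\otimes d}\otimes \k\Si_d\xrightarrow{\simeq}\hom(\otimes^d,(E_r\otimes I)^{\otimes d})$ is the isomorphism of Lemma \ref{lm-prep1}. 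Since each of the three arrows is an isomorphism, so is $\theta(\otimes^d,\otimes^d,r)$.

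For compatibility with products, one reads off from the formulas that, under $\alpha^{-1}$, a cup product $c\cup c'\in \Ext^*(\otimes^{(d+d')\,(r)},\otimes^{(d+d')\,(r)})$ corresponds to the tensor of preimages with the symmetric-group element $\sigma\otimes \sigma'\in \k\Si_d\otimes\k\Si_{d'}\hookrightarrow \k\Si_{d+d'}$ (no Koszul sign appears because $\Ext^*(I^{(r)},I^{(r)})$ is concentrated in even degrees). The same concatenation formula holds for $\beta$ on the $\hom$ side, since the map of Lemma \ref{lm-prep1} is manifestly multiplicative in the concatenation sense. The middle arrow is obviously multiplicative, being a $d$-fold tensor power. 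Chaining these observations gives the required identity $\theta(c)\otimes\theta(c')=\theta(c\cup c')$. Naturality in the two tensor arguments then follows automatically: by Lemma \ref{lm-descrHomTT} every endomorphism of $\otimes^d$ is a $\k$-linear combination of permutations, and $\alpha$ is $\Si_d^{\times 2}$–equivariant by Proposition \ref{prop-FFC} while $\beta$ is visibly so.

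For compatibility with $\mathrm{Fr}_1$, I would reduce it to Lemma \ref{lm-prep3}. The twisting map is natural and commutes with cup products, so $\alpha$ is compatible with $\mathrm{Fr}_1$ (acting on $\Ext^*(I^{(r)},I^{(r)})^{\otimes d}$ as the $d$-fold tensor product of $\mathrm{Fr}_1$ on $\Ext^*(I^{(r)},I^{(r)})$, and as the identity on $\k\Si_d$). The middle map $(\theta(I,I,r)^{-1})^{\otimes d}\otimes\mathrm{Id}$ intertwines this with the $d$-fold tensor of the inclusion $E_r\hookrightarrow E_{r+1}$ by Lemma \ref{lm-prep3}. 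Finally, $\beta$ is natural with respect to the map $E_r\hookrightarrow E_{r+1}$ of graded vector spaces, which is exactly the statement that the right-hand square commutes.

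The one piece that requires some care—and that I expect to be the main obstacle—is verifying that the map $\alpha$ of Proposition \ref{prop-FFC} is itself compatible with the twisting map in the sense just described. This is not explicitly stated earlier and must be justified: it amounts to the naturality of $\mathrm{Fr}_1$ with respect to cup products (so that $\mathrm{Fr}_1(c_1\cup\cdots\cup c_d)=\mathrm{Fr}_1(c_1)\cup\cdots\cup \mathrm{Fr}_1(c_d)$) and with respect to the $\Si_d$-action by permutations (so that $\mathrm{Fr}_1(\alpha_*x)=\alpha_*\mathrm{Fr}_1(x)$). Both facts hold because $\mathrm{Fr}_1$ is defined by precomposition with $I^{(1)}$, which is a functor and hence commutes with tensor products of cocycles and with permutation natural transformations. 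Once this is granted, the asserted commutative square for the twisting map follows formally from stacking the three layers of naturality established above.
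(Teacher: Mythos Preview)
Your construction of $\theta(\otimes^d,\otimes^d,r)$ as the composite $\beta\circ((\theta(I,I,r)^{-1})^{\otimes d}\otimes\mathrm{Id}_{\k\Si_d})\circ\alpha^{-1}$ is exactly the paper's definition (the paper writes this in one sentence, citing the three ingredients from Lemma~\ref{lm-prep1}, Lemma~\ref{lm-prep3}, and Proposition~\ref{prop-FFC}). Your verifications of naturality, product compatibility, and compatibility with $\mathrm{Fr}_1$ are correct and more detailed than what the paper records; in particular your justification that $\alpha$ intertwines $\mathrm{Fr}_1$ with $\mathrm{Fr}_1^{\otimes d}\otimes\mathrm{Id}_{\k\Si_d}$---via naturality of precomposition and its compatibility with tensor products of cocycles---is exactly the check the paper leaves implicit.
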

\begin{proof} Lemma \ref{lm-prep2} yields an isomorphism between $\hom(I,I\otimes E_r)^{\otimes d}\otimes \k\Si_d$ and $\Ext^*(I^{(r)},I^{(r)})^{\otimes d}\otimes \k\Si_d$. We define $\theta(\otimes^d,\otimes^d,r)$ by composing this isomorphism by the inverse of the isomorphism of proposition \ref{prop-FFC} and the isomorphism of lemma \ref{lm-prep1}.
\end{proof}

\subsection{The general case} We now state the main result of section \ref{subsec-mult}.

\begin{theorem}\label{thm-mult}
Let $r$ be a positive integer, let $\mu$ be a tuple of nonnegative integers and let $F\in\P_\k$.
There is a graded isomorphism, natural in $F$ and $S^{\mu}$:
$$\theta(F,S^\mu,r)\;:\Ext^*(F^{(r)},S^{\mu\,(r)})\simeq \hom(F,S^\mu(E_r\otimes I))\;.$$
Moreover, $\theta(F,S^\mu,r)$ satisfies the following properties.
\begin{enumerate}
\item[(1)] $\theta(F,S^\mu,r)$ commutes with products:
$$\theta(F_1\otimes F_2,S^{\mu_1}\otimes S^{\mu_2} ,r)(c_1\cup c_2)=\theta(F_1,S^{\mu_1},r)(c_1)\otimes\theta(F_2,S^{\mu_2},r)(c_2)\;.$$
\item[(2)] $\theta(F,S^\mu,r)$ is compatible with twisting maps, that is 
$\theta(F,S^\mu,r)$ and $\theta(F,S^\mu,r+1)$ fit into a commutative diagram:
$$\xymatrix{
\Ext^*(F^{(r)},S^{\mu\,(r)})\ar[d]^{\mathrm{Fr}_1}\ar[rr]^-{\theta(F,S^\mu,r)}_-{\simeq} &&\hom(F,S^\mu(E_r\otimes I))\ar@{^{(}->}[d]\\
\Ext^*(F^{(r+1)},S^{\mu\,(r+1)})\ar[rr]^-{\theta(F,S^\mu,r+1)}_-{\simeq}&&\hom(F,S^\mu(E_{r+1}\otimes I))\;,
}$$
where the vertical arrow on the right is induced by the canonical inclusion $E_{r}\hookrightarrow E_{r+1}$ ($E_r$ is the part of $E_{r+1}$ of degree $<2p^r$).
\end{enumerate}
Similarly, there is a graded isomorphism, natural in $F$ and $\Gamma^{\mu}$:
$$\Ext^*(\Gamma^{\mu\,(r)},F^{(r)})\simeq \hom(\Gamma^\mu(E_r\otimes I),F)\;,$$
which is compatible with products and twisting maps.
\end{theorem}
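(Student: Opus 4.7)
My strategy is to use the natural isomorphism $\theta(F, S^\mu, r)$ supplied by Theorem \ref{thm-princ}, normalized so that on tensor powers it agrees with the explicit family constructed in the corollary at the end of Section \ref{subsec-TT}, and then to deduce properties (1) and (2) from the tensor-power case via a reduction argument based on the exactness of Corollary \ref{cor-exact}.

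The starting point is that on pairs $(F, S^\mu) = (\otimes^d, \otimes^d)$, properties (1) and (2) have already been established in that corollary, whose proof rests on Proposition \ref{prop-FFC} together with Lemmas \ref{lm-prep1}--\ref{lm-prep3}. In particular, both sides of the multiplicativity equation agree on pure tensor powers, and the twisting square commutes there.

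To propagate the compatibilities to arbitrary $F$ and $S^\mu$, I would exploit the exactness supplied by Corollary \ref{cor-exact}: the functors $F \mapsto \Ext^*(F^{(r)}, S^{\mu\,(r)})$ and $F \mapsto \hom(F, S^\mu(E_r \otimes I))$ are exact contravariant in $F$. Any homogeneous $F$ of degree $d$ admits a projective presentation by the $\Gamma^{d,n}$'s, which decompose via the exponential formula into sums of $\Gamma^\lambda$'s; the short exact sequence $0 \to \Gamma^\lambda \to \otimes^d \to K \to 0$ then yields, by exactness in the first variable, surjections $\Ext^*(\otimes^{d\,(r)}, S^{\mu\,(r)}) \twoheadrightarrow \Ext^*(\Gamma^{\lambda\,(r)}, S^{\mu\,(r)})$. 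The second variable is handled dually via the duality isomorphism $\Ext^*(F^{(r)}, S^{\mu\,(r)}) \simeq \Ext^*(\Gamma^{\mu\,(r)}, F^{\sharp\,(r)})$ of \cite[Prop 2.6]{FS}: the inclusion $\Gamma^\mu \hookrightarrow \otimes^d$ combined with exactness produces analogous surjections in the $S^\mu$-slot. Both equations (1) and (2) are natural bifunctorial identities in $(F_1, F_2, S^{\mu_1}, S^{\mu_2})$, so each side of them is compatible with these surjections; by naturality of $\theta$, of the cup product $\cup$, and of $\mathrm{Fr}_1$, the verification reduces to the already-known tensor-power case.

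The parallel statement involving $\Ext^*(\Gamma^{\mu\,(r)}, F^{(r)}) \simeq \hom(\Gamma^\mu(E_r \otimes I), F)$ follows by applying the duality functor $-^\sharp$ of \cite[Prop 2.6]{FS}, which interchanges the two forms. The main obstacle I expect lies in the bookkeeping of the normalization: one has to check that the natural isomorphism of Theorem \ref{thm-princ}, together with the normalization fixed on tensor powers, is genuinely uniquely pinned down on all of $\P_\k$, so that the chosen $\theta(F, S^\mu, r)$ really coincides with the one exhibiting the tensor-power compatibilities, and that the cup product structure descends consistently through the two independent reductions in $F$ and in $S^\mu$ without introducing sign or normalization conflicts.
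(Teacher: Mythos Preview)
Your reduction-to-tensor-powers strategy via the exactness of Corollary~\ref{cor-exact} is exactly the paper's approach, and the argument you sketch is correct in outline. There is one conceptual point where your framing diverges from the paper, and it is precisely the ``obstacle'' you flag at the end.

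You propose to use the isomorphism of Theorem~\ref{thm-princ} and then ``normalize'' it so as to agree on tensor powers with the explicit $\theta(\otimes^d,\otimes^d,r)$ built in Section~\ref{subsec-TT}. The paper does \emph{not} do this: it constructs $\theta(F,S^\mu,r)$ from scratch, starting with $\theta(\otimes^d,\otimes^d,r)$ and extending by exactness and presentations, without ever invoking Theorem~\ref{thm-princ}. Concretely, $S^\mu$ is written as the cokernel of an explicit map $\bigoplus_{\tau\in\Si_\mu}\otimes^d\to\otimes^d$, and exactness of $G\mapsto\Ext^*(\otimes^{d\,(r)},G^{(r)})$ (note $\otimes^d=\Gamma^{(1,\dots,1)}$) produces $\theta(\otimes^d,S^\mu,r)$; then $\Gamma^\lambda$ is written as a kernel of a map between tensor powers, and exactness in the first variable extends to $\theta(\Gamma^\lambda,S^\mu,r)$ and thence to all $F$. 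The commutative squares characterizing these maps make them well-defined and natural, and the compatibilities (1), (2) descend from the tensor-power case exactly as you say.

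The reason the paper avoids your framing is that it is genuinely unknown whether the isomorphism of Theorem~\ref{thm-princ} agrees with the one so constructed (see Remark~\ref{rk-iso}: for $r=1$ they coincide, but for $r\ge 2$ the question is open). So ``normalizing'' Theorem~\ref{thm-princ}'s isomorphism is not a well-posed operation; instead one simply builds a new $\theta$ by the extension procedure. Once you rephrase your argument in this way, the obstacle you identify evaporates, and your proof is essentially the paper's. Your alternative of handling the $S^\mu$-variable via duality rather than via the cokernel presentation is a harmless variation.
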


\begin{proof}
The duality isomorphism $\Ext^*(F,G)\simeq \Ext^*(G^\sharp,F^\sharp)$ commutes with products and with precomposition by Frobenius twists, so it is sufficient to prove the first part of the theorem (the existence of suitable $\theta(F,S^\mu,r)$, satisfying (1) and (2)). 

To be concise, we let $\mathbb{E}(F,G):=\Ext^*(F^{(r)},G^{(r)})$ and $\mathbb{H}(F,G):=\hom(F,G(E_r\otimes I))$ in this proof. By corollary \ref{cor-exact}, the functors
$F\mapsto \mathbb{E}(F,S^\mu)$ and $G\mapsto \mathbb{E}(\Gamma^\lambda,G)$ are exact. By projectivity of $\Gamma^\lambda(E_r\otimes I)$ and by injectivity of $S^\mu(E_r\otimes I)$, the functors $F\mapsto \mathbb{H}(F,S^\mu)$ and $G\mapsto \mathbb{H}(\Gamma^\lambda,G)$ are also exact. The proof is a formal consequence of this and proposition \ref{prop-FFC}.

Indeed, by proposition \ref{prop-FFC} we already have maps $\theta(\otimes^d,\otimes^d,r)$. But $S^\mu$ admits a presentation $T^\mu_1\to T^\mu_0\twoheadrightarrow S^\mu$ with $T_0^\mu=\otimes^d$ and $T_1^\mu=\bigoplus_{\tau\in\Si_\mu} \otimes^d$ and the map $T_1^\mu\to T_0^\mu$ equals $\sum (\Id-\tau)$ (the sum is taken over the elements of the Young subgroup $\Si_\mu$). Moreover, by projectivity of $\otimes^d$, any map $f:S^\mu\to S^{\mu'}$ lifts to a map of presentations. By exactness of $G\mapsto \mathbb{E}(\otimes^d,G)$ and $G\mapsto \mathbb{H}(\otimes^d,G)$, the presentation $T^\mu_1\to T^\mu_0\twoheadrightarrow S^\mu$ induces presentations of $\mathbb{E}(\otimes^d,S^\mu)$ and $\mathbb{H}(\otimes^d,S^\mu)$, which in turn define maps $\theta(\otimes^d,S^\mu,r)$, natural in $\otimes^d,S^\mu$. Dually, the $\Gamma^\lambda$ have (injective) copresentations by tensor powers, and any functor $F$ has a presentation $P_1\to P_0\twoheadrightarrow F$ by direct sums of $\Gamma^\lambda$. So using these presentations and exactness of $F\mapsto \mathbb{E}(F,S^\mu)$ and $F\mapsto \mathbb{H}(F,S^\mu)$ we define similarly maps $\theta(F,S^\mu,r)$, natural in $F,S^\mu$. The maps $\theta(\Gamma^\lambda,S^\mu,r)$ and $\theta(F,S^\mu,r)$ are characterized as the unique maps fitting into the commutative squares:
$$\xymatrix{
\mathbb{E}(\otimes^d,\otimes^d)\ar@{->>}[d]\ar[rr]^-{\theta(\otimes^d,\otimes^d,r)}&&
\mathbb{H}(\otimes^d,\otimes^d)\ar@{->>}[d]\\
\mathbb{E}(\Gamma^\lambda,S^\mu)\ar[rr]^-{\theta(\Gamma^\lambda,S^\mu,r)}&&
\mathbb{H}(\Gamma^\lambda,S^\mu)
}\quad
\xymatrix{
\mathbb{E}(F,S^\mu)\ar@{^{(}->}[d]\ar[rr]^-{\theta(F,S^\mu,r)}&&
\mathbb{H}(F,S^\mu)\ar@{^{(}->}[d]\\
\mathbb{E}(P_0,S^\mu)\ar[rr]^-{\theta(P_0,S^\mu,r)}&&
\mathbb{H}(P_0,S^\mu)\;.
}$$
(One easily checks that $\theta(F,S^\mu,r)$ does not depend on the choice of $P_0$). 

It remains to check compatibility with cup products and twisting maps. But using the naturality of cup products (resp. twisting maps) and the two commutative squares above, one can reduce the compatibility for general $\theta(F,S^\mu,r)$ to the compatibility for $\theta(\Gamma^\lambda,S^\mu,r)$, which in turn reduces to the compatibility for $\theta(\otimes^d,\otimes^d,r)$. The latter holds by proposition \ref{prop-FFC}.
\end{proof}

\begin{remark}\label{rk-iso}
We don't know if the isomorphisms of theorems \ref{thm-princ} and \ref{thm-mult} are equal in general. Looking at the proof of theorem \ref{thm-mult}, one  sees that they coincide if and only if they coincide for the special case $F=S^\mu=\otimes^d$. 

In this special case, the isomorphism of theorem \ref{thm-princ} is built by using the injective coresolution $T(\otimes^d,r)^*$, while the isomorphism of theorem of  \ref{thm-mult} uses $(T(I,r)^*)^{\otimes d}$. These two injective coresolutions are not equal in general (if $p\ne 2$ of if $d\ge 2$, they do not coincide as graded objects). But in both cases, the coresolution contains the graded object $(E_r\otimes S^{p^r})^{\otimes d}$. And the isomorphisms of theorems \ref{thm-princ} and \ref{thm-mult} follow in both cases from the fact that the complex obtained after applying $\hom(\otimes^{d\,(r)},-)$ to the coresolution equals the graded vector space $\hom(\otimes^{d\,(r)}, (E_r\otimes S^{p^r})^{\otimes d})$, with zero differential.
So, to prove that the isomorphisms of theorems \ref{thm-princ} and \ref{thm-mult} are equal, we have to find a homotopy equivalence $h^*:T(\otimes^d,r)^*\to (T(I,r)^*)^{\otimes d}$ whose restriction to $(E_r\otimes S^{p^r})^{\otimes d}$ is the identity. 

If $r=1$, $B_{(1,\dots,1)}(1)^*\simeq (B_{1}(1)^*)^{\otimes d}$ by \cite[Prop 3.2.1]{Troesch}, so \cite[Prop 2.4]{TouzeUniv} yields such a homotopy equivalence. Hence the isomorphisms of theorems \ref{thm-princ} and \ref{thm-mult} coincide for $r=1$. 

If $r>1$, $B_{(1,\dots,1)}(r)^*$ and $(B_{1}(r)^*)^{\otimes d}$ are equal as graded objects, but they don't bear the same differential, so we cannot conclude the equality of the isomorphisms by the previous method, and the question wether the isomorphisms of theorems \ref{thm-princ} and \ref{thm-mult} coincide is open.
\end{remark}

\subsection{Applications to algebra and Hopf algebra structures}\label{subsec-Hopf-alg-structure}

Let $U$ be a finite dimensional $\k$-vector space. 
Recall that $\Gamma^{d,U}$ denotes the functor $V\mapsto \Gamma^{d}(\hom_\k(U,V))$.
Let $F_i$, $i=1,2$, be strict polynomial functors, and let $d_1,d_2$ be nonnegative integers. We define a morphism:
\begin{align*}\textstyle\bigotimes_{i=1}^2\Ext^*(\Gamma^{d_i,U\,(r)},F_i)&\xrightarrow[]{\cup} \Ext^*((\Gamma^{d_1,U}\otimes\Gamma^{d_2,U})^{(r)},(F_1\otimes F_2)^{(r)})\\ &\qquad\quad\xrightarrow[]{(\Delta^{U\,(r)}_{d_1,d_2})^*} \Ext^*(\Gamma^{d_1+d_2,U\,(r)},(F_1\otimes F_2)^{(r)})\;, 
\end{align*}
where $\Delta_{d_1,d_2}^U: \Gamma^{d_1+d_2,U}\to \Gamma^{d_1,U}\otimes \Gamma^{d_2,U}$ is the evaluation of the comultiplication $\Delta_{d_1,d_2}: \Gamma^{d_1+d_2}\to \Gamma^{d_1}\otimes \Gamma^{d_2}$ of the divided power algebra 
on the functor $\hom_\k(U,-)$. 
The following statement provides an extra compatibility property for Cha{\l}upnik's formula \cite[Cor 5.1]{Chalupnik1}. 
\begin{corollary}\label{cor-iso}
Let $F$ be a homogeneous strict polynomial functor of degree $d$. There are isomorphisms, natural in $F,U$:
$$\Ext^*(\Gamma^{d,U\,(r)},F)\simeq F(U\otimes E_r)\;.$$
Moreover, if  $F_i$, $i=1,2$ are homogeneous strict polynomial functors of respective degree $d_i$, these isomorphisms fit into commutative diagrams:
$$\xymatrix{
\bigotimes_{i=1}^2\Ext^*(\Gamma^{d_i,U\,(r)},F_i)\ar[rr]^-{\simeq}\ar[d]^-{c_1\otimes c_2\mapsto (\Delta_{d_1,d_2}^{(r)})^*(c_1\cup c_2)}
&& F_1(U\otimes E_r)\otimes F_2(U\otimes E_r)\ar[d]^-{=}\\
\Ext^*(\Gamma^{d_1+d_2,U\,(r)},(F_1\otimes F_2)^{(r)})\ar[rr]^-{\simeq}&&(F_1\otimes F_2)(U\otimes E_r)\;.
}$$
In particular, the vertical arrow on the left is an isomorphism.
\end{corollary}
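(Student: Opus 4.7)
The plan is to combine theorem \ref{thm-mult} (in its $\Gamma^\mu$-version, i.e.\ the second isomorphism at the end of the theorem) with the graded Yoneda lemma \ref{lm-Yoneda-graded}, essentially following the pattern of corollary \ref{cor-formulemagique} but now tracking the extra multiplicative structure provided by theorem \ref{thm-mult}.

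First I would establish the isomorphism itself. By the exponential formula, $\Gamma^{d,U}(E_r\otimes I)=\Gamma^d(U^\vee\otimes E_r\otimes I)$ decomposes (after a choice of basis of $U$) as a direct sum of functors $\Gamma^\mu(E_r\otimes I)$ over tuples $\mu$ of weight $d$. Applying theorem \ref{thm-mult} summandwise and reassembling, one gets a graded isomorphism, natural in $F$,
\[\Ext^*(\Gamma^{d,U\,(r)},F^{(r)})\;\simeq\;\hom\bigl(\Gamma^{d,U}(E_r\otimes I),F\bigr),\]
and by the graded Yoneda lemma \ref{lm-Yoneda-graded} the right-hand side is canonically isomorphic to $F(U\otimes E_r)$. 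The $\hom$-description is manifestly natural in $U$ (through $U^\vee$), giving the first statement.

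For the diagram I would exploit the two structural properties of theorem \ref{thm-mult}. Multiplicativity (property (1)) translates cup products on the $\Ext$ side into tensor products on the $\hom$ side. Naturality in the first variable, applied to $\Delta^U_{d_1,d_2}\colon \Gamma^{d_1+d_2,U}\to \Gamma^{d_1,U}\otimes\Gamma^{d_2,U}$, translates precomposition by $(\Delta^{U\,(r)}_{d_1,d_2})^*$ into precomposition by $\Delta^U_{d_1,d_2}(E_r\otimes I)$. Consequently the left-hand vertical map of the diagram, read through the $\hom$-description, is the composite
\[\textstyle\bigotimes_i \hom\bigl(\Gamma^{d_i,U}(E_r\otimes I),F_i\bigr)\;\to\;\hom\bigl(\Gamma^{d_1+d_2,U}(E_r\otimes I),F_1\otimes F_2\bigr)\]
sending $f_1\otimes f_2$ to $(f_1\otimes f_2)\circ \Delta^U_{d_1,d_2}(E_r\otimes I)$.

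The remaining check is that this composite coincides with the canonical identification $F_1(U\otimes E_r)\otimes F_2(U\otimes E_r)=(F_1\otimes F_2)(U\otimes E_r)$ via the Yoneda isomorphism of lemma \ref{lm-Yoneda-graded}. This is a formal Hopf-algebra compatibility: the Yoneda isomorphism sends a map $\Gamma^d((V^*)^\vee\otimes I)\to H$ to its evaluation at the canonical weight-$(1,\dots,1)$ element, and $\Delta_{d_1,d_2}$ is precisely the map that decomposes this canonical element compatibly with the tensor product decomposition $H=F_1\otimes F_2$ on the functor side. I expect this last bookkeeping to be the main obstacle, but no new idea is needed beyond the explicit description of the Yoneda pairing. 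Once it is done the right-hand vertical arrow of the diagram is the identity, so commutativity forces the left-hand vertical arrow to be an isomorphism, which yields the final assertion.
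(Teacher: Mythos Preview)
Your proposal is correct and follows essentially the same route as the paper: apply theorem \ref{thm-mult} (naturality plus multiplicativity) to rewrite the left vertical arrow as $f_1\otimes f_2\mapsto (f_1\otimes f_2)\circ \Delta^U_{d_1,d_2}(E_r\otimes I)$ on the $\hom$ side, then identify this with the identity via the graded Yoneda lemma. The paper dispatches your ``remaining check'' in one line by noting that the Yoneda isomorphism of lemma \ref{lm-Yoneda-graded} is concretely given by evaluation at $\Id_{V^*}^{\otimes d}\in\Gamma^d((V^*)^\vee\otimes V^*)$, from which the compatibility with $\Delta_{d_1,d_2}$ is immediate; this is exactly the ``canonical element'' description you allude to, so no obstacle remains.
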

\begin{proof}
 By theorem \ref{thm-mult}, $c_1\otimes c_2\mapsto (\Delta_{d_1,d_2}^{U\,(r)})^*(c_1\cup c_2)$ identifies with the map 
 $$
\begin{array}{ccc}\bigotimes_{i=1}^2\hom(\Gamma^{d_i,U}(E_r\otimes I),F_i)&\xrightarrow[]{\simeq} &\hom(\Gamma^{d_1+d_2,U}(E_r\otimes I),F_1\otimes F_2)\quad (*)\\
f_1\otimes f_2&\mapsto & (f_1\otimes f_2)\circ \Delta_{d_1,d_2}^U(E_r\otimes I)\;.
\end{array}
$$
For all $F$, the graded Yoneda isomorphism $\hom(\Gamma^d((V^*)^\vee\otimes I),F)\simeq F(V^*)$ of lemma \ref{lm-Yoneda-graded} is nothing but evaluation on $\Id_{V^*}^{\otimes d}\in (V^*)^\vee\otimes V^*$. So the map $(*)$ identifies through Yoneda isomorphisms with the identity map $F_1(U\otimes E_r)\otimes F_2(U\otimes E_r) = (F_1\otimes F_2)(U\otimes E_r)$. 
\end{proof}

Let $(A^j)_{j\in\mathbb{N}}$ be a family of strict polynomial functors, and assume that $A^*$ is endowed with a graded algebra structure (i.e. we have maps of strict polynomial functors $A^k\otimes A^\ell\to A^{k+\ell}$ and $\k\to A^0$ which satisfy the axioms of an algebra).  In such a situation, the following holds
\begin{enumerate} 
\item Each $A^j$ splits as a finite direct sum of homogeneous functors of degree $d$: $A^j=\bigoplus_{d\ge 0} A_d^j$, so that the graded functor $A^*=\bigoplus_{d,j}A^j_d$ is actually automatically bigraded. Homogeneity in $\P_\k$ \cite[Prop. 2.6]{FS} implies that $A^*$ is a bigraded algebra.
\item The extension groups $\bigoplus_{i,d,j}\Ext^i(\Gamma^{d,U\,(r)}, A^{j\,(r)})$ are equipped with a trigraded algebra structure with unit $\k=\Ext^{0}(\k^{(r)},\k^{(r)})\to \Ext^{0}(\k^{(r)}, A^{0\,(r)})
$ and with multiplication defined as the composite (the first map is the map $c_1\otimes c_2\mapsto (\Delta_{d_1,d_2}^{U\,(r)})^*(c_1\cup c_2)$ from corollary \ref{cor-iso}, the second one is induced by the multiplication of $A^*$)
\begin{align*}\textstyle\bigotimes_{k=1}^2 \Ext^{i_k}(\Gamma^{d_k,U\,(r)}, A^{j_k\,(r)})&\to \Ext^{i_1+i_2}(\Gamma^{d_1+d_2,U\,(r)}, A^{j_1\,(r)}\otimes A^{j_2\,(r)}) \\
&\to \Ext^{i_1+i_2}(\Gamma^{d_1+d_2,U\,(r)}, A^{j_1+j_2\,(r)}) 
\end{align*}
\end{enumerate}
The following result is straightforward from corollary \ref{cor-iso}.

\begin{corollary}\label{cor-alg}
Let $A^*$ be a family of strict polynomial functors endowed with a graded algebra structure. Let $U$ be a finite dimensional vector space. 
There is an isomorphism of trigraded algebras:
$$\bigoplus_{i,d,j}\Ext^i(\Gamma^{d,U\,(r)}, A^{j\,(r)})\simeq \bigoplus_{i,d,j} A_d^j(U\otimes E_r)\;.$$
Here, $A_d^j$ denotes the homogeneous degree $d$ part of $A^j$, and the degree $i$ on the right handside corresponds to the degree which arises when we evaluate the functor $A_d^j$ on the graded vector space $U\otimes E_r$ ($U$ placed in degree $0$).
\end{corollary}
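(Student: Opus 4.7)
The plan is to establish the isomorphism first as trigraded vector spaces by applying Corollary \ref{cor-iso} termwise to the homogeneous decomposition of $A^*$, and then to verify algebra compatibility by combining the commutative square of Corollary \ref{cor-iso} with the naturality in the target variable of the isomorphism $\Ext^*(\Gamma^{d,U\,(r)},F^{(r)})\simeq F(U\otimes E_r)$.

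First I would observe that by homogeneity of the category $\P_\k$ (\cite[Prop.~2.6]{FS}), we have $\Ext^i(\Gamma^{d,U\,(r)},A^{j\,(r)})=\Ext^i(\Gamma^{d,U\,(r)},A_d^{j\,(r)})$, since $\Gamma^{d,U\,(r)}$ is homogeneous of degree $dp^r$ and only the summand $A_d^j$ of $A^j$ contributes. Applying the first (non-structured) part of Corollary \ref{cor-iso} to each homogeneous piece $A_d^j$ and summing yields the asserted isomorphism at the level of trigraded vector spaces, with the internal grading on $A_d^j(U\otimes E_r)$ providing the cohomological degree.

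To check multiplicativity, I would pick classes $c_k\in\Ext^{i_k}(\Gamma^{d_k,U\,(r)},A_{d_k}^{j_k\,(r)})$ for $k=1,2$ and unwind the definition of their product: it is $(\mu_{j_1,j_2}^{(r)})_*(\Delta_{d_1,d_2}^{U\,(r)})^*(c_1\cup c_2)$, where $\mu_{j_1,j_2}\colon A^{j_1}\otimes A^{j_2}\to A^{j_1+j_2}$ is the multiplication, which by homogeneity restricts to $A_{d_1}^{j_1}\otimes A_{d_2}^{j_2}\to A_{d_1+d_2}^{j_1+j_2}$. The commutative square of Corollary \ref{cor-iso} identifies $(c_1,c_2)\mapsto(\Delta_{d_1,d_2}^{U\,(r)})^*(c_1\cup c_2)$ with the canonical identification $A_{d_1}^{j_1}(U\otimes E_r)\otimes A_{d_2}^{j_2}(U\otimes E_r)=(A_{d_1}^{j_1}\otimes A_{d_2}^{j_2})(U\otimes E_r)$. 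Naturality in the target of the isomorphism of Corollary \ref{cor-iso} then identifies pushforward along $\mu_{j_1,j_2}^{(r)}$ with the map induced by evaluating $\mu_{j_1,j_2}$ on $U\otimes E_r$; but the latter is exactly the product on $\bigoplus_{d,j} A_d^j(U\otimes E_r)$ induced by the algebra structure on $A^*$. The unit matches tautologically via the canonical $\k=\Ext^0(\k^{(r)},\k^{(r)})\to\Ext^0(\k^{(r)},A^{0\,(r)})$ corresponding to the unit $\k\to A^0$ evaluated at $U\otimes E_r$.

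The only potential pitfall is bookkeeping of the three gradings (cohomological, polynomial, and the intrinsic $j$-grading of $A^*$) to ensure the isomorphism respects all of them simultaneously, but this is immediate from the way Corollary \ref{cor-iso} is stated. No new homological input beyond Corollary \ref{cor-iso} is required, which is why the statement reduces to a formal consequence.
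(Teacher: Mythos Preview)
Your proof is correct and is exactly the argument the paper has in mind: the paper simply says the result is ``straightforward from corollary \ref{cor-iso}'' without further details, and what you have written is precisely the unpacking of that claim---applying the isomorphism of Corollary \ref{cor-iso} to each homogeneous piece $A_d^j$, then using the commutative square for the $(\Delta^{U\,(r)})^*(c_1\cup c_2)$ step and the naturality in $F$ for the pushforward along the multiplication $\mu_{j_1,j_2}^{(r)}$.
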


Similarly, if $C^*$ is a family of strict polynomial functors endowed with the structure of a graded coalgebra, then $\bigoplus_{i,d,j}\Ext^i(\Gamma^{d,U\,(r)}, C^{j\,(r)})$ is endowed with the structure of a trigraded coalgebra, whose comultiplication equals the composite (the isomorphism is inverse to the one given by corollary \ref{cor-iso}):
\begin{align*}
\Ext^{i}(\Gamma^{d,U\,(r)}, C^{j\,(r)})&\to \textstyle \bigoplus_{\text{\tiny$
\begin{array}{c}j_1+j_2=j\\d_1+d_2=d\end{array}$}
}\Ext^{i}(\Gamma^{d,U\,(r)}, C_{d_1}^{j_1\,(r)}\otimes C_{d_2}^{j_2\,(r)})\\
&\simeq  \textstyle \bigoplus_{\text{\tiny$
\begin{array}{c}i_1+i_2=i\\j_1+j_2=j\\d_1+d_2=d\end{array}$}
}\textstyle\bigotimes_{k=1}^2 \Ext^{i_k}(\Gamma^{d_k,U\,(r)}, C_{d_k}^{j_k\,(r)})\;.
\end{align*}
And we similarly obtain a trigraded coalgebra isomorphism:
$$\bigoplus_{i,d,j}\Ext^i(\Gamma^{d,U\,(r)}, C^{j\,(r)})\simeq \bigoplus_{i,d,j} C_d^j(U\otimes E_r)\;.$$
Now we want to combine the two previous structures, that is, we consider a family $ H^*$ of strict polynomial functors endowed with a graded Hopf algebra structure without antipode (The case with antipode works similarly and we leave the slight modifications to the interested reader).

There is a trap here: $\bigoplus_{i,d,j}\Ext^i(\Gamma^{d,U\,(r)}, H^{j\,(r)})$ is a trigraded algebra and a trigraded coalgebra, but not a trigraded Hopf algebra in general. There is a sign problem with the partial grading corresponding to the letter `$d$' when one wants to check the compatibility axiom between the multiplication and the comultiplication. This problem can be fixed in two ways. Either double the gradations, that is consider $\Gamma^{d, U\,(r)}$ in degree $2d$, or forget the $d$-grading, that is consider the bigraded algebra structure. We choose this latter solution.   

It is easy to check from our definitions that the coalgebra and the algebra structure of $\bigoplus_{i,d,j}\Ext^i(\Gamma^{d,U\,(r)}, H^{j\,(r)})$ satisfy the axioms of a $(i,j)$-bigraded Hopf algebra (without antipode) (this is also proved when $H$ is a `Hopf exponential functor' in \cite[Lemma 1.11]{FFSS} and in general in \cite[Thm 5.16]{TouzeClassical}). Then, from our description of algebra and coalgebra structures we get the following result (which generalizes \cite[Thm 5.8 (1,2,5)]{FFSS}).

\begin{corollary}
Let $H^*$ be a family of strict polynomial functors endowed with the structure of a graded Hopf algebra. There is an a isomorphism of $(i,j)$-bigraded Hopf algebras:
$$\bigoplus_{i,d,j}\Ext^i(\Gamma^{d,U\,(r)},H^{j\,(r)})\simeq \bigoplus_{i,j} H^j(U\otimes E_r) \;.$$
Here, the degree $i$ on the right handside corresponds to the degree which arises when we evaluate the functor $H^j$ on the graded vector space $U\otimes E_r$ (with $U$ placed in degree $0$).
\end{corollary}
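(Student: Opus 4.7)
The plan is to deduce this statement by combining the already-established algebra isomorphism of Corollary \ref{cor-alg} with its coalgebra counterpart stated just before, observing that both isomorphisms are in fact induced by the \emph{same} underlying map of graded vector spaces, namely the one provided by Corollary \ref{cor-iso}.

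First, I would fix the underlying isomorphism of $(i,j)$-bigraded vector spaces
\[
\Phi\;:\;\bigoplus_{i,d,j}\Ext^i(\Gamma^{d,U\,(r)},H^{j\,(r)})\;\xrightarrow[]{\simeq}\;\bigoplus_{i,d,j} H_d^j(U\otimes E_r)
\]
given by Corollary \ref{cor-iso} (equivalently, by theorem \ref{thm-mult} composed with the graded Yoneda lemma). On the right-hand side, evaluation on the graded vector space $U\otimes E_r$ via lemma \ref{lm-fctgraded} is an exact tensor functor, so it transports the graded Hopf algebra structure of $H^*$ into an $(i,j)$-bigraded Hopf algebra structure on $\bigoplus_{i,j} H^j(U\otimes E_r)$, after we forget the $d$-grading as explained in the paragraph preceding the statement.

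Second, I would invoke Corollary \ref{cor-alg}, applied to the graded algebra structure of $H^*$: this says precisely that $\Phi$ is an algebra map (a priori trigraded, and a fortiori $(i,j)$-bigraded). Dually, the coalgebra analogue stated immediately before the corollary says that $\Phi$ is a coalgebra map. Since both statements concern the \emph{same} map $\Phi$ (both are obtained from Corollary \ref{cor-iso} by naturality), and since the Hopf compatibility axiom holds on the $(i,j)$-bigraded level on both sides (this is the content of the preceding discussion, and it is also exactly what \cite[Lemma 1.11]{FFSS} and \cite[Thm 5.16]{TouzeClassical} establish on the source), the map $\Phi$ is automatically a morphism of $(i,j)$-bigraded Hopf algebras.

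The only substantive point to verify, and what I would expect to be the main (mild) obstacle, is the bookkeeping around the $d$-grading: checking that after forgetting $d$ the algebra map of Corollary \ref{cor-alg} and the coalgebra map of its dual are compatible in the Hopf sense, i.e.\ that the comultiplication on the source is an algebra map. This is a formal consequence of the naturality of cup products, together with the fact that $\Delta^U_{d_1,d_2}$ is itself a morphism of algebras (it comes from the Hopf structure of the divided power algebra), and of the fact that evaluation on $U\otimes E_r$ is tensor-compatible by lemma \ref{lm-fctgraded}. Everything else reduces to the algebra and coalgebra statements already proved, so no further $\Ext$-computation is needed.
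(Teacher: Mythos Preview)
Your proposal is correct and follows essentially the same approach as the paper: the paper derives the result directly from the algebra isomorphism of Corollary~\ref{cor-alg} and its coalgebra analogue, noting (as you do) that both are instances of the single isomorphism of Corollary~\ref{cor-iso} and that the Hopf compatibility on the source is supplied by \cite[Lemma 1.11]{FFSS} and \cite[Thm 5.16]{TouzeClassical}. Your write-up simply makes explicit what the paper leaves to the phrase ``from our description of algebra and coalgebra structures we get the following result.''
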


\begin{example}
Consider the graded Hopf algebra $H^*$ defined for all $j\ge 0$ by $H^{2j+1}:=0$ and $H^{2j}:= S^j(S^2)$. Then, as a $(i,j)$-bigraded Hopf algebra,
$\bigoplus_{i,d,j}\Ext^i(\Gamma^{d\,(r)},S^j(S^{2\,(r)}))$ is a symmetric Hopf algebra on generators $\alpha_i,\beta_j$ and $\gamma_{k,\ell}$, where $0\le i< p^r$, $0\le j<p^r$ and $0\le k<\ell<p^r$, placed in respective bidegrees $(4i,2)$, $(4j,2)$ and $(2(k+\ell),2)$.
\end{example}

\section{The semi-stable range}\label{sec-semistable}

If $F$ and $G$ are strict polynomial functors of degree $d$, the evaluation map: 
$$\mathrm{ev}_n:\Ext^*_{\P_\k}(F,G)\to \Ext^*_{GL_n}(F(\k^n),G(\k^n))$$
is an isomorphism if $n\ge d$ \cite[Cor 3.13]{FS}. We call such values of $n$ the `stable range' (relative to $F,G$). In general, if $n$ is not in the stable range (i.e. $n<d$), the evaluation map may very well not be an isomorphism, as the following example shows it.

\begin{example}\label{ex-calc} Let $p=2$, $d=2$, and let $F=I^{(1)}$ and $G=\Gamma^2$. Then 
$$\Ext^i_{\P_\k}(I^{(1)},\Gamma^2)= \k \text{ if $i=2$, and $0$ otherwise.}$$
On the other hand, if we evaluate on $\k$ then $\k^{(1)}=\Gamma^2(\k)$ is the one dimensional $GL_1=\mathbb{G}_m$ representation of weight $2$, so that we have:
$$\Ext^*_{GL_1}(\k^{(1)},\Gamma^2(\k))=\hom_{GL_1}(\k^{(1)},\Gamma^2(\k))=\k\;.$$
So the evaluation map $\mathrm{ev}_n$ is neither injective, nor surjective.
\end{example}
\begin{proof}
To compute $\Ext^*_{\P_\k}(I^{(1)},\Gamma^2)$, we know that $\Ext^*_{\P_\k}(I^{(1)}, I^{(1)})=E_1$ (by corollary \ref{cor-Ext-I-I}) and that $\Ext^*_{\P_\k}(I^{(1)},S^2)$ equals $\k$, concentrated in degree $0$ (by duality \cite[Prop 2.6]{FS} and by the Yoneda lemma \cite[Thm 2.10]{FS}). Then we use the short exact sequences $I^{(1)}\hookrightarrow S^2\twoheadrightarrow \Lambda^2$ and 
$\Lambda^2\hookrightarrow \Gamma^2\twoheadrightarrow I^{(1)}$.
\end{proof}

Unfortunately, the stable range is not always sufficient for applications to the rational cohomology of $GL_n$. For example, we have computed in corollary \ref{cor-alg} the $\Ext$-algebras $\Ext^*_{\P_\k}(\Gamma^{*,U},A^*)$. One can wonder if the evaluation map 
$$\Ext^*_{\P_\k}(\Gamma^{*,U\,(r)},A^{*\,(r)})\xrightarrow[]{\mathrm{ev}_n}\Ext^*_{GL_n}(\Gamma^{*,U}(\k^{n\,(r)}),A^*(\k^{n\,(r)}))\quad (\star)$$ 
is an isomorphism for some values of $n$. This is not the case in general. For example, take the case of $U=\k$ and $A^*=S^*$, then Friedlander and Suslin's result \cite[Cor 3.13]{FS} shows that $\mathrm{ev}_n$ is an isomorphism in tridegrees $(i,j,k)$ with $j,k\le n/p^r$. But for higher tridegrees, we are out of the stable range, so we don't know  that $\mathrm{ev}_n$ is an isomorphism in these degrees.

The purpose of this section is to improve Friedlander and Suslin's range of values of $n$ for which the evaluation map $\mathrm{ev}_n$ is an isomorphism. We call the values of $n$ such that $n < d$ (i.e. out of Friedlander and Suslin's stable range) but for which $\mathrm{ev}_n:\Ext^*_{\P_\k}(F,G)\to \Ext^*_{GL_n}(F(\k^n),G(\k^n))$ is still an isomorphism `the semi-stable range' (relative to $F,G$). 

The section is organized as follows. In section \ref{subsec-review}, we review some material from \cite[Section 3]{FS}. In section \ref{subsec-n-resolved}, we give conditions on $F$ and $G$  so that the semi-stable range relative to $F,G$ is not trivial. With the help of Troesch complexes, we prove that these conditions are satisfied for $F=\Gamma^{n\,(r)}$ (This is proposition \ref{prop-ex-nresolved}(i)). In particular, the evaluation map $(\star)$ above is an isomorphism for $n\ge p^r\dim U$. In section \ref{subsec-FFT-SFT} we use this fact to prove in theorem \ref{cor-CFFT} a cohomological version of the First Fundamental Theorem (FFT) and Second Fundamental Theorem (SFT) for $GL_n$, for $n$ big enough. In question \ref{qn-CFFT}, we give a conjecture for small values of $n$ (see also remark \ref{rk-SFT}).

\subsection{Review of section 3 of {\cite{FS}}}\label{subsec-review}
We keep the notations of \cite{FS}, so  
$S(n,d)=\Gamma^d(\hom_\k(\k^n,\k^n))$ is the Schur algebra and $(\mathrm{Pol})_{n,d}$ is the full subcategory of the category of rational $GL_n$-modules consisting of homogeneous polynomial modules of degree $d$, which are finite dimensional. Using the adjunction isomorphism: 
$$\hom_\k(\Gamma^d(\hom_\k(\k^n,\k^n))\otimes V, V)\simeq  \hom_\k(V, S^d(\hom_\k(\k^n,\k^n)^\vee)\otimes V) $$
one sees that being a left $S(n,d)$-module is equivalent to being a $S^d(\hom_\k(\k^n,\k^n)^\vee)$-comodule, that is, a homogeneous polynomial $GL_n$-module of degree $d$. Whence an equivalence (it is even an isomorphism) of categories (for all $n>0$ and all $d\ge 0$):
$$\mathrm{Mod}\{S(n,d)\}\simeq (\mathrm{Pol})_{n,d}\qquad(1)$$
(By definition, the modules of $\mathrm{Mod}\{S(n,d)\}$ are finite dimensional). By \cite[Cor 3.12.1]{FS} one knows that (again for all $n>0$ and all $d\ge 0$) the embedding of categories
$$(\mathrm{Pol})_{n,d}\hookrightarrow \text{ rational $GL_n$-modules}\qquad(2)$$
induces an isomorphism on $\Ext$-groups.
If $F\in\P_{\k,d}$ is homogeneous of degree $d$, then for all $V,W\in\mathcal{V}_\k$ we have a homogeneous polynomial map of degree $d$: $\hom_\k(V,W)\to \hom_\k(F(V),F(W))$, or equivalently a $\k$-linear map $\Gamma^d(\hom_\k(V,W))\to \hom_\k(F(V),F(W))$. If we take $V=W=\k^n$, this $\k$-linear map provides a $S(n,d)$-module structure on $F(\k^n)$, so we have an evaluation functor (for all $n>0$ and all $d\ge 0$):
$$\P_{\k,d}\to \mathrm{Mod}\{S(n,d)\}\;,\quad F\mapsto F(\k^n)\qquad(3)$$
Finally, the evaluation functor $\P_{\k,d}\to \text{ rational $GL_n$-modules}$, $F\mapsto F(\k^n)$ equals the composite of the functors $(1)$, $(2)$ and $(3)$:
$$\P_{\k,d}\to \mathrm{Mod}\{S(n,d)\}\simeq (\mathrm{Pol})_{n,d}\hookrightarrow \text{ rational $GL_n$-modules}\;.$$
We sum up the situation in the following proposition.
\begin{proposition}
For all $n>0$ and all $d\ge 0$, the map $\mathrm{ev}_n$ induced by evaluation on the standard representation of $GL_n$:
$$\mathrm{ev}_n:\Ext^*_{\P_{\k,d}}(F,G)\to \Ext^*_{GL_n}(F(\k^n),G(\k^n))$$
decomposes as:
$$\Ext^*_{\P_{\k,d}}(F,G)\xrightarrow[]{} \Ext^*_{S(n,d)}(F(\k^n),G(\k^n))\simeq \Ext^*_{GL_n}(F(\k^n),G(\k^n))\;.$$
\end{proposition}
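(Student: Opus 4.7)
The proposition is essentially a recapitulation of the three functors $(1)$, $(2)$, $(3)$ described immediately above it, so the plan is to verify that each induces the appropriate map on $\Ext$-groups and that the composite is the ordinary evaluation map.

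First I would observe that each of the three functors $(1)$, $(2)$, $(3)$ is an exact additive functor between abelian categories (functor $(1)$ is even an isomorphism of categories as recalled, and functor $(2)$ is a full exact embedding). Consequently, each induces a natural $\k$-linear map on $\Ext$-groups, and composition of functors corresponds to composition of the induced maps. Thus there is a well-defined composite
\[
\Ext^*_{\P_{\k,d}}(F,G) \to \Ext^*_{S(n,d)}(F(\k^n),G(\k^n)) \to \Ext^*_{(\mathrm{Pol})_{n,d}}(F(\k^n),G(\k^n)) \to \Ext^*_{GL_n}(F(\k^n),G(\k^n)),
\]
and by construction it sends a Yoneda class represented by an exact sequence in $\P_{\k,d}$ to its image under evaluation on $\k^n$, hence agrees with $\mathrm{ev}_n$.

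Next I would explain why the middle arrow is an isomorphism: it is induced by the equivalence of categories $(1)$, which is even an isomorphism of abelian categories, and any equivalence of abelian categories induces an isomorphism on $\Ext$-groups. Then I would invoke \cite[Cor 3.12.1]{FS} to conclude that the embedding $(\mathrm{Pol})_{n,d}\hookrightarrow \text{rational }GL_n\text{-modules}$ induces an isomorphism on $\Ext$-groups, which handles the third arrow and gives the displayed isomorphism
\[
\Ext^*_{S(n,d)}(F(\k^n),G(\k^n))\simeq \Ext^*_{GL_n}(F(\k^n),G(\k^n)).
\]

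There is no genuine obstacle here: the statement is a bookkeeping assembly of the ingredients $(1)$, $(2)$, $(3)$ collected in the review. The only subtlety worth flagging explicitly is that the first arrow $\Ext^*_{\P_{\k,d}}(F,G)\to \Ext^*_{S(n,d)}(F(\k^n),G(\k^n))$ need not be an isomorphism in general (as Example \ref{ex-calc} makes clear)—that is precisely what motivates the rest of the section on the semi-stable range. So in the write-up I would keep the proof to a few lines and emphasise that the substantive content of the proposition lies in identifying the target of $\mathrm{ev}_n$ with $\Ext^*_{S(n,d)}$, thereby reducing the study of $\mathrm{ev}_n$ to an $\Ext$-computation over the Schur algebra.
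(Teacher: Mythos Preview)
Your proposal is correct and matches the paper's approach exactly: the paper gives no separate proof of this proposition, introducing it with ``We sum up the situation in the following proposition'' precisely because it is the bookkeeping assembly of the functors $(1)$, $(2)$, $(3)$ and the cited result \cite[Cor 3.12.1]{FS} that you spell out. Your added remark that the first arrow need not be an isomorphism is also apt and anticipates the purpose of the rest of the section.
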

Friedlander and Suslin prove \cite[Thm 3.2]{FS} that if $n\ge d$, the evaluation functor $\P_{\k,d}\xrightarrow[]{(3)} \mathrm{Mod}\{S(n,d)\}$ is an equivalence of categories, but we are interested in the general case, where this is no longer true. We shall prove below that the evaluation functor induces an isomorphism on the level of $\Ext$-groups when $F$ (or $G$) admit special projective resolutions (or injective coresolutions). We call such functors `$n$-resolved' and '$n$-coresolved'.

\subsection{$n$-resolved functors}\label{subsec-n-resolved}
The notion of $n$-resolved functors is a natural generalization of $n$-generated functors, defined in \cite{FS}. So we first recall the definition of $n$-generated functors from \cite{FS}.

\begin{definition}[{\cite[Thm 2.10]{FS}}]
Let $F\in\P_{\k,d}$ and let $n$ be a positive integer. Recall the map $\theta_F:\Gamma^{d,n}\otimes F(\k^n)\to F$ defined 
for all $V\in\mathcal{V}_\k$ by: 
$$\Gamma^d(\hom_\k(\k^n,V))\otimes F(\k^n)\to F(V)\,,\qquad f\otimes x\mapsto F(f)(x)\;.$$
We call $F$ $n$-generated if $\theta_F$ is an epimorphism.
\end{definition}

\begin{example} $\Gamma^{d,n}$ itself is $n$-generated.
\end{example}

\begin{lemma}[Properties of $n$-generation]\label{lm-prop-n-gen}
~

\begin{itemize}
\item[(i)] A functor $F$ is $n$-generated if and only if there is a $n$-generated projective $P$ and an epimorphism $P\twoheadrightarrow F$.

In particular, a direct summand or a quotient of a $n$-generated functor is again $n$-generated.
\item[(ii)] If $F$ is $n$-generated then $F$ is $(n+1)$-generated.
\item[(iii)] If $F$ has degree $d$ then $F$ is $d$-generated.
\item[(iv)] If $F_i$ , $i=1,2$ is $n_i$-generated then $F_1\otimes F_2$ is $(n_1+n_2)$-generated.
\item[(v)] If $F$ is $n$-generated and $\ell$ is a positive integer, then $F(\k^\ell\otimes I)$ is $n\ell$-generated.
\item[(vi)] If $F$ is $n$-generated then $F^{(r)}$ is $n$-generated 
\end{itemize}
\end{lemma}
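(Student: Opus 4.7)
The plan is to reformulate $n$-generation as the existence of an epimorphism from a direct sum of the representable projectives $\Gamma^{d,n}$, and then deduce (ii)--(vi) by elementary manipulations of such presentations. The only genuine obstacle will be (vi), where the main point is to control how the twist $F\mapsto F^{(r)}$ interacts with these generators.

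For (i), one first observes that $\Gamma^{d,n}$ is itself a projective (Yoneda, \cite[Thm 2.10]{FS}) and $n$-generated (by inspection of $\theta_{\Gamma^{d,n}}$). The forward direction follows by taking $P=\Gamma^{d,n}\otimes F(\k^n)$, a direct sum of copies of $\Gamma^{d,n}$. For the backward direction, naturality of $\theta$ in $F$ places $\theta_F$ in a commutative square with $\theta_P$, and surjectivity of $\theta_P$ combined with the epi $P\twoheadrightarrow F$ forces $\theta_F$ to be an epimorphism. The ``in particular'' clause is the same naturality argument applied to an epi $F\twoheadrightarrow F'$.

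For (ii)--(v) the workhorse is the exponential formula. (ii): the decomposition $\k^{n+1}=\k^n\oplus\k$ makes $\Gamma^{d,n}$ a direct summand of $\Gamma^{d,n+1}$, so a presentation by the former lifts to one by the latter. (iii): $\Gamma^{d,d}$ is a projective generator of $\P_{\k,d}$ by \cite[Thm 3.2]{FS}, hence every $F\in\P_{\k,d}$ has a presentation $\bigoplus\Gamma^{d,d}\twoheadrightarrow F$. (iv): $\k^{n_1+n_2}=\k^{n_1}\oplus\k^{n_2}$ exhibits $\Gamma^{d_1,n_1}\otimes\Gamma^{d_2,n_2}$ as a direct summand of $\Gamma^{d_1+d_2,n_1+n_2}$; combining with (i) and exactness of tensor products of epimorphisms yields the result. (v): the identification $\Gamma^{d,n}(\k^\ell\otimes V)=\Gamma^{d,n\ell}(V)$ (from $\hom(\k^n,\k^\ell\otimes V)=\hom(\k^{n\ell},V)$) converts any presentation by $\Gamma^{d,n}$'s into one by $\Gamma^{d,n\ell}$'s upon precomposition by $\k^\ell\otimes I$.

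The main obstacle is (vi). Since precomposition by $I^{(r)}$ is an exact endofunctor of $\P_\k$ (exactness being checked pointwise), any epi $\bigoplus\Gamma^{d,n}\twoheadrightarrow F$ yields an epi $\bigoplus\Gamma^{d,n(r)}\twoheadrightarrow F^{(r)}$, so by (i) it suffices to prove each $\Gamma^{d,n(r)}$ is itself $n$-generated. The exponential formula gives
\[
\Gamma^{d,n(r)}(V)=\Gamma^d((\k^n)^\vee\otimes V^{(r)})=\bigoplus_{\nu}\Gamma^{\nu_1(r)}(V)\otimes\cdots\otimes\Gamma^{\nu_n(r)}(V),
\]
the sum being over $n$-tuples of nonnegative integers with $\nu_1+\cdots+\nu_n=d$, so by (iv) it is enough to prove each factor $\Gamma^{m(r)}$ is $1$-generated (the factors with $\nu_i=0$ collapse to $\k$, trivially $0$-generated). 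For this, $\theta_{\Gamma^{m(r)}}$ is a map $\Gamma^{mp^r}\otimes\Gamma^{m(r)}(\k)=\Gamma^{mp^r}\to \Gamma^{m(r)}$, and by Yoneda $\hom(\Gamma^{mp^r},\Gamma^{m(r)})=\Gamma^{m(r)}(\k)=\k$ is one-dimensional; hence $\theta_{\Gamma^{m(r)}}$ is a scalar multiple of the canonical surjection $\Gamma^{mp^r}\twoheadrightarrow\Gamma^{m(r)}$ dual to the inclusion $S^{m(r)}\hookrightarrow S^{mp^r}$, and a direct evaluation at $\k$ checks that this scalar is nonzero. Promoting each $\Gamma^{\nu_1(r)}\otimes\cdots\otimes\Gamma^{\nu_n(r)}$ to $n$-generation by (ii), and using that direct sums of $n$-generated functors are $n$-generated, finishes the argument.
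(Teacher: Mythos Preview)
Your proof is correct and follows essentially the same route as the paper for (i)--(v). For (vi), the paper proceeds more directly: it simply invokes the epimorphism $\Gamma^{dp^r,n}\twoheadrightarrow \Gamma^{d,n\,(r)}$ (dual to the inclusion $S^{d,n\,(r)}\hookrightarrow S^{dp^r,n}$), which already shows $\Gamma^{d,n\,(r)}$ is $n$-generated and hence so is $F^{(r)}$ by (i). Your decomposition via the exponential formula and reduction to the case $\Gamma^{m\,(r)}$ with $n=1$ is valid but more circuitous; in effect you are reconstructing that same epimorphism summand by summand.
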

\begin{proof}
For (i), the `only if' part is the definition, for the `if' part, use the commutative diagram:
$$\xymatrix{
\Gamma^{d,n}\otimes P(\k^n)\ar@{->>}[r]^-{\theta_P}\ar@{->>}[d]& P\ar@{->>}[d]\\
\Gamma^{d,n}\otimes F(\k^n)\ar@{->}[r]^-{\theta_F}& F\;.\\
}$$
For (ii) use that $\Gamma^{d,n}$ is a quotient of the $(n+1)$-generated functor
$\Gamma^{d,n+1}$. (iii) is just \cite[Prop 2.9]{FS}. For (iv), use that $\Gamma^{d_1,n_1}\otimes \Gamma^{d_2,n_2}$ is a quotient of $\Gamma^{d_1+d_2,n_1+n_2}$ by the exponential formula. For (v), use that $\Gamma^{d,n}(\k^\ell\otimes I)\simeq \Gamma^{d,n\ell}$. Finally, (vi) follows from the epimorphism $\Gamma^{dp^r,n}\twoheadrightarrow \Gamma^{d,n\,(r)}$.
\end{proof}

The following lemma is the key result for our applications.

\begin{lemma}\label{lm-projngen}
Let $P$ be a $n$-generated projective of $\P_{\k,d}$. 
\begin{itemize}
\item[(i)] $P(\k^n)$ is projective in the category $\mathrm{Mod}\{S(n,d)\}$
\item[(ii)] For all $G\in\P_{\k,d}$ the evaluation map induces an isomorphism
$$\hom_{\P_\k}(P,G)\simeq \hom_{S(n,d)}(P(\k^n), G(\k^n))\;.$$
\end{itemize}
\end{lemma}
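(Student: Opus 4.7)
The plan is to reduce everything to the case of the representable projective $\Gamma^{d,n}$ and then use a direct-summand argument based on the $n$-generation hypothesis.

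First I would record the crucial identification $\Gamma^{d,n}(\k^n) = \Gamma^d(\hom_\k(\k^n,\k^n)) = S(n,d)$: by construction this is exactly the regular $S(n,d)$-module, in particular a projective object of $\mathrm{Mod}\{S(n,d)\}$. Next, for any $G\in\P_{\k,d}$ the Yoneda lemma (\cite[Thm 2.10]{FS}) gives $\hom_{\P_\k}(\Gamma^{d,n},G)\simeq G(\k^n)$, while for the Schur algebra side the tautological identification $\hom_{S(n,d)}(S(n,d),M)\simeq M$ gives $\hom_{S(n,d)}(\Gamma^{d,n}(\k^n),G(\k^n))\simeq G(\k^n)$. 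One checks that the evaluation map intertwines both isomorphisms (both send a map to its value on the element $\gamma_d(\Id_{\k^n})\in S(n,d)$), so (i) and (ii) hold for $P=\Gamma^{d,n}$, and by additivity for any finite direct sum of copies of $\Gamma^{d,n}$.

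Now I would use the $n$-generation of $P$ to bootstrap from this special case. By definition the natural map
$$\theta_P\colon \Gamma^{d,n}\otimes_\k P(\k^n)\longrightarrow P$$
is an epimorphism, and the source is isomorphic to $(\Gamma^{d,n})^{\oplus m}$ with $m=\dim P(\k^n)<\infty$. Since $P$ is projective, $\theta_P$ splits, so $P$ is a direct summand of $(\Gamma^{d,n})^{\oplus m}$. Evaluating at $\k^n$, the module $P(\k^n)$ is then a direct summand of $S(n,d)^{\oplus m}$, hence a projective $S(n,d)$-module, which proves (i). For (ii), both sides of the evaluation map are additive contravariant functors of $P$ that preserve direct summands, so the isomorphism established for $\Gamma^{d,n}$ transfers to its direct summand $P$.

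There is no serious obstacle here: the only point requiring care is the compatibility of the evaluation map with the two Yoneda-type identifications, and this is a direct unpacking of definitions (the element $\gamma_d(\Id_{\k^n})\in\Gamma^{d,n}(\k^n)$ plays the role of both the Yoneda `universal element' and the unit of $S(n,d)$). Everything else is formal manipulation of direct summands, using only lemma \ref{lm-prop-n-gen}(i) and the finite-dimensionality of $P(\k^n)$.
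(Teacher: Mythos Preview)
Your proof is correct and follows essentially the same route as the paper: reduce to $P=\Gamma^{d,n}$ by exhibiting an $n$-generated projective as a direct summand of a finite sum of copies of $\Gamma^{d,n}$, then check (i) via $\Gamma^{d,n}(\k^n)=S(n,d)$ and (ii) via the Yoneda lemma, observing that evaluation at the element $\gamma_d(\Id_{\k^n})=\Id_{\k^n}^{\otimes d}$ realizes both identifications simultaneously.
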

\begin{proof}
Since $n$-generated projectives are direct summands of finite direct sums of $\Gamma^{d,n}$, it suffices to prove the lemma for $P=\Gamma^{d,n}$. Then (i) is trivial since $\Gamma^{d,n}(\k^n)= S(n,d)$, and (ii) follows from the commutative diagram:
$$\xymatrix{
\hom_{\P_\k}(\Gamma^{d,n},G)\ar[r]\ar[d]^-{\simeq} & \hom_{S(n,d)}(\Gamma^{d,n}(\k^n), G(\k^n))\ar[d]^-{\simeq}\\
G(\k^n)\ar[r]^-{=}&G(\k^n)
}$$
where the vertical isomorphism on the left is the Yoneda lemma (which is nothing but evaluation on $\Id_{\k^n}^{\otimes d}\in \Gamma^{d,n}(\k^n)$), and the vertical isomorphism on the right is evaluation on $\Id_{\k^n}^{\otimes d}$ which is the unit of $\Gamma^{d,n}(\k^n)=S(n,d)$. 
\end{proof}

Now we briefly recall the dual notion of $n$-cogenerated functors. Let $S^{d,n}=(\Gamma^{d,n})^\sharp$ be the standard injective. By duality, the map $\theta_{F^\sharp}:\Gamma^{d,n}\otimes F^\sharp(\k^n)\to F^\sharp$ yields a map $\theta_{F^\sharp}^\sharp: F\to S^{d,n}\otimes F((\k^n)^\vee)$.
\begin{definition}
Let $F\in\P_{\k,d}$. We call $F$ $n$-cogenerated if $F^\sharp$ is $n$-generated, or equivalently if the map $\theta_{F^\sharp}^\sharp: F\to S^{d,n}\otimes F((\k^n)^\vee)$ is a monomorphism.
\end{definition}

By duality, one sees that $S^{d,n}$ is $n$-cogenerated. Also, lemma \ref{lm-prop-n-gen} dualizes easily. Now we have an analogue of lemma \ref{lm-projngen}:
\begin{lemma}\label{lm-injncogen}
Let $J\in\P_{\k,d}$ denote a $n$-cogenerated injective.
\begin{itemize}
\item[(i)] $J(\k^n)$ is injective in the category $\mathrm{Mod}\{S(n,d)\}$
\item[(ii)] For all $F\in\P_{\k,d}$ the evaluation map induces an isomorphism
$$\hom_{\P_\k}(F,J)\simeq \hom_{S(n,d)}(F(\k^n), J(\k^n))\;.$$
\end{itemize}
\end{lemma}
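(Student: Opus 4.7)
This lemma is dual to Lemma \ref{lm-projngen}, and I would prove it by dualizing that proof. Two ingredients drive the duality. First, the Kuhn duality $F\mapsto F^\sharp$ is an anti-equivalence of $\P_{\k,d}$ which swaps $\Gamma^{d,n}$ with $S^{d,n}$, swaps projectives with injectives, and (by the very definition of $n$-cogenerated) swaps $n$-generated with $n$-cogenerated functors. Second, $\k$-linear duality gives an anti-equivalence of $\mathrm{Mod}\{S(n,d)\}$ (via the canonical anti-involution of the Schur algebra) sending the regular module $S(n,d)$ to the injective cogenerator $S(n,d)^\vee$. Evaluation on $\k^n$ intertwines these two dualities: $F^\sharp(\k^n)\simeq F(\k^n)^\vee$ as $S(n,d)$-modules, and in particular $S^{d,n}(\k^n)\simeq S(n,d)^\vee$.

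My first step is to reduce to the case $J=S^{d,n}$. Since $J^\sharp$ is an $n$-generated projective, it is a direct summand of $(\Gamma^{d,n})^{\oplus k}$ for some $k$, so dually $J$ is a direct summand of $(S^{d,n})^{\oplus k}$. Both (i) and (ii) are clearly stable under finite direct sums and direct summands, so this reduction is harmless.

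For (i), Lemma \ref{lm-projngen}(i) says that $\Gamma^{d,n}(\k^n)=S(n,d)$ is projective in $\mathrm{Mod}\{S(n,d)\}$, and dualizing gives that $S^{d,n}(\k^n)\simeq S(n,d)^\vee$ is injective. For (ii), I would set up the dual analogue of the diagram used in the proof of Lemma \ref{lm-projngen}:
$$\xymatrix{
\hom_{\P_\k}(F, S^{d,n})\ar[r]\ar[d]^-{\simeq} & \hom_{S(n,d)}(F(\k^n), S^{d,n}(\k^n))\ar[d]^-{\simeq}\\
F^\sharp(\k^n)\ar[r]^-{=} & F^\sharp(\k^n)
}$$
The left vertical arrow is the dual form of the Yoneda lemma, $\hom_{\P_\k}(F, S^{d,V})\simeq F^\sharp(V)$, from \cite[Thm 2.10]{FS} together with duality. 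The right vertical arrow comes from the standard isomorphism $\hom_A(M, A^\vee)\simeq M^\vee$ (natural in $M$) for a finite dimensional algebra $A$, applied to $A=S(n,d)$ and combined with $F(\k^n)^\vee\simeq F^\sharp(\k^n)$. Commutativity is checked by chasing the element of $S^{d,n}(\k^n)$ dual to $\Id_{\k^n}^{\otimes d}$ through both compositions.

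The main (quite mild) obstacle is the bookkeeping needed to verify that the functor duality $F\mapsto F^\sharp$ and the $\k$-linear duality on $S(n,d)$-modules are compatible under evaluation on $\k^n$; this reduces to invoking the canonical anti-involution of the Schur algebra together with the definition $F^\sharp(V)=F(V^\vee)^\vee$, and does not require anything beyond standard Schur algebra formalism.
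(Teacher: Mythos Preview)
Your proposal is correct and follows the same strategy as the paper: dualize Lemma~\ref{lm-projngen}. The only technical difference is in how you handle module duality. The paper observes that if $M$ is a left $S(n,d)$-module then $M^\vee$ is naturally a \emph{right} $S(n,d)$-module, proves the right-module analogue of Lemma~\ref{lm-projngen}, and then uses the commutative square
\[
\xymatrix{
\hom_{\P_\k}(F,G)\ar[d]\ar[r]^-{\simeq}&\hom_{\P_\k}(G^\sharp,F^\sharp)\ar[d] \\
\hom_{\mathrm{Mod}\{S(n,d)\}}(F(\k^n), G(\k^n))\ar[r]^-{\simeq}&\hom_{\{S(n,d)\}\mathrm{Mod}}(G(\k^n)^\vee, F(\k^n)^\vee)
}
\]
(the right vertical arrow being evaluation on the right module $\k^{n\,\vee}$) to transport the result back. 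You instead stay in the category of left modules by invoking the canonical anti-involution of $S(n,d)$ to turn $M^\vee$ into a left module, and then run the dual Yoneda diagram directly. These are equivalent bookkeeping choices; the paper's version spares you the compatibility check between $F\mapsto F^\sharp$ and the anti-involution that you flag at the end, at the cost of introducing the right-module category.
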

\begin{proof} First, it is easy to prove that lemma \ref{lm-projngen} has an analogue in the category $\{S(n,d)\}\mathrm{Mod}$ of finite dimensional \emph{right} $S(n,d)$-modules. Now if $M$ is a left $S(n,d)$-module, it dual $M^\vee$ is naturally a right $S(n,d)$-module, and we have a commutative diagram, natural in $F,G$:
$$\xymatrix{
\hom_{\P_\k}(F,G)\ar[d]\ar[r]^-{\simeq}&\hom_{\P_\k}(G^\sharp,F^\sharp)\ar[d] \\
\hom_{\mathrm{Mod}\{S(n,d)\}}(F(\k^n), G(\k^n))\ar[r]^-{\simeq}&\hom_{\{S(n,d)\}\mathrm{Mod}}(G(\k^n)^\vee, F(\k^n)^\vee)
}$$
where the vertical arrow on the right is evaluation on the right $S(n,d)$-module $\k^{n\,\vee}$. So lemma \ref{lm-injncogen} is simply a translation of the right-$S(n,d)$-module analogue of lemma \ref{lm-projngen}.
\end{proof}

\begin{definition}
Let $F\in\P_{\k,d}$ and let $n$ be a positive integer. We say that $F$ is $n$-resolved if $F$ admits a resolution by $n$-generated projectives. We say that $F$ is n-coresolved if $F$ admits a coresolution by $n$-cogenerated injectives (or equivalently if $F^\sharp$ is $n$-resolved).
\end{definition}

The interest of $n$-(co)resolved functors lies in the following theorem, which is an immediate consequence of lemmas \ref{lm-projngen} and \ref{lm-injncogen}.

\begin{theorem}\label{thm-nres-iso}
Let $F,G\in \P_{\k,d}$, and let $n$ be an integer such that $F$ is $n$-resolved, or $G$ is $n$-coresolved. Then the evaluation map is an isomorphism:
$$\Ext^*_{\P_\k}(F,G)\xrightarrow[]{\simeq} \Ext^*_{GL_n}(F(\k^n),G(\k^n))\;. $$
\end{theorem}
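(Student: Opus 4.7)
The plan is to factor the evaluation map through the category of $S(n,d)$-modules, using the proposition recalled in section~\ref{subsec-review}:
$$\mathrm{ev}_n:\Ext^*_{\P_{\k,d}}(F,G)\xrightarrow[]{\alpha} \Ext^*_{S(n,d)}(F(\k^n),G(\k^n))\xrightarrow[]{\simeq}\Ext^*_{GL_n}(F(\k^n),G(\k^n)),$$
where the second arrow is always an isomorphism. So everything reduces to showing that the comparison map $\alpha$ is an isomorphism under either hypothesis on $F$ or $G$.

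First I would treat the case where $F$ is $n$-resolved. Choose a resolution $P_\bullet\twoheadrightarrow F$ by $n$-generated projectives of $\P_{\k,d}$. Evaluation on $\k^n$ is an exact functor $\P_{\k,d}\to \mathrm{Mod}\{S(n,d)\}$ (it is evaluation at an object, hence preserves kernels and cokernels), so $P_\bullet(\k^n)\twoheadrightarrow F(\k^n)$ is still a resolution. By lemma~\ref{lm-projngen}(i) each $P_i(\k^n)$ is projective in $\mathrm{Mod}\{S(n,d)\}$, so this is a projective resolution usable to compute $\Ext^*_{S(n,d)}$. The map $\alpha$ is then realized on the chain level by the evaluation map of complexes
$$\hom_{\P_\k}(P_\bullet,G)\longrightarrow \hom_{S(n,d)}(P_\bullet(\k^n), G(\k^n)),$$
which is an isomorphism in every degree by lemma~\ref{lm-projngen}(ii). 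Passing to homology gives the isomorphism claimed.

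The case where $G$ is $n$-coresolved is dual: take a coresolution $G\hookrightarrow J^\bullet$ by $n$-cogenerated injectives in $\P_{\k,d}$, obtain by exactness of evaluation an injective coresolution $G(\k^n)\hookrightarrow J^\bullet(\k^n)$ of $S(n,d)$-modules (injectivity of each term follows from lemma~\ref{lm-injncogen}(i)), and conclude via the degreewise isomorphism $\hom_{\P_\k}(F,J^\bullet)\simeq \hom_{S(n,d)}(F(\k^n),J^\bullet(\k^n))$ of lemma~\ref{lm-injncogen}(ii).

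There is essentially no genuine obstacle here: the argument is the usual one for comparing derived functors via an adapted (co)resolution, and the two nontrivial inputs (that $n$-generated projectives evaluate to projective $S(n,d)$-modules, and that $\hom$ is preserved on the nose by evaluation) have already been isolated in lemmas~\ref{lm-projngen} and~\ref{lm-injncogen}. The only minor point to keep in mind is the exactness of the evaluation functor $F\mapsto F(\k^n)$, which allows one to transfer a (co)resolution in $\P_{\k,d}$ to a (co)resolution in $\mathrm{Mod}\{S(n,d)\}$.
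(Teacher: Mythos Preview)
Your proof is correct and follows essentially the same route as the paper: factor the evaluation through $\mathrm{Mod}\{S(n,d)\}$, transfer an $n$-generated projective (resp.\ $n$-cogenerated injective) resolution via exactness of evaluation, and apply lemmas~\ref{lm-projngen} and~\ref{lm-injncogen} to get a chain-level isomorphism. The paper's version is slightly terser but the argument is identical.
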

\begin{proof}Since the evaluation map decomposes as
$$\Ext^*_{\P_{\k,d}}(F,G)\to \Ext^*_{S(n,d)}(F(\k^n),G(\k^n))\simeq \Ext^*_{GL_n}(F(\k^n),G(\k^n))\;,$$
it suffices to prove that the first map is an isomorphism.
Assume that $F$ is $n$-resolved. Let $P_*$ be a resolution of $F$ by $n$-generated projectives. By lemma  \ref{lm-projngen}(i), the evaluation map $\Ext^*_{\P_\k}(F,G)\to \Ext^*_{S(n,d)}(F(\k^n),G(\k^n))$ is obtained by taking the homology of the chain map:
$\hom_{\P_\k}(P_*,G)\to \hom_{S(n,d)}(P_*(\k^n),G(\k^n))$. This latter map is an isomorphism by lemma \ref{lm-projngen}(ii) and we are done. The proof for $G$ $n$-coresolved is similar. 
\end{proof}

To make an effective use of theorem \ref{thm-nres-iso}, one needs ways of building $n$-resolved functors. Here come Troesch complexes into play.
\begin{proposition}[Examples of $n$-resolved functors]~\label{prop-ex-nresolved}
\begin{itemize}
\item[(i)] For all $r\ge 0$ and all $d\ge 0$, $\Gamma^{d\,(r)}$ is $p^r$-resolved.
\item[(ii)] If $F$ has degree $d$ then $F$ is $d$-resolved.
\item[(iii)] If $F_i$, $i=1,2$ is $n_i$ resolved then $F_1\otimes F_2$ is $(n_1+n_2)$-resolved.
\item[(iv)] If $F$ is $n$-resolved then $F$ is $(n+1)$-resolved.
\item[(v)] If $F$ is $n$-resolved and if $\ell$ is a positive integer, then $F(\k^\ell\otimes I)$ is $n\ell$-resolved.
\end{itemize} 
\end{proposition}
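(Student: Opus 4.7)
The plan is to dispose of the four formal parts (iv), (v), (iii), (ii) quickly, reserving the real work for (i) --- the only one that genuinely requires Troesch complexes.

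Parts (iv) and (v) will be essentially immediate. For (iv), any projective resolution by $n$-generated projectives is a fortiori one by $(n+1)$-generated projectives, by Lemma \ref{lm-prop-n-gen}(ii). For (v), I would use that precomposition by $V \mapsto \k^\ell \otimes V$ is an exact endofunctor of $\P_\k$ (exactness in $\P_\k$ is tested pointwise) and sends $\Gamma^{d,n}$ to $\Gamma^{d,n\ell}$, so that combined with Lemma \ref{lm-prop-n-gen}(v) it transforms any $n$-generated projective resolution into an $n\ell$-generated one.

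For part (iii) the plan is the standard K\"unneth argument. Starting from resolutions $P_\bullet \to F_1$ and $Q_\bullet \to F_2$ by $n_i$-generated projectives, I would show that $\mathrm{Tot}(P_\bullet \otimes Q_\bullet)$ is a projective resolution of $F_1 \otimes F_2$: projectivity of $P_i \otimes Q_j$ comes from the exponential formula exhibiting $\Gamma^{d_1,n_1} \otimes \Gamma^{d_2,n_2}$ as a direct summand of $\Gamma^{d_1+d_2,n_1+n_2}$, and acyclicity of the total complex is the usual bicomplex argument using that the tensor product over the field $\k$ is exact in both arguments. Each term of the total complex is then $(n_1+n_2)$-generated by Lemma \ref{lm-prop-n-gen}(i),(iv). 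Part (ii) will follow by induction: Lemma \ref{lm-prop-n-gen}(iii) says any degree-$d$ functor $F$ is $d$-generated, so the canonical map $\theta_F : \Gamma^{d,d} \otimes F(\k^d) \twoheadrightarrow F$ is a surjection from a $d$-generated projective, its kernel is still of degree $d$ and hence again $d$-generated, and iterating produces the required resolution.

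The main work lies in (i), and this is the only step where I expect any real difficulty. For $r=0$ the statement is trivial since $\Gamma^d = \Gamma^{d,1}$ is already $1$-generated projective, so assume $r \geq 1$. My plan is to dualize a Troesch coresolution. Applying Proposition \ref{prop-res1} to the one-tuple $\mu = (d)$, the $p$-complex $(B_{p^rd}(r)^*, \delta)$ is a $p$-coresolution of $S^{d(r)}$, whose contraction $T(S^d, r)^*$ from Definition \ref{def-TSmur} is an injective coresolution of $S^{d(r)}$. Dualizing with $-^\sharp$ produces a projective resolution $T(S^d, r)^{*,\sharp}$ of $(S^{d(r)})^\sharp = \Gamma^{d(r)}$. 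Each term $T(S^d, r)^i$ is a direct summand of $B_{p^rd}(r)^* = S^{p^rd}(\sha_r \otimes I)$, which by the exponential formula decomposes as a direct sum of functors $S^{\mu_0} \otimes \cdots \otimes S^{\mu_{p^r-1}}$ over $p^r$-tuples $\mu$ of weight $p^rd$; dualizing, each term of the projective resolution is a direct sum of functors $\Gamma^{\mu_0} \otimes \cdots \otimes \Gamma^{\mu_{p^r-1}}$, each of which --- again by the exponential formula --- is a direct summand of $\Gamma^{p^rd}(\k^{p^r} \otimes I) = \Gamma^{p^rd, p^r}$ and is therefore $p^r$-generated by Lemma \ref{lm-prop-n-gen}(i). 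The essential point, precisely what Troesch's theorem buys us, is that the number of tensor slots equals $\dim \sha_r = p^r$, giving $p^r$-generation rather than the naive $p^rd$-generation that one would get by merely applying (ii) to the degree-$p^rd$ functor $\Gamma^{d(r)}$.
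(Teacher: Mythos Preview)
Your proposal is correct and follows essentially the same approach as the paper. The paper's own proof is a terse two sentences: for (i) it observes that Troesch coresolutions exhibit $S^{d\,(r)}$ as $p^r$-coresolved (terms being direct summands of $S^{p^rd}(\k^{p^r}\otimes I)$, hence $p^r$-cogenerated), then dualizes; for (ii)--(v) it simply says ``follows from Lemma~\ref{lm-prop-n-gen} by taking projective resolutions,'' which is exactly what you spell out in detail.
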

\begin{proof}
For (i), Troesch complexes provide injective coresolutions of $S^{d\,(r)}$ by injectives which are direct summands of $S^{p^rd}(\k^{p^r}\otimes I)$, hence $p^r$-cogenerated. Thus  $S^{d\,(r)}$ is $p^r$-coresolved, or equivalently $\Gamma^{d\,(r)}$ is $p^r$-resolved. The rest follows from lemma \ref{lm-prop-n-gen} by taking projective resolutions.
\end{proof}

\begin{remark}
In lemma \ref{lm-prop-n-gen} we proved that $F^{(r)}$ is $n$-generated if $F$ is $n$-generated. This is not the case for $n$-resolution. For example the identity functor $I=\Gamma^1$ is $1$-resolved, but example \ref{ex-calc} and theorem  \ref{thm-nres-iso} show that $I^{(1)}$ is not $1$-resolved. 
\end{remark}

\subsection{Towards a cohomological FFT and SFT for $GL_n$}\label{subsec-FFT-SFT}
Observe that in proposition \ref{prop-ex-nresolved}(i) the integer $n$ such that $\Gamma^{d\,(r)}$ is $n$-resolved only depends on $r$, and not on $d$. So from theorem \ref{thm-nres-iso} and section \ref{subsec-mult} we obtain new rational cohomology computations. 
For example corollary \ref{cor-alg} yields:
\begin{corollary}\label{cor-rat-alg}
Let $A^*$ be a family of strict polynomial functors, endowed with an algebra structure. For all $j\ge 0$, denote by $A^j_d$ the homogeneous degree $d$ part of $A^j$. Let $U$ be a finite dimensional vector space with trivial $GL_n$ action, let $\k^n$ be the standard representation of $GL_n$, and assume $n\ge p^r\dim U$. Then we have a trigraded algebra isomorphism
$$\bigoplus_{i,d,j}\Ext^i_{GL_n}(\Gamma^{d}(\hom_\k(U,\k^{n\,(r)})), A^j(\k^{n\,(r)}))\simeq \bigoplus_{i,d,j} A^j_d(U\otimes E_r)\;,$$
Similarly, if $C^*$ is endowed with a coalgebra structure, we have a trigraded algebra isomorphism 
$$\bigoplus_{i,d,j}\Ext^i_{GL_n}(C^j(\k^{n\,(r)}),S^{d}(U\otimes\k^{n\,(r)}))\simeq \bigoplus_{i,d,j} C^{j\,\sharp}_d(U\otimes E_r)\;.$$ 
\end{corollary}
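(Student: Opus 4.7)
The plan is to combine the $\P_\k$-level computation of Corollary \ref{cor-alg} with the semi-stable comparison Theorem \ref{thm-nres-iso}. The only new input required is the $n$-resolvedness of $\Gamma^{d,U\,(r)}$ when $n\ge p^r\dim U$; once that is in hand, the graded vector space isomorphism follows tridegree-by-tridegree, and the algebra compatibility is essentially automatic by naturality.

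The key step is therefore to establish that $\Gamma^{d,U\,(r)}$ is $n$-resolved for $n\ge p^r\dim U$. Setting $\ell=\dim U$ and choosing any $\k$-linear isomorphism $U^\vee\simeq (\k^\ell)^{(r)}$, one gets an isomorphism of strict polynomial functors
$$\Gamma^{d,U\,(r)}\simeq \Gamma^{d\,(r)}(\k^\ell\otimes I),$$
since $\Gamma^{d,U\,(r)}(V)=\Gamma^d(U^\vee\otimes V^{(r)})$ while $\Gamma^{d\,(r)}(\k^\ell\otimes V)=\Gamma^d((\k^\ell)^{(r)}\otimes V^{(r)})$. Proposition \ref{prop-ex-nresolved}(i) then gives that $\Gamma^{d\,(r)}$ is $p^r$-resolved; part (v) upgrades this to $p^r\ell$-resolvedness of $\Gamma^{d\,(r)}(\k^\ell\otimes I)$, and part (iv) extends to all $n\ge p^r\ell=p^r\dim U$. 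Theorem \ref{thm-nres-iso} applied to $F=\Gamma^{d,U\,(r)}$ and $G=A^{j\,(r)}$ then yields, in each tridegree $(i,d,j)$, an evaluation isomorphism
$$\Ext^i_{\P_\k}(\Gamma^{d,U\,(r)},A^{j\,(r)})\xrightarrow{\simeq}\Ext^i_{GL_n}(\Gamma^d(\hom_\k(U,\k^{n\,(r)})),A^j(\k^{n\,(r)})),$$
and Corollary \ref{cor-alg} identifies the left-hand sides, summed over $i,d,j$, with $\bigoplus A^j_d(U\otimes E_r)$ as trigraded algebras.

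It remains to verify that the evaluation map respects the trigraded algebra structure. Both products are built from cup products, pullback along the comultiplication $\Delta_{d_1,d_2}^U$, and pushforward along the multiplication of $A^*$. Since the evaluation functor $F\mapsto F(\k^n)$ from $\P_\k$ to rational $GL_n$-modules is exact and symmetric monoidal, each of these operations commutes with it already at the cochain level, so $\mathrm{ev}_n$ is automatically a map of trigraded algebras, and the algebra statement follows. For the coalgebra statement, I would dualize: \cite[Prop 2.6]{FS} gives $\Ext^i_{\P_\k}(C^{j\,(r)},S^{d,U\,(r)})\simeq \Ext^i_{\P_\k}(\Gamma^{d,U\,(r)},C^{j\,\sharp\,(r)})$, and since $C^\sharp$ carries a graded algebra structure, the algebra case just proved applies to it; the same duality works on the $GL_n$ side (and $\mathrm{ev}_n$ intertwines both dualities since $F^\sharp(\k^n)=F((\k^n)^\vee)^\vee$), which transports the algebra structures correctly. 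The main potential obstacle is really the first step — checking the $n$-resolvedness — because, as the remark following Proposition \ref{prop-ex-nresolved} notes, $n$-resolvedness is not preserved by Frobenius twists in general, and one must take care to apply (v) to the already-twisted functor $\Gamma^{d\,(r)}$ rather than to $\Gamma^d$ directly; this is precisely what forces the bound $p^r\dim U$ instead of $\dim U$.
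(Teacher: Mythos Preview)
Your proof is correct and follows the same route as the paper, which treats the corollary as an immediate consequence of Theorem \ref{thm-nres-iso}, Proposition \ref{prop-ex-nresolved}(i), and Corollary \ref{cor-alg}. Your use of parts (i), (v), (iv) of Proposition \ref{prop-ex-nresolved} to establish $n$-resolvedness of $\Gamma^{d,U\,(r)}$ is a clean way to make explicit what the paper leaves implicit, and your checks of algebra compatibility and of the dual coalgebra statement are the expected ones.
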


\noindent
{\bf FFT and SFT for $GL_n$.} Now we turn to classical invariant theory.
Let us recall from \cite{DCP} the First and Second Fundamental Theorems for $GL_n$. Let $k,\ell$ be positive integers, let $r$ be a nonnegative integer and let  $V_r:=\k^{n\,(r)}$ be the $r$-th twist of the standard representation of $GL_n$. 

We define the contractions $(i|j)\in S^2(V_r^{\vee\,\oplus k}\oplus V_r^{\oplus\ell})$, for $1\le i\le k$, $1\le j\le \ell$ in the following way. Let $c_0\in V_r^\vee\otimes V_r\subset S^2(V_r^\vee\oplus V_r)$ denote the Trace (up to a scalar multiplication, it is the unique $GL_n$-invariant in $V_r^\vee\otimes V_r$ and in $S^2(V_r^\vee\oplus V_r)$). Let $\phi_{i,j}$ be the $GL_n$-equivariant map:
$$\begin{array}{cccc}
\phi_{i,j}:& V_r^\vee\oplus V_r &\to & V_r^{\vee\,\oplus k}\oplus V_r^{\oplus\ell}\\
& (f,x)&\mapsto & \underbrace{(0,\dots,0,f,0,\dots,0)}_{\text{$f$ in $i$-th position}},\underbrace{(0,\dots,0,x,0,\dots,0)}_{\text{$x$ in $j$-th position}}\;.
\end{array}
$$
Then the contraction $(i|j)$ is defined by $(i|j):= S^2(\phi_{ij})(c_0)$. 
\begin{remark}
The name `contraction' comes from the fact that $(i|j)$ may be equivalently defined as the homogeneous degree two polynomial (beware the duality):
$$\begin{array}{ccccl}(i,j): &V_r^{\oplus k}\oplus V_r^{\vee\,\oplus\ell}&\to &\k& \\
& (x_1,\dots,x_k,f_1,\dots,f_\ell)&\mapsto & f_j(x_i)&.
\end{array}
$$ 
\end{remark}

The  $(i|j)$ live in the invariant algebra $H^0(GL_n,S^*(V_r^{\vee\,\oplus k}\oplus V_r^{\oplus\ell}))$.
The First Fundamental Theorem \cite[Thm 3.1]{DCP} asserts that the $(i|j)$, $1\le i\le k$, $1\le j\le \ell$, actually generate this invariant algebra for $r=0$, hence for all $r\ge 0$ since Frobenius twists do not modify the invariants. The Second Fundamental Theorem \cite[Thm 3.4]{DCP} gives a minimal set of relations between the $(i|j)$ (these relations are given in terms of determinant formulas. In particular, if $n\ge \min\{k,\ell\}$, there are no relations between the $(i|j)$).

\medskip

\noindent
{\bf Cohomological FFT and SFT.} Now we are interested not only in the invariant algebra 
but more generally in the whole cohomology algebra $H^*(GL_n, S^*(V_r^{\vee\,\oplus k}\oplus V_r^{\oplus\ell}))$. 
%For $r=0$, this cohomology algebra reduces to invariants, that is the cohomology is trivial in positive degrees, so the cases of interest are for $r\ge 1$. 

To describe this algebra, we introduce `higher contractions' $(h|i|j)$, $0\le h<p^r$, $1\le i\le k$, $1\le j\le \ell$ in the following way.
First, by the exponential formula, one decomposes the representation $S^2(V_r^{\vee}\oplus V_r)$ as the direct sum:
$$S^2(V_r^{\vee}\oplus V_r)= S^2(V_r^{\vee})\oplus S^2(V_r)\oplus V_r^\vee\otimes V_r\;.$$ 
For weight reasons, the first two summands do not contribute to the $GL_n$-cohomology\footnote{Since the homotheties of $GL_n$ is a normal group-subscheme of $GL_n$, each rational $GL_n$-module $M$ splits as the direct sum of the $GL_n$-submodules $M_d$ of weight $d$, $d\in\Z$ ($M_d$ consists of the vectors $m$ of $M$ which are acted on by the homotheties via the formula $\lambda\Id\cdot m=\lambda^dm$). So the category of rational $GL_n$-modules splits as a direct sum of its full subcategories $\mathrm{Rat}_d$ whose objects are the rational $GL_n$-modules of weight $d$. Since the trivial representation $\k$ has weight $0$, the functor $\hom_{GL_n}(\k,-)$ is zero on $\mathrm{Rat}_d$, for $d\ne 0$. So its derived functors (i.e. $H^*(GL_n,-)$) vanish on $\mathrm{Rat}_d$ for $d\ne 0$.}, so the inclusion $V_r^\vee\otimes V_r\hookrightarrow S^2(V_r^{\vee}\oplus V_r)$ induces an isomorphism in cohomology. By \cite[I Chap 4, Lm 4.4]{Jantzen} there is an isomorphism $H^*(GL_n,V_r^\vee\otimes V_r)\simeq \Ext^*_{GL_n}(V_r,V_r)$. As a consequence, using the evaluation map $E_r\simeq\Ext^*_{\P_\k}(I^{(r)},I^{(r)})\to \Ext^*_{GL_n}(V_r,V_r)$, one finally obtains a graded map (which is an isomorphism if $n\ge p^r$):
$$E_r\to  H^*(GL_n,S^2(V_r^{\vee}\oplus V_r))\;.$$
Recall that $E_r$ is the graded vector space concentrated in degrees $2h$ for $0\le h<p^r$ and which equals $\k$ in these degrees. We denote by $c_h$ the image in $H^{2h}(GL_n,S^2(V_r^{\vee}\oplus V_r))$ of the canonical basis element of degree $2h$ of $E_r$ (in particular, $c_0$ is the Trace used before).
For $0\le h<p^r$, $1\le i\le k$, $1\le j\le \ell$, we define the higher contraction $(h|i|j)$ by the formula:
$$(h|i|j):= (S^2(\phi_{ij}))_*c_h \;\,\in\;\, H^{2h}(GL_n, S^2(V_r^{\vee\,\oplus k}\oplus V_r^{\oplus\ell}))\;.$$
In particular the $(0|i|j)$ are the usual contractions used in invariant theory.

\begin{theorem}[Partial Cohomological FFT and SFT]\label{cor-CFFT}
Let $k,\ell$ be positive integers. Let $r\ge 0$ and let $V_r:=\k^{n\,(r)}$ denote the $r$-th Frobenius twist of the standard representation of $GL_n$. Assume furthermore that $n\ge p^r\min\{k,\ell\}$. Then:
\begin{enumerate}
\item The cohomology algebra $H^*(GL_n, S^*(V_r^{\vee\,\oplus k}\oplus V_r^{\oplus\ell}))$ is generated by the contractions $(h|i|j)$, for $0\le h<p^r$, $1\le i\le k$, $1\le j\le \ell$.
\item There are no relations between the $(h|i|j)$.
\end{enumerate}  
\end{theorem}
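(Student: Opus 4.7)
The plan is to reduce the cohomology computation to an $\Ext$-computation in $\P_\k$, and then to recognize the answer as a polynomial algebra via Corollary~\ref{cor-rat-alg}. First I would use the exponential formula combined with the $GL_n$-equivariant duality $S^n(V_r^\vee)\simeq \Gamma^n(V_r)^\vee$ to obtain
$$S^d(V_r^{\vee\,\oplus k}\oplus V_r^{\oplus \ell})\;\simeq \bigoplus_{\substack{\lambda\in\mathbb{N}^k,\,\mu\in\mathbb{N}^\ell\\ |\lambda|+|\mu|=d}}\hom_\k(\Gamma^\lambda(V_r),S^\mu(V_r))\,.$$
The center $\mathbb{G}_m\subset GL_n$ acts on $\hom_\k(\Gamma^\lambda(V_r),S^\mu(V_r))$ with weight $p^r(|\mu|-|\lambda|)$, so (by the weight argument of the footnote preceding the theorem) $GL_n$-cohomology of such a summand vanishes unless $|\lambda|=|\mu|$. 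Applying $H^*(GL_n,\hom_\k(M,N))=\Ext^*_{GL_n}(M,N)$ and re-collecting via the exponential formula, one gets a trigraded algebra isomorphism
$$H^*(GL_n,S^*(V_r^{\vee\,\oplus k}\oplus V_r^{\oplus \ell}))\;\simeq\; \bigoplus_d\Ext^*_{GL_n}\bigl(\Gamma^d(\hom_\k(\k^k,V_r)),\,S^d(\k^\ell\otimes V_r)\bigr)\,,$$
where the algebra structure on the right is cup product combined with the comultiplication of divided powers on the source and the multiplication of symmetric powers on the target, which is precisely the structure appearing in Corollary~\ref{cor-alg}.

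Next, under $n\ge p^r\min\{k,\ell\}$, at least one of $n\ge p^r k$ or $n\ge p^r \ell$ holds, so either the first statement of Corollary~\ref{cor-rat-alg} (with $U=\k^k$ and $A^*=S^*(\k^\ell\otimes -)$) or its dual (with $U=\k^\ell$ and $C^*=\Gamma^*(\k^k\otimes -)$) is applicable, and both yield the same trigraded algebra isomorphism
$$\Phi\colon\; H^*(GL_n,S^*(V_r^{\vee\,\oplus k}\oplus V_r^{\oplus \ell}))\xrightarrow[]{\simeq} S^*(\k^{k\ell}\otimes E_r)\,.$$
Since $\k^{k\ell}\otimes E_r$ is concentrated in even degrees, the right-hand side is a polynomial algebra on its $k\ell p^r$-dimensional linear part, with canonical basis $(e_{ij}\otimes c'_h)$ indexed by $1\le i\le k$, $1\le j\le \ell$, $0\le h<p^r$, where $(e_{ij})$ is the canonical basis of $\k^k\otimes\k^\ell$ and $c'_h$ is the generator of $E_r$ in degree $2h$.

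The remaining step is to identify $\Phi^{-1}(e_{ij}\otimes c'_h)$ with the higher contraction $(h|i|j)$. For $k=\ell=1$, $\Phi$ restricts on the weight-zero summand of $S^2(V_r^\vee\oplus V_r)$ to an isomorphism $H^*(GL_n,V_r^\vee\otimes V_r)\simeq E_r$ which, by construction of $c_h$ as the image of the degree $2h$ generator of $E_r$ along the evaluation map $E_r\simeq \Ext^*_{\P_\k}(I^{(r)},I^{(r)})\to \Ext^*_{GL_n}(V_r,V_r)$ and the compatibility of the isomorphisms $\theta(\cdot,\cdot,r)$ of theorem~\ref{thm-mult} with cup products, satisfies $\Phi(c_h)=c'_h$. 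For general $k,\ell$, the embedding $\phi_{ij}$ factors through the $i$-th coordinate injection $\k\hookrightarrow\k^k$ and the $j$-th coordinate injection $\k\hookrightarrow\k^\ell$. Naturality of the isomorphism of Corollary~\ref{cor-rat-alg} in these auxiliary variables (inherited from the naturality assertions of Corollary~\ref{cor-iso} and theorem~\ref{thm-mult}) then implies that $(S^2\phi_{ij})_*$ corresponds, via $\Phi$, to the $(i,j)$-th coordinate inclusion $E_r\hookrightarrow \k^{k\ell}\otimes E_r$ on the linear part. Hence $\Phi((h|i|j))=e_{ij}\otimes c'_h$, so the $(h|i|j)$ form a basis of the linear part of the polynomial algebra $S^*(\k^{k\ell}\otimes E_r)$, proving both (1) and (2) simultaneously.

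The main obstacle is the naturality verification of the last paragraph: chasing the classes $c_h$ and the embeddings $\phi_{ij}$ through the compositions of isomorphisms produced by Corollaries~\ref{cor-rat-alg} and~\ref{cor-iso} and theorem~\ref{thm-mult}, and confirming their compatibility with all the relevant dualities and with the interplay between the two auxiliary vector spaces $\k^k$ and $\k^\ell$. Nothing essentially new is required beyond theorem~\ref{thm-mult}, but the bookkeeping is nontrivial.
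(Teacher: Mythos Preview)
Your proof is correct and follows essentially the same route as the paper: decompose via the exponential formula, kill the off-weight summands, convert $H^*$ into $\Ext^*$ by Jantzen's lemma, and then invoke Corollary~\ref{cor-rat-alg} (using the algebra form if $n\ge p^rk$, or the coalgebra form if $n\ge p^r\ell$) to identify the answer with $S^*(\k^k\otimes\k^\ell\otimes E_r)$.

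The only substantive difference is in the identification of the generators. You trace $(h|i|j)$ through the isomorphism $\Phi$ by a naturality chase, which, as you note, requires tracking the classes $c_h$ through several layers of compatibilities (Theorem~\ref{thm-mult}, Corollary~\ref{cor-iso}, the evaluation map, and the duality between the two forms of Corollary~\ref{cor-rat-alg}). The paper sidesteps this entirely: once one knows the algebra is free polynomial on its bidegree-$(*,2)$ piece, it suffices to show that the $(h|i|j)$ form a basis of $H^*(GL_n, S^2(V_r^{\vee\,\oplus k}\oplus V_r^{\oplus \ell}))$. But the exponential formula and the same weight argument show that $\sum_{i,j}S^2(\phi_{i,j}):\bigoplus_{i,j}S^2(V_r^\vee\oplus V_r)\to S^2(V_r^{\vee\,\oplus k}\oplus V_r^{\oplus \ell})$ induces an isomorphism on cohomology, so the basis statement is immediate from the $k=\ell=1$ case. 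This avoids having to verify that $\Phi$ is compatible in both variables simultaneously, and in particular avoids the bookkeeping you flag as the main obstacle.
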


\begin{proof}
We have to prove that $H^*(GL_n, S^*(V_r^{\vee\,\oplus k}\oplus V_r^{\oplus\ell}))$ is a symmetric algebra on the generators $(h|i|j)$.
The exponential formula yields an isomorphism of algebras (take the total degree on the right hand side) 
$$S^*(V_r^{\vee\,\oplus k}\oplus V_r^{\oplus\ell})\simeq S^*(V_r^{\vee\,\oplus k})\otimes S^*(V_r^{\oplus\ell})\;.$$
For weight reasons, the summands of the form $S^s(V_r^{\vee\,\oplus k})\otimes S^t(V_r^{\oplus\ell})$ for $s\ne t$ do not contribute to the $GL_n$-cohomology.
So \cite[I Chap 4, Lm 4.4]{Jantzen} yields an isomorphism of bigraded algebras:
\begin{align*}H^*(GL_n, S^*(V_r^{\vee\,\oplus k}\oplus V_r^{\oplus\ell}))&= \textstyle\bigoplus_d H^*(GL_n, S^d(V_r^{\vee\,\oplus k})\otimes S^d(V_r^{\oplus\ell}))\\& \simeq \textstyle\bigoplus_d \Ext^*_{GL_n}(\Gamma^d(V_r^{\vee\,\oplus k}), S^d(V_r^{\oplus\ell}))\;. \end{align*}
Thanks to corollary \ref{cor-rat-alg}, we know an isomorphism of bigraded algebras:
$$ \textstyle\bigoplus_d \Ext^*_{GL_n}(\Gamma^d(V_r^{\vee\,\oplus k}), S^d(V_r^{\oplus\ell}))\simeq \textstyle\bigoplus_d S^d(\k^k\otimes\k^\ell\otimes E_r)\;.$$
The elements of $\k^k\otimes\k^\ell\otimes E_r$ corresponds bijectively through these isomorphisms with $H^*(GL_n, S^2(V_r^{\vee\,\oplus k}\oplus V_r^{\oplus\ell}))$. Thus we have obtained that $H^*(GL_n, S^*(V_r^{\vee\,\oplus k}\oplus V_r^{\oplus\ell}))$ is a bigraded commutative algebra freely generated by its elements of bidegree $(*,2)$.

Now, for weight reasons, the exponential formula shows that the map 
$$\textstyle\sum_{i,j} S^2(\phi_{i,j}):\textstyle\bigoplus_{i,j} S^2(V_r^\vee\oplus V)\to S^2(V_r^{\vee\,\oplus k}\oplus V_r^{\oplus\ell})$$
induces an isomorphism in cohomology. So the $(h|i|j)$ form a basis of the vector space $H^*(GL_n, S^2(V_r^{\vee\,\oplus k}\oplus V_r^{\oplus\ell}))$, whence the result.
\end{proof}

\begin{question}[Conjectural Cohomological FFT for $GL_n$]\label{qn-CFFT}
We do not need any hypothesis on $n$ to build the $(h|i|j)$. The FFT asserts that the  $(0|i|j)$ always generate the invariant subalgebra $H^0(GL_n, S^*(V^{\vee\,\oplus k}\oplus V^{\oplus\ell}))$. This leads us to the following question: do the $(h|i|j)$  generate the cohomology algebra $H^*(GL_n, S^*(V^{\vee\,\oplus k}\oplus V^{\oplus\ell}))$ for all $n\ge 1$ ?
\end{question}

\begin{remark}\label{rk-SFT} Finding the relations between the $(h|i|j)$ seems more delicate. The SFT gives us the relations between the $(0|i|j)$. More generally, one could expect relations of the same kind between the $(h|i|j)$. There must also be some relations of the type $(h|i|j)=0$ for specific values of $h$ when $n$ is small, since some $c_h$ vanish when we are not in the semistable range. We don't know if these two kinds of relations are sufficient to generate the relations between the $(h|i|j)$ in general.
\end{remark}

\section{The twisting spectral sequence}\label{sec-untwist}

The following theorem generalizes the idea used by Cha{\l}upnik to compute extension groups in \cite{Chalupnik1}. It yields a spectral sequence which computes the extension groups $\Ext^{*}(F^{(r)}, G^{(r)})$ between twisted functors from extension groups between untwisted functors. So we call it the `twisting spectral sequence'. Its construction is a formal consequence of theorem \ref{thm-mult}. This spectral sequence is implicitly used in Cha{\l}upnik's proofs of \cite[Thm 4.3 and 4.4]{Chalupnik1}. It also appears in \cite[Prop 4.2.2]{FP}, but with a somewhat different identification of the second page (and without the compatibilities with cup products and twisting maps, which are very useful for applications).

\begin{theorem}[The twisting spectral sequence]\label{thm-sstwist}
Let $r$ be a nonnegative integer and let $F,G$ be strict polynomial functors. We consider (cf. section \ref{sec-background}) $G(E_r\otimes I)$ as a graded functor and we denote its  grading by the letter `$t$'.
There is a first quadrant cohomological spectral sequence, natural in $F,G$, 
$$E_2^{s,t}(F,G,r)=\Ext^s(F, G(E_r\otimes I))\Longrightarrow \Ext^{s+t}(F^{(r)}, G^{(r)})\;.$$
Moreover, these spectral sequences enjoy the following additional structure:
\begin{itemize}
\item[(i)] If $F_1,F_2,G_1,G_2$ are strict polynomial functors, there is a pairing of spectral sequences (see e.g. \cite[3.9]{Benson2} for a definition):
$$E^{*,*}(F_1,G_1,r)\otimes E^{*,*}(F_2,G_2,r)\to  E^{*,*}(F_1\otimes F_2,G_1\otimes G_2,r)\;.$$
On the second page and on the abutment, this pairing coincides with the product of extensions (defined in section \ref{subsec-mult}).
\item[(ii)] There are maps of spectral sequences (natural in $F,G$ and compatible with the pairing above):
$$E^{*,*}(F,G,r)\to E^{*,*}(F,G,r+1)\;.$$
These maps coincide with the split injections $\Ext^*(F,G(E_r\otimes I))\hookrightarrow \Ext^*(F,G(E_{r+1}\otimes I))$ on the second page, and with the twisting maps $\Ext^*(F^{(r)},G^{(r)})\to \Ext^*(F^{(r+1)},G^{(r+1)})$ on the abutment.
\end{itemize}
\end{theorem}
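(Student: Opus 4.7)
The plan is to realize the twisting spectral sequence as the column-filtration spectral sequence of an appropriate double complex, and then to deduce the multiplicative and twist-compatibility assertions formally from Theorem~\ref{thm-mult}.

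First, I would choose a projective resolution $P_\bullet\to F$ in $\P_\k$ with each $P_n$ a finite direct sum of functors $\Gamma^\mu$, together with an injective coresolution $G^{(r)}\to J^\bullet$. Since precomposition with $I^{(r)}$ amounts, at each $V$, to evaluation at $V^{(r)}$, the endofunctor $(-)\circ I^{(r)}$ of $\P_\k$ is exact, so $P_\bullet^{(r)}\to F^{(r)}$ is an exact (but no longer projective) resolution. Form the first-quadrant double complex $C^{s,t}:=\hom(P_s^{(r)},J^t)$. Its row-filtration spectral sequence degenerates: injectivity of $J^t$ makes $\hom(-,J^t)$ exact, so the horizontal cohomology of $C^{\bullet,t}$ is $\hom(F^{(r)},J^t)$ concentrated in $s=0$, identifying the total cohomology of $C^{\bullet,\bullet}$ with $\Ext^{*}(F^{(r)},G^{(r)})$. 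In the column-filtration spectral sequence, the vertical cohomology gives
$$E_1^{s,t}=\Ext^t(P_s^{(r)},G^{(r)}),$$
which Theorem~\ref{thm-mult} identifies with the degree-$t$ summand of $\hom(P_s,G(E_r\otimes I))$. The naturality in the first variable asserted by Theorem~\ref{thm-mult} forces $d_1$ to coincide with the differential induced by $P_{s+1}\to P_s$, and taking horizontal cohomology then yields $E_2^{s,t}=\Ext^s(F,G(E_r\otimes I))^t$, as desired. Naturality in $F$ and $G$ is standard, thanks to uniqueness up to homotopy of the resolutions.

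For the pairing in (i), I would invoke the exponential formula to observe that tensor products of $\Gamma^\mu$'s are again $\Gamma^\nu$'s and similarly for $S^\mu$'s, so that $P_\bullet\otimes Q_\bullet$ is a projective resolution of $F_1\otimes F_2$ by sums of $\Gamma^\mu$ and $J_1^\bullet\otimes J_2^\bullet$ is an injective coresolution of $G_1^{(r)}\otimes G_2^{(r)}$. The canonical K\"unneth pairing of double complexes
$$\hom(P_s^{(r)},J_1^t)\otimes\hom(Q_{s'}^{(r)},J_2^{t'})\longrightarrow \hom\bigl((P_s\otimes Q_{s'})^{(r)},J_1^t\otimes J_2^{t'}\bigr)$$
then induces the required pairing of spectral sequences. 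It equals the cup product on the abutment by construction, and the multiplicativity in Theorem~\ref{thm-mult}(1) identifies it with the expected product on $E_2$.

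For (ii), I would use the same $P_\bullet$ together with injective coresolutions $J_r^\bullet,J_{r+1}^\bullet$ of $G^{(r)},G^{(r+1)}$. Precomposition by $I^{(1)}$ gives a chain map $\hom(P_s^{(r)},J_r^t)\to\hom(P_s^{(r+1)},J_r^{(1)\,t})$; since $(-)^{(1)}$ is exact, $J_r^{(1)\,\bullet}$ is an exact coresolution of $G^{(r+1)}$, and the standard comparison lemma applied to the injective target $J_{r+1}^\bullet$ yields a chain map $J_r^{(1)\,\bullet}\to J_{r+1}^\bullet$ lifting $\mathrm{id}_{G^{(r+1)}}$, unique up to homotopy. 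Composing produces a morphism of double complexes whose induced map on abutments is $\mathrm{Fr}_1$ by construction, and whose map on $E_2$ is identified with the split injection induced by $E_r\hookrightarrow E_{r+1}$ via Theorem~\ref{thm-mult}(2); compatibility with the pairing follows by naturality of the chosen lifts. I expect the main obstacle to be the careful bookkeeping of the naturality statements of Theorem~\ref{thm-mult} in both variables, needed to match the $d_1$-differential and the $E_2$-level product and twist maps with their expected descriptions; once that is in place, the construction is purely formal.
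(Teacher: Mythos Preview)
Your construction is correct but is dual to the paper's. The paper works on the injective side: it fixes an injective coresolution $J_G^*$ of $G$, takes a Cartan--Eilenberg coresolution $J_{G,r}^{*,*}$ of the complex $J_G^{*\,(r)}$, and forms the spectral sequence of $\hom(F^{(r)}, J_{G,r}^{*,*})$. The first page is then $\Ext^t(F^{(r)}, J_G^{s\,(r)})$, and it is the \emph{first} isomorphism of Theorem~\ref{thm-mult} (the one with $S^\mu$ in the second variable) that identifies this with $\hom(F, J_G^s(E_r\otimes I))$; injectivity of $J_G^s(E_r\otimes I)$ then gives the stated $E_2$ directly, with $E_r$ already sitting on the $G$ side. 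Your version, resolving $F$ projectively and $G^{(r)}$ injectively, is precisely the variant the paper introduces right after its proof and calls $E'(F,G,r)$.

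One imprecision to flag: the dual form of Theorem~\ref{thm-mult} you are invoking on $E_1^{s,t}=\Ext^t(P_s^{(r)},G^{(r)})$ literally yields $\hom(P_s(E_r\otimes I),G)$, with $E_r$ on the \emph{projective} side, not $\hom(P_s,G(E_r\otimes I))$ as you write. Your $E_2$ is therefore naturally $\Ext^s(F(E_r\otimes I),G)$, and to obtain the second page as stated in the theorem you still need the adjunction $\Ext^*(F(V\otimes I),G)\simeq\Ext^*(F,G(V^\vee\otimes I))$ (an easy consequence of the graded Yoneda lemma, recorded in the paper as a separate proposition). This is harmless, but it should be said; the paper's route avoids it because $E_r$ lands on the correct side from the start. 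With that adjustment, your treatment of the pairing and the twist map via Theorem~\ref{thm-mult}(1),(2) goes through just as in the paper.
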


\begin{remark}
Assume that $C^*$, resp $A^*$, is a family of strict polynomial functors endowed with a graded coalgebra (resp.  algebra) structure. Then $\bigoplus_{s,i,j}\Ext^s(C^{i\,(r)},A^{j\,(r)})$ is a trigraded algebra (whose multiplication sends $c_1\otimes c_2$ to $(m_A^{(r)})_*(\Delta_C^{(r)})^*(c_1\cup c_2)$, where $\Delta_C$, resp. $m_A$, denotes the comultiplication in $C^*$, resp. multiplication in $A^*$ ). Similarly, $\bigoplus_{s,t,i,j}\Ext^s(C^i, A^j(E_r\otimes I))$ is a quadrigraded algebra (here $t$ denotes the grading of the functors $A^j(E_r\otimes I)$).
Using the pairing of theorem \ref{thm-sstwist}(i) and naturality, one obtains a spectral sequence of quadrigraded algebras:
$$\bigoplus_{s,t,i,j} E_2^{s,t}(C^i,A^j,r) 
\Longrightarrow 
\bigoplus_{s+t,i,j}\Ext^{s+t}(C^{i\,(r)},A^{j\,(r)})\;.$$
\end{remark}

\begin{proof}[Proof of theorem \ref{thm-sstwist}] The twisting spectral sequence is just an hypercohomology spectral sequence, and we identify the second page by theorem \ref{thm-mult}. 

Let us give the details. Let $F,G\in\P_\k$. Let $J_G^*$ be an injective coresolution of $G$. So $J_G^{*\,(r)}$ is a coresolution of $G^{(r)}$ and we denote by $J_{G,r}^{**}$ a Cartan-Eilenberg coresolution of $J_G^{*\,(r)}$. The totalization of $J_{G,r}^{**}$ is an injective coresolution of $G^{(r)}$, so that $\Ext^*(F^{(r)},G^{(r)})$ equals  the homology of the totalization of $\hom(F^{(r)},J_{G,r}^{**})$.
The spectral sequence $E(F,G,r)$ associated to this bicomplex (which computes the homology along the columns first, then along the rows, etc.) converges to $\Ext^*(F^{(r)},G^{(r)})$. Tensor products induce pairings of bicomplexes 
$$\hom(F_1^{(r)},J_{G_1,r}^{i,j})\otimes \hom(F_2^{(r)},J_{G_2,r}^{k,\ell})\to \hom((F_1\otimes F_2)^{(r)},J_{G_1\otimes G_2,r}^{i+k,j+\ell})$$
whence pairings of spectral sequences $E(F_1,G_1,r)\otimes E(F_2,G_2,r)\to E(F_1\otimes F_2,G_1\otimes G_2,r)$. Finally, the map $E(F,G,r)\to E(F,G,r+1)$ comes from the composite:
$$\hom(F^{(r)},J_{G,r}^{**})\simeq \hom(F^{(r+1)},J_{G,r}^{**\,(1)})\to \hom(F^{(r+1)},J_{G,r+1}^{**})\;. $$

Now we identify the second page. The first page of $E(F,G,r)$ (together with the first differential $d_1:E_1^{s,t}(F,G)\to E_1^{s+1,t}(F,G)$) equals the $t$-graded complex  $\Ext^t(F^{(r)},J_G^{*\,(r)})$. By theorem \ref{thm-mult}, this complex is isomorphic (in a $t$-graded way, and compatibly with cup products and Frobenius twist) to  $\hom(F, J_G^*(E_r\otimes I))$. But $J_G^k(E_r\otimes I)$ is injective in each degree, so $J_G^{*}(E_r\otimes I)$ is an injective $t$-graded coresolution of $G(E_r\otimes I)$. Thus the second page identifies with $\Ext^s(F, G(E_r\otimes I))$. Moreover the pairing of spectral sequences is given by cup products on the first pages of the form $\Ext^t(F^{(r)},J_G^{*\,(r)})$, so by tensor product after the identification of the first page with $\hom(F, J_G^*(E_r\otimes I))$, so by cup products on the second page. Finally, the map induced by precomposition by the Frobenius twist  is given at the first page level by the split injection $\hom(F, J_G^*(E_r\otimes I))\hookrightarrow \hom(F, J_G^*(E_{r+1}\otimes I))$, whence its description at the second page level.
\end{proof}

Of course, starting with a projective resolution of $F$ in the proof, one obtains another first quadrant cohomological spectral sequence:
$${E'}_2^{s,t}(F,G)=\Ext^s(F(E_r\otimes I), G)\Longrightarrow \Ext^{s+t}(F^{(r)}, G^{(r)})\;.$$
At first sight, the second page of $E'(F,G,r)$ might seem quite different from the second page of $E(F,G,r)$. But the following proposition shows it is not. Thus, we don't have any advantage in considering $E'$ (and that's why we do not use it later).
\begin{proposition}
Let $F,G$ be strict polynomial functors. For all finite dimensional graded $\k$-vector space $V$, there is a bigraded isomorphism, natural in $F,G$, and $V$ (recall that `$~^\vee$' denotes $\k$-linear duality)
$$\Ext^*(F(V\otimes I),G)\simeq \Ext^*(F,G(V^\vee\otimes I))\;.$$
\end{proposition}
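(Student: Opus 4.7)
The plan is to reduce to the case where $G$ is an injective of standard form and then invoke the graded Yoneda lemma. Both sides of the claimed isomorphism involve endofunctors of $\P_\k$ (namely $F\mapsto F(V\otimes I)$ and $G\mapsto G(V^\vee\otimes I)$) that are exact by lemma \ref{lm-valuesgraded}, so the identification at the level of $\hom$ will extend automatically to $\Ext$.

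The first step I would carry out is the Yoneda-level computation. Applied twice, lemma \ref{lm-Yoneda-graded} gives bigraded isomorphisms
$$\hom(F(V\otimes I),S^d(W\otimes I))\simeq (F(V\otimes I))^\sharp(W)\;,\qquad \hom(F,S^d(V^\vee\otimes W\otimes I))\simeq F^\sharp(V^\vee\otimes W)\;,$$
for any finite dimensional $\k$-vector space $W$ (placed in degree $0$). On the other hand, directly from the defining formula $H^\sharp(U)=H(U^\vee)^\vee$, one has a canonical bigraded identification
$$(F(V\otimes I))^\sharp(U)=F(V\otimes U^\vee)^\vee=F^\sharp(V^\vee\otimes U)=F^\sharp(V^\vee\otimes I)(U)\;,$$
so the two right-hand sides above coincide. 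This produces a bigraded isomorphism $\hom(F(V\otimes I),S^d(W\otimes I))\simeq \hom(F,S^d(V^\vee\otimes W\otimes I))$, natural in $F$, $W$, and $V$. Taking direct sums extends the isomorphism to any injective of the form $S^\mu$, and then to any injective $J\in\P_\k$ since the standard injectives $S^\mu$ cogenerate $\P_\k$.

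The second step is to promote this to $\Ext$-groups by a coresolution argument. Pick an injective coresolution $G\hookrightarrow J^*$. The functor $G\mapsto G(V^\vee\otimes I)$ is exact and sends the standard injective $S^\mu$ to $S^\mu(V^\vee\otimes I)$, which decomposes by the exponential formula as a direct sum of standard injectives. Hence $J^*(V^\vee\otimes I)$ is an injective coresolution of $G(V^\vee\otimes I)$, and both extension groups are computed by isomorphic cochain complexes:
$$\Ext^*(F(V\otimes I),G)=H^*\hom(F(V\otimes I),J^*)\simeq H^*\hom(F,J^*(V^\vee\otimes I))=\Ext^*(F,G(V^\vee\otimes I))\;.$$
Naturality in $F$, $G$, $V$ follows from the naturality of the Yoneda isomorphisms and of the duality identity $(F(V\otimes I))^\sharp\simeq F^\sharp(V^\vee\otimes I)$.

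The only minor obstacle is bookkeeping the two independent gradings—the $\Ext$-degree on one side and the internal grading induced by $V^*$ on the other—and checking that the dualization $V\mapsto V^\vee$ interacts correctly with gradings throughout. Both points are already handled by the graded version of Yoneda (lemma \ref{lm-Yoneda-graded}) and by the remark that evaluation on a graded vector space is a graded exact functor (lemma \ref{lm-valuesgraded}), so no new computation beyond careful degree tracking is required.
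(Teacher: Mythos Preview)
Your proposal is correct and follows essentially the same route as the paper: reduce to a $\hom$-level identity via the graded Yoneda lemma, then pass to $\Ext$ by taking an injective coresolution and using that $G\mapsto G(V^\vee\otimes I)$ is exact and preserves injectives. The only cosmetic difference is that the paper reduces both variables to standard (co)generators $P=\Gamma^{d,U}$ and $J=S^{d,W}$ before applying Yoneda, whereas you keep $F$ arbitrary and use the form $\hom(F,S^d(W\otimes I))\simeq F^\sharp(W)$ directly---a slight streamlining, but the underlying computation $(F(V\otimes I))^\sharp(W)=F^\sharp(V^\vee\otimes W)$ is identical.
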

\begin{proof}
It suffices to build an isomorphism natural in $P,J$ and $V$ $$\hom(P(V\otimes I),J)\simeq\hom(P,J(V^\vee\otimes I))\;,$$  when $P=P_U=\Gamma^{d}(\hom_\k(U,-))$ and $J=J_W=S^{d}(\hom_\k(W,-))$ (where $U$ and $W$ are finite dimensional $\k$-vector spaces). The general result then follows by taking resolutions. Now, $P_U(V\otimes I)=P_{U\otimes V^\vee}$ and $J_W(V^\vee\otimes I)=J_{W\otimes V}$ , so the Yoneda lemma yields an isomorphism, natural in $P_U,J_W,V$:
\begin{align*}\hom(P_U(V\otimes I), J_W)&\simeq  J_W(U\otimes V^\vee)= \\&
(P_U)^\sharp (V^\vee\otimes W^\vee)\simeq \hom(P_U, J_W(V^\vee\otimes I))\;.
\end{align*}
Whence the result.
\end{proof}

\begin{remark}
In fact, with more work, one can show that not only the second page and the abutment of the two spectral sequences $E_i(F,G,r)$ and $E'_i(F,G,r)$ are the same, but more generally there is a spectral sequence $E''_i(F,G,r)$ and maps of spectral sequences $E_i(F,G,r)\to E_i''(F,G,r)$ and $E_i'(F,G,r)\to E_i''(F,G,r)$ which yield isomorphisms between the second pages (and so also between the $i$-th pages for $i\ge 2$).
\end{remark}

The remainder of the section is devoted to applications of the twisting spectral sequence.

\subsection{Effect of precomposition by Frobenius twists}\label{subsec-precomp}

The twisting spectral sequence is a convenient way to study the twisting map 
$$\mathrm{Fr}_1:\Ext^*(F^{(r)},G^{(r)})\to \Ext^*(F^{(r+1)},G^{(r+1)})\;,$$
induced by precomposition by $I^{(1)}$. We already know that:
\begin{enumerate}
\item This map is injective, at least when $\k=\mathbb{F}_p$. This is the `twist injectivity'. It results from a theorem of Andersen \cite[Part II, prop 10.14]{Jantzen} which proves that for $G$ reductive, the map induced by \emph{postcomposition} by Frobenius twists $H^*(G,V)\to H^*(G,V^{(1)})$ is injective. Since $\k=\mathbb{F}_p$, we have \cite[proof of Thm 4.10]{FS} $I^{(1)}\circ F \simeq F\circ I^{(1)}$, so this result for postcomposition by Frobenius twists is actually also valid for precomposition.
\item This map is an isomorphism in low degrees. This is the `strong twist stability' of \cite[Cor 4.10]{FFSS}, and for $\k=\mathbb{F}_p$ it also results from \cite[Th (6.6)]{CPSVdK}.
\end{enumerate}

As a first application of the twisting spectral sequence, we get a slight improvement of the `strong twist stability'.
\begin{corollary}[{compare \cite[Cor 4.10]{FFSS}}]
Let $F,G$ be strict polynomial functors, let $j,r$ be nonnegative integers. The twisting map
$$\Ext^s(F^{(r)},G^{(j+r)})\to \Ext^s(F^{(r+1)},G^{(j+r+1)})$$
is an isomorphism for $0\le s< 2p^{r+j}$, and is injective in degree $s=2p^{r+j}$.
\end{corollary}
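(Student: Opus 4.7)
The plan is to apply Theorem~\ref{thm-sstwist} to the pair $(F^{(r)},G^{(r+j)})$ with twist parameter $1$, yielding a first-quadrant cohomological spectral sequence
\[
E_2^{s,t}\;=\;\Ext^s\bigl(F^{(r)},\,G^{(r+j)}(E_1\otimes I)\bigr)^t\;\Longrightarrow\; \Ext^{s+t}(F^{(r+1)},G^{(r+j+1)}).
\]
Applying part~(ii) of that same theorem to the comparison $E(F^{(r)},G^{(r+j)},0)\to E(F^{(r)},G^{(r+j)},1)$, whose source is concentrated on the row $t=0$ (since $E_0=\k$), the twisting map $\mathrm{Fr}_1$ is identified with the edge map $E_2^{s,0}\twoheadrightarrow E_\infty^{s,0}\hookrightarrow \Ext^s(F^{(r+1)},G^{(r+j+1)})$.

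The first real step would be to analyze the grading of $G^{(r+j)}(E_1\otimes I)$. Since
$G^{(r+j)}(E_1\otimes V)=G\bigl((E_1\otimes V)^{(r+j)}\bigr)=G\bigl(E_1^{(r+j)}\otimes V^{(r+j)}\bigr)$
and by Example~\ref{ex-twistgraded} the graded vector space $E_1^{(r+j)}$ has its one-dimensional homogeneous pieces only in degrees $0,\,2p^{r+j},\,\ldots,\,2(p-1)p^{r+j}$, the weight decomposition of the strict polynomial functor $G$ forces every nonzero homogeneous piece of $G^{(r+j)}(E_1\otimes I)$ to sit in a degree divisible by $2p^{r+j}$. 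In particular $E_2^{s,t}=0$ throughout the strip $0<t<2p^{r+j}$, while $E_2^{s,0}=\Ext^s(F^{(r)},G^{(r+j)})$.

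A lacunary bookkeeping argument then finishes the proof. Since $d_k$ has bidegree $(k,-k+1)$, for every $2\le k\le 2p^{r+j}$ either the source or target of $d_k$ falls into the forbidden strip $0<t<2p^{r+j}$ or out of the first quadrant, so $d_k=0$ on the relevant columns and the first possibly nonzero differential is $d_{2p^{r+j}+1}$. The only such differential hitting $(s,0)$ would start from bidegree $(s-2p^{r+j}-1,\,2p^{r+j})$, which lies outside the first quadrant whenever $s\le 2p^{r+j}$; hence $E_\infty^{s,0}=E_2^{s,0}$ in this range and $\mathrm{Fr}_1$ is injective. For $s<2p^{r+j}$, the remaining pieces $E_\infty^{s',t'}$ with $s'+t'=s$ and $t'>0$ all vanish (either $0<t'<2p^{r+j}$ forces $E_2=0$, or $t'\ge 2p^{r+j}>s$ is impossible), so the filtration on the abutment collapses to $E_\infty^{s,0}=E_2^{s,0}$ and the edge map is an isomorphism. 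The main subtlety should be the identification of $\mathrm{Fr}_1$ with the edge map at $(s,0)$; once that and the gap on $G^{(r+j)}(E_1\otimes I)$ are in place, the remainder is a routine first-quadrant lacunary argument.
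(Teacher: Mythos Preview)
Your proof is correct and complete. You use the twisting spectral sequence in a different way from the paper: whereas the paper compares the two spectral sequences $E(F,G^{(j)},r)\to E(F,G^{(j)},r+1)$ and runs an induction on the page number using the elementary fact about cochain maps that are isomorphisms in low degree and injective in the boundary degree, you instead apply Theorem~\ref{thm-sstwist}(ii) to the comparison $E(F^{(r)},G^{(r+j)},0)\to E(F^{(r)},G^{(r+j)},1)$, so that the source is degenerate and the question reduces to the edge map of a single spectral sequence. The key input is the same in both proofs---the Frobenius twist introduces a gap of size $2p^{r+j}$ in the $t$-grading---but your packaging avoids the page-by-page induction and makes the lacunary mechanism completely explicit. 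The paper's route, on the other hand, stays closer to the general framework of Section~\ref{subsec-precomp} and would adapt more readily if one wanted to compare $E(F,G^{(j)},r)$ for several values of $r$ simultaneously.
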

\begin{proof}
The two graded vector spaces $E_r^{(j)}$ and $E_{r+1}^{(j)}$ are equal in degrees $t<2p^{r+j}$ (recall that $-^{(j)}$ acts as a homothety of coefficient $p^j$ on the degrees). So the injective map $G^{(j)}(E_r\otimes I)\to G^{(j)}(E_{r+1}\otimes I)$ is an isomorphism in degrees $t<2p^{r+j}$. In particular, the map of spectral sequences $E^{s,t}_2(F,G^{(j)},r)\to E^{s,t}_2(F,G^{(j)},r+1)$ is an isomorphism in total degree $(s+t)<2p^{r+j}$. Now the result follows by induction (on the page number of the spectral sequence) from the following elementary result. Let $f^*:C^*\to D^*$ be a morphism of cochain complexes, which is an isomorphism in degrees $k<\ell$ and is injective in degree $\ell$. Then $H^k(f):H^k(C)\to H^k(D)$ is an isomorphism in degrees $k<\ell$ and is injective in degree $\ell$.
\end{proof}

We don't know any proof in $\P_\k$ of the `twist injectivity' phenomenon. The following statement may be a step towards such a proof (see corollary \ref{cor-chal} and section \ref{sec-collapse} for many results on collapsing).
\begin{corollary}\label{cor-twistinjcoll}
Let $F,G$ be strict polynomial functors and let $r\ge 0$. Assume that the spectral sequence $E(F,G,r+1)$ collapses at the second page. Then $E(F,G,r)$ also collapses at the second page and the twisting map is injective:
$$\Ext^*(F^{(r)},G^{(r)})\hookrightarrow \Ext^*(F^{(r+1)},G^{(r+1)})\;.$$ 
\end{corollary}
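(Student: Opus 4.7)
The plan is to exploit the map of spectral sequences $\Phi:E(F,G,r)\to E(F,G,r+1)$ provided by theorem \ref{thm-sstwist}(ii), together with the fact that on the second page $\Phi$ is induced by the canonical inclusion $E_r\hookrightarrow E_{r+1}$, which is a split injection. Since $G(E_r\otimes I)$ is a direct summand (as a graded functor) of $G(E_{r+1}\otimes I)$, applying the exact functor $\Ext^s(F,-)$ yields a split injection $\Phi_2^{s,t}:E_2^{s,t}(F,G,r)\hookrightarrow E_2^{s,t}(F,G,r+1)$ in every bidegree.

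First I would show by induction on $i\ge 2$ that $\Phi_i$ remains injective and that all differentials of $E(F,G,r)$ vanish. The base case $i=2$ is the split injection above. For the inductive step, assume $\Phi_i$ is injective and that $d_i$ vanishes on $E_i(F,G,r+1)$ (the latter holds by hypothesis of collapsing). Then for any $x\in E_i^{s,t}(F,G,r)$ one has $\Phi_i(d_ix)=d_i\Phi_i(x)=0$, so $d_ix=0$ by injectivity of $\Phi_i$. Hence the $i$-th differential of $E(F,G,r)$ vanishes, so $E_{i+1}(F,G,r)=E_i(F,G,r)$ and similarly for $r+1$, and $\Phi_{i+1}=\Phi_i$ remains injective. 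This proves that $E(F,G,r)$ collapses at the second page and that $\Phi_\infty$ is (split) injective in every bidegree.

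Finally, passing from the collapsed $E_\infty$-page to the abutment: the twisting map $\mathrm{Fr}_1:\Ext^*(F^{(r)},G^{(r)})\to\Ext^*(F^{(r+1)},G^{(r+1)})$ is a morphism of filtered vector spaces whose associated graded is $\Phi_\infty$. Because $E(F,G,r)$ and $E(F,G,r+1)$ are first quadrant spectral sequences, the filtration on each abutment in a given total degree $n$ is finite (of length $n+1$). A standard five-lemma/induction argument on the length of a finite filtration then shows that if a filtered morphism is injective on every associated graded piece, it is injective; applying this to $\mathrm{Fr}_1$ yields the desired injectivity.

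The main conceptual point (and really the only nontrivial input) is the split-injectivity of $\Phi$ on the $E_2$ page, which is guaranteed by theorem \ref{thm-sstwist}(ii) together with the structural description of $E_r$ as a truncation of $E_{r+1}$; everything else is a formal argument with morphisms of spectral sequences and finite filtrations, so no genuine obstacle is expected.
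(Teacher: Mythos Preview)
Your proof is correct and follows essentially the same approach as the paper's: both use the injection $E_2^{s,t}(F,G,r)\hookrightarrow E_2^{s,t}(F,G,r+1)$ from theorem \ref{thm-sstwist}(ii) to deduce that $E(F,G,r)$ is a sub-spectral sequence of one with vanishing differentials, hence also collapses, and then pass from injectivity on the associated graded to injectivity on the abutment via the finiteness of the filtration. The paper simply compresses your inductive argument into the phrase ``it is a sub-spectral sequence,'' but the content is identical.
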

\begin{proof}
We have an injection $E_2^{s,t}(F,G,r)\hookrightarrow E_2^{s,t}(F,G,r+1)$. Since $E(F,G,r+1)$ collapses at the second page, so does $E(F,G,r)$ (it is a sub-spectral sequence) and we get an injection $E_\infty^{s,t}(F,G,r)\hookrightarrow E_\infty^{s,t}(F,G,r+1)$. So precomposition by the Frobenius twist induces a an injective map
$\mathrm{Gr}(\Ext^*(F^{(r)},G^{(r)}))\hookrightarrow \mathrm{Gr}(\Ext^*(F^{(r+1)},G^{(r+1)}))$ (`$\mathrm{Gr}$' denotes the graded object coming from the filtration on the abutment), hence an injective map $\Ext^*(F^{(r)},G^{(r)})\hookrightarrow \Ext^*(F^{(r+1)},G^{(r+1)})$.
\end{proof}

Finally, the twisting spectral sequence also gives information about numerical invariants associated to $\Ext^*(F^{(r)},G^{(r)})$.  For example, corollary \ref{cor-num}(ii) enables to control the growth of the total dimension of $\Ext^*(F^{(r)},G^{(r)})$ viewed as a function of $r$ (by \cite{Totaro} and \cite[Th 2.10]{FS}, we know that this total dimension is finite). 

\begin{corollary}\label{cor-num} Let $r$ be a positive integer and let $F,G$ be strict polynomial functors. 
\begin{itemize}
\item[(i)] The graded vector spaces $\Ext^*(F^{(r)}, G^{(r)})$, $\Ext^*(F,G(E_r\otimes I))$ (with the total grading) and $\Ext^*(F,G(\k^{p^r}\otimes I))$ have the same Euler characteristic.
\item[(ii)] The total dimension of $\Ext^*(F^{(r)}, G^{(r)})$ is less or equal to the total dimension of $\Ext^*(F,G(\k^{p^r}\otimes I))$. Equality holds iff the twisting spectral sequence collapses at the second page (i.e. $E^{**}_2(F,G)= E^{**}_\infty(F,G)$).
\end{itemize}
\begin{proof}
The graded functor $G(E_r\otimes I)$ is concentrated in even degrees since $E_r$ is. Moreover, $E_r=\k^{p^r}$ as ungraded vector spaces, so $\Ext^*(F,G(E_r\otimes I))$ and $\Ext^*(F,G(\k^{p^r}\otimes I))$ have the same Euler characteristic. Now if $C$ is a complex, the Euler characteristic of $C^*$ equals the Euler characteristic of $H^*(C)$. Applying this to $E^{**}_r(F,G)$ we get (i). For (ii), observe that the total dimension of $E_{r+1}^{* *}(F,G)$ equals the total dimension of $E_{r}^{* *}(F,G)$ minus twice the total rank of $d_r^{* *}$.
\end{proof}
\end{corollary}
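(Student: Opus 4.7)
The natural tool is the twisting spectral sequence from Theorem \ref{thm-sstwist}, namely
$$E_2^{s,t}(F,G,r)=\Ext^s(F, G(E_r\otimes I))\Longrightarrow \Ext^{s+t}(F^{(r)}, G^{(r)})\;.$$
Both parts of the corollary should follow formally from general properties of spectral sequences of finite-dimensional vector spaces, combined with the observation that $E_r$ is isomorphic to $\k^{p^r}$ as an ungraded vector space.

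For (i), I would use the standard fact that taking homology preserves the (finite-dimensional) Euler characteristic, so the Euler characteristic of the abutment equals that of $E_\infty$, which in turn equals that of $E_2$, all with respect to the total grading $s+t$. The key observation is that the graded vector space $E_r$ is concentrated in even degrees, hence so is the graded functor $G(E_r\otimes I)$ in its $t$-grading; consequently $(-1)^{s+t}=(-1)^s$ on every nonzero entry of $E_2^{s,t}$. Summing over $t$ first collapses the Euler characteristic to $\sum_s (-1)^s \dim\Ext^s(F, G(E_r\otimes I))$, and since $E_r\simeq \k^{p^r}$ as ungraded vector spaces, $G(E_r\otimes I)\simeq G(\k^{p^r}\otimes I)$ as ungraded functors, giving the third Euler characteristic as well.

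For (ii), the point is that each page decreases in total dimension by twice the rank of the corresponding differential:
$$\dim E_{i+1} = \dim \ker(d_i) - \dim \mathrm{im}(d_i) = \dim E_i - 2\dim\mathrm{im}(d_i)\;.$$
Iterating, $\dim E_\infty \le \dim E_2$, with equality if and only if $d_i = 0$ for every $i\ge 2$, i.e.\ iff the spectral sequence collapses at the second page. Since the abutment $\Ext^*(F^{(r)}, G^{(r)})$ has total dimension equal to that of $E_\infty$ (the filtration argument at $E_\infty$ preserves total dimension), and $\dim E_2 = \dim \Ext^*(F, G(\k^{p^r}\otimes I))$ by the same ungraded isomorphism as in (i), the inequality and equality condition follow.

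There is no serious obstacle: both statements reduce, via the twisting spectral sequence, to entirely formal properties of bounded spectral sequences of finite-dimensional vector spaces. The only thing one really needs from the paper is the existence of the twisting spectral sequence (Theorem \ref{thm-sstwist}) together with Totaro--Friedlander--Suslin finiteness ensuring everything is finite-dimensional; the even-degree concentration of $E_r$, used to trivialize the sign in the Euler characteristic, is immediate from the definition of $E_r$.
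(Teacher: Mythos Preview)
Your proposal is correct and follows essentially the same approach as the paper's own proof: both invoke the twisting spectral sequence, use that $E_r$ is concentrated in even degrees to handle the sign in the Euler characteristic, observe that $E_r\simeq\k^{p^r}$ as ungraded vector spaces, and apply the standard facts that taking homology preserves Euler characteristic while decreasing total dimension by twice the rank of the differential. Your write-up is in fact slightly more detailed than the paper's (you spell out the filtration step and the iteration explicitly), but there is no substantive difference in strategy.
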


\subsection{Detecting extensions between twisted functors}

Consider the edge homomorphism $\theta(F,G,r)$ of the twisting spectral sequence:
$$\Ext^*(F^{(r)},G^{(r)})\twoheadrightarrow E^{0,*}_{\infty}(F,G)\subset E^{0,*}_{2}(F,G)=\hom(F,G(E_r\otimes I))\;.$$
Its target is easy to compute (this is invariant theory). So we want to use it as a way to detect extensions in $\Ext^*(F^{(r)},G^{(r)})$ (or even to compute these extension groups). To this purpose, we first state some elementary properties of the edge homomorphism.
\begin{lemma}\label{lm-std-prop}
\begin{itemize}
\item[(i)] If $F$ is arbitrary and $G=S^\mu$, the edge homomorphism is an isomorphism. Furthermore, it coincides with the isomorphism of theorem \ref{thm-mult} (whence the notation $\theta(F,G,r)$).
\item[(ii)] The edge homomorphism is compatible with products:
$$\theta(F_1\otimes F_2,G_1\otimes G_2 ,r)(c_1\cup c_2)=\theta(F_1,G_1,r)(c_1)\otimes\theta(F_2,G_2,r)(c_2)\;.$$
\item[(iii)] The edge homomorphisms $\theta(F,G,r)$ and $\theta(F,G,r+1)$ fit into a commutative diagram:
$$\xymatrix{
\Ext^*(F^{(r)},G^{(r)})\ar[d]\ar[rr]^-{\theta(F,G,r)}_-{\simeq} &&\hom(F,G(E_r\otimes I))\ar@{^{(}->}[d]\\
\Ext^*(F^{(r+1)},G^{(r+1)})\ar[rr]^-{\theta(F,G,r+1)}_-{\simeq}&&\hom(F,G(E_{r+1}\otimes I))\;\;.
}$$
\end{itemize} 
\end{lemma}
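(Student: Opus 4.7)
All three statements will be deduced formally from the construction of the twisting spectral sequence given in theorem~\ref{thm-sstwist}, using standard properties of edge homomorphisms in hypercohomology spectral sequences. No new calculation is needed; the multiplicative and twisting-map data were built into the spectral sequence precisely so that statements like these become automatic.

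\textbf{Plan for (i).} The key observation is that the graded functor $S^\mu(E_r\otimes I)$ is injective in $\P_\k^*$. Indeed, by the exponential formula it decomposes as a direct sum of functors of the form $S^\nu$, each of which is injective by \cite[Cor~2.12]{FS}. Consequently $\Ext^s(F, S^\mu(E_r\otimes I))=0$ for $s>0$, so the second page $E_2^{s,t}(F,S^\mu,r)$ is concentrated in the column $s=0$, the spectral sequence collapses, and the edge homomorphism is an isomorphism with no filtration issue. To identify this edge map with the isomorphism of theorem~\ref{thm-mult}, I will choose the trivial injective coresolution $J_G^*=S^\mu$ (concentrated in degree $0$) in the construction of the spectral sequence. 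Then the Cartan--Eilenberg resolution $J_{G,r}^{**}$ is supported in a single row, the bicomplex $\hom(F^{(r)},J_{G,r}^{**})$ has only one nonzero row, and the edge isomorphism reduces to the identification of the first page obtained via theorem~\ref{thm-mult}, which is $\theta(F,S^\mu,r)$ by construction.

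\textbf{Plan for (ii) and (iii).} For (ii), write the edge homomorphism as the composite
\[
\Ext^*(F^{(r)},G^{(r)})\twoheadrightarrow E_\infty^{0,*}\hookrightarrow E_2^{0,*}\simeq \hom(F,G(E_r\otimes I)).
\]
The pairing of spectral sequences from theorem~\ref{thm-sstwist}(i) induces a pairing on each page and is compatible with the multiplicative filtration on the abutment; the cup product on $\Ext^*$ therefore descends through the associated graded to $E_\infty^{0,*}$ and then to $E_2^{0,*}$. Since the pairing on $E_2^{0,*}$ matches the cup product on $\hom(F,G(E_r\otimes I))$ (again by theorem~\ref{thm-sstwist}(i), whose identification of the second page is built from theorem~\ref{thm-mult}, which is multiplicative), we obtain the formula stated in (ii). For (iii), I will simply invoke the map of spectral sequences $E(F,G,r)\to E(F,G,r+1)$ provided by theorem~\ref{thm-sstwist}(ii): on abutments it is the twisting map $\mathrm{Fr}_1$, on the second page it is the split injection induced by $E_r\hookrightarrow E_{r+1}$, and morphisms of spectral sequences automatically commute with edge homomorphisms, which is the desired square.

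\textbf{Main obstacle.} There is no genuine difficulty; everything is formal once theorems~\ref{thm-mult} and~\ref{thm-sstwist} are in hand. The only point to verify carefully is that the identification of $E_2^{0,*}$ with $\hom(F,G(E_r\otimes I))$ is compatible with both the pairing and the change-of-$r$ map simultaneously, but this is precisely what was arranged in the proof of theorem~\ref{thm-sstwist}, where the first-page identification is inherited from theorem~\ref{thm-mult}.
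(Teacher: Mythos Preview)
Your proof of (i) is essentially the paper's: collapse the spectral sequence by taking the trivial coresolution of the injective $S^\mu$, so the bicomplex has a single row and the edge map is literally the identification of the first page via $\theta(F,S^\mu,r)$.

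For (ii) and (iii) your argument is correct but differs from the paper's. You argue directly from the spectral-sequence machinery: the pairing of spectral sequences respects the filtration on the abutment, so the cup product descends along the surjection $\Ext^*\twoheadrightarrow E_\infty^{0,*}$, and the pairing on $E_\infty^{0,*}$ is the restriction of the pairing on $E_2^{0,*}$; likewise a map of spectral sequences automatically intertwines edge homomorphisms, giving (iii). This is clean and fully formal once theorem~\ref{thm-sstwist} is available with all its compatibilities. The paper instead reduces (ii) and (iii) to the case $G=S^\mu$ already handled by (i): it embeds a general $G$ into an injective $J=\bigoplus S^{\mu^j}$, uses naturality of the edge map to get a commutative square
\[
\xymatrix{
\Ext^*(F^{(r)},G^{(r)})\ar[d]\ar[rr]^-{\theta(F,G,r)}&&\hom(F,G(E_r\otimes I))\ar@{^{(}->}[d]\\
\Ext^*(F^{(r)},J^{(r)})\ar[rr]^-{\theta(F,J,r)}_-{\simeq}&&\hom(F,J(E_r\otimes I))
}
\]
and reads off the product and twisting compatibilities inside $\hom(F,J(E_r\otimes I))$, where they hold by theorem~\ref{thm-mult}. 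Your route leans on the general compatibility of pairings with filtrations and edge maps; the paper's route trades that for a naturality-plus-injectivity trick that keeps the argument inside the $G=S^\mu$ case. Both are short and valid.
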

\begin{proof} 
For (i), remember the construction of the twisting spectral sequence. Since $S^\mu$ is injective, we can take an injective bicomplex $J^{**}$ which is zero in bidegree $(s,t)$ as soon as $t>0$. Thus, before identifying the second page, the edge isomorphism is simply the identity map: 
$$\Ext^*(F^{(r)},S^{\mu\,(r)})=H^*_{d_0}(\hom(F^{(r)},J^{* 0}))= H^0_{d_1}(H^*_{d_0}(\hom(F,J^{**}))\,.$$
Since we use the isomorphism $\theta(F,S^\mu,r)$ to identify the second page, the edge homomorphism coincides with $\theta(F,S^\mu,r)$ after identification.

To prove (ii) and (iii), observe that these properties are already known when $G=S^\mu$ (by (i)). Now $G$ embeds in an injective $J$ which is a finite direct sum of $S^\mu$. And we have a commutative diagram:
$$\xymatrix{
\Ext^*(F^{(r)},G^{(r)})\ar[d]\ar[rr]_-{\theta(F,G,r)}&&\hom(F,G(E_r\otimes I))\ar@{^{(}->}[d]\\
\Ext^*(F^{(r)},J^{(r)})\ar[rr]_-{\theta(F,J,r)}^-{\simeq}&&\hom(F,J(E_r\otimes I))\;.
}$$
Since the vertical arrow on the right is injective, one can read cup products as well as the effect of the Frobenius twist in $\hom(F,J(E_r\otimes I))$ where we know (ii) and (iii) are valid. This proves (ii) and (iii). 
\end{proof}

Now assume that the twisting spectral sequence collapses at the second page. Then the edge homomorphism is surjective (so it detects many extensions).

\begin{corollary}\label{cor-chal}
Let $r$ be a positive integer and let $F,G$ be strict polynomial functors. 
Assume that $\Ext^s(F, G(E_r\otimes I))=0$ for $s$ odd. Then the twisting spectral sequence collapses at the second page. So one has an isomorphism (compatible with cup products and Frobenius twisting).
$$ \mathrm{Gr}(\Ext^{s+t}(F^{(r)},G^{(r)}))\simeq E_\infty^{s+t}(F,G) \simeq \Ext^s(F,G(E_r\otimes I))\;.$$
In particular the edge homomorphism is a surjection (compatible with cup products and Frobenius twisting):
$$\Ext^*(F^{(r)},G^{(r)})\twoheadrightarrow \Ext^0(F,G(E_r\otimes I))\;.$$
Its kernel is isomorphic (non naturally in $F,G$) to $\Ext^{>0}(F,G(E_r\otimes I))$.
\end{corollary}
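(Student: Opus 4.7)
The plan is to exploit a simple parity argument on the second page. Recall the twisting spectral sequence has $E_2^{s,t}(F,G,r)=\Ext^s(F,G(E_r\otimes I))$, where the $t$-grading comes from the internal grading on $G(E_r\otimes I)$ induced by the grading on $E_r$. Since $E_r$ is concentrated in \emph{even} degrees, so is $G(E_r\otimes I)$, and therefore $E_2^{s,t}=0$ whenever $t$ is odd. Combined with the standing hypothesis that $\Ext^s(F,G(E_r\otimes I))=0$ for $s$ odd, we obtain $E_2^{s,t}=0$ unless both $s$ and $t$ are even.

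Now consider a differential $d_k\colon E_k^{s,t}\to E_k^{s+k,t-k+1}$ with $k\ge 2$. If the source is nonzero, then $s$ and $t$ are both even; but then $s+k$ and $t-k+1$ have opposite parities, so at least one of them is odd, forcing the target to vanish. This vanishing propagates to all pages, since each $E_k^{s,t}$ is a subquotient of $E_2^{s,t}$ and therefore inherits the same parity concentration. Hence all $d_k$ with $k\ge 2$ are zero, and the spectral sequence collapses at the second page. This yields the claimed identification $E_\infty^{s,t}\simeq E_2^{s,t}=\Ext^s(F,G(E_r\otimes I))$ and thus $\mathrm{Gr}(\Ext^{s+t}(F^{(r)},G^{(r)}))\simeq \Ext^s(F,G(E_r\otimes I))$.

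For the edge map, collapsing implies $E_\infty^{0,t}=E_2^{0,t}=\hom(F,G(E_r\otimes I))^t=\Ext^0(F,G(E_r\otimes I))^t$ sits as the top quotient of the decreasing filtration on $\Ext^{t}(F^{(r)},G^{(r)})$, so the edge map is surjective. Its compatibility with cup products and with Frobenius twists follows directly from Lemma \ref{lm-std-prop}(ii),(iii) (equivalently, from items (i) and (ii) of Theorem \ref{thm-sstwist}). Finally, the kernel of the edge surjection is the first step $F^{\ge 1}$ of the filtration, whose associated graded is $\bigoplus_{s\ge 1}E_\infty^{s,*}\simeq \Ext^{\ge 1}(F,G(E_r\otimes I))$; choosing a (non-canonical) vector space splitting of the filtration identifies the kernel with $\Ext^{>0}(F,G(E_r\otimes I))$, as claimed.

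The only mild subtlety is making sure the parity of the second page is really inherited by every later page; but this is immediate from the fact that each $E_k^{s,t}$ is a subquotient of $E_2^{s,t}$. There is no real obstacle here: once the bi-parity observation is in place, the rest is essentially formal book-keeping about spectral sequences and the edge homomorphism.
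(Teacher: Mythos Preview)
Your argument is correct and follows essentially the same approach as the paper: both exploit the bi-parity of the $E_2$-page (vanishing for $t$ odd because $E_r$ is concentrated in even degrees, vanishing for $s$ odd by hypothesis) to force all higher differentials to be zero. The paper phrases this slightly more tersely via total degree (the $E_2$-page lives in even total degree and each $d_k$ raises total degree by one), while you check the bidegree parity directly; these are equivalent. Your additional remarks on the edge map surjection and the identification of its kernel via a non-canonical splitting of the filtration spell out what the paper leaves implicit.
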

\begin{proof}
The graded functor $G(E_r\otimes I)$ is concentrated in even degrees since $E_r$ is. Thus $E_2^{s,t}(F,G,r)$ is zero unless $s$ and $t$ are even. So the second page is concentrated in even total degree. The differentials of the spectral sequence raise the total degree by one, so they must vanish.  
\end{proof}

\begin{remark}\label{rk-collapse-chal}
Corollary \ref{cor-chal} is a generalization of the main theorem of \cite{Chalupnik1}. Indeed, assume that the stronger $\Ext$-condition holds: $\Ext^s(F, G(E_r\otimes I))=0$ for $s>0$. Then $E_2^{s,t}(F,G,r)$ is concentrated in bidegrees $(0,t)$ so the edge homomorphism is actually an isomorphism (compatible with cup products and Frobenius twisting), compare \cite[Th 4.4]{Chalupnik1}:
$$\Ext^{*}(F^{(r)},G^{(r)})\simeq \hom(F,G(E_r\otimes I))\;.$$

As examples of pairs of functors satisfying this strong $\Ext$-condition, one can take: (i) $F$ is a projective functor and $G$ is arbitrary, (ii) $F$ is arbitrary and $G$ is injective, (iii) $F^\sharp$ and $G$ are Schur functors (this case gives \cite[Th 6.1]{Chalupnik1}), or more generally (iv) $F^\sharp$ and $G$ have a Schur filtration (i.e. a filtration whose associated graded object is a direct sum of Schur functors). Many $\Ext^*$-computations between twisted functors known in the literature amount to one of these cases.

As a pair satisfying the vanishing in odd degrees but not the strong $\Ext$-condition, one can take for example $F=\Gamma^{d\,(1)}$ and $G=I^{(1)}\otimes S^{dp-p}$.
\end{remark}

\section{The collapsing conjecture}\label{sec-collapse}

As observed in remark \ref{rk-collapse-chal}, many explicit computations in the literature amount to the collapsing of the twisting spectral sequence at the second page.
In fact, when one looks carefully at the results of \cite{FS,FFSS,Chalupnik1,Chalupnik2}, it seems that the total dimension of $\Ext^{*}(F^{(r)},G^{(r)})$ always equals the total dimension of $\Ext^*(F,G(\k^{p^r}\otimes I))$.
Thus (cf. corollary \ref{cor-num}(ii)), it seems that the twisting spectral sequence always collapses at the second page, even when there is no obvious lacunary reason for it.
This leads us to the following conjecture.

\begin{conjecture}\label{conj-collapse}
For all $r\ge 0$, and all $F,G$,  the twisting spectral sequence $E(F,G,r)$ of theorem \ref{thm-sstwist} collapses at the second page.
\end{conjecture}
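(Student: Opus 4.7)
The plan is to combine lacunary vanishing on the second page with an inductive argument over injectives in $\P_\k$. First, observe that since $E_r$ is concentrated in even degrees, the entry $E_2^{s,t}(F,G,r)=\Ext^s(F,G(E_r\otimes I))$ vanishes whenever $t$ is odd. The differential $d_k:E_k^{s,t}\to E_k^{s+k,t-k+1}$ lands in a bidegree where $t-k+1$ has opposite parity from $t$ precisely when $k$ is even; hence $d_{2k}\equiv 0$ automatically, and the actual content of the conjecture is that all the odd-indexed differentials $d_3,d_5,\dots$ also vanish. Equivalently, by Corollary \ref{cor-num}(ii), the conjecture reduces to the numerical identity $\dim_\k\Ext^*(F^{(r)},G^{(r)})=\dim_\k\Ext^*(F,G(\k^{p^r}\otimes I))$.

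My first move would be to exploit the multiplicative and functorial structure of Theorem \ref{thm-sstwist}. The strong $\Ext$-condition of Remark \ref{rk-collapse-chal} (for instance, $F^\sharp$ and $G$ both admitting Schur filtrations) already gives trivial collapsing; by the pairing of Theorem \ref{thm-sstwist}(i), collapsing of $E(F_i,G_i,r)$ propagates to $E(F_1\otimes F_2,G_1\otimes G_2,r)$ whenever the induced product map on second pages is surjective. Combined with the exponential formula and standard Cauchy-type decompositions, this should cover many pairs beyond the strong $\Ext$-condition, including all the pairs from \cite{FS,FFSS,Chalupnik1,Chalupnik2}.

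For the remaining general case, I would attempt an induction using short exact sequences of functors together with the twist-injectivity statement of Corollary \ref{cor-twistinjcoll}: since collapsing at stage $r{+}1$ implies collapsing at stage $r$, it is enough to prove the conjecture at arbitrarily large $r$. Choosing a short exact sequence $0\to G\to J\to G'\to 0$ with $J$ injective yields a morphism of spectral sequences $E(F,J,r)\to E(F,G',r)$, and the hope is to identify the only possible higher differentials of $E(F,G,r)$ with connecting maps in the long exact sequence of $\Ext(F,-(E_r\otimes I))$, which can then be shown to vanish by a direct Troesch-complex calculation paralleling Proposition \ref{prop-FFC}. Dually, one can filter $F$ by its projective resolution and run the same argument.

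The main obstacle is precisely this last identification: a short exact sequence of functors does not produce a short exact sequence of spectral sequences in a form that controls higher differentials. Higher $d_{2k+1}$ of $E(F,G,r)$ are defined via the totalization of a Cartan--Eilenberg double complex of $G^{(r)}$, and their interaction with connecting maps of $\Ext(F,-(E_r\otimes I))$ depends on choices that have no a priori compatibility with the Troesch injective model used to identify the second page. Overcoming this will likely require a genuinely new ingredient, perhaps an explicit dg-model for $\Ext^*(F^{(r)},G^{(r)})$ obtained by applying the Troesch construction levelwise to a full injective coresolution of $G$, in which the higher differentials vanish for lacunary reasons inherited from the even-degree structure of $\sha_r$ and its compatibility with the differential $\delta$ described in Proposition \ref{prop-res-2}.
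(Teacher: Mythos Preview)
This statement is a \emph{conjecture} in the paper, not a theorem: the paper does not prove it, and you have not proved it either. Your proposal is an honest outline of possible strategies, and you correctly flag the main gap at the end. A few comments on how your ideas line up with what the paper actually does.

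Your lacunary observation is correct: since $E_2^{s,t}$ vanishes for $t$ odd and this persists on all later pages, every even-indexed differential $d_{2k}$ is automatically zero, so the content of the conjecture is the vanishing of $d_3,d_5,\dots$. The paper records the stronger special case (vanishing also for $s$ odd) in Corollary~\ref{cor-chal}, which already handles all pairs from \cite{FS,FFSS,Chalupnik1,Chalupnik2} via Remark~\ref{rk-collapse-chal}.

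Your multiplicative propagation argument is valid in principle (permanent cycles generate $\Rightarrow$ collapse, via the Leibniz rule for the pairing), but the required surjectivity of the cup product map on second pages is a K\"unneth-type statement that fails in general for $\Ext^*(F_1,G_1(E_r\otimes I))\otimes\Ext^*(F_2,G_2(E_r\otimes I))\to\Ext^*(F_1\otimes F_2,(G_1\otimes G_2)(E_r\otimes I))$. The paper does not use this route.

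Your final paragraph is the interesting one: the ``explicit dg-model obtained by applying the Troesch construction levelwise to a full injective coresolution of $G$'' is \emph{exactly} the paper's approach to its partial result. Section~\ref{subsec-tcc} formalizes the obstruction you identify: one needs the liftings $T(f,r)^*$ of the differentials of $J_G^*$ to be chosen \emph{functorially}, so that they assemble into a genuine bicomplex. The paper solves this by restricting to the twist-compatible subcategory $\mathcal{T}_r\mathcal{P}_\k$ (Proposition~\ref{prop-T}), and Theorem~\ref{thm-collapse} then proves collapse whenever $G$ admits an $r$-twist compatible coresolution. The remaining question---whether every $G$ admits a coresolution for which compatible liftings of the differentials exist---is posed as the combinatorial Problem~\ref{pb-lift}, whose solution would settle the conjecture. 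So your instinct about the ``genuinely new ingredient'' is right, and the paper pins down precisely what that ingredient would have to be.
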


\begin{remark}\label{rk-collapse} One can reduce conjecture \ref{conj-collapse} to slightly less general statement. For example, by induction, one can reduce the conjecture to the case $r=1$. We also know that the collapsing of $E(F,G,r+1)$ implies the collapsing of $E(F,G,r)$ by corollary \ref{cor-twistinjcoll}. So, to prove the conjecture, it suffices to find for each pair $F,G$ an integer $r$ such that $E(F,G,r)$ collapses at the second page.
\end{remark}

The purpose of this section is to make a step towards conjecture \ref{conj-collapse}. Namely, we prove that the spectral sequence $E(F,G,r)$ collapses at the second page for all $r\ge 1$ for a big family of pairs $(F,G)$, which contains all the pairs studied in  \cite{FS,FFSS,Chalupnik1,Chalupnik2}, and many others. This collapsing is stated in theorem \ref{thm-collapse}, which is the main result of the section. 

Our general approach to conjecture \ref{conj-collapse} and to theorem \ref{thm-collapse} is based once again on Troesch complexes, and may be described as follows. Recall the way we constructed the twisting spectral sequence $E(F,G,r)$. We first took an injective coresolution $J_G^*$ of $G$. So $J_G^{*\,(r)}$ is a exact coresolution of $G^{(r)}$ by twisted symmetric powers. The twisting spectral sequence $E(F,G,r)$ is constructed as the spectral sequence associated to the bicomplex $\hom(F^{(r)}, J_{G,r}^{*,*})$, where $J_{G,r}^{*,*}$ is a Cartan-Eilenberg injective coresolution of $J_G^{*\,(r)}$.
The idea to prove our collapsing result is to construct a bicomplex $J_{G,r}^{*,*}$, whose columns are Troesch coresolutions (i.e. of the form $T(S^\mu,r)^*$). Indeed, we have seen in lemma \ref{lm-CFSr} that the complexes $\hom(F^{(r)},T(S^\mu,r))$ are zero in odd degrees. So, with such a bicomplex $J_{G,r}^{*,*}$ in hand, the odd degree rows of the bicomplex $\hom(F^{(r)},J_{G,r}^{*,*})$ are zero, so that the differentials $d_i$ of the associated spectral sequence are zero for $i\ge 2$.

To construct such a bicomplex, we want to apply a functor `$T(-,r)$' to the coresolution $J_G$. We have already defined the values of $T(-,r)$ on symmetric powers in definition \ref{def-TSmur}. But it is not clear how to define \emph{in a functorial way} the values of $T(-,r)$ on morphisms between symmetric powers. To bypass this difficulty, we introduce in section \ref{subsec-tcc} `twist compatible categories' $\mathcal{T}_r\mathcal{P}_\k$ which extend to the case of arbitrary $r$ the twist compatible categories already used in \cite{TouzeUniv} for $r=1$. These categories are (not full) subcategories of $\P_{\k}$ containing symmetric powers. Since $\mathcal{T}_r\mathcal{P}_\k$ has less morphisms than $\mathcal{P}_\k$, it is easier to make $T(-,r)$ into a functor on this category (cf. proposition \ref{prop-T}). In section \ref{subsec-answer}, we use the functor $T(-,r)$ to fulfill our plans. Namely, if $G$ has a `twist compatible coresolution' (i.e. an injective coresolution of $G$ whose differentials are morphisms of the twist compatible category), our idea works by taking $J_{G,r}^{*,*}=T(J_G^*,r)^*$. This proves theorem \ref{thm-collapse}, and we observe in proposition \ref{prop-ex} that many interesting functors admit such twist compatible coresolutions. Finally, we study in section \ref{subsec-comb} to what extent our approach could be used to prove conjecture \ref{conj-collapse} for all pairs $(F,G)$. In particular, we propose an elementary combinatorial problem whose solution would prove the conjecture.

\subsection{Twist compatible categories}\label{subsec-tcc}

In this section, we choose a functorial construction of the direct sum in $\P_\k$. (for example, $F\oplus G$ \emph{equals} the functor which sends a vector space $V$ to the set of couples $(f,g)$, with $f\in F(V)$ and $g\in G(V)$).

\begin{definition}[{compare \cite[Def. 3.4]{TouzeUniv}}]\label{def-r-twist} Let $r$ be a positive integer,
let $(\lambda^i)$ and $(\mu^j)$ be two finite families of tuples of nonnegative integers, and let $f\in \hom(\bigoplus_i S^{\lambda^i}, \bigoplus_j S^{\mu^j})$. We say that $f$ is $r$-twist compatible if there exists a map $\overline{f_r}$ such that the following diagram commutes:
$$\xymatrix{
\bigoplus_i S^{\lambda^i}(S^{p^r})\ar@{->>}[d]\ar@{->}[rr]^-{f(S^{p^r})}&& \bigoplus_j S^{\mu^j}(S^{p^r})\ar@{->>}[d]\\
\bigoplus_i S^{p^r\lambda^i}\ar@{->}[rr]^-{\overline{f_r}}&& \bigoplus_j S^{p^r\mu^j}
}
$$
where the vertical epimorphisms are induced by multiplications $S^n(S^{p^r})\twoheadrightarrow S^{np^r}$ (it is the unique map in $\hom(S^n(S^{p^r}),S^{np^r})$ up to a scalar constant).
\end{definition}

\begin{remark}
In general, not all isomorphisms $\bigoplus_i S^{\lambda^i}(S^{p^r})\simeq \bigoplus_i S^{\lambda^i}(S^{p^r})$ are $r$-twist compatible. That's why it is important to choose a strict definition of the direct sum : with a definition `up to isomorphism', definition \ref{def-r-twist} would be inconsistent.
\end{remark}

If $\overline{f_r}$ exists, it is unique by surjectivity of the vertical arrows. Observe that $r$-twist compatible maps form a subvector space of the $\k$ vector space $\hom(\bigoplus_i S^{\lambda^i}, \bigoplus_j S^{\mu^j})$, and that the composite of two $r$-twist compatible maps is $r$-twist compatible. Examples of $r$-twist compatible maps are the multiplications $S^i\otimes S^j\to S^{i+j}$ or the permutations $S^i\otimes S^j\simeq S^j\otimes S^i$.
So, the category $\overline{\mathcal{T}_r\mathcal{P}_\k}$ whose objects are the finite direct sums of $S^\mu$ and whose maps are $r$-twist compatible is a $\k$-linear subcategory of $\P_\k$, stable by direct sums.

For combinatorial reasons, we would like our category to be stable not only by direct sums but also by tensor products. So we have to enlarge it a bit. To this purpose we introduce `iterated symmetric tensors'. A $0$-iterated symmetric tensor is just a functor of the form $S^\mu$. For $n\ge 1$, an $n$-iterated symmetric tensor is a functor $F$ of the form $F:=\bigotimes_{i=1}^k\bigoplus_{j=1}^\ell S_{i,j}$ where $S_{i,j}$ is an $(n-1)$-iterated symmetric tensor.
If $F$ is an $n$-iterated symmetric tensor, distributivity of tensor products with respect to direct sums yields a canonical isomorphism $\xi_F :F\simeq F_0$ where $F_0$ is a direct sum of $0$-iterated symmetric tensors. 

\begin{definition}
The $r$-th twist compatible category $\mathcal{T}_r\mathcal{P}_\k$ is the (not full) subcategory of $\P_\k$ whose objects are iterated symmetric tensors and whose morphisms are the $f:F\to G$ such that $f_0:=\xi_G\circ f\circ \xi_F^{-1}$ are $r$-twist compatible.
\end{definition}

One checks as in \cite[Lm 3.10]{TouzeUniv} that $\mathcal{T}_r\mathcal{P}_\k$ is stable by direct sums and tensor products. Moreover, we have a functor (actually an equivalence of categories), which enables us to extend any functor with source $\overline{\mathcal{T}_r\mathcal{P}_\k}$ into a functor with source $\mathcal{T}_r\mathcal{P}_\k$:
$$\begin{array}{ccc}\mathcal{T}_r\mathcal{P}_\k &\to &\overline{\mathcal{T}_r\mathcal{P}_\k}\\
F&\mapsto & F_0\\
F\xrightarrow[]{f} G& \mapsto & F_0\xrightarrow[]{f_0} G_0
\end{array}
$$

The interest of $\mathcal{T}_r\mathcal{P}_\k$ (or $\overline{\mathcal{T}_r\mathcal{P}_\k}$) lies in the following observation. Let $f\in\hom(\bigoplus_i S^{\lambda^i}, \bigoplus_j S^{\mu^j})$ be an $r$-twist compatible map. Since $f^{(r)}$ is just the restriction of $f(S^{p^r})$ to the subfunctor $\bigoplus_i S^{\lambda^i\,(r)}$, we have a commutative diagram (the vertical arrows are the canonical inclusions):
$$\xymatrix{
\bigoplus_i S^{\lambda^i\,(r)}\ar@{^{(}->}[d]\ar@{->}[rr]^-{f^{(r)}}&& \bigoplus_j S^{\mu^j\,(r)}\ar@{^{(}->}[d]\\
\bigoplus_i S^{p^r\lambda^i}\ar@{->}[rr]^-{\overline{f_r}}&& \bigoplus_j S^{p^r\mu^j}
}$$
Thus $\overline{f_r}$ determines way to choose a lifting of $f^{(r)}$ to $\bigoplus_i S^{p^r\lambda^i}\to \bigoplus_j S^{p^r\mu^j}$, in a way  which respects composition (there are often many liftings of $f^{(r)}$ available, so making a `good' choice is not trivial). 

Now let $T(\oplus_i S^{\lambda^i},r)^*$ be the injective coresolution of $\oplus_i S^{\lambda^i\,(r)}$ obtained by contracting the $p$-complex $\bigoplus_j B^*_{p^r\lambda^i}(r)$ (that is, the complex $T(\oplus_i S^{\lambda^i},r)^*=\oplus_i T( S^{\lambda^i},r)^* $ is the one used in section \ref{subsec-compute}). If $f$ is a $r$-twist compatible map define $T(f,r)^*:= \overline{f}(\sha_r\otimes I)_{[1]}$. By proposition \ref{prop-res-2}, this defines a functor from $\overline{\mathcal{T}_r\mathcal{P}_\k}$ (or equivalently from $\mathcal{T}_r\mathcal{P}_\k$) to cochain complexes. To sum up, we have proved the following result.

\begin{proposition}\label{prop-T}
Let $r$ be a nonnegative integer. The complexes $T(S^\mu,r)^*$ of section \ref{subsec-compute} define an additive functor $T(-,r)^*$ from the $r$-twist compatible category $\mathcal{T}_r\mathcal{P}_\k$ to the category of cochain complexes, which sends an object $F$ to an injective coresolution of $F^{(r)}$ and a morphism $f:F\to G$ to a chain map $T(f,r)^*:T(F,r)^*\to T(G,r)^*$ which lifts $f^{(r)}$. 
\end{proposition}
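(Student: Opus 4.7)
The plan is to piece together the functor from the ingredients already assembled: Definition \ref{def-TSmur} (which produces $T(S^\mu,r)^*$), the notion of $r$-twist compatibility (which provides canonical lifts $\overline{f_r}$ of $f^{(r)}$), and Proposition \ref{prop-res-2} (which guarantees these lifts interact correctly with the $p$-differential on Troesch coresolutions). The argument naturally splits into two stages: first define the functor on the subcategory $\overline{\mathcal{T}_r\mathcal{P}_\k}$ whose objects are direct sums of $S^\mu$, then transport it to $\mathcal{T}_r\mathcal{P}_\k$ via the equivalence $F \mapsto F_0$ introduced just before the proposition.

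On objects of $\overline{\mathcal{T}_r\mathcal{P}_\k}$, I would set $T(\bigoplus_i S^{\lambda^i}, r)^* := \bigoplus_i T(S^{\lambda^i}, r)^*$. This is an injective coresolution of $(\bigoplus_i S^{\lambda^i})^{(r)}$ by Definition \ref{def-TSmur} and Proposition \ref{prop-res1}(ii). On an $r$-twist compatible morphism $f: \bigoplus_i S^{\lambda^i} \to \bigoplus_j S^{\mu^j}$, I would take the unique lift $\overline{f_r}: \bigoplus_i S^{p^r\lambda^i} \to \bigoplus_j S^{p^r\mu^j}$ provided by Definition \ref{def-r-twist}, and define $T(f,r)^* := \overline{f_r}(\sha_r \otimes I)_{[1]}$, i.e.\ the contraction of the induced morphism of Troesch $p$-complexes. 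Proposition \ref{prop-res-2} directly supplies the key fact that $\overline{f_r}(\sha_r \otimes I)$ commutes with the $p$-differential $\delta$, so its contraction is a bona fide chain map. Moreover, the defining commutative square of Definition \ref{def-r-twist} shows that the restriction of $\overline{f_r}$ to the twisted summands $\bigoplus_i S^{\lambda^i(r)} \hookrightarrow \bigoplus_i S^{p^r\lambda^i}$ equals $f^{(r)}$, so $T(f,r)^*$ genuinely lifts $f^{(r)}$.

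The substantive verification is functoriality: I need $T(g \circ f, r)^* = T(g,r)^* \circ T(f,r)^*$ and $T(\mathrm{id}, r)^* = \mathrm{id}$. Both reduce to the corresponding identities for the lifts $\overline{f_r}$, which in turn follow from the \emph{uniqueness} of the lift noted immediately after Definition \ref{def-r-twist} (uniqueness holds because the multiplication maps $S^n(S^{p^r}) \twoheadrightarrow S^{np^r}$ are epimorphisms). Indeed, $\overline{g_r} \circ \overline{f_r}$ and the identity $\mathrm{id}_{\bigoplus S^{p^r\lambda^i}}$ both fit into the diagram characterizing $\overline{(g \circ f)_r}$ and $\overline{\mathrm{id}_r}$ respectively, hence must coincide with them. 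Additivity of $T(-,r)^*$ in morphisms is immediate from $\k$-linearity of the operation $h \mapsto h(\sha_r \otimes I)$ and the fact that contraction of $p$-complexes is $\k$-linear.

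Finally I would extend to all of $\mathcal{T}_r\mathcal{P}_\k$ by setting $T(F,r)^* := T(F_0, r)^*$ and $T(f,r)^* := T(f_0, r)^*$, where $F \mapsto F_0$ and $f \mapsto f_0$ is the equivalence with $\overline{\mathcal{T}_r\mathcal{P}_\k}$ given by applying the distributivity isomorphisms $\xi_F$; this preserves additivity and functoriality automatically. I do not expect any serious obstacle here — the hardest piece of work (compatibility of $\overline{f_r}(\sha_r \otimes I)$ with $\delta$) is already bundled into Proposition \ref{prop-res-2}, and what remains is the essentially formal check that taking canonical lifts defines a functor, which is forced by uniqueness.
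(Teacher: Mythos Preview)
Your proposal is correct and follows essentially the same route as the paper: the paper defines $T(\bigoplus_i S^{\lambda^i},r)^*:=\bigoplus_i T(S^{\lambda^i},r)^*$ and $T(f,r)^*:=\overline{f_r}(\sha_r\otimes I)_{[1]}$, invokes Proposition~\ref{prop-res-2} for the chain-map property, and extends to $\mathcal{T}_r\mathcal{P}_\k$ via $F\mapsto F_0$. Your write-up is more explicit about the functoriality verification (via uniqueness of $\overline{f_r}$), which the paper leaves to the reader, but the argument is the same.
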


\begin{remark}
For $r=1$, we have $T(-,1)^*=-_{[1]}\circ T$ where $T$ is the functor of \cite[Prop 3.13]{TouzeUniv} ($T$ has values in $p$-complexes). Contrarily to \cite[Prop 3.13]{TouzeUniv}, we do not assert any compatibility of $T(-,r)$ with tensor products in proposition \ref{prop-T}. Indeed for $r\ge 2$, the isomorphism $B(r)_d^*(V\oplus W)\simeq \bigoplus_{i+j=d}B(r)_i^*(V)\otimes B(r)_j^*(W)$ induced by the exponential formula does not commute with the $p$-differentials, so it is not clear how to get a compatibility result with tensor products (this problem already appears in remark \ref{rk-iso}).
\end{remark}

In section \ref{subsec-compute}, we computed the complexes $\hom(F^{(r)},T(S^\mu,r)^*)$. Now we can consider $T(-,r)$ as a functor, and the analysis performed in section \ref{subsec-compute} yields the following result.

\begin{proposition}\label{prop-analysis}
Let $F\in \P_\k$ and let $r$ be a nonnegative integer. For all $J\in\mathcal{T}_r\mathcal{P}_\k$, the cochain complex  
$\hom(F^{(r)},T(J,r)^*) $ is zero in odd degrees (so its differential is trivial). Moreover, 
there is a graded isomorphism, natural with respect to morphisms $f:F'\to F$ in $\P_\k$  and $g:J\to J'$ in $\mathcal{T}_r\mathcal{P}_\k$:
$$\hom(F^{(r)},T(J,r)^*) \simeq \hom(F,J(E_r\otimes I))\;. $$
\end{proposition}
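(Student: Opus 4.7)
The plan is to reduce Proposition~\ref{prop-analysis} to the computations already performed in Section~\ref{subsec-compute} for the special case $J = S^\mu$. By definition of $\mathcal{T}_r\mathcal{P}_\k$, the canonical isomorphism $\xi_J : J \simeq J_0$ identifies $J$ with a direct sum $J_0 = \bigoplus_i S^{\lambda^i}$, and since $T(-,r)^*$ is additive (Proposition~\ref{prop-T}), we obtain $T(J,r)^* = \bigoplus_i T(S^{\lambda^i},r)^*$ as a cochain complex. Because $\hom(F^{(r)},-)$ is additive, it suffices to handle each summand separately.

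For the vanishing in odd degrees, I would apply Lemma~\ref{lm-CFSr} to each $S^{\lambda^i}$ and sum. For the even-degree identification, I would use the isomorphism $\xi(F,S^{\lambda^i},r)$ from Step~1 of Section~\ref{subsec-compute}, rescaled by $2/p^r$ as in the discussion preceding Lemma~\ref{lm-nat}. Summing over $i$ gives
$$\hom(F^{(r)},T(J,r)^*) \;\simeq\; \bigoplus_i \hom(F, S^{\lambda^i}(E_r\otimes I)) \;\simeq\; \hom(F,J(E_r\otimes I)),$$
the final identification being induced by $\xi_J(E_r\otimes I)$.

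Naturality with respect to $f:F'\to F$ in $\P_\k$ is a direct application of Lemma~\ref{lm-prop-xi}(ii), performed summand by summand. The subtle point is naturality with respect to a morphism $g:J\to J'$ in $\mathcal{T}_r\mathcal{P}_\k$. Here the key observation is that $g_0 := \xi_{J'}\circ g\circ \xi_J^{-1}$ is $r$-twist compatible by definition, hence admits a canonical lifting $\overline{g_0}$ fitting into the square of Definition~\ref{def-r-twist}. Restricting this square to the Frobenius subfunctor $\bigoplus_i S^{\lambda^i\,(r)} \hookrightarrow \bigoplus_i S^{\lambda^i}(S^{p^r})$ (and using naturality of $g_0$) shows that $\overline{g_0}$ is in particular a lifting of $g_0^{(r)}$ in the precise sense required by Lemma~\ref{lm-prop-xi}(iii). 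By construction in Proposition~\ref{prop-T}, $T(g,r)^*$ is exactly the contraction of $\overline{g_0}(\sha_r\otimes I)_{[1]}$, so a summand-wise application of Lemma~\ref{lm-prop-xi}(iii) (together with the obvious extension to direct sums) yields the required naturality square.

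The main obstacle to verify carefully is the last point: one must check that restricting the $r$-twist compatibility square of $\overline{g_0}$ along the Frobenius inclusion $V^{(r)} \hookrightarrow S^{p^r}(V)$ really produces the type of lifting square that Lemma~\ref{lm-prop-xi}(iii) demands. Once this is in place, the proof is essentially a direct transcription of the argument of Lemma~\ref{lm-nat} from single symmetric powers to finite direct sums, followed by passage through $\xi_J$.
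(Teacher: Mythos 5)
Your proposal is correct and follows exactly the route the paper takes: the paper's own (very terse) proof simply cites Lemma~\ref{lm-CFSr} for the odd-degree vanishing and Lemma~\ref{lm-nat} for naturality, and your expansion — additivity of $T(-,r)^*$ over the decomposition $\xi_J$, plus the observation (already recorded in the paper just before Proposition~\ref{prop-T}) that $\overline{g_0}$ restricts along the Frobenius inclusions to a lifting of $g_0^{(r)}$ of the type Lemma~\ref{lm-prop-xi}(iii) requires — is precisely the content those citations compress. The ``main obstacle'' you flag is settled by the commutative square displayed right after the definition of $\mathcal{T}_r\mathcal{P}_\k$, so there is no gap.
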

\begin{proof} The cancellation property is given in lemma \ref{lm-CFSr}. The naturality is a particular case of lemma \ref{lm-nat}.
\end{proof}

\subsection{A partial answer to conjecture \ref{conj-collapse}}\label{subsec-answer}

\begin{definition}
Let $F\in \P_\k$ and let $r$ be a positive integer. An $r$-twist compatible coresolution of $F$ is a cochain complex $J_F^*$ in $\mathcal{T}_r\mathcal{P}_\k$, which is a coresolution of $F$\footnote{The objects of $\mathcal{T}_r\mathcal{P}_\k$ are injectives of $\P_{\k}$. Thus, $r$-twist compatible coresolutions of $F$ are actually injective coresolutions of $F$.}. 
\end{definition}

\begin{proposition}[examples]\label{prop-ex}
Let $r$ be a positive integer.

\begin{itemize}
\item[(i)] The symmetric powers $S^n$ admit $r$-twist compatible coresolutions.
\item[(ii)] The kernels of the reduced bar complex $\overline{B}(S^*)$ (in particular the exterior powers $\Lambda^n$) admit $r$-twist compatible coresolutions. 
\item[(iii)] The kernels of the reduced bar complex $\overline{B}(\Lambda^*)$ (in particular the divided powers $\Gamma^n$) admit  $r$-twist compatible coresolutions.
\item[(iv)] If $F$ and $G$ admit $r$-twist compatible coresolutions, so does $F\otimes G$.
\end{itemize}
\end{proposition}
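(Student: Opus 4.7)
Plan:

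I would handle the four parts in order, with (i) and (iv) being warm-ups and the real work concentrated in (ii) and (iii).

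For (i), the cochain complex concentrated in degree zero with value $S^n$ is already an $r$-twist compatible coresolution: $S^n$ is injective in $\P_\k$ and lies in $\mathcal{T}_r\mathcal{P}_\k$.

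For (iv), given $r$-twist compatible coresolutions $J_F^*,J_G^*$ of $F,G$, I would form the totalization $\mathrm{Tot}(J_F^*\otimes J_G^*)$. Its terms $J_F^i\otimes J_G^j$ belong to $\mathcal{T}_r\mathcal{P}_\k$ by the closure of this category under tensor products; its differentials $d_F\otimes \mathrm{id}\pm \mathrm{id}\otimes d_G$ are $r$-twist compatible since $r$-twist compatible maps form a $\k$-subspace closed under tensoring with identities (immediate from Definition \ref{def-r-twist}). Because the $J_F^i$ are injective, hence flat in $\P_\k$, the Künneth formula implies that the totalization is a coresolution of $F\otimes G$.

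For (ii), the reduced bar complex $\overline{B}(S^*)$ has terms which are tensor products of symmetric powers (so in $\mathcal{T}_r\mathcal{P}_\k$) and differentials built as alternating sums of applications of the reduced comultiplication $\Delta\colon S^m\to\bigoplus_{i+j=m,\,i,j\ge 1}S^i\otimes S^j$ to each tensor factor. The key verification is $r$-twist compatibility of $\Delta$, which reduces to the commutativity of
\[
\xymatrix{
S^m(S^{p^r})\ar@{->>}[d]\ar[rr]^-{\Delta(S^{p^r})} && S^i(S^{p^r})\otimes S^j(S^{p^r})\ar@{->>}[d]\\
S^{p^rm}\ar[rr]^-{\Delta} && S^{p^ri}\otimes S^{p^rj}\;,
}
\]
an expression of the naturality of the Hopf-algebra comultiplication of $S^*$ with respect to the multiplicative maps. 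Every differential of $\overline{B}(S^*)$ is then $r$-twist compatible, and truncating at any kernel position $K$ of this complex produces an $r$-twist compatible coresolution of $K$ by tensor products of symmetric powers. The identification of $\Lambda^n$ with one of these kernels in polynomial degree $n$ is the standard Koszul-duality statement.

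For (iii), the terms $\Lambda^{d_1}\otimes\cdots\otimes\Lambda^{d_k}$ of $\overline{B}(\Lambda^*)$ do not themselves lie in $\mathcal{T}_r\mathcal{P}_\k$. I would first use (ii) together with (iv) to coresolve each such term by an $r$-twist compatible complex, then lift the wedge-multiplication differentials of $\overline{B}(\Lambda^*)$ to chain maps between these coresolutions; the functoriality of $T(-,r)^*$ on $\mathcal{T}_r\mathcal{P}_\k$ (Proposition \ref{prop-T}) allows these liftings to be chosen $r$-twist compatibly, so the resulting bicomplex lives in $\mathcal{T}_r\mathcal{P}_\k$ with $r$-twist compatible differentials. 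Totalizing yields an $r$-twist compatible coresolution of each kernel of $\overline{B}(\Lambda^*)$, in particular of $\Gamma^n$. The main obstacle is precisely this step: coherent $r$-twist compatible liftings of all the wedge multiplications must be constructed simultaneously across the successive coresolutions from (ii) and (iv), and one must then verify that the totalization of the resulting bicomplex coresolves exactly the intended kernels rather than some shift or twist thereof.
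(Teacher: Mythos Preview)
Your treatment of (i) and (iv) is fine and matches the paper. The real issues are in (ii) and (iii).

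\textbf{Part (ii).} You have the differential of $\overline{B}(S^*)$ wrong. The reduced bar complex of the \emph{algebra} $S^*$ has differential given by multiplying adjacent tensor factors, not by comultiplying each factor. In the paper's cohomological indexing the complex in polynomial degree $n$ runs
\[
\otimes^n\;\to\;\bigoplus_{i+j=n-1}(\otimes^i)\otimes S^2\otimes(\otimes^j)\;\to\;\cdots\;\to\;S^n,
\]
so one passes from more tensor factors to fewer, i.e.\ by multiplication. This matters, because your key claim---that the comultiplication $\Delta\colon S^m\to S^i\otimes S^j$ is $r$-twist compatible---is false. Take $p=2$, $r=1$, $\Delta_{1,1}\colon S^2\to S^1\otimes S^1$, and $V$ with basis $x,y$. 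The elements $(xy)^2$ and $(x^2)(y^2)$ of $S^2(S^2(V))$ have the same image $x^2y^2$ under the multiplication $S^2(S^2)\twoheadrightarrow S^4$, but $\Delta(S^2)$ sends them to $0$ and to $x^2\otimes y^2+y^2\otimes x^2$ respectively. So no map $\overline{f_1}\colon S^4\to S^2\otimes S^2$ can make your square commute. By contrast, the maps actually occurring in the bar differential---multiplications $S^i\otimes S^j\to S^{i+j}$ and permutations $S^i\otimes S^j\simeq S^j\otimes S^i$---are easily seen to be $r$-twist compatible, and this is what the paper checks.

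\textbf{Part (iii).} Your plan to coresolve each $\Lambda^{\mu}$ via (ii) and (iv) and then lift the differentials of $\overline{B}(\Lambda^*)$ is not supported by Proposition~\ref{prop-T}: that functor is defined only on morphisms of $\mathcal{T}_r\mathcal{P}_\k$, whereas the wedge multiplications $\Lambda^i\otimes\Lambda^j\to\Lambda^{i+j}$ are maps between objects that are not in $\mathcal{T}_r\mathcal{P}_\k$ at all. You would need to produce, by hand, chain maps between the bar coresolutions of the $\Lambda^\mu$ that are $r$-twist compatible and that lift the wedge differentials coherently; this is exactly the ``obstacle'' you identify, and you have not indicated how to overcome it. The paper avoids the issue entirely by working directly with the double reduced bar complex $\overline{B}(\overline{B}(S^*))$: its terms are already iterated symmetric tensors, its differentials are again built from multiplications and permutations (hence $r$-twist compatible), and its homology yields $\Gamma^n$. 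Truncations then give $r$-twist compatible coresolutions of all the kernels of $\overline{B}(\Lambda^*)$.
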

\begin{proof}
For (i), take  $S^n$ as a resolution of $S^n$. For (ii), we know  \cite[Lm 3.19]{TouzeUniv} that $\overline{B}(S^*)$ has homology $\Lambda^n$. Moreover, the differential of $\overline{B}(S^*)$ is $r$-twist compatible, since it is build by taking linear combinations of tensor products of permutations $S^i\otimes S^j\simeq S^j\otimes S^i$ and multiplications $S^i\otimes S^j\to S^{i+j}$ which are $r$-twist compatible. So, the homogeneous part of polynomial degree $n$ of $\overline{B}(S^*)$ yields a complex $J_{\Lambda^n}^*$ in $\mathcal{T}_r\mathcal{P}_\k$ which looks like:
$$\underbrace{\otimes^n}_{J_{\Lambda^n}^0}\to \underbrace{\textstyle\bigoplus_{i+j=n-1} (\otimes^i)\otimes S^2\otimes (\otimes^j)}_{J_{\Lambda^n}^1} \to \dots\to \underbrace{S^n}_{J_{\Lambda^n}^{n-1}}\;.$$ 
The cohomology of $J_{\Lambda^n}^*$ equals $\Lambda^n$ concentrated in degree $0$, so $\Lambda^n$ admits $J_{\Lambda^n}^*$ as an $r$-twist compatible coresolution. If $K$ is degree $n$ homogeneous strict polynomial functor which is a kernel of a differential of $\overline{B}(S^*)$, then $K$ admits a truncation of $J_{\Lambda^n}^*$ as an $r$-twist compatible coresolution. The proof of (iii) is similar (one must work in the double reduced bar resolution $\overline{B}(\overline{B}(S^*))$, as in \cite[Prop. 3.21]{TouzeUniv}). Finally, (iv) follows from the stability of $\mathcal{T}_r\mathcal{P}_\k$ by tensor products.
\end{proof}

We are now ready to prove the main theorem of section \ref{sec-collapse}.

\begin{theorem}[{A partial answer to conjecture \ref{conj-collapse}}]\label{thm-collapse}
Let $F,G\in\P_\k$ and let $r$ be a positive integer. Assume that $G$ admit an $r$-twist compatible coresolution. Then there is a graded isomorphism, natural in $F$ (take the total degree on the right handside):
$$\Ext^*(F^{(r)},G^{(r)})\simeq \Ext^*(F,G(E_r\otimes I))\;. $$
Thus, the twisting spectral sequence $E(F,G,r)$ collapses at the second page.

Similarly, if $F^\sharp$ has an $r$-twist compatible coresolution, $\Ext^*(F^{(r)},G^{(r)})$ is graded isomorphic (naturally in $G$) to $\Ext^*(F(E_r\otimes I),G)$, and the twisting spectral sequence $E(F,G,r)$ collapses at the second page.
\end{theorem}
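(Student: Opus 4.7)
The plan is to turn an $r$-twist compatible coresolution $J_G^*$ of $G$ into a transparent Cartan--Eilenberg resolution of $G^{(r)}$ by applying the functor $T(-,r)^*$ of Proposition \ref{prop-T}. Concretely, I will form the bicomplex $T(J_G^*,r)^{**}$ whose $(s,t)$-entry is $T(J_G^s,r)^t$, whose vertical differential is the Troesch differential in the $t$-direction, and whose horizontal differential is $T(d^s,r)^{*}$ induced functorially by the ($r$-twist compatible) differentials $d^s$ of $J_G^*$. Functoriality of $T(-,r)^*$ guarantees that horizontal maps are chain maps, so this is a genuine bicomplex; since each column is an injective coresolution of $J_G^{s\,(r)}$, its totalization is an injective coresolution of $G^{(r)}$ and may serve as the Cartan--Eilenberg resolution underlying the twisting spectral sequence of Theorem \ref{thm-sstwist}.

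Next I will apply $\hom(F^{(r)},-)$ and exploit the decisive vanishing from Lemma \ref{lm-CFSr}: each column $\hom(F^{(r)},T(J_G^s,r)^*)$ is concentrated in even cohomological degrees, so the vertical differential of the resulting bicomplex is identically zero. By the naturality half of Proposition \ref{prop-analysis}, the row $t=2i$ identifies as a complex in the horizontal direction with the degree $t$ part of $\hom(F,J_G^*(E_r\otimes I))$. Since evaluation on $E_r\otimes I$ is exact and sends injectives of $\P_\k$ to injectives, $J_G^*(E_r\otimes I)$ is an injective coresolution of $G(E_r\otimes I)$, and horizontal cohomology of row $t$ computes $\Ext^s(F,G(E_r\otimes I))^t$.

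Because the vertical differential vanishes, total cohomology is the direct sum of horizontal row cohomologies, yielding the natural graded isomorphism
$$\Ext^n(F^{(r)},G^{(r)}) \;\simeq\; \bigoplus_{s+t=n}\Ext^s(F,G(E_r\otimes I))^t,$$
which matches the statement of the theorem (the odd-$t$ summands vanish automatically since $E_r$ is concentrated in even degrees). For the collapsing, the columns-first spectral sequence of our bicomplex has $E_0^{s,t}=\hom(F^{(r)},T(J_G^s,r)^t)$ with trivial vertical differential, so $E_1=E_0$ and $E_2^{s,t}=\Ext^s(F,G(E_r\otimes I))^t$; the displayed isomorphism then identifies $E_2$ with the associated graded of the abutment (dimensions agree), forcing $E_2=E_\infty$.

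The second assertion I will deduce from the first by the duality $\Ext^*(F^{(r)},G^{(r)})\simeq\Ext^*(G^{\sharp\,(r)},F^{\sharp\,(r)})$ of \cite[Prop 2.6]{FS} applied to the pair $(F',G')=(G^\sharp,F^\sharp)$, combined with the natural identification $\Ext^*(G^\sharp,F^\sharp(E_r\otimes I))\simeq\Ext^*(F(E_r\otimes I),G)$ obtained by dualizing once more (using $E_r^\vee\simeq E_r$ as ungraded vector spaces). The main obstacle, such as it is, is purely bookkeeping: checking that $T(J_G^*,r)^{**}$ really is a Cartan--Eilenberg resolution of $J_G^{*\,(r)}$ and that the resulting columns-first spectral sequence coincides from $E_2$ on with the twisting spectral sequence of Theorem \ref{thm-sstwist}. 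This reduces to standard uniqueness of hypercohomology spectral sequences, and all the genuinely substantive ingredients---the functoriality of $T(-,r)^*$ (Proposition \ref{prop-T}), the odd-degree vanishing (Lemma \ref{lm-CFSr}), the naturality of the identification (Proposition \ref{prop-analysis}), and the existence of $r$-twist compatible coresolutions for a wide class of $G$ (Proposition \ref{prop-ex})---are already assembled before the statement.
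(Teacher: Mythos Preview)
Your proposal is correct and follows essentially the same approach as the paper: form the bicomplex $T(J_G^*,r)^{**}$, use Proposition \ref{prop-analysis} to see that $\hom(F^{(r)},-)$ applied to it has zero odd rows and identifies with $\hom(F,J_G^*(E_r\otimes I))$, conclude the graded isomorphism, and deduce collapsing by a dimension count (the paper cites Corollary \ref{cor-num}(ii)). One small simplification relative to your write-up: you do not need to verify that $T(J_G^*,r)^{**}$ is a Cartan--Eilenberg resolution or to identify your spectral sequence with that of Theorem \ref{thm-sstwist}; it suffices that the totalization is an injective coresolution of $G^{(r)}$, and once the graded isomorphism is established the collapsing of $E(F,G,r)$ follows purely from the equality of total dimensions.
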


\begin{remark}
Theorem \ref{thm-collapse} generalizes \cite[Cor 4.2]{Chalupnik2} (One can prove that Cha{\l}upnik's result is equivalent to the case $F^\sharp=\Lambda^d$). The latter result is the key result (together with \cite[proposition 3.1]{Chalupnik2}) to compute the extension groups $\Ext^*(\Lambda^{*\,(j)},\Gamma^{*\,(r)})$ and $\Ext^*(S^{*\,(j)},\Gamma^{*\,(r)})$.
\end{remark}

\begin{proof}[Proof of theorem \ref{thm-collapse}]
Assume that $G$ admits a $r$-twist compatible coresolution $J^*$. Then the totalization of the bicomplex $T(J^*,r)^*$ is an injective coresolution of $G^{(r)}$. Hence $\Ext^*(F^{(r)},G^{(r)})$ may be computed as the homology of the totalization of the bicomplex $B^{*,*}(F,r):=\hom(F^{(r)},T(J^*,r)^*)$

Now by proposition \ref{prop-analysis}, the odd rows $B^{*,2k+1}(F,r)$ of $B^{*,*}(F,r)$ are zero (so the vertical differential of this bicomplex is trivial) and the totalization of $B^{*,*}(F,r)$ is isomorphic (naturally in $F$) to $\hom(F,J^*(E_r\otimes I))$. 

But the homology of the latter complex computes $\Ext^*(F,G(E_r\otimes I))$, whence the first isomorphism.
 
The collapsing of the twisting spectral sequence follows for dimension reasons by corollary \ref{cor-num}(ii). Finally, the case when $F^\sharp$ has an $r$-twist injective coresolution follows by duality \cite[Prop 2.6]{FS}.
\end{proof}

\subsection{A combinatorial problem}\label{subsec-comb}

Let $(C^*,\partial)$ be a complex in $\P_\k$, whose objects are finite direct sums of functors of the form $S^\mu$. For each object $C^k:=\bigoplus_{i=1}^n S^{\mu^i}$, we define an object $\widetilde{C}^k:=\bigoplus_{i=1}^n S^{p\mu^i}$. The canonical inclusions $S^{\mu\,(1)}\hookrightarrow S^{p\mu}$ yield a graded injection: $C^{*\,(1)}\hookrightarrow \widetilde{C}^*$.  
Since the objects of $\widetilde{C}^*$ are injective it is always possible to find maps $\widetilde{\partial}^k:\widetilde{C}^k\to \widetilde{C}^{k+1}$ whose restrictions to $C^{k\,(1)}$ equal $\partial^{k\,(1)}$. But if we construct the $\widetilde{\partial}^k$ by an abstract injectivity argument, we do not have $\widetilde{\partial}\circ \widetilde{\partial}=0$ in general.

\begin{problem}\label{pb-lift}
For all $F\in\P_\k$, find a coresolution $(J^*_F,\partial)$ by direct sums of functors of the form $S^\mu$ such that 
the following holds. There exists a family of liftings $\widetilde{\partial}^k:\widetilde{J}^k_F\to \widetilde{J}^{k+1}_F$, $k\ge 0$, of the differentials $\partial^{k\,(1)}$, such that $\widetilde{\partial}\circ \widetilde{\partial}=0 $. 
\end{problem}

If $F$ admits a $1$-twist compatible coresolution $(J^*_F,\partial)$, then $\overline{\partial}$ is a lifting of $\partial^{(1)}$ to $\widetilde{J}_F^*$ such that $\overline{\partial}\circ\overline{\partial}=0$, so in that case we can solve the problem.

An answer to the problem \ref{pb-lift} would imply a positive answer to the collapsing conjecture \ref{conj-collapse}.
Indeed, assume that we can solve the problem for $G$. Then, by propositions \ref{prop-res1} and \ref{prop-res-2}, the bigraded object $J_G^*(\sha_r\otimes I)$ is endowed  with a vertical $p$-differential $\delta$ and  a horizontal differential $\overline{\partial}(\sha_r\otimes I)$ which commute. Moreover, if we contract the columns (i.e. apply the functor $-_{[1]}$ columnwise), we obtain a bicomplex $\widetilde{J}_G^{*,*}$ whose totalization is an injective coresolution of $F$. Then for all $F\in\P_\k$ we can analyze $\hom(F^{(r)}, \widetilde{J}_G^{*,*})$ as in the proof of theorem \ref{thm-collapse} to conclude that $E(F,G,1)$ collapses. Now, as observed in remark \ref{rk-collapse}, if $E(F,G,1)$ collapses for all $F,G$, then conjecture \ref{conj-collapse} is valid.

\section{Appendix: review of the construction of Troesch $p$-complexes}\label{sec-app}

In this appendix, we review the construction of Troesch $p$-complexes from \cite{Troesch} (with our notations and conventions from section \ref{sec-background}). As usual, we work over a ground field $\k$ of prime characteristic $p$.

\subsection{A preparatory lemma}
Let $U$ be a finite dimensional graded vector space over $\k$. We consider $S^*(U)$ as a graded object in the usual way (e.g. if $x$ (resp. $y$) in a homogeneous element of degree $i$ (resp. $j$), then $xy\in S^*(U)$ has degree $i+j$). Recall that if $C$ is a coalgebra, $A$ is an algebra, and $f,g\in\End_\k(C,A)$, the convolution $f\star g:C\to A$ is the composite:
$$C\xrightarrow[]{\Delta}C\otimes C\xrightarrow[]{f\otimes g}A\otimes A\xrightarrow[]{m}A\;.$$
The following lemma gives a way to construct $p$-differentials on $S^*(U)$ from $p$-differentials on $U$. It encompasses the computations of \cite[Prop 4.1.4, 4.1.5 and 4.2.6]{Troesch}. 
\begin{lemma}\label{lm-diff}
Let $\phi:U\to U$ be a morphism of graded vector spaces, which raises the degree by an integer $\alpha$. The convolution 
$\Id\star S^*(\phi) $ splits as a sum of morphisms 
$\phi_d: S^*(U)\to S^*(U) $ such that for all $d\ge 0$  $\phi_d$ raises the degree by $d\alpha$. The morphisms $\phi_d$ satisfy the following properties.
\begin{enumerate}
\item[(1)] For all $d,e\ge 0$, $\phi_d\circ \phi_e=\phi_e\circ\phi_d$.
\item[(2)] If $\phi$ is a $p$-differential, then for all $d\ge 1$, $\phi_d$ is a $p$-differential.
\item[(3)] Let $\psi:U\to U$ be another morphism of graded vector spaces, which raises the degree by $\beta$. If $\phi$ and $\psi$ commute, then for all $d,e\ge 0$, $\phi_d\circ\psi_e=\psi_e\circ\phi_d$.
\end{enumerate}
\end{lemma}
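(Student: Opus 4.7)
The plan is to recognise $\Id \star S^*(\phi)$ as an algebra endomorphism in a familiar form, then extract the $\phi_d$ as coefficients in a one-parameter deformation. The key observation is that $S^*(U)$ is a commutative cocommutative bialgebra, so the convolution of two algebra morphisms is again an algebra morphism; in particular $\Id \star S^*(\phi)$ is an algebra endomorphism of $S^*(U)$. On the generators $u \in U \subset S^*(U)$ it acts as $u \mapsto u + \phi(u)$, and the universal property of $S^*$ then identifies
$$\Id \star S^*(\phi) = S^*(\Id_U + \phi).$$
I would then introduce a formal variable $t$ and consider the algebra endomorphism $T_t := S^*(\Id + t\phi)$ of $S^*(U) \otimes \k[t]$. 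Direct computation gives
$$T_t(u_1 \cdots u_n) = \prod_{i=1}^n (u_i + t\phi(u_i)) = \sum_{d \ge 0} t^d \Big( \sum_{|J| = d} \prod_{j \in J} \phi(u_j) \prod_{i \notin J} u_i \Big),$$
and I \emph{define} $\phi_d$ as the coefficient of $t^d$. This makes $\phi_d$ a morphism raising the $U$-degree by $d\alpha$, and specialising $t = 1$ recovers $\Id \star S^*(\phi) = \sum_d \phi_d$.

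For properties (1) and (3), which I would treat together since (1) is the case $\psi = \phi$ of (3), I use two independent formal variables $s, t$ and functoriality of $S^*$ to write
$$T_t^\phi \, T_s^\psi = S^*\bigl((\Id + t\phi)(\Id + s\psi)\bigr).$$
When $\phi \psi = \psi \phi$, this expression is symmetric in $(t,\phi)$ and $(s,\psi)$, so $T_t^\phi T_s^\psi = T_s^\psi T_t^\phi$. Extracting the coefficient of $t^d s^e$ yields $\phi_d \psi_e = \psi_e \phi_d$.

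For property (2), I would use functoriality together with the fact that, in characteristic $p$, $(\Id + t\phi)^p = \Id + t^p \phi^p = \Id$ (since $\Id$ and $\phi$ commute and $\phi^p = 0$). Hence
$$T_t^p = S^*\bigl((\Id + t\phi)^p\bigr) = \Id.$$
On the other hand, using (1) the operators $\phi_d$ commute, so by the multinomial theorem
$$T_t^p = \sum_{\substack{n_0 + n_1 + \cdots = p}} \binom{p}{n_0,\,n_1,\,\ldots}\, t^{\sum j n_j} \prod_j \phi_j^{n_j}.$$
By Lucas' theorem the multinomial coefficient $\binom{p}{n_0,\,n_1,\,\ldots}$ is nonzero modulo $p$ only when exactly one $n_j$ equals $p$ and the others vanish, which collapses the sum to $\Id + \sum_{d \ge 1} t^{dp} \phi_d^p$. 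Comparing coefficients of $t^{dp}$ with $T_t^p = \Id$ forces $\phi_d^p = 0$ for each $d \ge 1$.

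The whole argument hinges on the first step: once one sees that $\Id \star S^*(\phi) = S^*(\Id + \phi)$, the convolution is replaced by a functorial identity for $S^*$ and everything else becomes coefficient extraction in formal-variable identities. I expect this identification to be the main conceptual obstacle; without it one is reduced to a direct combinatorial verification of (2), checking on $S^n(U)$ that $\phi_d^p$ applied to a monomial produces a sum of multinomials that all vanish modulo $p$ — feasible but substantially less transparent than the route above.
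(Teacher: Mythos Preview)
Your proof is correct and follows a genuinely different route from the paper's.

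The paper never makes the identification $\Id\star S^*(\phi)=S^*(\Id_U+\phi)$ explicit; instead it works directly with the convolution. For (3) (and hence (1)) it writes down the same explicit formula you obtain as the $t^d$-coefficient, namely $\phi_d(x_1\cdots x_n)=\sum_{|K|=d} x_{J}\cdot\phi(x)_K$, and then expands $\psi_e\circ\phi_d$ and $\phi_d\circ\psi_e$ combinatorially as sums over pairs of partitions, checking they agree when $\phi\psi=\psi\phi$. For (2) the paper also shows $(\Id\star S^*(\phi))^p=\Id$ and expands the left-hand side as $\sum_d\phi_d^{\,p}$ using commutativity of the $\phi_d$, but it reaches the identity $(\Id\star S^*(\phi))^p=\Id$ through a Hopf-algebraic convolution identity $(\Id\star f)\circ(\Id\star g)=\Id\star(f\star g\star(f\circ g))$, iterated to obtain $(\Id\star S^*(\phi))^p=\Id\star S^*(\phi^p)=\Id\star(\eta\epsilon)=\Id$.

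Your reduction to $S^*(\Id+\phi)$ and the formal-variable trick replace both of these by functoriality of $S^*$: the commutation (3) becomes $(\Id+t\phi)(\Id+s\psi)=(\Id+s\psi)(\Id+t\phi)$, and the $p$-th power (2) becomes the freshman's dream $(\Id+t\phi)^p=\Id$. This is shorter and more transparent. The paper's approach, on the other hand, establishes the auxiliary convolution identity in the course of the argument, which has some independent interest and does not rely on recognising the convolution as $S^*$ of a linear map.
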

\begin{proof}
Observe that (3) implies (1). Let us prove (3).
For all $d\ge 0$, the restriction of $\phi_d$ to the graded object $S^n(U)$ can be explicitly described as the composite
$$S^n(U)\xrightarrow[]{\Delta} S^{n-d}(U)\otimes S^d(U)\xrightarrow[]{\Id\otimes S^d(\phi)}S^{n-d}(U)\otimes S^d(U)\xrightarrow[]{m} S^n(U)\;.$$
If $(x_1,\dots ,x_n)$ is a family of elements of $U$ and if $J\subset \{1,\dots,n\}$, we denote by $x_J$ the product of the $x_j$, for $j\in J$ (so $x_J\in S^{\mathrm{card}(J)}(U)$). The comultiplication $\Delta: S^n(U)\to S^{n-d}(U)\otimes S^d(U)$ sends a product $x_1\cdots x_n\in S^n(U)$ to the sum of the $x_J\otimes x_K\in S^{n-d}(U)\otimes S^d(U)$, where the sum is taken over all partitions $J\sqcup K$ of $\{1,\dots,n\}$ into a subset $J$ of $n-d$ elements and a subset $K$ into $d$ elements. So we have the equality: $$\phi_d(x_1\cdots x_n)=\sum_{J\sqcup K=\{1,\dots,n\}\; \mathrm{card}(K)=d} x_J\cdot\phi(x)_K \;.$$
A similar formula holds for $\psi_e$:
$$\psi_e(y_1\cdots y_n)=\sum_{L\sqcup M=\{1,\dots,n\}\; \mathrm{card}(M)=e} y_L\cdot\phi(y)_M \;.$$
Putting these two formulas together, we compute that the composite $\psi_e\circ\phi_d$ is given by the formula:
$$(\psi_e\circ \phi_d)(x_1\cdots x_n)=\sum x_{J\cap L}\cdot \phi(x)_{K\cap L}\cdot\psi(x)_{J\cap M}\cdot(\psi\circ\phi)(x)_{K\cap M}\;,$$
where the sum is taken over all partitions $J\sqcup K=\{1,\dots,n\}$ and $L\sqcup M=\{1,\dots,n\}$ such that $K$ has cardinal $d$ and $M$ has cardinal $e$. Similarly, 
$$(\phi_d\circ \psi_e)(x_1\cdots x_n)=\sum x_{J\cap L}\cdot \phi(x)_{K\cap L}\cdot\psi(x)_{J\cap M}\cdot(\phi\circ\psi)(x)_{K\cap M}\;.$$
Since $\phi\circ\psi=\psi\circ\phi$ the two expressions are equal. This proves (3).

Now we prove (2). Since the $\phi_d$ commute with each other by (1), Newton's binomial formula yields an equality:
$$(\Id\star S^*(\phi))^p=\sum_{d=0}^\infty (\phi_d)^p\;.$$
In this formula, the sum is finite on each summand $S^n(U)$, and the exponent refers to composition of morphisms. So, to prove that the iterated compositions $(\phi_d)^p$ are zero for $d> 0$, it suffices to prove that $(\Id\star S^*(\phi))^p$ equals the identity map. Let $f,g:S^*(U)\to S^*(U)$ be two Hopf algebra morphisms. Then it follows from the axioms of commutative Hopf algebras satisfied by $S^*(U)$ that: 
$$(\Id\star f)\circ (\Id\star g)= \Id\star (f\star g\star (f\circ g))\;.$$
Using the axioms of Hopf algebras, and the commutativity and the cocommutativity of $S^*(U)$, we check that $(f\star g\star (f\circ g))$ is a morphism of Hopf algebras. So we can iterate the computation. In particular, if $f$ is of the form $S^*(\phi)$, with $\phi$ an endomorphism of $U$, we get the formula:
$$(\Id\star S^*(\phi))^p= \Id\star(S^*(\phi)^p)\;.$$
If $\phi$ is a $p$-differential, then $S^*(\phi)^p=S^*(\phi^p)=S^*(0)=\eta\circ\epsilon$ (where $\eta$, resp $\epsilon$ is the unit, resp. counit, of $S^*(U)$). Since $\eta\circ\epsilon$ is the unit for the convolution, we obtain that $(\Id\star S^*(\phi))^p=\Id$. This concludes the proof of lemma \ref{lm-diff}.
\end{proof}

\subsection{Construction of Troesch $p$-complexes}
We now turn to describe the construction of the $p$-complexes $B_d(r)^*$. Let $V$ be a finite dimensional vector space (the elements of $V$ are considered as elements of degree $0$). As a graded object, we let:
$$B_d(r)^*(V):=S^*(\sha_r\otimes V)\;,$$
where $\sha_r$ is the graded vector space which equals $\k$ in each degree $i$, for $0\le i<p^r$. 
To define the $p$-differential on $B_d(r)^*(V)$, we proceed in several steps. 

{\bf Step 1.} First, we observe that there is a canonical isomorphism of graded vector spaces:
$$\sha_1^{(0)}\otimes \sha_1^{(1)}\otimes\cdots\otimes\sha_1^{(r-1)}\simeq \sha_r\;.$$
Indeed each $\sha_1^{(k)}$ is (according to the conventions of section \ref{sec-background}, which define the Frobenius twist $I^{(k)}$ on graded vector spaces\footnote{Here, we use the convention $\sha_1^{(0)}=\sha_1$.} as an operator which multiplies the degrees by $p^k$) a direct sum of copies of $\k$, one in each degree $p^ki$ for $0\le i<p$. 

{\bf Step 2.} On each $\sha_1^{(k)}$ we define a `translation operator' $\rho^{(k)}:\sha_1^{(k)}\to \sha_1^{(k)}$, which maps the summand $\k$ of degree $p^ki$ of $\sha_1^{(k)}$ identically onto the summand $\k$ of degree $p^k(i+1)$ of $\sha_1^{(k)}$ if $i<p-1$, and which is zero in the other degrees. Thus, $\rho^{(k)}$ raises the degree by $p^k$ and it is a $p$-differential: $(\rho^{(k)})^p=0$.

{\bf Step 3.} Now we use the translation operators $\rho_k$ and lemma \ref{lm-diff} to define a bunch of $p$-differentials on the graded functor $B_d(r)^*(V)$. Namely, for all integer $s$ such that $0\le s< r$, and for all positive integer $\ell$ we let  
$$ d^{r-1-s}_\ell := \left(\sha_1^{(0)}\otimes\cdots\otimes \sha_1^{(s-1)}\otimes\rho^{(s)}\otimes \sha_1^{(s+1)}\otimes \cdots\otimes \sha_1^{(r-1)}\otimes V\right)_{\ell}.$$
That is, $d_\ell^{r-1-s}$ is the part of $\Id\star S^*(\sha_1^{(0)}\otimes\cdots \otimes \rho^{(s)}\otimes\cdots\otimes\sha_1^{(r-1)}\otimes V )$ which raises the degree by $\ell p^s$. By lemma \ref{lm-diff}, these morphisms $d_\ell^s$ satisfy the following properties.
\begin{enumerate}
\item[(i)] For all $\ell> 0$ and all $s\in\{0,\dots,r-1\}$, $d_\ell^s$ raises the degree by $\ell p^{r-1-s}$.
\item[(ii)] For all $\ell> 0$ and all $s\in\{0,\dots,r-1\}$, $d_\ell^s$ is a $p$-differential.
\item[(iii)] For all $k,\ell> 0$ and all $s,t\in\{0,\dots,r-1\}$, $d_\ell^s\circ d_k^t= d_k^t\circ d_\ell^s$.
\end{enumerate}

{\bf Step 4.} Finally, we define the differential $d$ on $B_d(r)^*$, which raises the degree by $p^{r-1}$, by the formula:
$$d:= d_1^0+d_p^1+\dots+d_{p^{r-1}}^{r-1}\;. $$
(It is a $p$-differential thanks to properties (ii) and (iii) given in step 3.)

\bigskip

\begin{remark}
If $r=1$, the $p$-complex $B_d(1)^*(V)$ equals $S^d(\sha_1\otimes V)$ as a graded object. Its differential $d=d^0_1$ equals the composite:
\begin{align*}S^d(\sha_1\otimes V)\xrightarrow[]{\Delta}  & S^{d-1}(\sha_1\otimes V)\otimes (\sha_1\otimes V)\\ &\xrightarrow[]{\Id\otimes( \rho\otimes V)} S^{d-1}(\sha_1\otimes V)\otimes (\sha_1\otimes V)\xrightarrow[]{m}S^d(\sha_1\otimes V)\;.\end{align*}
The exponential formula yields an isomorphism of graded functors $S^d(\sha_1\otimes V)\simeq \bigoplus S^\mu(V)$, where the sum is taken over all $p$-tuples $\mu=(\mu_0,\dots,\mu_{p-1})$ of weight $d$, and each $S^\mu(V)$ is placed in degree $\sum i\mu_i$. If we denote by $\delta:S^i\otimes S^j\to S^{i-1}\otimes S^{j+1}$ the composite
$$S^i\otimes S^j\xrightarrow[]{\Delta}S^{i-1}\otimes S^1\otimes S^{j}\xrightarrow[]{m} S^{i-1}\otimes S^{j+1}\;, $$
Then the differential $d$ can be rewritten as the sum
$$d=\sum_{i=0}^{p-1}\underbrace{\Id\otimes\cdots\otimes\Id}_{i\text{ factors}}\otimes \delta \otimes \underbrace{\Id\otimes\cdots\otimes\Id}_{p-2-i\text{ factors}} $$
This corresponds to the expression given in \cite[Section 3.1, p. 1062]{Troesch}.
\end{remark}

\begin{remark}
If $r\ge 2$, we denote by $\sha_1^{s,r}$ the graded vector space:
$$\sha_1^{s,r}:=\sha_1^{(s)}\otimes \sha_1^{(0)}\otimes \cdots\otimes \widehat{\sha_1^{(s)}}\otimes \cdots\otimes \sha_1^{(r-1)}\;.$$
Permutation of the factors of tensor products yields isomorphisms $\chi_s:\sha_1^{s,r}\simeq \sha_1^{0,s}$ for all $0\le s<r$. The maps $\iota_s(r)$ defined in \cite[Def 4.2.1]{Troesch} can be identified, though the exponential isomorphism, with the isomorphisms $S^*(\sha_1^{s,r}\otimes V)\xrightarrow[]{S^*(\chi_s)}S^*(\sha_1^{0,r}\otimes V)$. 
Thus, the differentials $d^s_\ell$ defined by Troesch in \cite[Def 4.2.2]{Troesch} coincide with ours. 
\end{remark}

\subsection{The homology of $B_d(1)^*$} The following result is proved in \cite[Section 3]{Troesch}. We quote it without proof.

\begin{theorem}[{\cite[Thm 3.1.2]{Troesch}}]\label{thm-calcul-Troesch}
Let $d\ge 0$. let us identify $S^{d\,(1)}$ with a subfunctor of $B_{pd}(1)^*$ of cohomological degree $0$ via the inclusion
$$S^{d\,(1)}(V)\hookrightarrow S^{pd}(V) \hookrightarrow S^{pd}(\sha_1\otimes V)\;.$$
Then the $p$-complex $B_{pd}(1)^*$ is a $p$-coresolution of $S^{d\,(1)}$ (in particular, the $p$-differential of $B_{pd}(1)^*$ vanishes on $S^{d\,(1)}$).

If $d$ is not divisible by $p$, then $B_{d}(1)^*$ is $p$-acyclic.
\end{theorem}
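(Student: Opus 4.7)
The plan is to reduce the theorem, via the exponential formula and a Hopf-algebraic reinterpretation of $p$-complexes, to an explicit statement about the one-variable functor $B_a(1)^*(\k) = S^a(\sha_1)$, which I would then analyze using a companion derivation.

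First, by \cite[Cor 2.12]{FS} it suffices to check the claim after evaluating on $V = \k^n$ for $n$ large enough. The exponential formula gives a $\mathbb{G}_m^n$-equivariant decomposition
$$B_d(1)^*(\k^n) \;\simeq\; \bigoplus_{|\mu|=d}\; \bigotimes_{i=1}^n S^{\mu_i}(\sha_1).$$
Unpacking the convolution $\mathrm{Id}\star S^*(\rho\otimes V)$ through the Hopf-algebra factorization $S^*(\sha_1\otimes V)=\bigotimes_i S^*(\sha_1\otimes \k v_i)$ shows that $d_1^0$ preserves each summand, and restricts there to the Leibniz-type $p$-differential $\sum_i 1\otimes\cdots\otimes d_1^0\otimes\cdots\otimes 1$.

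Next, I would reinterpret graded $p$-complexes as graded modules over the cocommutative Hopf algebra $u=\k[\partial]/(\partial^p)$ with $\Delta\partial=\partial\otimes 1+1\otimes\partial$. The classification of its indecomposable graded modules $V_1,\dots,V_p$ (each $V_i$ supported in $i$ consecutive degrees) gives the dictionary: a $p$-complex is $p$-acyclic iff it is $u$-free, and is a $p$-coresolution of $F\subseteq C^0$ iff $C\simeq F\oplus(\text{free})$ with $F$ a sum of copies of the trivial module $V_1\langle 0\rangle$. Since $u\otimes M$ is $u$-free for any $M$, tensor products of decompositions of this shape are again of this shape. This reduces the theorem to the \emph{Key Lemma}: as a $u$-module, $S^a(\sha_1)$ is free when $p\nmid a$ and is $\k\cdot e_0^a\oplus(\text{free})$ when $p\mid a$. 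Summing over $\mu$, the $V_1\langle 0\rangle$-summands then assemble into the image of $S^{d/p\,(1)}(\k^n)\hookrightarrow S^d(\k^n)$.

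To prove the Key Lemma I would introduce the companion derivation $h$ on $S^*(\sha_1)$ uniquely determined by $h(e_i)=i\cdot e_{i-1}$. A check on generators gives $[\partial,h]=-\mathrm{Id}$ on $\sha_1$, and the Leibniz rule upgrades this to $[\partial,h]=-a\cdot\mathrm{Id}$ on $S^a(\sha_1)$; a similar Leibniz argument yields $h^p=0$. When $p\nmid a$, the scalar $-a$ is invertible and $\partial,h$ equip $S^a(\sha_1)$ with the structure of a module over the restricted Weyl algebra $A=\k\langle\partial,h\rangle/(\partial^p,h^p,[\partial,h]+a)$; a PBW count gives $\dim A=p^2$, and $A$ acts faithfully on the obvious $p$-dimensional module $\k^p$ with $\partial(v_i)=v_{i+1}$ and $h(v_i)=a\,i\,v_{i-1}$, forcing $A\simeq M_p(\k)$. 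Every $A$-module is then a direct sum of copies of the unique simple, whose restriction to $u$ is the regular module $V_p$, so $S^a(\sha_1)$ is $u$-free as required.

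The main obstacle is the case $p\mid a$, in which $\partial$ and $h$ commute and the preceding semisimplicity argument collapses. Here $\ker h$ becomes a $u$-submodule containing $e_0^a$, and I would attempt to construct an explicit $u$-equivariant graded complement of $\k\cdot e_0^a$ inside $\ker h$, then verify via the Poincar\'e series
$$\sum_{j\ge 0}\dim\bigl(S^a(\sha_1)^j\bigr)\,q^j \;=\; [z^a]\prod_{i=0}^{p-1}\frac{1}{1-zq^i}$$
that this series equals $1+(1+q+\cdots+q^{p-1})\cdot Q(q)$ for some $Q(q)\in\mathbb{Z}[q]$, ruling out any $V_i$ with $1<i<p$ in the $u$-decomposition. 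This Hilbert-series check combined with the control provided by $h$ is precisely the delicate combinatorial step carried out explicitly in \cite[Section 3]{Troesch}.
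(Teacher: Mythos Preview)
The paper does not prove this theorem: it simply quotes it from \cite[Section 3]{Troesch}. So there is no ``paper's own proof'' to compare against, and your proposal should be judged on its own merits.

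Your reduction is correct and clean. The exponential decomposition of $B_d(1)^*(\k^n)$, the identification of $d_1^0$ with the Leibniz extension of $\rho$ on each tensor factor, and the dictionary between $p$-complexes and graded $u=\k[\partial]/(\partial^p)$-modules are all valid. The fact that $u\otimes_\k M$ (diagonal action via $\Delta\partial=\partial\otimes 1+1\otimes\partial$) is $u$-free makes the tensor-product step go through, so everything does reduce to your Key Lemma about $S^a(\sha_1)$.

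Your treatment of the case $p\nmid a$ is complete and rather elegant. The relations $\partial^p=0$, $h^p=0$, $[\partial,h]=-a$ hold on $S^a(\sha_1)$ (using that $p$-th powers of derivations are derivations in characteristic $p$), and the resulting restricted Weyl algebra is indeed isomorphic to $M_p(\k)$: the element $\partial h$ acts on your model $\k^p$ as the diagonal matrix $a\cdot\mathrm{diag}(0,1,\dots,p-1)$ with distinct eigenvalues, so together with $\partial$ it generates all of $M_p(\k)$, forcing $\dim A=p^2$ and $A\simeq M_p(\k)$. This gives $u$-freeness immediately.

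The genuine gap is the case $p\mid a$. Your Hilbert-series constraint $P(q)\equiv 1\pmod{[p]_q}$ is correct (it follows from $\prod_{i=0}^{p-1}(1-z\zeta^i)=1-z^p$ at a primitive $p$-th root $\zeta$), but it does \emph{not} rule out summands $V_i$ with $1<i<p$. For instance, with $p=3$ the $u$-modules $V_3\langle 0\rangle$ and $V_2\langle 0\rangle\oplus V_1\langle 2\rangle$ have the same Hilbert series $1+q+q^2$, so no Poincar\'e-series argument alone can distinguish ``free'' from ``non-free with the same graded dimensions''. You gesture at ``the control provided by $h$'', but when $p\mid a$ the operators $\partial$ and $h$ commute and generate a non-semisimple algebra, so the mechanism by which $h$ would force freeness is not explained. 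In effect you have reduced the theorem to exactly the hard case and then deferred to \cite{Troesch}; that is honest, but it is not a proof. If you want to complete the argument along these lines, what you actually need to show is that $\ker\partial^s/\mathrm{im}\,\partial^{p-s}$ is one-dimensional (in degree $0$) for every $1\le s<p$; this is the content of Troesch's computation and requires more than the Hilbert series.
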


\subsection{Some maps between Troesch $p$-complexes}\label{subsec-iota}
Let us define a morphism:
$$\iota:B_d(r)^*(V^{(1)})=S^d(\sha_r\otimes V^{(1)})\to S^{pd}(\sha_{r+1}\otimes V)=B_{pd}(r+1)^*(V)$$  
as the composite:
$$S^{d}(\sha_{r}\otimes V^{(1)})\simeq S^{d\,(1)}(\sha_{r}\otimes V) \hookrightarrow S^{pd}(\sha_{r}\otimes V) \hookrightarrow S^{pd}(\sha_{r+1}\otimes V) \;.$$
Here, the first morphism is induced by the isomorphism $\sha_{r}\simeq \sha_{r}^{(1)}$ which maps the summand $\k$ of degree $i$ of  $\sha_{r}$ identically onto the summand $\k$ of degree $pi$ of $\sha_{r}^{(1)}$. The second map is induced by the canonical inclusion $S^{d\,(1)}\hookrightarrow S^{pd}$, and the last map is induced by the canonical inclusion of graded objects of $\sha_{r}$ into $\sha_{r}\otimes \sha_{1}^{(r)}=\sha_{r+1}$.
The following lemma follows from the construction of the differentials of $B_d(r)^*$. We use it in lemma \ref{lm-prep2}.
\begin{lemma}[{Compare \cite[Lemmas 4.3.4 and 4.3.5]{Troesch}}]\label{lm-fonct-B}
The morphism $\iota$ sends an element of degree $i$ of $B_d(r)^*(V^{(1)})$ to an element of degree $pi$ of $B_{pd}(r+1)^*(V)$. Moreover, it commutes with the $p$-differentials.
\end{lemma}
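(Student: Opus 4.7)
The first assertion is routine: the isomorphism $\sha_r\simeq\sha_r^{(1)}$ used in the first map of $\iota$ sends the summand of degree $i$ to that of degree $pi$, so it multiplies degrees by $p$, while the remaining two maps in the definition of $\iota$ preserve degrees.

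For the second assertion, my plan is to compare the source and target $p$-differentials term by term using the explicit decomposition from Step 3 of Section \ref{sec-app}. Write $\delta_{\mathrm{src}}=\sum_{s=0}^{r-1}d^{r-1-s}_{p^{r-1-s}}$ on $B_d(r)^*$ and $\delta_{\mathrm{tgt}}=\sum_{s=0}^{r}d^{r-s}_{p^{r-s}}$ on $B_{pd}(r+1)^*$, where the summand for index $s$ uses $\rho^{(s)}$ on the factor $\sha_1^{(s)}$; the target has one extra summand (at $s=r$) not present in the source. On a monomial $x=\prod_{i=1}^d(e_{j_i}\otimes \pi v_{k_i})\in S^d(\sha_r\otimes V^{(1)})$ (with $e_j\in\sha_r$ the basis element of degree $j$ and $\pi v\in V^{(1)}$ corresponding to $v\in V$), unwinding the three maps shows $\iota(x)=\prod_i w_i^p$ with $w_i:=e_{j_i}\otimes v_{k_i}$, viewed inside $\sha_{r+1}\otimes V$ via the standard (degree-preserving) embedding $\sha_r\hookrightarrow\sha_{r+1}$.

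I would then match the $s$-th source summand with the $s$-th target summand for each $s\in\{0,\ldots,r-1\}$, using the characteristic-$p$ identity $\binom{p}{m}\equiv 0\pmod p$ for $0<m<p$. By lemma \ref{lm-diff}, $d^{r-s}_{p^{r-s}}(\prod_i w_i^p)$ expands as a sum over tuples $(m_1,\ldots,m_d)$ with $\sum m_i=p^{r-s}$, weighted by $\prod_i\binom{p}{m_i}$ and evaluated as $\prod_i\psi_s(w_i)^{m_i}w_i^{p-m_i}$; in characteristic $p$ only tuples with $m_i\in\{0,p\}$ survive, collapsing the sum to one indexed by subsets $S\subset\{1,\ldots,d\}$ with $|S|=p^{r-1-s}$. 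This matches $\iota$ applied to the source expression $d^{r-1-s}_{p^{r-1-s}}(x)=\sum_{|K|=p^{r-1-s}}\prod_{i\notin K}(e_{j_i}\otimes \pi v_{k_i})\prod_{i\in K}\psi_s^{\mathrm{src}}(e_{j_i}\otimes \pi v_{k_i})$, since $\iota$ takes each source factor to its $p$-th power and sends $\psi_s^{\mathrm{src}}(e_{j_i}\otimes \pi v_{k_i})=e_{j_i+p^s}\otimes \pi v_{k_i}$ to $\psi_s(w_i)^p=(e_{j_i+p^s}\otimes v_{k_i})^p$. For the extra target summand $d^0_1$ at $s=r$ (weight $\ell=1$), no tuple in $\{0,p\}^d$ sums to $1$, so the same analysis gives $d^0_1(\iota(x))=0$. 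Summing over $s$ yields $\iota(\delta_{\mathrm{src}}x)=\delta_{\mathrm{tgt}}(\iota(x))$.

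The main subtlety I expect to navigate is the correspondence of factor indices. Although Map 1 uses the iso $\sha_r\simeq\sha_r^{(1)}$, which might naïvely suggest shifting the source factor $\sha_1^{(s)}$ to $\sha_1^{(s+1)}$, the subsequent $p$-th power in Map 2 combined with the standard embedding in Map 3 produces a net inclusion of $\sha_r$ into $\sha_{r+1}$ via the degree-preserving inclusion (image concentrated in degrees $<p^r$, i.e., with $p$-adic digit $j_r=0$). Thus the source factor $\sha_1^{(s)}$ genuinely corresponds to the target factor $\sha_1^{(s)}$ (same index), while the degree-multiplication by $p$ is absorbed entirely by the operation $w_i\mapsto w_i^p$ from Map 2. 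Once this is sorted out, the whole compatibility reduces to the elementary binomial identity above.
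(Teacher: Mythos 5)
Your proof is correct, and its core is the same as the paper's: write $\iota(x)=\prod_i w_i^p$, match the source summand built from $\rho^{(s)}$ with the target summand built from $\rho^{(s)}$ for $0\le s\le r-1$, and kill the one remaining target summand (the one built from $\rho^{(r)}$) using the fact that everything in the image of $\iota$ is a product of $p$-th powers. You have also correctly negotiated the one genuine trap, namely that the net correspondence of tensor factors is $\sha_1^{(s)}\leftrightarrow\sha_1^{(s)}$ with no index shift; this is exactly the point where the paper's own bookkeeping (its $d'$ uses $\rho^{(1)},\dots,\rho^{(r)}$ on $\sha_r^{(1)}$, while its $D'$ uses $\rho^{(0)},\dots,\rho^{(r-1)}$ on $\sha_{r+1}$, and its notation $D^s_\ell$ in that step does not follow the convention of the appendix) is easiest to misread. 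Two places where your route differs in substance. First, the paper factors the argument through the intermediate complex $(S^{d\,(1)}(\sha_r\otimes V),d')$ and asserts the compatibility of $d'$ with $D'$ essentially ``by definition''; the honest justification is that $\Id\star S^*(\phi)$ is an algebra endomorphism together with $(w+\phi(w))^p=w^p+\phi(w)^p$, which is precisely what your explicit collapse $\prod_i\binom{p}{m_i}\equiv 0$ unless all $m_i\in\{0,p\}$ makes visible. Second, for the vanishing of the extra summand the paper identifies $(B_{pd}(r+1)^*,D'')$ with $B_{pd}(1)^*(\sha_r\otimes V)$ and invokes Theorem \ref{thm-calcul-Troesch}, whereas your observation that no tuple in $\{0,p\}^d$ sums to $1$ (i.e.\ $\binom{p}{1}=0$ in characteristic $p$) proves the needed vanishing directly. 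Your version is therefore slightly more self-contained; what the paper's detour buys is that the same identification of $(B_{pd}(r+1)^*,D'')$ with a rank-one Troesch complex is reused in the inductive proof of Theorem \ref{thm-calcul-Troesch2}.
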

\begin{proof}
The assertion on degrees is clear. Let us prove that $\iota$ commutes with the $p$-differentials. In this proof, we denote by $d$ the $p$-differential of $B_d(r)^*$, and by $D$ the $p$-differential of $B_{pd}(r+1)^*$. We also denote by $\iota'$ the inclusion of graded functors:
$$ S^{d\,(1)}(\sha_{r}\otimes V) \hookrightarrow S^{pd}(\sha_{r}\otimes V) \hookrightarrow S^{pd}(\sha_{r+1}\otimes V)\;. $$

{\bf Step 1.}
We define a $p$-differential $d'$ on $S^d(\sha_r^{(1)}\otimes V^{(1)})=S^{d\,(1)}(\sha_r\otimes V)$, which raises the degree by $p^{r}$, as the sum
$$d':= {d'}_p^1+\dots+{d'}_{p^{r}}^{r}\;, $$
where ${d'}_\ell^s$ denotes the part $\Id\star S^d(\sha_1^{(1)}\otimes\cdots \otimes \rho^{(s)}\otimes\cdots\otimes\sha_1^{(r)}\otimes V^{(1)} )$ which raises the degree by $\ell p^{r-s}$. 

By definition of $d'$, the isomorphism 
$S^{d}(\sha_{r}\otimes I^{(1)})\simeq S^{d\,(1)}(\sha_{r}\otimes I)$ induces an isomorphism of $p$-complexes $(B_d(r)^*(V^{(1)}),d^{(1)})\simeq (S^{d\,(1)}(\sha_r\otimes V),d')$, which sends an element of degree $i$ of the first $p$-complex to an element of degree $pi$ of the second $p$-complex.

Thus, to conclude the proof of lemma \ref{lm-fonct-B}, it suffices to prove that $\iota'$ induces a degree preserving morphism of $p$-complexes:
$$(S^{d\,(1)}(\sha_r\otimes V),d')\hookrightarrow (B_{pd}(r+1)^*(V),D) $$

{\bf Step 2.}
We write the $p$-differential $D$ of $B_{pd}(r+1)^*(V)$ as the sum of two $p$-differentials: 
$$D=D'+D''\;\;\text{ with }\;\; D'=D_1^0+\cdots + D_{p^{r-1}}^{r-1}\;\;\text{ and }\;\;D''=D_{p^{r}}^{r}\;,$$ 
where $D_\ell^s$ is the part of $\Id\star S^{pd}(\sha_1^{(0)}\otimes\cdots \otimes \rho^{(s)}\otimes\cdots\otimes\sha_1^{(r)}\otimes V )$ which raises the degree by $\ell p^{r-s}$. By definition of $d'$ and $D'$, the inclusion $\iota':S^{d\,(1)}(\sha_{r}\otimes I) \hookrightarrow 
S^{pd}(\sha_{r}\otimes\sha_1^{(r)}\otimes V)$ induces an inclusion of $p$-complexes 
$$(S^{d\,(1)}(\sha_r\otimes V),d')\hookrightarrow (B_{pd}(r+1)^*(V),D')\;.$$

{\bf Step 3.} So, to finish the proof, it suffices to check that $D''$ vanishes on the graded subfunctor $S^{d\,(1)}(\sha_r\otimes V)$ of $B_{pd}(r+1)^*(V)$. 

We have a commutative diagram:
$$\xymatrix{
S^{pd}(\sha_1\otimes\sha_r\otimes V)\ar[r]^{\simeq}_{(*)}& S^{pd}(\sha_r\otimes\sha_1^{(r)}\otimes V)\\
S^{d\,(1)}(\sha_r\otimes V)\ar@{^{(}->}[u]\ar@{=}[r]&S^{d\,(1)}(\sha_r\otimes V)\ar@{^{(}->}[u]_{\iota'}
}$$
where the vertical arrow on the left is the inclusion from theorem \ref{thm-calcul-Troesch} evaluated on $\sha_r\otimes V$, the vertical arrow on the right is our map $\iota'$ and the horizontal arrow $(*)$ is induced by the isomorphism $\sha_1\simeq \sha_1^{(r)}$ which maps an element of degree $i$ of $\sha_1$ identically on same the same element with degree $p^ri$ of $\sha_1^{(r)}$.

Let us denote by $d''$ the $p$-differential of $B_d(1)^*$. By definition of $d''$ and $D''$, the isomorphism $(*)$ induces an isomorphism of $p$-complexes
$$(B_{pd}(1)^*(\sha_r\otimes V),d''(\sha_r\otimes V))\simeq (B_{pd}(r+1)^*(V),D'')\;.$$

We know by theorem \ref{thm-calcul-Troesch} that the differential $d''(\sha_r\otimes V)$ vanishes on $S^{d\,(1)}(\sha_r\otimes V)$. Since the diagram commutes, this means that $D''$ vanishes on $S^{d\,(1)}(\sha_r\otimes V)$. This concludes the proof.
\end{proof}

\subsection{The homology of $B_d(r)^*$} The following result corresponds to \cite[Thm 4.3.2]{Troesch}. 
For the sake of completeness, we give a sketch of Troesch's proof below.
\begin{theorem}[{\cite[Thm 4.3.2]{Troesch}}]\label{thm-calcul-Troesch2}
Let $r$ be a positive integer and let $d$ be a nonnegative integer.
The $p$-complex $B_{p^rd}(r)^*$ is a $p$-coresolution of $S^{d\,(r)}$.
If $d$ is not a multiple of $p^r$ then the $p$-complex $B_{d}(r)^*$ is $p$-acyclic.
\end{theorem}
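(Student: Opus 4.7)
The plan is to proceed by induction on $r$. The base case $r=1$ is Theorem~\ref{thm-calcul-Troesch}, which has been quoted.

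For the inductive step $r-1 \Rightarrow r$, I would exploit the factorization $\sha_r = \sha_{r-1} \otimes \sha_1^{(r-1)}$ to split the Troesch $p$-differential $d$ on $B_d(r)^*$ as $d = D_1 + D_2$, where $D_1 := d_1^0$ is the single component coming from $\rho^{(r-1)}$ on the outer factor $\sha_1^{(r-1)}$, and $D_2 := d_p^1 + d_{p^2}^2 + \cdots + d_{p^{r-1}}^{r-1}$ is the sum of components coming from $\rho^{(s)}$ at positions $s<r-1$, all internal to $\sha_{r-1}$. By Lemma~\ref{lm-diff}, $D_1$ and $D_2$ are commuting $p$-differentials, equipping $B_d(r)^*$ with a bi-$p$-complex structure. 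The exponential formula applied to $\sha_1^{(r-1)} = \bigoplus_{i=0}^{p-1}\k[ip^{r-1}]$ further provides a $D_2$-stable decomposition
\[
B_d(r)^*(V) \;\cong\; \bigoplus_{\substack{\mu=(\mu_0,\dots,\mu_{p-1}) \\ \sum_i \mu_i = d}} \bigotimes_{i=0}^{p-1} B_{\mu_i}(r-1)^*(V)\,[\,ip^{r-1}\mu_i\,],
\]
on which $D_1$ acts by transporting a single ``unit'' of $\mu$ from some position $i$ to position $i+1$.

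The strategy is then to compute the total $p$-cohomology in two stages via this bi-$p$-complex. First, I would compute the $D_2$-$p$-cohomology summand-by-summand: using the inductive hypothesis applied to each tensor factor $B_{\mu_i}(r-1)^*$, one expects this cohomology to vanish whenever some $\mu_i$ is not divisible by $p^{r-1}$ (so the total complex is $p$-acyclic when $p^{r-1}\nmid d$, and a fortiori when $p^r\nmid d$), and, when $d = p^{r-1}e$ and $\mu = p^{r-1}\nu$, to collapse to $\bigotimes_i S^{\nu_i\,(r-1)}(V)$ concentrated in cohomological degree $0$ of each tensor factor. Reassembled via the exponential formula, the total $D_2$-$p$-cohomology should be identified, after suitable degree rescaling, with a Frobenius-twisted copy of the $r=1$ Troesch complex $B_e(1)^*$. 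In the second stage, the induced $D_1$-action on this $D_2$-cohomology is identified with the $(r-1)$-twist of the $r=1$ Troesch differential (the degree-$p^{r-1}$ shift of $D_1$ matching the rescaling induced by the Frobenius twist). Applying the base case $r=1$ to this twisted complex then gives the stated dichotomy: $p$-acyclic if $p\nmid e$ (i.e.\ $p^r\nmid d$), and a $p$-coresolution of $S^{(d/p^r)\,(r)}$ if $p^r\mid d$.

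\textbf{Main obstacle.} The crux is the Künneth-style identification of the $D_2$-$p$-cohomology. The difficulty is that $D_2$, restricted to a single summand $\bigotimes_i B_{\mu_i}(r-1)^*(V)$, is \emph{not} the naïve ``sum of individual differentials'' $\sum_i \Id \otimes \cdots \otimes d^{(r-1)}_{B_{\mu_i}} \otimes \cdots \otimes \Id$: each convolution $d_{p^{r-1-s}}^{s}$ distributes a fixed total of $p^{r-1-s}$ applications of $\rho^{(s)}$ across all $p$ tensor factors, producing essential cross terms absent from a Künneth complex (precisely the non-equivalence of differentials flagged in the remark after Proposition~\ref{prop-res1}). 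Controlling these cross terms, together with carefully tracking the degree shifts coming from $\sha_1^{(r-1)}$, the Frobenius twist, and the cohomological degree $p^{r-1}$ of $D_1$, requires delicate use of the convolution identities of Lemma~\ref{lm-diff}. A viable route is a secondary filtration by ``total application multiplicity'' across the tensor factors, combined with the inductive hypothesis, to show that the cross-term contributions act trivially on $p$-cohomology; this is the technical heart of Troesch's original argument in~\cite{Troesch}.
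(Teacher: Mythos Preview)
Your inductive framework and your decomposition $d = D_1 + D_2$ via $\sha_r = \sha_{r-1}\otimes\sha_1^{(r-1)}$ coincide with the paper's setup: in the paper's notation your $D_1$ is $D''$ (the convolution built from $\rho$ on the single highest-twist factor $\sha_1^{(r-1)}$) and your $D_2$ is $D'$ (the remaining terms, supported on $\sha_{r-1}$). The substantive difference is the \emph{order} in which you compute the two $p$-cohomologies, and this is where your plan runs into the obstacle you yourself flag.

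The paper (following Troesch) takes $D''$-cohomology \emph{first}. The point is that $(B_d(r)^*(V),D'')$ is isomorphic, as a $p$-complex, to $B_d(1)^*(\sha_{r-1}\otimes V)$: there is only a single $\sha_1$-factor involved, so no K\"unneth-type decomposition is needed and the base case $r=1$ applies verbatim with $V$ replaced by $\sha_{r-1}\otimes V$. This yields $S^{d/p\,(1)}(\sha_{r-1}\otimes V)\simeq B_{d/p}(r-1)^{*\,(1)}(V)$ in one stroke. On this cohomology the surviving differential $D'$ restricts to the Frobenius twist of the $(r-1)$-differential, and the induction hypothesis finishes the job directly. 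No cross terms ever appear. The only remaining technicality is that for $p>2$ one needs an ad hoc ``spectral sequence'' lemma for bi-$p$-complexes (Troesch's Lemma~4.3.6) to pass from the two-stage computation to the total $p$-cohomology; this, rather than any K\"unneth analysis, is the actual technical heart of the argument.

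Your route---taking $D_2$-cohomology first and invoking the induction hypothesis on each tensor factor $B_{\mu_i}(r-1)^*$---forces you into exactly the difficulty you describe: the restriction of $D_2$ to a $\mu$-summand is not the tensor product of the individual $(r-1)$-differentials (indeed, even the total application multiplicities differ by a factor of $p$), and your sketched ``secondary filtration'' fix is not the argument Troesch gives. So your proposal is not incorrect in spirit, but it manufactures an obstacle that the paper's ordering dissolves; reversing the two stages would align you with the paper and eliminate the need for any K\"unneth control.
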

\begin{proof}[Proof (Troesch)]
The proof goes by induction on $r$. For $r=1$, the result is given by theorem \ref{thm-calcul-Troesch}. So we now assume that theorem \ref{thm-calcul-Troesch2} is true for $r$, and we want to prove it for $r+1$. We concentrate on the case of the homology of $B_{dp^{r+1}}(r+1)^*$ (the case of the homology of $B_{d}(r+1)^*$ when $p^r\not|d$ is similar).

We denote by $d$ the $p$-differential $B_{p^rd}(r)^*$, and by $D$ the differential of $B_{p^{r+1}d}(r+1)^*$.  We have seen in the proof of lemma \ref{lm-fonct-B} that $D$ can be written as the sum of two commuting $p$-differentials $D'$ and $D''$ such that the following two conditions are satisfied.
\begin{itemize}
\item[(a)] The inclusion $\iota: B_{p^rd}(r)^{*\,(1)}\hookrightarrow B_{p^{r+1}d}(r+1)^*$ from section \ref{subsec-iota} induces a morphism of $p$-complexes:
$$(B_{p^rd}(r)^{*\,(1)}, d^{(1)})\hookrightarrow (B_{p^{r+1}d}(r+1)^*,D')$$
which sends an element of degree $i$ of $B_{p^rd}(r)^{*\,(1)}$ to an element of degree $pi$ of $B_{p^{r+1}d}(r+1)^*$ (this is proved in the first two steps of the proof of  lemma \ref{lm-fonct-B}).
\item[(b)] $(B_{p^{r+1}d}(r+1)^*,D'')$ is a $p$-coresolution of $B_{p^rd}(r)^{*\,(1)}$ (this is a consequence of step 3 of the proof of  lemma \ref{lm-fonct-B}).
\end{itemize}

If $p=2$, then we can say that $(B_{p^{r+1}d}(r+1)^*,D',D'')$ is an ordinary bicomplex, and there is an associated spectral sequence converging to the homology of the total complex, that is $(B_{p^{r+1}d}(r+1)^*,D)$. The zero-th page of the spectral sequence is the complex  $(B_{p^{r+1}d}(r+1)^*,D'')$, whose homology equals $B_{p^rd}(r)^{*\,(1)}$, concentrated in the first column by condition $(b)$. Now condition $(a)$ ensures that the first page of the spectral sequence equals the complex $(B_{p^rd}(r)^{*\,(1)}, d^{(1)})$ concentrated in the first column. By induction hypothesis, its homology equals $S^{d\,(r+1)}$, concentrated in bidegree $(0,0)$. Hence $(B_{p^{r+1}d}(r+1)^*,D)$ is a coresolution of $S^{d\,(r+1)}$ and we are done.

If $p>2$, the theory of spectral sequences arising from $p$-bicomplexes does not exist, but Troesch shows in \cite[Lemma 4.3.6]{Troesch} that the conclusion remains valid for bicomplexes of that kind. Hence we obtain that $(B_{p^{r+1}d}(r+1)^*,D)$ is a $p$-coresolution of $S^{d\,(r+1)}$.
\end{proof}

\end{document}